\tikzset{inner sep=0pt,
	root/.style={circle,draw,minimum size=7pt,thick},
	fatroot/.style={circle,draw,minimum size=10pt,thick},
	short root/.style={circle,fill,minimum size=7pt},
	doublearrow/.style={postaction={decorate},
		decoration={markings,mark=at position .7
			with {\arrow{angle 60}}},double distance=3pt,thick}
}
\newtheorem{proposition}{Proposition}[section]
\newtheorem{theorem}[proposition]{Theorem}
\newtheorem{lemma}[proposition]{Lemma}
\newtheorem{corollary}[proposition]{Corollary}
\numberwithin{equation}{section}
\theoremstyle{definition}
\newtheorem{definition}[proposition]{Definition}
\newtheorem{remark}[proposition]{Remark}
\newtheorem{example}[proposition]{Example}
\newcommand{\G}{\mathbb{G}}
\DeclareMathOperator{\GL}{GL}
\DeclareMathOperator{\PGL}{PGL}
\DeclareMathOperator{\SL}{SL}
\DeclareMathOperator{\Sp}{Sp}
\DeclareMathOperator{\PSp}{PSp}
\DeclareMathOperator{\SO}{SO}
\DeclareMathOperator{\PSO}{PSO}
\DeclareMathOperator{\Spin}{Spin}
\DeclareMathOperator{\liesl}{\mathfrak{sl}}
\DeclareMathOperator{\lieg}{\mathfrak{g}}
\DeclareMathOperator{\lieh}{\mathfrak{h}}
\DeclareMathOperator{\liet}{\mathfrak{t}}
\DeclareMathOperator{\Ad}{Ad}
\DeclareMathOperator{\Lie}{Lie}
\DeclareMathOperator{\Id}{Id}
\DeclareMathOperator{\Hom}{Hom}
\DeclareMathOperator{\Aut}{Aut}
\DeclareMathOperator{\Out}{Out}
\DeclareMathOperator{\Gal}{Gal}
\DeclareMathOperator{\Sym}{Sym}
\DeclareMathOperator{\rank}{rank}
\DeclareMathOperator{\Stab}{Stab}
\DeclareMathOperator{\Spec}{Spec}
\DeclareMathOperator{\Pic}{Pic}
\newcommand{\A}{\mathbb{A}}
\renewcommand{\P}{\mathbb{P}}
\renewcommand{\O}{\mathcal{O}}
\newcommand{\GIT}{\mathbin{/\mkern-6mu/}}
\newcommand{\HH}{\mathrm{H}}
\newcommand{\Q}{\mathbb{Q}}
\newcommand{\Z}{\mathbb{Z}}
\DeclareMathOperator{\Sel}{Sel}
\DeclareSymbolFont{cyrletters}{OT2}{wncyr}{m}{n}
\DeclareMathSymbol{\Sha}{\mathalpha}{cyrletters}{"58}
\newcommand{\extp}{\@ifnextchar^\@extp{\@extp^{\,}}}
\def\@extp^#1{\mathop{\bigwedge\nolimits^{\!#1}}}
\newcommand{\overbar}[1]{\mkern 1.5mu\overline{\mkern-1.5mu#1\mkern-1.5mu}\mkern 1.5mu}
\newcommand{\calO}{\mathcal{O}}
\newcommand{\height}{\mathrm{ht}}
\tikzset{
  symbol/.style={
    draw=none,
    every to/.append style={
      edge node={node [sloped, allow upside down, auto=false]{$#1$}}}
  }
}
\newcommand{\frakg}{\mathfrak g}
\newcommand{\frakh}{\mathfrak h}
\newcommand{\frakz}{\mathfrak z}
\newcommand{\frakt}{\mathfrak t}
\newcommand{\fraksl}{\mathfrak{sl}}
\newcommand{\al}{\alpha}
\newcommand{\be}{\beta}
\newcommand{\bZ}{\mathbb Z}
\newcommand{\bG}{\mathbb G}
\newcommand{\ra}{\rangle}
\DeclareMathOperator{\ad}{ad}
\DeclareMathOperator{\spn}{span}
\newcommand{\rl}{r} 
\definecolor{codegreen}{rgb}{0,0.6,0}
\definecolor{codegray}{rgb}{0.5,0.5,0.5}
\definecolor{codepurple}{rgb}{0.58,0,0.82}
\definecolor{backcolour}{rgb}{0.97,0.97,0.95}
\lstdefinestyle{mystyle}{
    backgroundcolor=\color{backcolour},   
    commentstyle=\color{codegreen},
    keywordstyle=\color{magenta},
    numberstyle=\tiny\color{codegray},
    stringstyle=\color{codepurple},
    basicstyle=\ttfamily\footnotesize,
    breakatwhitespace=false,         
    breaklines=true,                 
    captionpos=b,                    
    keepspaces=true,                 
    numbers=left,                    
    numbersep=5pt,                  
    showspaces=false,                
    showstringspaces=false,
    showtabs=false,                  
    tabsize=2
}
\title{Families of curves in Vinberg representations}
\author{Jef Laga and Beth Romano}
\begin{document}

\maketitle

\begin{abstract}
Inspired by orbit parametrizations in arithmetic statistics, we explain how to construct families of curves associated to certain nilpotent elements in $\Z/m\Z$-graded Lie algebras, generalizing work of Thorne to the $m\geq 3$ case and the non-simply laced case.
We classify such families arising from subregular nilpotents in stable gradings and interpret almost all orbit parametrizations associated with algebraic curves appearing in the literature in this framework.
As an extended example, we give a ``Lie-theoretic'' proof of the integral orbit parametrization of $5$-Selmer elements of elliptic curves over $\Q$, using a $\Z/5\Z$-grading on a Lie algebra of type $E_8$.
\end{abstract}

\tableofcontents

\section{Introduction}

\subsection{Context}

Let $\mathcal{F}$ be a family of (smooth, projective, geometrically connected) curves over $\Q$.
Arithmetic statistics studies the behaviour of arithmetic quantities associated with $C$ as $C$ varies in $\mathcal{F}$, such as: the set of rational points $C(\Q)$ of $C$, the rational points $J(\Q)$ of the Jacobian variety $J$ of $C$, and the $n$-Selmer group $\Sel_n J$ of $J$, a cohomological avatar of the finitely generated abelian group $J(\Q)$.
The last twenty years have witnessed spectacular advances in the arithmetic statistics of curves, see for example \cite{BS-2selmerellcurves,BS-3Selmer, BS-4Selmer, BS-5Selmer, Bhargava-Gross-hyperellcurves, BhargavaGrossWang-positiveproportionnopoints, PoonenStoll-Mosthyperellipticnorational}.
One of the key ideas of Bhargava and his collaborators is that many arithmetic objects admit \emph{orbit parametrizations}.
In this context, this typically means that there exists a representation $V$ of an algebraic group $G/\Q$, an integer $n\geq 2$ and for each $C\in \mathcal{F}$ an injection 
\begin{align}\label{equation intro: embedding n-selmer}
    \Sel_n J \hookrightarrow G(\Q) \backslash V(\Q)
\end{align}
into the rational orbits of $(G,V)$ with specified invariants depending on $C$.
Combining such an orbit parametrization with geometry-of-numbers techniques of counting (integral) orbits often leads to a determination of (or a bound on) the average size of $\#\Sel_nJ$ as $C$ varies in $\mathcal{F}$ and to the results cited above.

It is therefore natural to ask how one finds such parametrizations.
Typically, they arise from classical algebro-geometric constructions.
For example, if $E/\Q$ is an elliptic curve, there exists an injection of $\Sel_2 E$ into the set of $\PGL_2(\Q)$-orbits of binary quartic forms; this is essentially due to Mordell and Birch--Swinnerton-Dyer \cite{BirchSwinnertonDyer-notesonellcurves1}.
Similar constructions exist for $\Sel_n E$ if $n\in \{3,4,5\}$ and for other families of curves \cite{BhargavaHo-coregularspacesgenusone}, but finding such parametrizations is still a relatively ad hoc process.

Graded Lie algebras provide a systematic way to construct orbit parametrizations which have proven useful in arithmetic statistics.
Given a reductive algebraic group $H$ over a field $k$ of characteristic zero with Lie algebra $\frakh$ and a $\mu_m$-action $\theta$ on $H$, the subspaces $\frak{h}_i = \{x\in \frakh \colon \theta(\zeta)(x) = \zeta^i x\text{ for all }\zeta\in \mu_m\}$ define an $m$-grading of $\frakh$, in the sense that
\begin{align*}
    \lieh = \bigoplus_{i \in \Z/m\Z} \lieh_i
\end{align*}
and that $[\lieh_i,\lieh_j] \subset\lieh_{i+j}$ for all $i,j\in \Z/m\Z$.
If $G$ denotes the identity component of the fixed-point subgroup $H^{\theta}$, then $G$ acts on $V = \frakh_1$ via restriction of the adjoint representation, and the pair $(G,V)$ is called a \emph{Vinberg representation} \cite{Vinberg-theweylgroupofgraded}.
Vinberg representations have a well-understood invariant theory and their orbits can be studied using the structure theory of the ambient Lie algebra $\frakh$. 
For example, Vinberg representations are always coregular, meaning that the subring $k[V]^G$ of $G$-invariant polynomials on $V$ is itself a polynomial ring in homogeneous generators $p_{d_1}, \dots, p_{d_{\ell}}$.

Gross \cite{Gross-BhargavasrepresentationsandVinberg} observed that almost all\footnote{The only exception known to us is the action of $\SL_n$ on pairs of symmetric $n\times n$ matrices studied in \cite{BhargavaGrossWang-positiveproportionnopoints}.} representations studied by Bhargava and his collaborators turn out to be Vinberg representations.
This suggests the possibility of taking Vinberg theory as a starting point and constructing the families of curves and orbit parametrizations using Lie theory.
This is exactly the perspective taken in Thorne's PhD thesis \cite{Thorne-thesis}, which analyzed the case where $H$ is simple of type $A,D,E$ and $\theta$ is a so-called stable $\mu_2$-action. 
For example, the $A_2$ case recovers the orbit parametrization of $\Sel_2E$ by binary quartic forms alluded to above.
Thorne's framework has unified and reproved many existing results concerning $2$-Selmer groups in families of curves and has led to new results for families of nonhyperelliptic curves; see \cite{Thorne-E6paper, Romano-Thorne-ArithmeticofsingularitiestypeE, Laga-E6paper, Laga-ADEpaper}.

\subsection{Results}
The purpose of this paper is to extend Thorne's framework to all simple groups $H$ and to all $m \geq 2$.
Since the landscape of stable gradings for $m\geq 3$ is more erratic than the $m=2$ case, our results are less complete and uniform than those of \cite{Thorne-thesis}. 
On the other hand, we interpret most of the known orbit parametrizations in our framework and pave the way to study them using Vinberg theory.
As an example, we analyze a $\mu_5$-grading on $E_8$ to give a ``Lie-theoretic'' proof of the orbit parametrization for the $5$-Selmer group of an elliptic curve used by Bhargava and Shankar \cite{BS-5Selmer} in their determination of the average size of the $5$-Selmer group.
This parametrization has been studied in a series of papers by Fisher \cite{Fisher-invariantsgenusone, Fisher-invarianttheorynormalquinticI, Fisher-invarianttheoryquinticII, Fisher-minimisationreduction5coverings} using completely different methods.

To explain our results in more detail, we need to introduce some terminology.
We say a Vinberg representation $(G,V)$ associated to a pair $(H, \theta)$ is \emph{stable} if $V$ contains stable vectors in the sense of geometric invariant theory.
This is a natural class of Vinberg representations for two reasons.
Firstly, their stable orbits over algebraically closed fields are separated by their invariants, which are well understood.
In addition, if $v\in V(k)$ is stable, then the $G(k)$-orbits in $G(\bar{k})\cdot v \cap V(k)$ (in other words, the passage from geometric to arithmetic orbits) can be understood using the Galois cohomology of the stabilizer of $v$, see \cite[Section 2, Proposition 1]{BhargavaGross-AIT}.
Since this stabilizer is a finite group scheme by assumption, it can often be related to torsion subgroup schemes of abelian varieties and hence the orbits can be related to Selmer groups.

Slodowy \cite{Slodowy-simplesingularitiesalggroups} has given a recipe to construct interesting subvarieties of $\frakh$ using nilpotent elements. 
We adapt this construction to the graded setting in the following way.
Let $e\in \frakh_1$ be a nilpotent element, and let $(e,h,f)$ be an $\fraksl_2$-triple with $h\in \frakh_0$ and $f\in \frakh_{-1}$.
Define the affine linear subspace 
\begin{align}\label{equation: graded slodowy slice intro}
X_e = (e+\frakz_{\frakh}(f)) \cap \frakh_1\subset V.
\end{align}
The restriction of the GIT quotient $\pi\colon V\rightarrow B:= V \GIT G = \Spec(k[V]^G)$ to $X_e$ defines a flat morphism $\varphi\colon X_e\rightarrow B$.
The general idea is now that, for well chosen $e$ and $(G,V)$, $\varphi\colon X_e\rightarrow B$ is a family of curves, smooth above an open subset $B^s\subset B$, and if $b\in B^s(k)$ then the $G(k)$-orbits of $V(k)$ with invariants $b$ should be closely related to the arithmetic of the curve $\varphi^{-1}(b) = X_{e,b}$, in the  
sense that an embedding like \eqref{equation intro: embedding n-selmer} exists, where $C$ is a compactification of $X_{e,b}$.

In this paper, we pursue this general idea for arbitrary $m\geq 2$, simple $H$, and stable $(G,V)$.
This extends Thorne's work \cite{Thorne-thesis}, who additionally assumed $m=2$ and $H$ is simply laced.
We say a stable $\mu_m$-action $(H, \theta)$ is \emph{subregular adapted} if there exists a nonzero subregular nilpotent element $e\in \frakh_1$ such that the morphism $X_e\rightarrow B$ of \eqref{equation: graded slodowy slice intro} has relative dimension $1$.
In that case, we prove that $X_e$ is a closed subscheme of $\A^2_B$ defined by a single polynomial $F \in k[B][x,y]$.
Our first main result is a classification of all subregular-adapted $(H,\theta)$ and an identification of the subregular curves $X_e\rightarrow B$.

\begin{theorem}\label{thm-intro}
    Let $k$ be an algebraically closed field of characteristic zero.
    Let $H/k$ be a simple adjoint algebraic group, $\theta\colon \mu_m\rightarrow \Aut_H$ a subregular-adapted $\mu_m$-action, and $e\in \frakh_1$ a subregular nilpotent such that the morphism $X_e  \rightarrow B$ has relative dimension $1$.
    Then $(m,H,X_e\rightarrow B)$ appears in Table \ref{table:examples intro}.
    Moreover, for each row in this table, there exists a triple $(H,\theta,e)$ over $k$ such that $X_e\rightarrow B$ is isomorphic to the family of curves corresponding to that row.
\end{theorem}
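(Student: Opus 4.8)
The plan is to prove the two assertions in turn: first that every $(m,H,X_e\to B)$ satisfying the hypotheses appears in Table \ref{table:examples intro} (the classification), and then that each row is realized. For the classification, suppose we are given a subregular-adapted pair $(H,\theta)$ and a suitable $e$. Since $(G,V)$ is a \emph{stable} Vinberg representation, $(H,\theta)$ lies in the finite list of stable $\mu_m$-gradings of simple adjoint groups classified by Vinberg and by Reeder, Levy and Yu: stability confines $m$ to a finite set of regular numbers, and for each such $m$ the stable actions form finitely many conjugacy classes, described explicitly by Kac coordinates. The remaining hypotheses then cut this list down sharply. First, $\frakh_1$ must contain a nonzero subregular nilpotent; the nilpotent $G$-orbits on $\frakh_1$ are classified by Vinberg, and from the Kac coordinates one can read off which nilpotent $H$-orbits meet $\frakh_1$, so one discards the gradings in which the subregular orbit of $\frakh$ does not appear. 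Second, $\varphi$ must have relative dimension $1$; since $e\in\frakh_1$ the slice is an affine space, $X_e = e + (\frakz_{\frakh}(f)\cap\frakh_1)$, so this is the numerical condition $\dim(\frakz_{\frakh}(f)\cap\frakh_1) = \dim B + 1$. Here $\dim B = \dim\mathfrak{c}$ is the dimension of a Cartan subspace of $\frakh_1$, available for every stable pair via Springer's theory of regular elements, and $\dim(\frakz_{\frakh}(f)\cap\frakh_1)$ is computed from the $\fraksl_2$-module structure of $\frakh$ under the triple $(e,h,f)$ together with the $\theta$-grading; since $\theta(h)=h$ (as $h\in\frakh_0$) the operators $\theta$ and $\ad h$ commute, giving $\frakh$ a $(\Z/m)\times\Z$-bigrading that makes this computation a matter of bookkeeping. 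For $m=2$ this recovers Thorne's analysis, and in general the gradings surviving both tests, equipped with their subregular nilpotents, turn out to be precisely the rows of the table.

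To identify $\varphi\colon X_e\to B$ with the stated family (and, conversely, to see that each row is realized by the data just found), I would exploit the Slodowy contraction. Since $h\in\frakh_0$, $\ad h$ preserves each $\frakh_i$, so a cocharacter $\lambda\colon\G_m\to H$ with $\dot\lambda=h$ yields an action $t\cdot s := t^2\,\Ad(\lambda(t)^{-1})\,s$ that preserves $\frakh_1$ and descends to $B$, under which $\varphi$ is $\G_m$-equivariant; the weights of this action on the coordinates $x,y$ of $\A^2_B\supset X_e$ and on the homogeneous generators $p_{d_i}$ of $k[B]$ (which get weight $2d_i$) are all determined by the bigrading. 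Consequently the polynomial $F\in k[B][x,y]$ defining $X_e$ is weighted-homogeneous of a fixed weight, and its restriction $F|_0$ at the origin $0\in B$ cuts out the curve $X_e\cap\pi^{-1}(0)$, which is the section by $\frakh_1$ of Slodowy's slice $S_e$ through the subregular orbit; the surface $S_e$ carries the rational double point attached to that orbit by the theorem of Brieskorn and Slodowy --- obtained from the simply-laced cover by folding when $H$ is not simply laced --- and this, together with the weights, pins down $F|_0$ up to a change of variables. Finally, flatness of $\varphi$ forces the top-weight coefficient of $F$ to be a unit; with all weights fixed, this determines $F$ up to the evident coordinate changes, and comparing the resulting normal forms with the weighted-homogeneous families of the table finishes both halves of the theorem.

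The step I expect to be the main obstacle is carrying out the two tests of the classification uniformly over the erratic supply of stable gradings with $m\geq 3$ and over the non-simply-laced groups: deciding exactly when the subregular orbit meets $\frakh_1$, and evaluating $\dim(\frakz_{\frakh}(f)\cap\frakh_1)$, both demand a case analysis organized by the bigraded structure, and the folding needed for non-simply-laced $H$ complicates both the orbit combinatorics and the identification of the central fibre. A secondary difficulty is making the rigidity argument for $F$ airtight --- ensuring that the $\G_m$-weights together with flatness really pin down the whole family rather than just its special fibre --- which in the borderline rows may require computing a few subleading coefficients of $F$ by hand.
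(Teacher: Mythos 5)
Your overall architecture (Slodowy slice, contracting $\G_m$-action, weighted homogeneity, du Val singularity of the central fibre) matches the paper's, but your route to the classification is genuinely different: you propose to start from the Reeder--Levy--Yu list of stable gradings and filter by two numerical tests, whereas the paper never enumerates gradings at the outset. Instead it restricts the $\mu_m$-action $\sigma$ to the three-dimensional ``singular direction'' $U_1$ of the surface slice $S_e\to\frakh\GIT H$ and classifies, case by case on the Dynkin type, the order-$m$ automorphisms of $\A^3$ with derivative eigenvalues $1,1,\zeta_m$ that preserve the normal form $f_{\mathrm{std}}$ of the simple singularity and normalize the folding group $\Gamma$ (Proposition \ref{proposition: normal form Gamma and sigma}); this simultaneously pins down the admissible $(m,H)$ and the shape of the curve. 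Your enumerative route is viable in principle but pushes all the work into Vinberg's orbit combinatorics: for each stable grading you must list every $G$-orbit inside the intersection of the subregular $H$-orbit with $\frakh_1$ and compute the bigraded weights for each representative, since distinct $\theta$-subregular orbits can give distinct families (types $B_\rl$, $C_\rl$, $F_4$, $G_2$); the paper carries out exactly this kind of analysis, but only for the exhibition half, using Ohta's ab-diagrams and computer calculation.

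The genuine gap is in your identification of the full family $X_e\to B$. Knowing the $\G_m$-weights of $x,y$ and of the $p_{d_i}$, the weighted homogeneity of $F$, the central fibre $F|_0$, and flatness does \emph{not} determine $F$ up to coordinate change: these data only constrain which monomials $p_{d_{i_1}}\cdots p_{d_{i_s}}x^ay^b$ may occur, not their coefficients, and a priori some tabulated coefficients could vanish or extraneous mixed terms of the same weight could appear (the $E_7$ and $E_8$ rows have several monomials of equal weight). What closes this is versality: the paper upgrades Slodowy's theorem that $S_e\to\frakh\GIT H$ is a $\G_m\times\Gamma$-semiuniversal deformation of the singular fibre to a $\G_m\times(\mu_m\ltimes\Gamma)$-equivariant statement (Proposition \ref{proposition: isomorphism between semiversal deformations}), concludes via Lemma \ref{lemma: Gm-morphism same positive weights is isomorphism} that the classifying map of the family is an isomorphism onto the standard miniversal family, and only then takes $\sigma$-fixed points to read off $X_e\to B$. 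You flag this as a ``secondary difficulty'' to be patched by computing subleading coefficients by hand, but it is the central technical point, and without the equivariant semiuniversality (or an equivalent rigidity statement) the classification half of the theorem is not proved.
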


\begin{remark}[Notation of Table \ref{table:examples intro}]    
    The second column denotes the Dynkin type of $H$.
    When a superscript is present, this indicates that $\theta$ is not inner; the superscript gives the order of the outer type of $\theta$ (see \S\ref{subsec: stable gradings over algebraically closed field}). For example $^2E_6$ indicates that the image of $\theta\colon \mu_m\rightarrow \Aut_H \rightarrow \Out_H$ has order $2$.
    The third column denotes an algebraic group $\tilde{G}$ such that there exists a central isogeny $\tilde{G}\rightarrow G$ with finite kernel. 
    The fourth column describes the $\tilde{G}$-representation $V$.
    If $\tilde{G}$ is naturally defined as a subgroup of $\GL_n$, we write the defining representation of $\tilde{G}$ as $(n)$. 
    The notations $\Sym^2_0(n+1)\subset \Sym^2(n+1)$, $\wedge^4_0(8)\subset \wedge^4(8)$ and $\wedge^3_0(6)\subset \wedge^3(6)$ indicate subrepresentations of maximal dimension.
    Since $G$ acts faithfully on $V$ (Corollary \ref{corollary:Gaction_faithful}), $G$ is isomorphic to the image of $\tilde{G}\rightarrow \GL(V)$ and thus can be recovered from the table. The fifth column gives the centralizer in $G$ of a stable vector $v \in V$.
    The sixth columm displays an equation that defines $X$ as a closed subscheme of $\A^2_B$, for some choice of coordinates $p_{d_1}, \dots, p_{d_r}$ of the affine space $B$. 
    When multiple equations are given, there are different $e$ that give rise to different families of curves. 
    In the case when $m = 2$, $H$ is type $D_n$, and $n$ is even, there are two invariants of degree $n$, denoted by $p_n$ and $p_{n}'$.
    The final column indicates a reference that discusses this orbit parametrization (not necessarily from the perspective of Vinberg theory).
\end{remark}


\newgeometry{margin=.5cm}
\afterpage{%
\begin{landscape}

\thispagestyle{empty}

\renewcommand{\arraystretch}{1.3} 

\begin{table} 

\begin{center}
	\begin{tabular}{|c|c|c|c|c|l|c|c|}
	\hline
	Order $m$ & $H$ & $\tilde{G}$   & $V$ & $Z_G(v)$ & Curves $X_e\rightarrow B$  & References \\
	\hline
	$2$ & $^2A_\rl\: (\rl\geq 2)$ & $\PSO_{\rl+1}$ & $\Sym^2_0(\rl+1)$ & $\mu_2^{2\lfloor\rl/2\rfloor}$ & $y^2 = x^{\rl+1} + p_2x^{\rl-1} + \cdots + p_{\rl+1}$ & \cite{Bhargava-Gross-hyperellcurves, ShankarWang-hypermarkednonweierstrass} \\	
	$2$ & $B_\rl\: (\rl\geq 2)$ & $\SO_{\rl + 1} \times \SO_\rl$ & $\Hom((\rl + 1), (\rl))$ & $\mu_2^{r-1}$ & $y^2 = x^{2\rl} + p_2x^{2\rl-2} + \dots +p_{2\rl-2}x^2 + p_{2\rl}$  &  \\	
    $2$ & $C_\rl\: (\rl\geq 3)$ & $\GL_\rl$ & $\Sym^2(\rl) \oplus \Sym^2((\rl)^{\vee})$ & $\mu_2^{r-1}$ & $xy^2 =  x^{\rl}+p_2x^{\rl-1}+p_4x^{\rl-2} + \dots + p_{2\rl}$  &  \\
     &  &  &  & & $y^2=  x^{\rl}+p_2x^{\rl-1}+p_4x^{\rl-2} + \dots + p_{2\rl}$  &  \\
    $2$ & $D_\rl$, $\rl \geq 4$ even & $\SO_\rl\times \SO_\rl$ & $(\rl)\boxtimes (\rl)$ & $\mu_2^{r-2}$ & $y(xy+p_{\rl}') = x^{\rl-1}+p_2x^{\rl-2}+p_4x^{\rl-3} + \dots + p_{2\rl-2}$  & \cite{Laga-ADEpaper, Thorne-averagesizeelliptictwomarkedfunctionfields}\\
     $2$ & $^{2}D_\rl$, $\rl \geq 5$ odd & $\SO_\rl\times \SO_\rl$ & $(\rl)\boxtimes (\rl)$ & $\mu_2^{r-1}$ & $y(xy+p_{\rl}) = x^{\rl-1}+p_2x^{\rl-2}+p_4x^{\rl-3} + \dots + p_{2\rl-2}$  & \cite{Shankar-2selmerhypermarkedpoints}\\
    $2$ & $^2E_6$ & $\mathrm{PSp}_8$ & $\wedge^4_0(8)$ & $\mu_2^6$ & $y^3 = x^4+(p_2x^2+p_5x+p_8)y + (p_6x^2+p_9x+p_{12})$  & \cite{Thorne-E6paper, Laga-E6paper} \\
    $2$ & $E_7$ & $\SL_8$ & $\wedge^4(8)$ & $\mu_2^6$ & $y^3 = x^3y +p_{10}x^2 +x(p_2y^2 + p_8y + p_{14}) + p_6y^2 + p_{12}y + p_{18}$  &\cite{Romano-Thorne-ArithmeticofsingularitiestypeE} \\
    $2$ & $E_8$ & $\Spin_{16}$ & half spin & $\mu_2^8$ & $y^3 = x^5+ (p_2x^3+p_8x^2+p_{14}x+p_{20})y  +(p_{12}x^3+p_{18}x^2+p_{24}x+p_{30}) $   &\cite{Romano-Thorne-ArithmeticofsingularitiestypeE} \\
    $2$ & $F_4$ & $\Sp_6\times \SL_2$ & $\wedge^3_0(6)\boxtimes (2)$ & $\mu_2^4$ & $y^3 = x^4+(p_2x^2+p_8)y + (p_6x^2+p_{12})$  & \cite{Laga-F4paper} \\ 
    &  & &  & & $y^2 = x^3 + p_8x + p_{12}$  & \cite{Laga-F4paper} \\
    $2$ & $G_2$ & $\SL_2\times \SL_2$ & $(2)\boxtimes \Sym^3(2)$ &  $\mu_2^2$ & $y^2x = x^3 + p_2x^2+p_6$ & \cite{BhargavaHo-2Selmergroupsofsomefamilies} \\ 
     &  &  &  & &  $y^2 = x^3 + p_2x^2+p_6$  &  \\
    \hline
	$3$ &  $^3D_4$ & $\PGL_3$ & $\Sym^3(3)$ &  $\mu_3^2$ & $y^2 = x^3 + p_4 x + p_6$  & \cite{BS-3Selmer}\\
    $3$ & $E_6$ & $\SL_3^3 $ & $(3)\boxtimes(3)\boxtimes (3)$  & $\mu_3^2$ & $y^2 = x^4 + p_{6} x^2 +p_9 x +p_{12}$   & \cite{BhargavaHo-coregularspacesgenusone}\\
    $3$ & $E_8$ & $\SL_9/\mu_3 $ & $\bigwedge^3(9)$  & $\mu_3^4$ & $y^2 =x^5+p_{12}x^3+p_{18}x^2+p_{24}x + p_{30}$  & \cite{Thorne-Romano-E8} \\
    $3$ & $F_4$ & $\SL_3 \times \SL_3$ & $(3)\boxtimes \Sym^2(3)$ & $\mu_3^2$ & $y^2 = x^4 + p_{6} x^2 + p_{12}$   & \cite{BhargavaHo-coregularspacesgenusone}\\
    $3$ & $G_2$ & $\GL_2 $ & $(\Sym^3(2)\otimes \text{det}^{-2}) \oplus \text{det}$ & $\mu_3$ & $y^2 = x^3 + p_{6}$  & \cite{BhargavaElkiesShnidman}\\
	\hline
    $4$ & $^2E_6$ & $\SL_2\times \SL_4$ & $(2)\boxtimes \Sym^2(4)$ & $\mu_4^2$ & $y^2 = x^3 + p_{8}x+p_{12}$ &  \cite{BS-4Selmer} \\
    $4$ & $F_4$ & $\SL_2\times \SL_3$ & $((2) \boxtimes \Sym^2(3)) \oplus ((2) \boxtimes 1)$ & $\mu_2^2$ & $y^2 = x^3+p_8x+p_{12}$ &  \\ 
    \hline
    $5$ & $E_8$ & $\SL_5\times \SL_5$ & $(5)\boxtimes \wedge^2(5)$ & $\mu_5^2$ & $y^2 = x^3 + p_{20}x + p_{30}$  & \cite{BS-5Selmer, Fisher-invarianttheorynormalquinticI} \\
    \hline
    $\rl +1$ & $A_\rl\: (\rl\geq 2)$ & $\G_m^\rl$ & sum of characters & $1$ & $xy = p_{\rl+1}$  &   \\
    $\{\frac{2\rl}{n}\colon n\mid \rl\}$ & $B_\rl\: (\rl\geq 2)$ & various & various & $\mu_2^{n-1}$ & $xy = p_{2\rl}$  &   \\
    \hline
	\end{tabular}
\end{center}
\caption{Classification of subregular-adapted gradings and their subregular curves}\label{table:examples intro}
\end{table}

\end{landscape}
}
\restoregeometry
\vspace{.1in}

\begin{remark}
    For many entries in Table \ref{table:examples intro}, there even exists a triple $(H, \theta, e)$ defined over $\Q$ such that $X_e\rightarrow B$ is isomorphic to the family of curves of that entry.
    When $m=2$ and $H$ is simply laced, this follows from \cite[Theorem 3.8]{Thorne-thesis}.
    When $H$ is of type $E_8$ and $m=3,5$, this follows from \cite[Proposition 4.12]{Thorne-Romano-E8} and Proposition \ref{proposition: determination surface slice 5-grading} respectively.
    In those examples, $H$ can be taken to be split.
    In the case $(m,H) = (3,{}^3D_4)$, no such split $H$ exists, see Section \ref{subsec: stable gradings over general fields}.
    However, there exists a triple $(H, \theta, e)$ over $\Q$ with $H$ quasi-split of type $D_4$ such that $X_e\rightarrow B$ is the family of elliptic curves and $(G,V)$ is the representation of $\PGL_3$ on ternary cubic forms studied by Bhargava--Shankar \cite{BS-3Selmer}.
\end{remark}

\begin{example}[Subregular-adapted gradings on $E_8$]
    We illustrate Theorem \ref{thm-intro} by exhibiting the families of curves $X_e\rightarrow B$ when $H$ is split and of type $E_8$; see also \cite[Section 6]{Romano-centralextensions}.
    Let $e\in \frakh = \Lie(H)$ be a subregular nilpotent element, and let $(e,h,f)$ be an $\fraksl_2$-triple containing $e$.
    The space of $H$-invariant polynomials $\frakh \GIT H = \Spec( k[\frakh]^H)$ is isomorphic to $\A^8$ with coordinates $(p_2, p_8, p_{12}, p_{14}, p_{18}, p_{20}, p_{24}, p_{30})$.
    The restriction of the invariant map $\frakh \rightarrow \frakh \GIT H$ to the slice $S = e + \frakz_{\frakh}(f)$ is a family of affine surfaces $S\rightarrow \A^8$ whose fiber over a point $b = (p_2, \dots,p_{30})$ is isomorphic to 
    \[
    S_b\colon z^2 + y^3 = x^5+ (p_2x^3+p_8x^2+p_{14}x+p_{20})y+(p_{12}x^3+p_{18}x^2+p_{24}x+p_{30}). 
    \]
    For each $m\in \{2,3,5\}$, there exists a stable subregular-adapted $\mu_m$-action $\theta_m$.
    The space of invariant polynomials $B = V\GIT G$ is the closed subscheme of $\frakh \GIT H$ obtained by setting those $p_i$ equal to zero unless $i$ is divisible by $m$.
    For each $m=2,3,5$, the triple $(e,h,f)$ can be chosen so that $X = S\cap V \rightarrow B$ is a family of curves, corresponding to setting $z,y$ or $x$ equal to zero, respectively. 
    When $m=2$, each smooth member $X_b$ is an open subset of a genus $4$ curve $C_b$, and the representation $(G,V)$ has been used to study the $2$-Selmer group of $\mathrm{Jac}(C_b)$ when $b$ varies in $V \GIT G$; see \cite{Thorne-thesis, Romano-Thorne-ArithmeticofsingularitiestypeE, Laga-ADEpaper}.
    When $m=3$, each smooth member $X_b$ is an open subset of a genus $2$ curve $C_b$ with a marked Weierstrass point, and the representation $(G,V) = (\SL_9/\mu_3, \wedge^3(9))$ has been used to determine the average size of the $3$-Selmer group of $\mathrm{Jac}(C_b)$ when $b$ varies over integral $b = (p_{12},p_{18},p_{24},p_{30})$ ordered by height; see \cite{Thorne-Romano-E8}.
    When $m = 5$, each smooth member $X_b$ is an open subset of an elliptic curve $E_b$ with a marked Weierstrass point, and the representation $(G,V)$ has been used to determine the average size of the $5$-Selmer group of elliptic curves when ordered by naive height \cite{BS-5Selmer}.
\end{example}

Note that there are entries on the table that have not yet been studied (to the best of our knowledge), particularly the stable $\mu_2$-gradings in types $B_\rl$ and $C_\rl$.  It would be interesting to investigate these entries 
and to see whether one can obtain new results in arithmetic statistics for the corresponding families of curves. In this direction, Martí Oller analyzes the $(m,H) = (2,B_{2n})$ case in upcoming work.

Beyond the $m=2$ case and the final two rows of Table \ref{table:examples intro} (which give rise to genus-zero curves), all remaining subregular-adapted $(G,V)$ arise from gradings on exceptional groups $H$.
These exceptional cases have all been studied from a different viewpoint in \cite{BhargavaHo-coregularspacesgenusone} and \cite{BhargavaHo-2Selmergroupsofsomefamilies}.
Interpreting them in the framework of Vinberg theory can be used to reprove these orbit parametrizations.
As an extended example, we treat a stable $\mu_5$-action on $E_8$, and use it to reprove the integral orbit parametrization of $5$-Selmer groups of elliptic curves.
To state this parametrization, let $G = (\SL_5\times \SL_5)/\mu$, where $\mu=\{ (\zeta, \zeta^2)\colon \zeta\in \mu_5\}$ and let $V = (5)\boxtimes \wedge^2(5)$, where $(5)$ denotes the standard representation of $\SL_5$.

\begin{theorem}\label{theorem:5Selmerintro}
There exists invariant polynomials $I,J \in \Q[V]^G$ (unique up to $\Q^{\times}$-multiples) and an integer $N\geq 1$ with the following properties:
\begin{enumerate}
    \item $I,J$ are algebraically independent, have degrees $20$ and $30$ respectively, and satisfy $\Q[V]^G = \Q[I,J]$.
    \item If $k/\Q$ is a field and $A,B\in k$ satisfy $4A^3 +27B^2\neq0$, consider the elliptic curve $E_{A,B}\colon y^2=  x^3+ A x + B$.
    Then there exists an embedding \[\eta_{A,B}\colon E_{A,B}(k)/5E_{A,B}(k) \hookrightarrow G(k)\backslash V_{A,B}(k)\] compatible with base change along field extensions, where $V_{A,B}(k)$ denotes the subset of $v \in V(k)$ with $I(v) = A$ and $J(v) = B$.
    \item If $k=\Q$, $\eta_{A,B}$ extends to an embedding $\tilde{\eta}_{A,B}\colon \Sel_5 E_b\hookrightarrow G(\Q)\backslash V_b(\Q)$.
    If $A,B \in \Z$, then the image of $\tilde{\eta}_{A,B}$ lies in $\frac{1}{N} V(\Z)$, i.e., has integral representatives up to bounded denominators.
\end{enumerate}
\end{theorem}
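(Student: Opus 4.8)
The plan is to deduce all three parts from the general Vinberg-theoretic machinery developed earlier together with classical $5$-descent. For part (1), I would first fix a stable subregular-adapted $\mu_5$-action $\theta$ on a split adjoint group $H$ of type $E_8$ over $\Q$ — such a $\theta$ together with a $\Q$-structure is produced by Proposition~\ref{proposition: determination surface slice 5-grading} — and invoke Theorem~\ref{thm-intro} to identify $(G,V)$ with $((\SL_5\times\SL_5)/\mu, (5)\boxtimes\wedge^2(5))$ and the subregular curve $X_e\to B$ with the family $y^2=x^3+p_{20}x+p_{30}$ over $B=\Spec\Q[p_{20},p_{30}]$. Since Vinberg representations are coregular, $\Q[V]^G$ is a polynomial ring on homogeneous generators whose degrees are the degrees of the fundamental invariants of the Weyl group of $E_8$ divisible by $m=5$; among $2,8,12,14,18,20,24,30$ these are $20$ and $30$. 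Taking $I$ and $J$ to be generators of these two degrees gives algebraic independence and $\Q[V]^G=\Q[I,J]$, and uniqueness up to $\Q^\times$ is forced by degree, there being no invariant of degree $10$. Finally I would rescale $I,J$ so that the fibre of $X_e\to B$ over $(A,B)$ becomes $y^2=x^3+Ax+B$, that is, $E_{A,B}$ minus its marked point at infinity.

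For part (2), let $k/\Q$ be a field and $A,B\in k$ with $4A^3+27B^2\neq 0$; then $b=(A,B)$ lies in the stable locus, and every $v\in V_{A,B}(k)$ of nonzero discriminant is a stable vector whose completed fibre is the elliptic curve $E_{A,B}$. I would use two structural inputs. First, $Z_G(v)$ is canonically isomorphic, up to the Weil-pairing self-duality, to $E_{A,B}[5]$ as a group scheme over $k$ — the graded-subregular case of the general identification of stabilizers of stable vectors with Jacobian torsion, matching the classical fact that automorphisms of a genus-one model of degree $5$ are translations by $5$-torsion. Second, by \cite[Section~2, Proposition~1]{BhargavaGross-AIT} the orbit map injects the $G(k)$-orbits of discriminant-nonzero vectors with invariants $(A,B)$ into $H^1(k,E_{A,B}[5])$, with image $\ker\bigl(H^1(k,E_{A,B}[5])\to H^1(k,G)\bigr)$ and with the distinguished orbit — the genus-one normal quintic model of $(E_{A,B},\mathcal{O}(5O))$ — mapping to $0$. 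I would then define $\eta_{A,B}(P)$, for $P\in E_{A,B}(k)$, as the orbit of the genus-one normal quintic model of the $5$-covering of $E_{A,B}$ associated to $P$; since $P$ is $k$-rational this covering has trivial underlying torsor, so the model is $k$-rational and is the twist of the distinguished vector by the Kummer cocycle $\delta(P)$, whence it lies in $V_{A,B}(k)$ and its orbit maps to $\delta(P)$. Injectivity of $\eta_{A,B}$ and compatibility with field extensions then follow from injectivity and functoriality of $\delta\colon E_{A,B}(k)/5E_{A,B}(k)\hookrightarrow H^1(k,E_{A,B}[5])$. To keep the argument Lie-theoretic, one can instead establish directly that $\delta$ has image inside that of the orbit map by twisting the Slodowy slice inside $\frakh$ and using the $k$-rational marked point of $X_{e,b}$, in the style of \cite{Thorne-thesis,Laga-ADEpaper}.

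For part (3), take $k=\Q$. A class $c\in\Sel_5 E_b\subseteq H^1(\Q,E_b[5])$ maps to a torsor $C_c$ with class in $\Sha(E_b)[5]$, hence of period dividing $5$, so $C_c$ carries a $\Q$-rational divisor class of degree $5$ whose linear system embeds it in $\P^4$; the Pfaffian of this genus-one normal quintic is a vector $v_c\in V_b(\Q)$, and I would set $\tilde\eta_{A,B}(c):=[v_c]$ after checking that the orbit map of part (2) sends $[v_c]$ back to $c$ — this recovers $\eta_{A,B}$ on $E_b(\Q)/5E_b(\Q)$ and gives injectivity on all of $\Sel_5 E_b$. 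For the integrality claim with $A,B\in\Z$: away from the primes dividing $5(4A^3+27B^2)$ the class $c$ is unramified, and spreading out the Slodowy slice over the corresponding ring $\Z[1/5\Delta]$ — using the Chevalley $\Z$-form of $\frakh$, under which $V(\Z)$ is the integral weight-one subspace — produces a representative of $\tilde\eta_{A,B}(c)$ over $\Z[1/5\Delta]$ up to $G(\Z[1/5\Delta])$; at the finitely many primes $p\mid 5(4A^3+27B^2)$ I would invoke the minimization theory of degree-$5$ genus-one models (Fisher, \cite{Fisher-invarianttheorynormalquinticI,Fisher-invarianttheoryquinticII}) to bound the $p$-adic denominators of a minimal model uniformly in $b$, and combine these local bounds into a single integer $N\geq 1$ with $N\cdot\tilde\eta_{A,B}(c)\subseteq V(\Z)$ for all $A,B\in\Z$.

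The steps in (1) and the formal skeleton of (2) are routine once Theorem~\ref{thm-intro} and the general stabilizer computation are in hand; the hard part will be the two verifications hidden inside (3). The first is the comparison showing that the Galois cohomology class attached to the $G$-orbit of a genus-one quintic model of a torsor equals the Selmer class one began with — the ``$\Theta$-group''/Heisenberg computation underlying all $5$-descent, which must be carried out compatibly with the Lie-theoretic normalization of the distinguished vector. The second is the uniform integral bound $N$: controlling the $p$-adic denominators of orbit representatives at primes of bad reduction of $E_b$ independently of $(A,B)$, where genuine input from minimization theory (or an equally careful direct argument with the $\Z$-form of $\frakh$) is required.
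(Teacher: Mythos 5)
The main gap is in your part (2): the isomorphism $Z_G(v)\simeq E_{A,B}[5]$ of Galois modules is not an instance of any ``general identification of stabilizers of stable vectors with Jacobian torsion''---no such general theorem exists, and establishing this isomorphism (compatibly over the whole base $B^s$, which you need for functoriality in $k$) is the central technical content of the paper's Section~\ref{section: 5 grading E8}. The paper proves it (Theorems~\ref{theorem: centralizer isomorphic to 5-torsion} and \ref{theorem: global iso centralizer 5-torsion}) by embedding the curve $C_b$ into the ambient Slodowy surface $S_{0,b}$, identifying $\Pic(S_{0,b,\bar k})$ with the $E_8$ root lattice $\Lambda_b=X^*(Z_H(\kappa_{0,b})_{\bar k})$ via pushout of the torsor $H\to\frakh_b$ (Proposition~\ref{proposition:isorootlatticepicardgroup}), using that the induced order-$5$ Weyl element $w$ is elliptic with $\Lambda_b/(1-w)\Lambda_b\simeq\FF_5^2$ generated by images of roots (Lemma~\ref{lemma:propertiesellipticelementorder5E8}), and checking that restriction to $C_b=S_b^{\mu_5}$ sends each root class to a nonzero $5$-torsion point because roots correspond to sections of the rational elliptic fibration $\bar S_{0,b}\to\P^1$ disjoint from the zero section (Proposition~\ref{proposition: coinvariants isomorphic to 5-torsion}). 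Your fallback justification---that automorphisms of a degree-$5$ genus-one model are translations by $5$-torsion---only identifies the stabilizer with $E[5]$ after one has matched stable vectors with genus-one quintic models, which is exactly the Fisher comparison you defer to part (3); as written the argument is circular at this point.

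Where the plan is complete it also diverges substantially from the paper, generally by re-importing the Fisher machinery the paper is designed to avoid. The paper never constructs genus-one normal quintic models: on nonzero classes $\eta_b$ is induced by the \emph{tautological} inclusion $C_b(k)\subset V_b(k)$ (the affine Weierstrass curve is literally a closed subscheme of the representation), so triviality of the obstruction in $\HH^1(k,G)$ is automatic (Corollary~\ref{corollary: 5selmer first orbit parametrization}, resting on Theorem~\ref{theorem: compatibility descent map orbit construction}); the extension to $\Sel_5 E_b$ uses not period--index theory but the Hasse principle for $\HH^1(-,G)$ for the split semisimple $G$ of Lemma~\ref{lemma: explicitGV5grading}; and integrality is obtained without minimization---formal-group considerations give a representing point with $5^2x,5^3y\in\Z_p$ for every $p$, the tautological inclusion and the Kostant section have uniformly bounded denominators by spreading out, and class number one for the Chevalley group $G/\Z$ glues the local representatives (Proposition~\ref{proposition: local integral representatives}). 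Your route through minimization of quintic models would presumably close part (3), but it requires precisely the input the theorem is meant to recover, and the bound at bad primes uniform in $(A,B)$ is not something minimization theory hands you without further argument. Part (1) of your proposal is fine and matches the paper.
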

As mentioned before, this (partially) recovers results of Fisher \cite{Fisher-invariantsgenusone, Fisher-invarianttheorynormalquinticI, Fisher-invarianttheoryquinticII, Fisher-minimisationreduction5coverings}, whose work forms the basis of the determination of the average size of $5$-Selmer groups of elliptic curves in \cite{BS-5Selmer}.

\begin{remark}
   Although we focus on subregular-adapted $\mu_m$-actions,
    much of our framework applies to more general stable gradings. There are examples of non-subregular-adapted gradings for which we can construct families of curves $X_e\rightarrow B$ (we construct some examples in the appendix). In fact, computations suggest that for every stable $\mu_m$-action there exists a nilpotent $e \in \frakh_1$ such that the fibers of $X_e \rightarrow B$ form a family of curves, but we have not found a general proof of this.
    However, the same computations suggest that non-subregular-adapted gradings do not give rise to interesting new families of curves, and that they are often of genus zero or already arise as transverse slices in subregular-adapted Vinberg representations. 
\end{remark}

\begin{remark}
Finally, we comment on which stable gradings are subregular adapted, assuming the base field $k$ to be algebraically closed of characteristic zero. Every simple adjoint group $H$ has a unique stable $\mu_2$-grading, and this grading is always subregular adapted, using the results of \cite{Thorne-thesis} and \cite{KostantRallis-Orbitsrepresentationssymmetrisspaces}. 
By Theorem \ref{thm-intro}, every stable $\mu_3$-grading is also subregular adapted, but there are stable $\mu_4$-gradings that are not subregular adapted. As mentioned already, subregular-adapated gradings of classical groups are rare. For exceptional groups $H$, every stable grading with nontrivial generic stabilizer (i.e. such that the centralizer in $G$ of a stable vector is nontrivial) is subregular adapted with only three exceptions: the stable $\mu_4$- and $\mu_8$-gradings on $E_8$ and the stable $\mu_8$-grading on $F_4$. We give examples of curves coming from the $\mu_8$-grading of $F_4$ in the appendix. 
An interesting geometric construction related to the $\mu_4$-grading on $E_8$ appears in \cite[Section 9]{Gruson-Sam-Weyman}, but it is not yet clear to what extent these three gradings might be useful in orbit-parametrization problems. 
\end{remark}

\subsection{Organization}
We start in Section \ref{section: background} by reviewing some background on graded Lie algebras, the classification of stable gradings, and nilpotent elements. 
We complement the literature by discussing stable gradings for general reductive groups and in the case when the field of definition is not algebraically closed. 
In Section \ref{section: constructing families of curves}, we adapt Slodowy's recipe for constructing transverse slices to the setting of Vinberg representations, and give some first examples.
In Section \ref{section:subregular curves}, we define and classify all subregular-adapted gradings and subregular curves and prove Theorem \ref{thm-intro}.
In Section \ref{section: 5 grading E8}, we treat the stable $5$-grading on $E_8$ in detail, and reprove the integral orbit parametrization for $5$-Selmer groups of elliptic curves, hence proving Theorem \ref{theorem:5Selmerintro}.

At various points in the paper, we need to perform calculations with nilpotent elements in exceptional Lie algebras. 
We explain in the appendix how we use de Graaf's \texttt{SLA} package \cite{deGraaf-nilporbitsthetagroups} (integrated in \texttt{GAP} \cite{GAP4}) for this purpose. In the appendix we also give examples of families of curves that come from non-subregular-adapted gradings.

\subsection{Notation}\label{subsec: notation}

\begin{itemize}
    \item Throughout this paper, $k$ will denote a field of characteristic zero with algebraic closure $\bar{k}$ and Galois group $\Gamma_k = \Gal(\bar{k}|k)$.
    \item A variety over $k$ is by definition a separated scheme of finite type over $k$.
    If $K/k$ is a field extension, we denote the $K$-points of $X$ by $X(K) = \Hom(\Spec(K), X\times_k K)$.
    \item If $V$ is a $k$-vector space, let $k[V] = \Sym^*(V^{\vee})$ be the $k$-algebra of polynomials in $V$.
    There is a natural bijection between $V$ and the $k$-points of $\Spec(k[V])$, and we use the latter to view the former as an affine space over $k$. 
    \item If an algebraic group $G$ acts on a variety $Y$ and $y\in Y(k)$, write $Z_G(y) = \{g\in G\colon g\cdot y = y\}$ for the stabilizer of $y$, an algebraic subgroup of $G$.
    \item If $V$ is a representation of a Lie algebra $\lieg$ and $v\in V$, write $\frakz_{\lieg}(v) = \{x\in \lieg\colon x\cdot v= 0\}$ for the stabilizer of $v$, a subalgebra of $\lieg$.
    \item If $G$ is a reductive group over $k$, we denote by $\Aut_G$ automorphism group scheme of $G$, which has the property that $\Aut_G(K) = \Aut_K(G_K)$ for every field extension $K/k$.
    We denote the center of $G$ by $Z_G$ and let $G^{\mathrm{ad}} = G / Z_G$ be the adjoint group of $G$.
    There is an exact sequence
    \begin{align}
        1\rightarrow G^{\mathrm{ad}} \rightarrow \Aut_G \rightarrow \Out_G \rightarrow 1,
    \end{align}
    where $\Out_G$ is the outer automorphism group scheme; see \cite[Theorem 7.1.9]{Conrad-SGA} for these results.
    \item If $G$ is an algebraic group over $k$ with Lie algebra $\frakg$ and $x\in \frakg$, then the inclusion $\Lie(Z_G(x))  \subset \frakz_{\frakg}(x)$ is an equality since we assume $k$ has characteristic zero; see \cite[Proposition 1.10]{Humphreys-conjugacyclassesalgebraic}.
    We use this fact without further mention.
\end{itemize}

\section{Graded Lie algebras}\label{section: background}

\subsection{Basic properties}

Let $m\geq 1$ be an integer and let $H/k$ be an algebraic group.
A $\mu_m$-action on $H$ is, by definition, a morphism of schemes $\alpha\colon \mu_m \times H\rightarrow H$ satisfying the axioms of a group action and such that $\alpha(\zeta,-)\colon H_{\bar{k}}\rightarrow H_{\bar{k}}$ is a homomorphism of algebraic groups for every $\zeta \in \mu_m(\bar{k})$.
Giving a $\mu_m$-action on $H$ is the same as giving a $\Gamma_k$-equivariant homomorphism $\theta\colon \mu_m(\bar{k})\rightarrow \Aut(H_{\bar{k}})$, which is also the same as giving a morphism of group schemes $\mu_m\rightarrow \Aut_H$.
We will use these three points of view interchangeably.
If we fix a root of unity $\zeta$ of order $m$ in $k$, giving a $\mu_m$-action $\theta$ on $H$ is the same as giving an order-$m$ automorphism $\theta(\zeta)$ of $H$.
If no such $\zeta$ exists or is given, we prefer to use the concept of a $\mu_m$-action.

If $h\in H(k)$, let $\Ad(h)\in \Aut(H)$ be the homomorphism $x\mapsto hxh^{-1}$.
Two $\mu_m$-actions on $H$, seen as homomorphisms $\theta, \theta'\colon \mu_m(\bar{k})\rightarrow \Aut(H_{\bar{k}})$, are said to be $H$-conjugate if $\theta' = \Ad(h) \circ \theta\circ \Ad(h)^{-1}$ for some $h\in H(k)$.
Two such homomorphisms are said to be $H^{\mathrm{ad}}$-conjugate if we only require $h$ to define a $k$-rational element of $H^{\mathrm{ad}}(k)$, in other words if there exists an element $\varphi\in H^{\mathrm{ad}}(k)$ such that $\theta' = \varphi \circ \theta\circ \varphi^{-1}$.
Note that if $\theta, \theta'$ are $H$-conjugate, then they are also $H^{\mathrm{ad}}$-conjugate.
The converse holds if $k$ is algebraically closed.

Let $\theta \colon \mu_m(\bar{k})\rightarrow \Aut(H_{\bar{k}})$ be a $\mu_m$-action on $H$.
Let $\frak{h}$ be the Lie algebra of $H$.
Taking derivatives, we obtain a $\mu_m$-action on $\frakh$, still denoted by $\theta\colon \mu_m(\bar{k})\rightarrow \Aut(\frakh_{\bar{k}})$.
For $i\in \Z/m\Z$, let 
\begin{equation*}
\frakh_i = \{ x\in \frakh \colon \theta(\zeta)(v) = \zeta^i v\text{ for all }\zeta\in \mu_m(\bar{k})\}.
\end{equation*}
These subspaces define a $\Z/m\Z$-grading on $\frakh$, in the sense that we have a direct-sum decomposition
\begin{align*}
\lieh = \bigoplus_{i \in \Z/m\Z} \lieh_i
\end{align*}
into linear subspaces satisfying $[\lieh_i,\lieh_j ] \subset \lieh_{i+j}$ for all $i,j\in \Z/m\Z$.
Let $G = (H^{\theta})^{\circ}$ be the identity component of the fixed-point subgroup under the action. 
Then $G$ has Lie algebra $\frakg:= \frakh_0$.
The adjoint representation of $H$ on $\frakh$ restricts to a representation of $G$ on $V:= \frakh_1$.

We have just associated, to any $\mu_m$-action on $H$, a representation $V$ of an algebraic group $G$.
We refer to the pair $(G,V)$ as a \emph{Vinberg representation}, and the study of the invariant theory of this pair is known as \emph{Vinberg theory} \cite{Vinberg-theweylgroupofgraded}.

\begin{remark}
    Suppose that $H$ is semisimple.
    If $H$ is also adjoint (meaning it has trivial center) or simply connected, then the natural map $\Aut(H) \rightarrow \Aut(\frak{h})$ is an isomorphism (cf. \cite[Section 1.5]{Conrad-SGA}, \cite[Section 3.1]{Reeder-torsion}).
    So giving a $\mu_m$-action on $H$ is equivalent to giving one on $\frakh$, which is equivalent to giving a $\Z/m\Z$-grading on $\frak{h}$.
    In general, we use the setting of $\mu_m$-actions on $H$ rather than $\Z/m\Z$-gradings on $\frakh$, since the definition of $G$ involves $H$, not just $\lieh$, and sometimes there are multiple natural choices for $H$. 
\end{remark}

We now summarize some of the highlights of Vinberg theory, referring to \cite{Vinberg-theweylgroupofgraded} (or the more modern \cite{Panyushev-Invarianttheorythetagroups,  Levy-Vinbergtheoryposchar}) for proofs.
We assume for these results and the remainder of the paper that $H$ is reductive.
Note that $H$ and $\theta$ can be defined over a subfield of $k$ that can be embedded in $\mathbb{C}$. 
So for any property of $(H, \theta)$ whose validity is invariant under base extension, we may assume $k=\mathbb{C}$ and freely cite references that only treat this case.

We say an element $x\in V$ is semisimple, nilpotent or regular if it is so when considered as an element of $\lieh$. 
A subspace $\mathfrak{c} \subset V$ is called a \emph{Cartan subspace} if it consists of semisimple elements, satisfies $[\mathfrak{c},\mathfrak{c}]=0$, and is maximal with these properties (among subspaces of $V$).

\begin{proposition}
    \begin{enumerate}
        \item $G = (H^{\theta})^{\circ}$ is reductive.
        \item If $x\in V$ has Jordan decomposition $x = x_s + x_n$ where $x_s, x_n$ are commuting elements that are semisimple and nilpotent respectively, then $x_s,x_n \in V$.
        \item Every semisimple $x\in V$ is contained in a Cartan subspace of $V$. 
Every two Cartan subspaces are $G(\bar k)$-conjugate. 
    \end{enumerate}
\end{proposition}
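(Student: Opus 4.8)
\emph{Overview and Part (2).} These are the structural foundations of Vinberg theory; of the three, (2) and the existence half of (3) are short, (1) is standard, and the conjugacy of Cartan subspaces is where the real work lies. I would prove (2) as follows. Jordan decomposition in the reductive Lie algebra $\frakh$ is defined over $k$ and is preserved by Lie algebra automorphisms, so, fixing a primitive $m$th root of unity $\zeta \in \bar k$, the automorphism $\theta(\zeta)$ of $\frakh_{\bar k}$ satisfies $\theta(\zeta)(x_s) = (\theta(\zeta)x)_s$ and $\theta(\zeta)(x_n) = (\theta(\zeta)x)_n$. Since $x \in \frakh_1$ we have $\theta(\zeta)(x) = \zeta x$, and scaling by the nonzero scalar $\zeta$ preserves semisimplicity, nilpotency, and commuting, so by uniqueness of Jordan decomposition $(\zeta x)_s = \zeta x_s$ and $(\zeta x)_n = \zeta x_n$. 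Comparing, $\theta(\zeta)(x_s) = \zeta x_s$, i.e.\ $x_s$ has $\theta$-weight $1$, so $x_s \in \frakh_1 = V$, and likewise $x_n \in V$.

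\emph{Part (1).} Reductivity is unchanged by base change, so I would assume $k = \bC$. By a standard averaging argument (Cartan's fixed point theorem for the finite cyclic group $\langle \theta(\zeta)\rangle$ acting on the symmetric space of $H(\bC)$) there is a $\theta$-stable maximal compact subgroup $U \subseteq H(\bC)$, equivalently a $\theta$-stable compact real form $\fraku$ of $\frakh$. The complex conjugation of $\frakh$ with respect to $\fraku$ commutes with $d\theta(\zeta)$ and hence preserves the eigenspace $\frakh_0 = \frakg$, so $\fraku \cap \frakg$ is a compact real form of $\frakg$; consequently the compact subgroup $(U^\theta)^{\circ} \subseteq G(\bC)$ is Zariski dense in $G$. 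It then remains to note that a connected linear algebraic group over $\bC$ with a Zariski-dense compact subgroup $K$ is reductive: in any finite-dimensional representation every $K$-subrepresentation is automatically $G$-stable (its stabilizer is Zariski closed and contains $K$), so complete reducibility of $K$-representations forces complete reducibility of $G$-representations. (Alternatively one may simply cite \cite{Vinberg-theweylgroupofgraded}.)

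\emph{Part (3), existence and structure.} Existence is formal: given semisimple $x \in V$, the line $kx$ consists of semisimple elements and has vanishing bracket, and since $\dim V < \infty$ one may enlarge it to a subspace $\mathfrak{c} \supseteq kx$ maximal among subspaces of $V$ consisting of semisimple elements with vanishing bracket; any strictly larger such subspace would still contain $x$, contradicting maximality among those containing $x$, so $\mathfrak{c}$ is a Cartan subspace through $x$. For the conjugacy statement I would first record a canonical description of a Cartan subspace. Write $\mathfrak{l} = \frakz_\frakh(\mathfrak{c}) = \{y \in \frakh : [y,\mathfrak{c}]=0\}$; this is the Lie algebra of the reductive group $Z_H(\mathfrak{c})$ (an iterated centralizer of commuting semisimple elements), and $\mathfrak{c}$ centralizes $\mathfrak{l}$, so $\mathfrak{c}$ lies in the center $\mathfrak{z}(\mathfrak{l})$, which consists of semisimple elements of $\frakh$. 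Using (2) one checks that any element of $\mathfrak{z}(\mathfrak{l}) \cap \frakh_1$ has its semisimple part already in $\mathfrak{c}$, hence lies in $\mathfrak{c}$; maximality then gives $\mathfrak{c} = \mathfrak{z}(\mathfrak{l}) \cap \frakh_1$. Moreover, for $v$ in the dense open locus $\mathfrak{c}^{\mathrm{reg}} = \{v \in \mathfrak{c} : \frakz_\frakh(v) = \mathfrak{l}\}$ one recovers $\mathfrak{c} = \mathfrak{z}(\frakz_\frakh(v)) \cap \frakh_1$ from $v$ alone.

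\emph{Part (3), conjugacy, and the main obstacle.} Given two Cartan subspaces $\mathfrak{c},\mathfrak{c}'$, I would base change to $\bC$ and rely on two results of Vinberg theory: (i) the restriction of the quotient map $\pi \colon V \to V\GIT G$ to any Cartan subspace is finite and surjective (the analogue of the Chevalley restriction theorem), and (ii) the orbit of a semisimple element of $V$ is closed, so two semisimple elements with the same image under $\pi$ lie in the same $G(\bC)$-orbit (using that $\pi$ separates closed orbits, which needs (1)). Choosing $v \in \mathfrak{c}^{\mathrm{reg}}$ with $\pi(v)$ in the dense open set $\pi((\mathfrak{c}')^{\mathrm{reg}})$, (i) produces $v' \in (\mathfrak{c}')^{\mathrm{reg}}$ with $\pi(v') = \pi(v)$, and (ii) gives $g \in G(\bC)$ with $\Ad(g)v = v'$. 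Since $g$ centralizes the action $\theta$, $\Ad(g)$ preserves $\frakh_1$ and carries $\frakz_\frakh(v)$ onto $\frakz_\frakh(v')$, hence carries $\mathfrak{z}(\frakz_\frakh(v)) \cap \frakh_1 = \mathfrak{c}$ onto $\mathfrak{z}(\frakz_\frakh(v')) \cap \frakh_1 = \mathfrak{c}'$. The main obstacle is exactly that the inputs (i) and (ii) are themselves the heart of the theory: a self-contained proof of them essentially reconstructs \cite{Vinberg-theweylgroupofgraded}, so I would instead invoke that reference (or \cite{Panyushev-Invarianttheorythetagroups, Levy-Vinbergtheoryposchar}).
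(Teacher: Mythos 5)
Your proof is correct, but it is genuinely more than what the paper does: the paper's entire proof of this proposition is a citation of Vinberg (\cite{Vinberg-theweylgroupofgraded}, Section 1 for parts (1)--(2) and Section 3, Theorem 1 for part (3)). You instead supply real arguments for (1), (2), and the existence half of (3), and your reduction of the conjugacy statement to the two standard inputs --- the graded Chevalley restriction theorem and the separation of closed (semisimple) orbits by invariants --- is a legitimate and correctly executed argument modulo those inputs, which you then honestly cite. Concretely: your part (2) via uniqueness of the Jordan decomposition and equivariance under $\theta(\zeta)$ is exactly the standard argument; your part (1) via a $\theta$-stable compact real form and Zariski density of a compact subgroup is a classical alternative to Vinberg's treatment; and your canonical description $\mathfrak{c} = \mathfrak{z}(\frakz_{\frakh}(\mathfrak{c}))\cap \frakh_1$ is the right mechanism for transporting one Cartan subspace to another once a single regular element has been moved. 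What the paper's approach buys is brevity and a clean delegation to the literature; what yours buys is transparency about which facts are formal (existence of Cartan subspaces, $\theta$-stability of Jordan components) and which facts carry the real content (the two inputs (i) and (ii), which you correctly identify as the heart of \cite{Vinberg-theweylgroupofgraded}). The only cosmetic blemish is the phrase ``has its semisimple part already in $\mathfrak{c}$'' in your structure lemma: since $\mathfrak{z}(\mathfrak{l})$ already consists of semisimple elements, the cleaner statement is that $\mathfrak{c}+ky$ is a commutative subspace of semisimple elements for any $y\in\mathfrak{z}(\mathfrak{l})\cap\frakh_1$, so maximality forces $y\in\mathfrak{c}$; this does not affect correctness.
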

\begin{proof}
    The first two parts are justified in \cite[Section 1]{Vinberg-theweylgroupofgraded}.
    The last part is \cite[Section 3, Theorem 1]{Vinberg-theweylgroupofgraded}.
\end{proof}

We call a triple $(e,h,f)$ an \emph{$\liesl_2$-triple} of $\lieh$ if $e,h,f$ are nonzero elements of $\lieh$ satisfying the following relations:
\begin{equation*}
[h,e] = 2e , \quad [h,f] = -2f ,\quad [e,f] = h .
\end{equation*}
The classical Jacobson--Morozov Theorem states that every nonzero nilpotent element in $\lieh$ can be completed to an $\liesl_2$-triple. 
If $\lieh$ is $\Z/m\Z$-graded, we say an $\liesl_2$-triple is \emph{normal} if $e \in \lieh_1$, $h \in \lieh_0$ and $f\in \lieh_{-1}$.

\begin{lemma}[Graded Jacobson--Morozov]\label{lemma: graded Jacobson-Morozov}
Every nonzero nilpotent $e\in \lieh_{1}$ is contained in a normal $\liesl_2$-triple $(e,h,f)$.
Any two normal $\liesl_2$-triples $(e,h,f)$ and $(e,h',f')$ are $Z_G(e)(k)$-conjugate.
\end{lemma}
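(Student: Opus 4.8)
The plan is to reduce the two assertions to the corresponding statements for ordinary $\liesl_2$-triples in $\lieh$ together with a $\mu_m$-equivariance/averaging argument, following the standard strategy (due to Vinberg) for refining the Jacobson–Morozov and Kostant theorems in the graded setting.

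For existence: start with the classical Jacobson–Morozov theorem applied to the nilpotent $e \in \lieh$, giving \emph{some} $\liesl_2$-triple $(e, h_0, f_0)$ in $\lieh$. This triple need not be normal. The key point is that the set of all $\liesl_2$-triples completing $e$ is a torsor under (or at least a homogeneous space for) the unipotent radical of $\frakz_{\lieh}(e)$ — more precisely, any two completions $(e,h_0,f_0)$, $(e,h_1,f_1)$ are conjugate by an element of $Z_{H}(e)$ that moreover lies in the unipotent part, and in fact $h$ is uniquely determined once one requires $h \in \image(\ad e)^{\perp}$-type normalization; the cleanest route is: the semisimple element $h$ can be chosen inside any maximal torus of a Levi of $\frakz_\lieh(e)$, and in particular one can use the grading element. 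Concretely, I would argue as follows. The $\mu_m$-action $\theta$ preserves $e$ (since $e \in \lieh_1$, we have $\theta(\zeta)(e) = \zeta e$, so $\theta(\zeta)$ sends the line $ke$ to itself; rescaling, the induced action on $e$ itself is by $\zeta$). Better: consider the cocharacter / grading. Let me instead invoke the following standard fact: if $e$ is nilpotent and homogeneous for a grading coming from a $\mu_m$-action, one can find an $\liesl_2$-triple $(e,h,f)$ and a cocharacter conjugate to $h$ that is "adapted" — this is exactly the content of the $\liesl_2$ theory combined with the fact that $\frakz_\lieh(e)$ is itself graded (because $\theta(\zeta)$ normalizes $\frakz_\lieh(e)$ up to the scaling on $e$, hence normalizes $\frakz_\lieh(e)$ on the nose since $\frakz_\lieh(\zeta e) = \frakz_\lieh(e)$). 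Then one runs Jacobson–Morozov \emph{inside} the graded structure: pick $h_0$, project/average to land in $\lieh_0$ using that the grading of $\frakz_\lieh(e)$ is compatible, and then solve for $f$ in $\lieh_{-1}$ using that $\ad(e)\colon \lieh_{-1} \to \lieh_0$ hits $h$. I expect the argument to be: the $H$-conjugacy class of $(e,h_0,f_0)$ meets the graded locus because $\theta$ fixes (the class of) $e$ and by uniqueness of $\liesl_2$-completions up to $Z_H(e)^\circ$-conjugacy, one can average over the finite group $\mu_m$ (char $0$) to replace $h_0$ by a $\theta$-fixed $h \in \lieh_0$ and correspondingly adjust $f$.

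For the conjugacy statement: suppose $(e,h,f)$ and $(e,h',f')$ are both normal. By the classical theory (Kostant), they are conjugate under $Z_H(e)^\circ$, in fact under the unipotent radical $U$ of $Z_H(e)$; write $(e,h',f') = \Ad(u)(e,h,f)$ with $u \in U(k)$. Now I would use that $U$ is $\theta$-stable (again because $Z_H(e) = Z_H(\zeta e)$ is $\theta$-stable and $U$ is characteristic in it) and unipotent in characteristic zero, so $\theta$ acts on $\mathfrak{u} = \Lie U$ and one can exponentiate a graded decomposition. The element $u$ is the \emph{unique} element of $U$ carrying one triple to the other (this uniqueness is part of Kostant's theorem: $Z_{H}(e,h,f)$ is reductive and $U \cap Z_H(e,h,f) = 1$, so $U$ acts simply transitively on the set of completions with fixed $e$... more carefully, $U$ acts transitively with trivial stabilizer on the set of $\liesl_2$-triples completing $e$). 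Given uniqueness, apply $\theta(\zeta)$ to the relation: $\theta(\zeta)$ sends $(e,h,f) \mapsto (\zeta e, h, \zeta^{-1} f)$ and $(e,h',f') \mapsto (\zeta e, h', \zeta^{-1} f')$, and one checks $\Ad(\theta(\zeta)(u))$ still carries the first to the second (after tracking the scalars, which cancel because $\liesl_2$-triple relations are preserved under the rescaling $(e,h,f)\mapsto(\zeta e, h, \zeta^{-1}f)$). By uniqueness $\theta(\zeta)(u) = u$ for all $\zeta$, so $u \in U^{\theta}(k) \subset G(k)$, and since $u$ fixes $e$ it lies in $Z_G(e)(k)$. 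This gives the desired $Z_G(e)(k)$-conjugacy.

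The main obstacle I anticipate is getting the \emph{rationality} right — i.e., that the conjugating element $u$, a priori produced over $\bar k$, is actually defined over $k$, and symmetrically that the normal triple in the existence part can be found over $k$. This hinges on the uniqueness of $u$ (a $\bar k$-point fixed by $\Gamma_k$ is a $k$-point) and on the Jacobson–Morozov/Kostant theorems holding over $k$ in characteristic zero, which they do; but one has to be careful that "normal" is a $k$-rational condition and that the averaging over $\mu_m$ in the existence step stays within $\lieh(k)$ — this is fine since $\mu_m$ acts $k$-linearly, though if $k$ contains no primitive $m$-th root of unity one must phrase the averaging as "take the $\lieh_0$-component" rather than literally summing over $\theta(\zeta)$. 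A secondary technical point is justifying that $Z_H(e)$ (equivalently $U$) is $\theta$-stable: this follows because $\theta(\zeta)$ scales $e$, hence preserves its centralizer scheme, hence preserves the unipotent radical which is characteristic.
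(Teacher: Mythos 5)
Your proposal is correct and follows essentially the same route as the paper, whose proof is just a citation of Vinberg (algebraically closed case: take the degree-zero component of $h_0$ and the degree $-1$ component of $f_0$, then complete via Morozov's lemma degree-wise) and of Thorne's Lemma 2.17 (descent to $k$ via uniqueness of the conjugating element in the unipotent radical of $Z_H(e)$, which is exactly your simply-transitive-action argument). The one step worth tightening is existence: replace the ``averaging'' language by the clean observation that the $\lieh_0$-component $h$ of $h_0$ still satisfies $[h,e]=2e$ and lies in $[e,\lieh_{-1}]$, so that the adjustment producing $f\in\lieh_{-1}$ is precisely the graded form of the Morozov completion lemma rather than something obtained by averaging $f_0$.
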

\begin{proof}
    In the case when $k$ is algebraically closed, this is \cite[Theorem 1]{Vinberg-nilpotent}. The case when $k\neq \bar{k}$ follows from the same reasoning as the proof of \cite[Lemma 2.17]{Thorne-thesis}.
\end{proof}

The next proposition describes the basic geometric invariant theory of the representation $(G,V)$.
Let 
\begin{align}\label{equation: GIT quotient map}
\pi\colon V\rightarrow V \GIT G = \Spec k[V]^G    
\end{align} 
be the GIT quotient map.
\begin{proposition}\label{proposition: basic invariant theory of vinberg representation full generality}
\begin{enumerate}
\item Let $\mathfrak{c} \subset V$ be a Cartan subspace and $W(\mathfrak{c})  \coloneqq N_G(\mathfrak{c})/Z_G(\mathfrak{c})$. Then the inclusion $\mathfrak{c} \subset V$ induces an isomorphism 
\begin{align*}
\mathfrak{c} \GIT W(\mathfrak{c}) \simeq V\GIT G. 
\end{align*}
The group $W(\mathfrak{c})$ is a finite pseudo-reflection group, so the quotient is isomorphic to affine space.

\item If $k$ is algebraically closed and $b\in (V \GIT G)(k)$, then the fiber $\pi^{-1}(b)$ consists of finitely many $G(k)$-orbits.
It contains a unique closed $G(k)$-orbit, the set of semisimple elements with invariants $b$.
\item If $k$ is algebraically closed and $v_1,v_2\in V(k)$, then $\pi(v_1) = \pi(v_2)$ if and only if the semisimple parts of $v_1$ and $v_2$ in their Jordan decompositions are $G(\bar{k})$-conjugate.

\item The quotient map $\pi\colon V\rightarrow V\GIT G$ is flat, and every fiber has dimension $\dim V - \dim \mathfrak{c}$.
\end{enumerate}
\end{proposition}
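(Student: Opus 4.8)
The plan is to derive all four parts from the foundational results of Vinberg theory—see \cite{Vinberg-theweylgroupofgraded}, \cite{Panyushev-Invarianttheorythetagroups}, \cite{Levy-Vinbergtheoryposchar}—combined with standard geometric invariant theory and the structural facts already on record (the Jordan decomposition stays inside $V$; all Cartan subspaces are $G(\bar k)$-conjugate; graded Jacobson--Morozov). For part (1), the finiteness of $W(\mathfrak c)$ and the fact that the restriction homomorphism $k[V]^G \to k[\mathfrak c]^{W(\mathfrak c)}$ is an isomorphism together form Vinberg's Chevalley restriction theorem for $\Z/m\Z$-graded Lie algebras; I would only spell out the easy half—a $G$-invariant vanishing on $\mathfrak c$ vanishes on all semisimple elements, since every semisimple element is $G(\bar k)$-conjugate into $\mathfrak c$, hence vanishes on all closed orbits, hence is zero—and quote the surjectivity. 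That $W(\mathfrak c)$ is a pseudo-reflection group, equivalently that the quotient is affine space, is likewise Vinberg's theorem (alternatively it follows from this quotient isomorphism together with the polynomiality of $k[V]^G$ via Chevalley--Shephard--Todd). In particular $\dim (V\GIT G) = \dim\mathfrak c$, which I will use in part (4).

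For part (3), the key point is that $\pi(v) = \pi(v_s)$ for every $v \in V(\bar k)$ with Jordan decomposition $v = v_s + v_n$: applying graded Jacobson--Morozov inside the $\theta$-stable reductive subalgebra $\frakz_{\frakh}(v_s)$, I complete $v_n$ to a normal $\liesl_2$-triple and take the associated cocharacter $\lambda$ of $Z_G(v_s)$; then $\lim_{t\to 0}\lambda(t)\cdot v = v_s$, so $v_s \in \overline{G\cdot v}$, and $\pi$, being constant on orbit closures, satisfies $\pi(v) = \pi(v_s)$. Since a semisimple orbit is closed (Vinberg) and distinct closed orbits have distinct images under $\pi$ over $\bar k$, it follows that $\pi(v_1) = \pi(v_2)$ iff $(v_1)_s$ and $(v_2)_s$ lie in the same $G(\bar k)$-orbit. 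Part (2) is then immediate: each fiber $\pi^{-1}(b)$ contains a unique closed orbit by GIT, which by the above is $G\cdot v_s$ for any $v$ in the fiber, i.e.\ the single orbit of semisimple elements with invariants $b$; finiteness of the number of orbits reduces, after conjugating each $v$ to have a fixed semisimple part $s \in \mathfrak c$, to the finiteness of $Z_G(s)$-orbits of degree-one nilpotents in the induced $\Z/m\Z$-grading on $\frakz_{\frakh}(s)$, which is Vinberg's finiteness theorem.

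For part (4), since $V\GIT G$ is an affine space (hence regular) and $V$ is smooth, miracle flatness reduces the claim to showing every fiber of $\pi$ has pure dimension $\dim V - \dim\mathfrak c$. As $V\GIT G = \Spec k[p_{d_1},\dots,p_{d_\ell}]$ with $\ell = \dim\mathfrak c$, each fiber is cut out in $V$ by $\ell$ equations, so every component has dimension $\ge \dim V - \ell$. For the opposite inequality I use part (2): a fiber is a finite union of orbits $G\cdot v$ with $Z_G(v) = Z_{Z_G(s)}(v_n)$, so $\dim G\cdot v = \dim G\cdot s + \dim(Z_G(s)\cdot v_n)$, where $s$ (a conjugate of $v_s$) lies in $\mathfrak c$. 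Because $\ad(s)\colon \frakh_i \to \frakh_{i+1}$ is semisimple, $\dim(\image\ad(s) \cap \frakh_i)$ is independent of $i$, which gives $\dim G\cdot s = \dim V - \dim \frakz_{\frakh}(s)_1$; and $\mathfrak c$ is still a Cartan subspace of the $\theta$-algebra $\frakz_{\frakh}(s)$, so Vinberg's dimension formula for the nilpotent cone there yields $\dim(Z_G(s)\cdot v_n) \le \dim\frakz_{\frakh}(s)_1 - \dim\mathfrak c$. Adding, $\dim G\cdot v \le \dim V - \ell$, as required.

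The main obstacle is the genuinely Vinberg-theoretic input: the surjectivity in the Chevalley restriction theorem of part (1), the finiteness of nilpotent $\theta$-orbits used in part (2), and the dimension of the nilpotent cone of a $\theta$-representation feeding part (4) (equivalently, the equidimensionality of $\pi$). I would cite these rather than reprove them; everything else is assembled from GIT generalities and the structure theory already established.
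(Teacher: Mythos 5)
Your proposal is correct and follows essentially the same route as the paper: the substantive inputs (Chevalley restriction, finiteness of orbits in a fiber, closedness of semisimple orbits, the dimension of the nilpotent cone) are delegated to Vinberg and Panyushev, and flatness in part (4) is deduced from equidimensionality of the fibers via Miracle Flatness. The only difference is one of detail — you spell out the cocharacter-limit argument for $\pi(v)=\pi(v_s)$ and the orbit-dimension count establishing equidimensionality, whereas the paper simply cites \cite[Sections 2,4]{Vinberg-theweylgroupofgraded} for everything except the Miracle Flatness step.
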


\begin{proof}
We may assume $k=\mathbb{C}$.
These claims then follow from \cite[Sections 2,4]{Vinberg-theweylgroupofgraded} (see also \cite[Theorem 1.1]{Panyushev-Invarianttheorythetagroups}), except the flatness of $\pi$.
This flatness follows from the fact that the source and domain of $\pi$ are smooth, its fibers are equidimensionsal, and ``Miracle Flatnes'' \cite[Theorem 23.1]{Matsumura-CommutativeRingTheory}.
\end{proof}

\subsection{Stable and principal gradings}\label{subsec: stable gradings over algebraically closed field}

Maintain notation as in the previous section, so $H$ is a reductive group with $\mu_m$-action $\theta$, and $(G = (H^\theta)^\circ, V = \frakh_1)$ is the corresponding Vinberg representation. Of particular interest to us are stable gradings.

\begin{definition}\label{definition: stable gradings}
A vector $v \in V$ is \emph{stable} (in the sense of geometric invariant theory) if the $G$-orbit of $v$ is closed and its stabilizer $Z_G(v)$ is finite (as an algebraic group).
We say the Vinberg representation $(G,V)$ is \emph{stable} if $V_{\bar{k}}$ contains stable vectors for the action of $G_{\bar{k}}$. 
\end{definition}

We say a $\mu_m$-action $\theta$ on $H$ is \emph{stable} if the associated Vinberg representation $(G,V) = ((H^{\theta})^{\circ},\frakh_1)$ is stable.

\begin{remark}
    All the known examples of Vinberg representations that are employed in arithmetic statistics (for example those appearing in \cite{BhargavaHo-2Selmergroupsofsomefamilies, BS-4Selmer, BS-5Selmer, Bhargava-Gross-hyperellcurves}) are stable.
    If $(G,V)$ is stable, then the stable vectors form a Zariski open subset $V^s\subset V$ with the following two desirable properties.
    Firstly, the geometric orbits (i.e. those over an algebraically closed field) in $V^s$ are separated by their invariants, by Proposition \ref{proposition: basic invariant theory of vinberg representation full generality}.
    Secondly, if $v\in V^s(k)$ with $\pi(v) = b$, and $V_b:= \pi^{-1}(b)$
 is the subscheme of elements with invariants $b$, then $G(k)\backslash V_b(k)$ is in natural bijection with the pointed kernel of the maps on Galois cohomology $\ker(\HH^1(k, Z_G(v)) \rightarrow \HH^1(k,G))$, see \cite[Section 2, Proposition 1]{BhargavaGross-AIT}. 
    Since $\Stab_G(v)$ is a finite group scheme, it can often be related to a torsion subgroup $A[n]$ of an abelian variety $A/k$, so the set of orbits $G(k)\backslash V_b(k)$ is a natural target for the descent map $A(k)/nA(k)\rightarrow \HH^1(k,A[n])$ and may contain Selmer groups when $k$ is a global field; this is the case for all the references cited above.
\end{remark}

 \begin{example}\label{ex-torus}
 If $H$ is a torus, then the stable $\mu_m$-actions on $H$ are in bijection with the elliptic order-$m$ automorphisms of $H$. In this case, if $\theta$ is stable, then $H^\theta$ is finite, $G$ is trivial, and every element of $V$ is stable.
 \end{example}

Stable $\mu_m$-actions on simple algebraic groups $H$ over an algebraically closed field of characteristic zero have been classified \cite[\S7.1, \S7.2]{GrossLevyReederYu-GradingsPosRank} in terms of $\Z$-regular elliptic conjugacy classes of (twisted) Weyl groups.
We recall the classification in the next subsection and some properties of stable gradings. We assume some basic knowledge about semisimple Lie algebras and split reductive groups, which can be found, for example, in \cite{Humphreys}, \cite{Steinberg-Chevalley}, \cite{Conrad-SGA}. For relevant background about finite-order automorphisms of Lie algebras, see \cite{Reeder-torsion}.

Let $H$ be a reductive group over a field $k$ of characteristic zero. 
Assume that there exists a pinning $(T,B,\{E_i\})$ of $H$ \cite[Definition 1.5.4]{Conrad-SGA}.
This is the data of a maximal torus $T\subset H$ (which determines a set of roots $\Phi\subset X^*(T_{\bar{k}}) = \Hom(T_{\bar{k}}, \G_{m,\bar{k}})$), a Borel subgroup $B\subset H$ containing $T$ (which determines a set of simple roots $\{\alpha_1, \dots, \alpha_{\ell}\}\subset \Phi$) and a generator $E_i \in \frakh_{\alpha_i}$ for each root space $\alpha_i$.
We require $T,B$ and the set $\{E_i\}$ to be defined over $k$, but we don't require the individual elements $E_i$ to be defined over $k$.
Such a pinning exists if and only if $H$ is quasi-split (for example, when $k = \bar{k}$), and in that case every two pinnings are conjugate by a unique element of $H^{\mathrm{ad}}(k)$.
An automorphism of $H$ is said to be \textit{pinned} if it preserves $T$, $B$ and the set $\{E_i\}$.

Recall that $H^{\mathrm{ad}}$ is the quotient of $H$ by its center $Z_H$ and that we have an exact sequence
\[
1\rightarrow H^{\mathrm{ad}}\xrightarrow{\Ad} \Aut_H \rightarrow \Out_H\rightarrow 1.
\]
Let $\Theta_H \subset \Aut_H$ denote the subgroup (scheme) of pinned automorphisms of $H$.
Then $\Theta_H$ defines a splitting of the above sequence, giving rise to the identification $\Aut(H) = H^{\mathrm{ad}} \rtimes \Theta_H$.
If $H$ is split and semisimple, then $\Theta_H$ is constant and can be identified with a subgroup of the automorphism group $\Aut(D)$ of the Dynkin diagram $D$ of $\frakh$.
If $H$ is additionally adjoint (in other words, $H = H^{\mathrm{ad}}$) or simply connected, then $\Theta_H$ equals $\Aut(D)$.
See \cite[Section 1.5]{Conrad-SGA} for these claims.

Given a $\mu_m$-action 
$\theta\colon \mu_m\rightarrow \Aut_H$,
we can postcompose $\theta$ with $\Aut_H\rightarrow \Out_H$ to obtain a homomorphism $\mu_m\rightarrow \Out_H$. 
There exists a unique divisor $e$ of $m$ such that the latter homomorphism factors as $\mu_m \rightarrow \mu_e \rightarrow \Out_H$, where $\mu_m\rightarrow \mu_e$ is raising to the $m/e$-th power, and $\tau\colon \mu_e\hookrightarrow \Out_H$ is injective.
We call $\tau$ the \emph{outer type} of $\theta$.
If $\theta$ is $H_{\bar{k}}$-conjugate to $\theta'$, they have the same outer type.

To recall the classification of stable gradings, we first define principal gradings. 
Write $\check\rho$ for the sum of the fundamental co-weights corresponding to our choice of simple roots; it is the unique element of $X_*(T^{\mathrm{ad}}) = \Hom(\G_m,T^{\mathrm{ad}})$ satisfying $\langle \alpha_i, \check{\rho} \rangle = 1$ for all $i$. 
Let $\tau\colon \mu_e\hookrightarrow \Out_H$ be an injection of group schemes.
If a primitive $e$th root of unity $\zeta_e\in k$ exists and is fixed, giving $\tau$ is the same as giving an element $\vartheta\in \Out_H(k)$ of order $e$. 
Let $m\geq 1$ be an integer divisible by $e$.
Let $\theta\colon \mu_m\rightarrow \Aut_H$ be defined by
\begin{align}\label{equation: definition pinned principal mu_m action}
    \theta(\zeta) = \Ad(\check{\rho}(\zeta)) \circ \tau(\zeta^{m/e})  \text{ for }\zeta\in \mu_m.
\end{align}
Since $\check{\rho}$ and $\vartheta$ commute, this defines a $\mu_m$-action on $H$.

\begin{definition}
 We say a $\mu_m$-action $\theta: \mu_m \to \Aut_H$ is \emph{principal} (respectively \emph{split principal}) if it is $H({\bar{k}})$-conjugate (respectively $H^{\mathrm{ad}}(k)$-conjugate) to one of the form \eqref{equation: definition pinned principal mu_m action} for some outer type $\tau\colon \mu_e\hookrightarrow \Out_H$ and integer $e$ dividing $m$.
\end{definition}

\begin{example}\label{ex-princ-inner}
If $\tau$ is trivial (in other words, $\theta$ is an inner automorphism), then \eqref{equation: definition pinned principal mu_m action} simply becomes $\Ad(\check{\rho}(\zeta))$ and we can describe the associated grading in the following way: given a root $\al \in \Phi$, let $\al = \sum_{i = 1}^\ell a_i\al_i$ be its decomposition as a sum of simple roots. The root height of $\al$ is then $\height(\al) := \sum_{i = 1}^\ell a_i$. The grading of $\frakh$ corresponding to $\theta$ is then given by
\begin{align*}
\frakh_0 &= \frakt \oplus \sum_{\substack{\height(\al) \equiv 0\\ \mod m}} \frakh_\al,\\
\frakh_j &= \sum_{\substack{\height(\al) \equiv j \\\mod m}} \frakh_\al \quad (\text{if }j \neq 0).
\end{align*}
(Here $\frakh_\al$ denotes the root space corresponding to $\al \in \Phi$.) If $\tau\neq 1$, a similar but more complicated explicit description can be given.
\end{example}

\begin{lemma}\label{lemma: split principal if and only if regular nilpotent exists}
    Suppose $H$ is quasi-split.
    A $\mu_m$-action $\theta$ on $H$ is split principal if and only if there exists a regular nilpotent $E\in V(k)$.
\end{lemma}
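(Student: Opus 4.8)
The plan is to prove both implications by relating the existence of a regular nilpotent in $V(k) = \frakh_1(k)$ to the structure of a normal $\liesl_2$-triple and the associated cocharacter. First I would handle the easy direction: if $\theta$ is split principal, then after conjugating by an element of $H^{\mathrm{ad}}(k)$ we may assume $\theta$ is the pinned action \eqref{equation: definition pinned principal mu_m action} attached to some outer type $\tau\colon \mu_e \hookrightarrow \Out_H$. In the inner case ($e=1$), Example \ref{ex-princ-inner} shows that $\frakh_1$ contains $\sum_i E_i$ (the sum of the simple root vectors), since each $E_i$ has root height $1$; this is a regular nilpotent element defined over $k$ because the set $\{E_i\}$ is $\Gamma_k$-stable and the sum is Galois-invariant. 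In the outer case one uses the analogous explicit description: the regular nilpotent for a twisted principal grading is built from sums of $E_i$ over $\Out$-orbits of simple roots (the "folded" regular nilpotent), which again lies in $\frakh_1(k)$ by Galois-equivariance of the pinning. So in all cases a regular nilpotent $E \in V(k)$ exists.

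For the converse, suppose $E \in V(k)$ is a regular nilpotent. By Graded Jacobson–Morozov (Lemma \ref{lemma: graded Jacobson-Morozov}), $E$ lies in a normal $\liesl_2$-triple $(E,h,f)$ with $h \in \frakh_0(k)$, $f \in \frakh_{-1}(k)$. Since $E$ is regular in $\frakh$, the cocharacter $\lambda = h/2$ — more precisely the cocharacter $\lambda \in X_*(T^{\mathrm{ad}})$ with $d\lambda(1) = h$ after passing to a maximal torus of $Z_H(h)$ containing a suitable Cartan — is the one attached to a regular nilpotent, which is well known to be $H^{\mathrm{ad}}(\bar k)$-conjugate to $2\check\rho$ (see e.g. the standard theory of $\liesl_2$-triples: a regular nilpotent has weighted Dynkin diagram with all labels $2$). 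The key point is now that $E \in \frakh_1$ forces a compatibility between $\theta$ and $\lambda$: since $[h,E]=2E$ and $\theta(\zeta)E = \zeta E$, conjugating $\theta$ by an element carrying $h$ to the standard coweight $2\check\rho$ puts $\theta$ into the form $\Ad(\check\rho(\zeta)) \circ (\text{pinned automorphism of order dividing } m)$; the pinned part is determined by the outer type of $\theta$, and raising to the $m/e$ power exactly recovers \eqref{equation: definition pinned principal mu_m action}. One must check that after this conjugation $\theta$ can be taken pinned with respect to a $k$-rational pinning, which follows because $H$ is quasi-split (so a $k$-pinning exists) and the conjugating element, being the one that normalizes the $\liesl_2$-triple and the grading, can be chosen $k$-rational using the uniqueness-up-to-$Z_G(E)(k)$-conjugacy clause of Lemma \ref{lemma: graded Jacobson-Morozov} together with the fact that all $k$-pinnings of a quasi-split $H$ are $H^{\mathrm{ad}}(k)$-conjugate.

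I expect the main obstacle to be the rationality bookkeeping in the converse: showing not merely that $\theta_{\bar k}$ is $H_{\bar k}^{\mathrm{ad}}$-conjugate to a pinned principal action (which is essentially the classical statement over $\bar k$), but that the conjugation can be performed over $k$, i.e. that $\theta$ is \emph{split} principal. The leverage for this is that every object in sight — the regular nilpotent $E$, its normal $\liesl_2$-triple, the resulting cocharacter, and a pinning of the quasi-split group $H$ — can be chosen $\Gamma_k$-equivariantly, and the transitive-action statements (Lemma \ref{lemma: graded Jacobson-Morozov} for triples, conjugacy of $k$-pinnings for quasi-split groups) come with rational stabilizers that let one descend the conjugating element. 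A secondary point to verify carefully is that the centralizer of a regular nilpotent meeting $\frakh_1$ forces the exponent $e$ appearing in the cocharacter normalization to match the outer type of $\theta$ exactly, so that the reconstructed action is genuinely of the form \eqref{equation: definition pinned principal mu_m action} and not merely of that shape up to an extra inner twist.
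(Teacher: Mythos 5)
Your proposal is correct and follows essentially the same route as the paper: both directions hinge on the transitivity of $H^{\mathrm{ad}}$ on regular normal $\liesl_2$-triples together with the triviality of the stabilizer of such a triple, which makes the conjugating element unique and hence $k$-rational, after which the conjugated $\theta$ is identified with the split principal action of the same outer type. Two minor points to tidy: the uniqueness you need is Kostant's triviality of the stabilizer of a regular $\liesl_2$-triple in $H^{\mathrm{ad}}$ (not the $Z_G(E)(k)$-conjugacy clause of the graded Jacobson--Morozov lemma), and in the outer case no ``folding'' is required, since $\sum_i E_i$ is already a $\zeta$-eigenvector for the pinned principal action because the pinned automorphism permutes the set $\{E_i\}$.
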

\begin{proof}
    Fix a pinning $(T, B, \{E_i\})$ of $H$.
    If $\theta$ is split principal, then after $H^{\mathrm{ad}}(k)$-conjugation we may assume it equals the $\mu_m$-action \eqref{equation: definition pinned principal mu_m action} associated to this pinning.
    Then $E = \sum E_i$ is a regular nilpotent in $V(k)$, see \cite[Section 5, Corollary 5.3]{Kostant-principalthreedimensional}.
    Conversely, suppose $V(k)$ contains a regular nilpotent $E$.
    Complete $E$ to a normal $\liesl_2$-triple $(E,X,F)$ using Lemma \ref{lemma: graded Jacobson-Morozov}.
    Let $\tau\colon \mu_e\hookrightarrow \Out_H$ be the outer type of $\theta$, and let $\theta_0$ be the split principal $\mu_m$-action associated to $\tau$ and the pinning $(T, P,\{E_i\})$, inducing a grading $\frakh= \oplus \frakh_i'$.
    Let $E_0 = E_1+ \cdots + E_{\ell}$.
    Then $E_0\in \frakh_1'(k)$ is regular nilpotent; complete it to a normal $\liesl_2$-triple $(E_0,X_0,F_0)$ with respect to $\theta_0$.

    Since all regular nilpotent elements are conjugate and the stabilizer of $(E,X,F)$ in $H^{\mathrm{ad}}$ is trivial \cite[Section 5, Corollaries 5.2 and 5.5]{Kostant-principalthreedimensional}, there exists a unique element $\varphi\in H^{\mathrm{ad}}(\bar{k})$ conjugating $(E,X,F)$ to $(E_0,X_0,F_0)$, which by uniquess must necessarily be defined over $k$.
    So after conjugating $\theta$ by $\varphi$ we may assume $(E,X,F) = (E_0,X_0,F_0)$.
    If $\zeta\in \bar{k}$ is a root of unity of order $m$, then
    Then $\theta_0(\zeta)^{-1} \circ \theta(\zeta)$ is an inner automorphism of $H$ that preserves $E,X$ and $F$.
    This automorphism must be trivial by the first sentence of the paragraph, hence $\theta = \theta_0$, as desired.
\end{proof}

 The next two examples give explicit descriptions of the inner principal 2-gradings when $H$ has type $B$ or $C$ following \cite{Ohta-classificationadmissiblenilpotent}. As we will see in the next section, for each such $H$, this is the unique stable $2$-grading of $H$. We will use these explicit descriptions to descibe subregular nilpotent elements in $\frakh_1$ in Section \ref{subreg-exhibition}. 

\begin{example}\label{ex-Bn-2grading}(Type $B_{\rl}$)
Let $\rl\geq 2$ be an integer.
Let $W_1$ be the vector space over $k$ with basis $v_1, \dots, v_{\rl+1}$ and $W_2$ the vector space with basis $w_1, \dots, w_\rl$ and $W = W_1\oplus W_2$.
Let $(-,-)_1$ be the symmetric bilinear form on $W_1$ with the property that $(v_i, v_{\rl+1-i})_1 = 1$ for all $1\leq i \leq \rl$, $(v_{\rl+1},v_{\rl+1})_1 = 1$ and all other pairings between basis vectors are zero.
Let $(-,-)_2$ be the symmetric bilinear form on $W_2$ with $(w_i, w_{\rl-i})_2=-1$, $(w_\rl, w_\rl)_2 = -1$ and all other basis pairings are zero.
Let $(-,-) = (-,-)_1 \oplus (-,-)_2$ be the direct sum pairing on $W$.
Let $H = \SO(W)$ with Lie algebra $\frakh = \mathfrak{so}(W)$; a semisimple Lie algebra of type $\rl$.
Let $s\in \GL(W)$ be the element that acts as $\Id$ on $W_1$ and $-\Id$ on $W_2$.
Then $s$ lies in $\SO(W)$ and $\theta=  \Ad(s)$ defines a $\Z/2\Z$-grading on $\frakh$.
In the basis $v_1, \dots, v_\rl, w_1, \dots, w_\rl$, we have
\[
\frakh_0 = 
\left\{
\begin{pmatrix}
    X & 0 \\ 0 & W 
\end{pmatrix}
\colon X \in \mathfrak{so}(W_1), W\in \mathfrak{so}(W_2)
\right\}
, \quad 
\frakh_1 = \left\{
\begin{pmatrix}
    0 & Y \\ Z & 0 
\end{pmatrix}
\colon Y \in M_{(\rl + 1) \times \rl}, Z \in M_{\rl \times (\rl + 1)}
\right\} \cap \frakh.
\]
Here $M_{n_1 \times n_2}$ denotes the space of $n_1 \times n_2$ matrices. Note that given $Y \in M_{(\rl + 1) \times \rl}$ as above, $Z$ is completely determined by the fact that $\langle Av, w\rangle_2 + \langle v, Aw\rangle_1 = 0$ for all $v \in W_1, w \in W_2, A \in \frakh_1$. We have that $\frakh_1 \simeq \Hom(W_1, W_2)$ as a representation of $G = \SO(W_1) \times \SO(W_2)$, where $\SO(W_1)$ acts as precomposition and $\SO(W_2)$ acts by post-composition.
Using the classification of nilpotent $G$-orbits in $V$ of \cite[p. 305, Proposition 2, Case (BDI)]{Ohta-classificationadmissiblenilpotent} in terms of ab-diagrams and the fact that the diagram $[abab\dots aba]$ gives rise to the regular nilpotent element by \cite[Sections 5.1 and 5.4]{CollingwoodMcGovern-nilpotentorbits}, we conclude that $V(\bar{k})$ contains a regular nilpotent element.
Therefore this 2-grading is principal by Lemma \ref{lemma: split principal if and only if regular nilpotent exists}.
\end{example}

\begin{example}\label{ex-Cn-2grading} (Type $C_{\rl}$)
Let $\rl\geq2$ be an integer.
Let $W_1$ be the vector space with basis $v_1, \dots, v_\rl$, $W_2$ the vector space with basis $w_1, \dots, w_\rl$ and let $W = W_1 \oplus W_2$.
Let $W$ be a vector space with basis $v_1, \dots, v_\rl, w_1, \dots, w_\rl$, .
Let $(-,-)$ be the nondegenerate alternating form on $W$ with $(v_i,w_j) = \delta_{ij}$, $(w_i, v_j) = -\delta_{ij}$ and $(v_i, v_j) = (w_i, w_j)$ for all $1\leq i,j\leq \rl$.
This defines the group $H = \PSp(W)$ and Lie algebra $\frak{h} = \mathfrak{sp}(W)$
Let $s\in \GL(W)$ be given by $v_i\mapsto v_i$, $w_i \mapsto -w_i$ and let $\theta = \Ad(s)$. 
Then $\theta$ induces a $\Z/2\Z$-grading on $\frakh$.
In the basis $v_1, \dots, v_\rl, w_1, \dots, w_\rl$, we have
\[
\frakh_0 = 
\left\{
\begin{pmatrix}
    X & 0 \\ 0 & -X^t 
\end{pmatrix}
\colon X \in \frak{gl}_\rl
\right\}
, \quad 
\frakh_1 = \left\{
\begin{pmatrix}
    0 & Y \\ Z & 0 
\end{pmatrix}
\colon Y,Z \in \frak{gl}_\rl, Y^t = Y \text{ and }Z^t = Z
\right\}.
\]
The fact that this is the principal $2$-grading on $H$ follows by the same logic as in the previous example, using \cite[p. 305, Proposition 2, Case (CI)]{Ohta-classificationadmissiblenilpotent}.
\end{example}

\subsection{Classification of stable gradings}

Keep the notation of the previous subsection.
Let $W = N_H(T_{\bar{k}})/T_{\bar{k}}$ be the Weyl group of $T_{\bar{k}}$ and let $\Lambda = X^*(T_{\bar{k}})$.
Let $\Aut(\Lambda, \Phi)$ denote the set of group automorphisms of $\Lambda$ preserving $\Phi$. 
An automorphism $\sigma  \in \Aut(\Lambda, \Phi)$ is called \textit{elliptic} if the fixed point set satisfies $\Lambda^\sigma = 0$,
and $\sigma$ is called $\bZ$-\textit{regular} if $\langle \sigma \ra$ acts freely on $\Phi$.

Since an element $\vartheta\in \Theta_H(k)$ preserves $T$, it induces an element of $\Aut(\Lambda, \Phi)$, still denoted by $\vartheta$. 
Hence we may consider the coset $W\vartheta$ in $\Aut((\Lambda, \Phi))$.

\begin{theorem}\label{theorem: characterization stable gradings}
Suppose that $k$ is algebraically closed, that $H$ is simple and let $\zeta \in k$ be a primitive $m$th root of unity.
Let $\theta\colon \mu_m\rightarrow \Aut_H$ be a $\mu_m$-action on $H$, with associated Vinberg representation $(G,V)$.
Let $\vartheta$ be the image of $\theta(\zeta)$ in $\Out_H(k)\simeq \Theta_H(k)$, in other words the unique element of $\Theta_H$ such that $\theta(\zeta)\in H^{\mathrm{ad}} \vartheta$. 
Then $(G,V)$ is stable if and only if $\theta$ is principal of order $m$ and there exists an elliptic $\Z$-regular element of $W\vartheta\subset \Aut(\Lambda, \Phi)$ of order $m$.
\end{theorem}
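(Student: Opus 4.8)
The plan is to deduce Theorem~\ref{theorem: characterization stable gradings} by matching the GIT notion of stability against the classification of stable $\Z/m\Z$-gradings of simple Lie algebras of Gross--Levy--Reeder--Yu \cite[\S7]{GrossLevyReederYu-GradingsPosRank}, which itself rests on Vinberg's invariant theory \cite{Vinberg-theweylgroupofgraded} and Springer's theory of regular elements. Since $H$ is simple and $k=\bar{k}$, giving a $\mu_m$-action $\theta$ on $H$ amounts to giving a torsion automorphism of $\frakh$ of order dividing $m$, equivariantly for $H^{\mathrm{ad}}$ and compatibly with outer types, and whether $(G,V)$ is stable --- as well as the element $\vartheta$ --- depends only on this automorphism of $\frakh$; so I would work on $\frakh$ throughout. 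The argument then has three steps: (i) reformulate stability intrinsically; (ii) reduce to principal gradings; (iii) translate the residual condition into the existence of an elliptic $\Z$-regular element of order $m$ in $W\vartheta$.

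First, for step (i), I would show: $(G,V)$ is stable if and only if $\frakz_{\frakg}(\mathfrak{c})=0$ for a (equivalently, any) Cartan subspace $\mathfrak{c}\subset V$, and that in that case $\frakt:=\frakz_{\frakh}(\mathfrak{c})$ is a $\theta$-stable Cartan subalgebra of $\frakh$ with $\frakt\cap\frakg=0$ and $\mathfrak{c}=\frakt\cap V$. This is elementary. If $v\in V$ is a stable vector then $G\cdot v$ is closed, hence is the unique closed orbit in its fibre of the invariant map, which by Proposition~\ref{proposition: basic invariant theory of vinberg representation full generality} is the orbit of the semisimple part of $v$; so $v$ is semisimple, lies in a Cartan subspace, and finiteness of $Z_G(v)$ forces $\frakz_{\frakg}(v)=0$ (char $0$), hence $\frakz_{\frakg}$ of that Cartan subspace vanishes; and all Cartan subspaces are $G(\bar{k})$-conjugate. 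Conversely, if $\frakz_{\frakg}(\mathfrak{c})=0$, then $\frakz_{\frakh}(\mathfrak{c})$ is a $\theta$-stable reductive subalgebra with trivial degree-$0$ part; since a finite-order automorphism of a nonzero semisimple Lie algebra has nonzero fixed points, its derived subalgebra vanishes, so $\frakt:=\frakz_{\frakh}(\mathfrak{c})$ is a (necessarily $\theta$-stable) Cartan subalgebra, and maximality of $\mathfrak{c}$ inside the abelian $\frakt$ forces $\mathfrak{c}=\frakt\cap V$. No root of $(\frakh,\frakt)$ vanishes on $\mathfrak{c}$ (else its root space would lie in $\frakz_{\frakh}(\mathfrak{c})=\frakt$), so a generic $v\in\mathfrak{c}$ is regular in $\frakh$, with $\frakz_{\frakg}(v)=\frakt\cap\frakg=0$ and closed $G$-orbit, hence a stable vector.

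Next, for step (ii), I would cite \cite[\S7.1]{GrossLevyReederYu-GradingsPosRank} for the fact that a stable grading on a simple $\frakh$ is principal --- alternatively, such a grading contains a regular nilpotent of $\frakh$, and one applies Lemma~\ref{lemma: split principal if and only if regular nilpotent exists} over $\bar{k}$; moreover stability forces $\frakh_1\neq0$ (since $Z_G(0)=G$ is positive-dimensional, $\frakh_0$ being nonzero), so $\theta$ has order exactly $m$. For step (iii), with $\theta$ principal, one fixes a pinning realizing the standard principal grading; then the $G(\bar{k})$-conjugacy classes of $\theta$-stable Cartan subalgebras of $\frakh$ biject with the conjugacy classes in $W\vartheta\subseteq\Aut(\Lambda,\Phi)$ of order dividing $m$, the class of $\frakt$ being the type of $\theta|_{\frakt}$, and every such class occurs (see \cite[\S7]{GrossLevyReederYu-GradingsPosRank}). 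Under this dictionary, $\frakt\cap\frakg=0$ translates into $\Lambda^{w\vartheta}=0$, i.e. $w\vartheta$ elliptic, and ``$\frakt\cap V$ contains an element regular in $\frakh$'' translates, via Springer's theory, into ``$w\vartheta$ has a regular eigenvector for the eigenvalue $\zeta$'', which is equivalent to $\langle w\vartheta\rangle$ acting freely on $\Phi$, i.e. to $\Z$-regularity. Combined with step (i), this gives: $(G,V)$ is stable iff $\frakh$ admits a $\theta$-stable Cartan subalgebra whose type is an elliptic $\Z$-regular element of order $m$ in $W\vartheta$ --- and, all such classes being realized for principal $\theta$, this is precisely the condition in the theorem.

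The hard part will be the substantive half of the translation in step (iii): producing, from the purely combinatorial condition that $\langle w\vartheta\rangle$ act freely on $\Phi$, an actual regular eigenvector --- that is, showing $\frakt\cap V$ is genuinely a Cartan subspace. This is exactly Springer's regularity theorem, and it is where \cite[\S7]{GrossLevyReederYu-GradingsPosRank} does its real work. The reverse implication is easy: a regular element $v\in\frakt\cap V$ forces free action on $\Phi$, because a root $\alpha$ with $(w\vartheta)^j\alpha=\alpha$ for some $0<j<m$ would satisfy $\alpha(v)=0$ by equivariance of the pairing $\frakt^*\times\frakt\to k$, contradicting regularity; this easy direction is all that the implication ``stable $\Rightarrow$ \dots'' requires. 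What remains is routine bookkeeping --- verifying that the passage between $\mu_m$-actions on $H$ and graded Lie algebras is harmless for simple $H$ of any isogeny type, and that the clause ``of order $m$'' adds nothing to ``principal''.
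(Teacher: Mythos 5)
Your proposal is correct in outline, but the paper's entire proof of Theorem \ref{theorem: characterization stable gradings} is the single line ``This is \cite[Corollary 14]{GrossLevyReederYu-GradingsPosRank}'', and your argument is essentially a reconstruction of the content of that citation: you still outsource the two genuinely hard inputs --- that a stable grading of a simple $H$ is principal, and the Springer-regularity step producing a regular $\zeta$-eigenvector from $\Z$-regularity of $w\vartheta$ --- to the same reference. So this is the same approach as the paper, just with the intermediate bookkeeping (stability $\Leftrightarrow$ $\frakz_{\frakg}(\mathfrak{c})=0$, and the dictionary between $\theta$-stable Cartan subalgebras and conjugacy classes in $W\vartheta$) written out, and that bookkeeping is sound and consistent with the paper's Propositions \ref{proposition: basic invariant theory of vinberg representation full generality} and \ref{proposition: regular semisimple iff stable}.
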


\begin{proof}
    This is \cite[Corollary 14]{GrossLevyReederYu-GradingsPosRank}.
\end{proof}

\begin{remark}
    Suppose that $k$ is algebraically closed, $H$ is simple, $\tau\colon \mu_e\hookrightarrow \Out_H$ an injective homomorphism, and $m$ an integer divisible by $e$.
    Then there exists at most one $H$-conjugacy class of stable $\mu_m$-action on $H$ of outer type $\tau$, since such a $\mu_m$-action must be principal by Theorem \ref{theorem: characterization stable gradings}.
    It is therefore justified to speak of ``the'' stable grading on $H$ of order $m$ and outer type $\tau$, provided it exists.
\end{remark}

Since elliptic $\Z$-regular elements of automorphism groups of irreducible root systems can be classified, stable $\mu_m$-actions on simple adjoint groups can be classified and the result is given in \cite[Section 8]{GrossLevyReederYu-GradingsPosRank}.
We highlight a few examples here.

\begin{example}\label{ex-m=2-stable}
    If $m=2$, then for any root system $\Phi$, $-1$ is the unique $\Z$-regular elliptic automorphism of $\Phi$ of order $m$.
    So there is a unique conjugacy class of stable $\Z/2\Z$-gradings on every simple Lie algebra $\lieh$. The principal 2-gradings of $\SO_{2n + 1}$ and $\Sp_{2n}$ described in Examples \ref{ex-Bn-2grading} and \ref{ex-Bn-2grading} are the stable 2-gradings for these groups, since $\Out_H$ is trivial in these cases.
\end{example}

\begin{example}\label{ex-cox}
Assume $H$ is simple. The Weyl group $W$ has a conjugacy class of \textit{Coxeter elements}, which are elliptic and $\bZ$-regular. The order of a Coxeter element is the Coxeter number $h$, which is given by $\max\{\height(\al) \mid \al \in \Phi\} + 1$ (here $\height(\al)$ is defined as in Example \ref{ex-princ-inner}, and this maximum does not depend on choice of simple roots). 
The inner stable $\mu_h$-action on $H$ is called the \textit{Coxeter grading}. For this grading we have 
$G = T$ and
\begin{equation*}
V = \sum_{i = 0}^\ell \frakh_{\al_i},
\end{equation*}
where $\al_0 \in \Phi$ is the unique root of lowest height. 
If $\{v_0, \dots, v_\ell \}$ is a basis of root vectors of $V$, then a vector $v = \sum c_iv_i$ is stable if and only if $\prod c_i \neq 0$. 
Note that if $\frakh$ has an inner stable $m$-grading, then $m$ is the order of a regular element in $W$, so $m$ divides at least one of the invariant degrees of $\frakh$ by \cite[Theorem 4.2(i)-(ii)]{Springer}, hence $m\leq h$.
So $h$ is maximal in this sense.
\end{example}

The tables of \cite{GrossLevyReederYu-GradingsPosRank} only describe stable gradings under the assumption that $H$ is simple, and one might wonder whether other interesting examples of stable gradings arise from replacing a simple group with a semisimple group. The answer, essentially, is no. To see this, assume $H$ is semisimple and $k$ is algebraically closed. For simplicity, assume $H$ is simply connected, so $H = H^1 \times \dots \times H^n$ is a product of simple groups. (It is enough to consider this case since every stable grading of a semisimple group lifts to a stable grading of its simply connected cover.) The automorphism $\theta$ induces a permutation $\sigma$ of $\{1, \dots, n\}$ via $\theta(H^i) = H^{\sigma(i)}$ for all $i$. 

As a special case, assume $\sigma$ is the $n$-cycle $(1~2\dots n)$. 
In this case all of the simple factors $H^i$ are isomorphic, $n \vert m$, and $\theta^n$ preserves $H^i$ for each $i$. We see that $\theta^n: \mu_{m/n} \to \Aut(H^1)$ is an $m/n$-grading on $H^1$. It's not hard to see that projection $H \to H^1$ gives an isomorphism $H^\theta \to H^{\theta^n}$, and projection $\frakh \to \frakh^1$ induces an isomorphism from the $\zeta_m$-eigenspace for $\theta$ to the $\zeta_m^n$-eigenspace for $\theta^n$. Thus $\theta$ is a stable $\mu_m$-action on $H$ if and only if $\theta^n$ is a stable $\mu_{m/n}$-action on $H^1$.

Generalizing from this case, for each orbit $\mathcal{O}$ for the action of $\langle \sigma \ra$ on $\{1, 2, \dots, n\}$, we see that $\theta$ restricts to a $\mu_m$-action on $H^{\mathcal{O}} := \prod_{i \in \mathcal{O}} H^i$. The $\mu_m$-action on $H$ is stable if and only if the restricted $\mu_m$-action on $H^{\mathcal{O}}$ is stable for each orbit $\mathcal{O}$. By the previous case, this $\mu_m$-action is stable if and only if the action of $\theta^{\lvert \mathcal{O} \rvert}$ is stable on some (equivalently any) $H^j$ for $j \in \mathcal{O}$.
Thus every Vinberg representation $(G, V)$ arising from a simply connected, semisimple group $H$ is of the form $G \simeq G_1 \times \dots \times G_d$, $V \simeq V_1 \oplus \dots \oplus V_d$, where each $(G_i, V_i)$ is a Vinberg representation arising from a simple group $H_i$, and $(G,V)$ is stable if and only if each $(G_i,V_i)$ is stable.

Similarly, one can use an understanding of stable gradings for semisimple groups, along with Example \ref{ex-torus}, to describe stable gradings for reductive groups. Indeed, if $H$ is reductive, then 
$\theta: \mu_m \to \Aut_H$ induces a $\mu_m$-action on both the derived subgroup $H^{\mathrm{der}}$ and the center $Z_H$. 
It's not hard to see that the grading given by $\theta$ is stable if and only if $(Z_H)^\theta$ is finite and the corresponding grading of $H^{\mathrm{der}}$ is stable. Using this idea, one can show that the assumption that $H$ is simple is unnecessary in Theorem \ref{theorem: characterization stable gradings}, but we choose not to give the details here, since it would be too much of a digression.

For later use, we record two properties of stable $\mu_m$-actions.

\begin{lemma}
    Suppose that $\theta$ is stable $\mu_m$-action on $H$ and that $x\in \lieh_1$ is semisimple.
    Let $L = Z_H(x)$.
    Then $L$ is a (connected) reductive group, and $\theta$ restricts to a stable $\mu_m$-action on $L$. 
\end{lemma}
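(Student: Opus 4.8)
The plan is to verify the three claims in turn: (i) $L = Z_H(x)$ is connected reductive, (ii) $\theta$ restricts to a $\mu_m$-action on $L$, and (iii) this restricted action is stable. Claim (ii) is immediate: since $x \in \lieh_1$ and $\theta(\zeta)$ acts on $\lieh_1$ by the scalar $\zeta$, for any $h \in L_{\bar k}$ and $\zeta \in \mu_m(\bar k)$ we have $\Ad(\theta(\zeta)(h))(x) = \theta(\zeta)\bigl(\Ad(h)(\theta(\zeta)^{-1}(x))\bigr) = \theta(\zeta)(\Ad(h)(\zeta^{-1}x)) = \zeta^{-1}\theta(\zeta)(x) = x$, so $\theta(\zeta)$ preserves $L_{\bar k}$; the $\Gamma_k$-equivariance is inherited. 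For claim (i), connectedness and reductivity of the centralizer of a semisimple element is standard in characteristic zero: since $x$ is semisimple, $L = Z_H(x)$ is the centralizer of a (connected, diagonalizable) subgroup — namely the Zariski closure of $\exp(\bar k x)$ inside a torus, or more carefully one passes to a semisimple element of $H$ via the usual argument — and centralizers of tori in connected reductive groups are connected reductive. I would cite the relevant statement from the structure theory of reductive groups (e.g.\ Humphreys, or \cite{Conrad-SGA}) rather than reprove it. Note $L$ need not be semisimple, but it is reductive, which is all that the notion of ``stable Vinberg representation'' requires.

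For claim (iii), the key input is part (2) of Proposition~\ref{proposition: basic invariant theory of vinberg representation full generality} together with the characterization of stability. Write $(G,V) = ((H^\theta)^\circ, \lieh_1)$ for the ambient Vinberg representation and $(G_L, V_L) = ((L^\theta)^\circ, \liel_1)$ for the one attached to $L$, where $\liel = \Lie(L) = \frakz_\lieh(x)$ and $\liel_1 = \liel \cap \lieh_1$. We may assume $k = \bar k$. Since $(G,V)$ is stable, pick a stable vector $v_0 \in V$; after replacing $x$ by a $G(\bar k)$-conjugate (which changes $L$ by a conjugate and hence does not affect stability of the restricted grading), we may arrange that $x$ lies in the closure relationship we need — more precisely, I would instead argue directly: choose a generic element $v \in V_L$ lying over a generic point $b$ of $V_L \GIT G_L$. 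The first step is to show $\liel_1$ is ``large enough,'' i.e.\ that a Cartan subspace $\frakc \subset V$ of the ambient representation can be chosen inside $\liel_1$; this follows because $x$ is semisimple in $\lieh_1$, hence lies in some Cartan subspace $\frakc$ of $V$ by Proposition~1(3), and every element of $\frakc$ commutes with $x$, so $\frakc \subset \frakz_\lieh(x) \cap \lieh_1 = \liel_1$, and $\frakc$ is then also a Cartan subspace of $V_L$. Consequently the generic fibers of $V_L \GIT G_L$ have the same dimension count behavior, and a generic $v \in \frakc \subset V_L$ is semisimple with $\frakz_{\liel}(v) \subset \frakz_{\liel}(\frakc)$; stability will follow once we check the generic stabilizer $Z_{G_L}(v)$ is finite and the orbit is closed.

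The main obstacle is verifying finiteness of the generic stabilizer $Z_{G_L}(v)$ for $v \in \frakc$ generic. Here I would use: $Z_G(v)$ is finite for $v$ generic in $\frakc$ (because $(G,V)$ is stable and $\frakc$ meets the stable locus — generic semisimple elements of a stable Vinberg representation are stable, as their stabilizers are finite by the identification $Z_G(v) = Z_G(\frakc)$-type arguments for $v$ regular in $\frakc$, i.e.\ $W(\frakc)$ acting faithfully). Then $Z_{G_L}(v) \subset Z_G(v)$ is a closed subgroup scheme of a finite group scheme, hence finite. For closedness of the orbit $G_L \cdot v$: since $v$ is semisimple in $\liel_1$, the orbit $G_L(\bar k)\cdot v$ is the unique closed orbit in its fiber of $V_L \to V_L \GIT G_L$ by Proposition~1(2) applied to $(G_L, V_L)$ — this uses only that $L$ is reductive, established in claim (i). Therefore $v$ is stable, so $(G_L, V_L)$ is a stable Vinberg representation, i.e.\ the restricted $\mu_m$-action on $L$ is stable. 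The one point requiring care is ensuring $\frakc$ genuinely meets the stable locus of $V$ and that finiteness of $Z_G(v)$ for $v \in \frakc$ generic is available; this is exactly the content of the classification in Theorem~\ref{theorem: characterization stable gradings} (the Cartan subspace is where the elliptic $\Z$-regular element acts, and ellipticity forces the generic stabilizer to be finite), so I would invoke that characterization to close the gap.
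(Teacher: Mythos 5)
Your proposal is correct and follows essentially the same route as the paper: choose a Cartan subspace $\mathfrak{c}$ containing $x$, observe that $\mathfrak{c}\subset \frakz_{\frakh}(x)\cap\frakh_1$ and (by conjugacy of Cartan subspaces) contains $G$-stable vectors, and check that such a vector remains stable for $(L^{\theta})^{\circ}$ via $Z_{(L^\theta)^\circ}(v)\subset Z_G(v)$ and closedness of the orbit of a semisimple element. The paper's proof is just a terser version of this, leaving implicit the details you spell out about finiteness of the smaller stabilizer and closedness of the smaller orbit.
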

\begin{proof}
    We may assume $k$ is algebraically closed.
    The connectedness of $L$ is \cite[Corollary 3.11]{Steinberg-Torsioninreductivegroups}.
    Choose a Cartan subspace $\mathfrak{c}\subset \lieh_1$ containing $x$. 
    Since every stable element is semisimple (\cite[Lemma 13]{GrossLevyReederYu-GradingsPosRank}), hence contained in a Cartan subspace, and every two Cartan subspaces are conjugate, $\mathfrak{c}$ contains stable vectors for the $G$-action.
    But such vectors lie in $\Lie(L)_1$ and are also stable for the $(L^{\theta})^0$-action, proving the lemma.
\end{proof}

The \emph{discriminant polynomial} $\Delta_0 \in k[\lieh]^H$ is the image of $\prod_{\al \in \Phi} \alpha \in k[\mathfrak{t}]^W$ under the Chevalley restriction isomorphism $k[\mathfrak{t}]^W \xrightarrow{\sim} k[\lieh]^H$; it is independent of the choice of $\mathfrak{t}$.
For $x\in \lieh$ we have $\Delta_0(x) \neq 0$ if and only if $x$ is regular semisimple.
Let $\Delta\in k[V]^G$ be the restriction of $\Delta_0$ to $V$.

\begin{proposition}\label{proposition: regular semisimple iff stable}
    Suppose that $\theta$ is a stable $\mu_m$-action on $H$ and that $(G,V)$ is the associated Vinberg representation.
    Let $x\in V$.
    Then $x$ is stable if and only if $x$ is regular semisimple if and only if $\Delta(x) \neq 0$.
    Moreover, if $x$ is stable, then for every primitive $m$th root of unity $\zeta$, the induced automorphism $\theta(\zeta)$ on the maximal torus $Z_H(x)$ is $\Z$-regular and elliptic.
\end{proposition}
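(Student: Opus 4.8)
The plan is to prove the three-fold equivalence together with the supplementary assertion, and I would begin with two observations that simplify matters. First, all three conditions on $x$ are insensitive to extension of the base field — finiteness of $Z_G(x)$, closedness of $G\cdot x$, and (non)vanishing of $\Delta(x)$ are all checkable over $\bar k$ — so I may assume $k=\bar k$. Second, the equivalence ``$x$ regular semisimple $\iff\Delta(x)\ne 0$'' is then immediate: $\Delta$ is by definition the restriction to $V\subset\frakh$ of $\Delta_0\in k[\frakh]^H$, and $\Delta_0$ is nonzero precisely on the regular semisimple locus of $\frakh$, so $\Delta(x)=\Delta_0(x)\ne 0$ iff $x$ is regular semisimple as an element of $\frakh$, which is the definition. (That $\Delta$ does not vanish identically on $V$ follows a posteriori once stable vectors are shown to be regular semisimple.) So the real content is to prove that $x$ is stable if and only if $x$ is regular semisimple.

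For the implication ``stable $\Rightarrow$ regular semisimple'', suppose $x$ is stable. Then $x$ is semisimple by \cite[Lemma 13]{GrossLevyReederYu-GradingsPosRank}, so $\frakz_\frakh(x)$ is a reductive subalgebra; since $x$ is homogeneous of degree $1$ it is a graded subalgebra, and its degree-$0$ part is $\frakz_\frakg(x)=\Lie Z_G(x)$, which is $0$ because $Z_G(x)$ is finite and we are in characteristic zero. Writing $\frakz_\frakh(x)=\frakz\oplus\fraks$ for its centre and its (semisimple) derived subalgebra — a $\theta$-stable decomposition, as both factors are characteristic — we get $\fraks_0=0$, i.e.\ the finite-order automorphism $\theta(\zeta)$ fixes nothing in the semisimple Lie algebra $\fraks$. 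I would then invoke the classical fact that a finite-order automorphism of a nonzero semisimple Lie algebra in characteristic zero has nonzero fixed subalgebra to conclude $\fraks=0$; hence $\frakz_\frakh(x)=\frakz$ is a toral subalgebra, so $\dim\frakz_\frakh(x)\le\rank\frakh$, which together with the opposite inequality (valid for any element) forces $x$ to be regular and $\frakz_\frakh(x)$ to be a Cartan subalgebra, $Z_H(x)$ a maximal torus. This appeal to the vanishing-of-fixed-subalgebra fact is the one genuinely non-formal ingredient; everything else is bookkeeping with the grading.

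For ``regular semisimple $\Rightarrow$ stable'', put $T=Z_H(x)$ and $\frakt=\Lie T=\frakz_\frakh(x)$, a Cartan subalgebra; since $\theta(\zeta)$ scales $x$ it preserves $T$, so $\frakt$ is graded with $x\in\frakt_1$. One checks $\frakt_1$ is a Cartan subspace of $V$: it is abelian, consists of semisimple elements, and any commuting semisimple $v\in V$ centralizes the regular element $x$, hence lies in $\frakt\cap\frakh_1=\frakt_1$. Now use the stability hypothesis to pick a stable vector $v_0\in V$; by the implication just proved $v_0$ is regular semisimple, $T_0:=Z_H(v_0)$ is a maximal torus, and re-running that argument gives $(\Lie T_0)_0=\frakz_\frakg(v_0)=0$, i.e.\ $\theta(\zeta)|_{T_0}$ is elliptic, with $(\Lie T_0)_1$ again a Cartan subspace. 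Since all Cartan subspaces are $G$-conjugate (Proposition \ref{proposition: basic invariant theory of vinberg representation full generality}), some $g\in G$ carries $\frakt_1$ to $(\Lie T_0)_1$; taking centralizers in $\frakh$ and using $\frakz_\frakh(\frakt_1)=\frakz_\frakh(x)=\frakt$ shows $gTg^{-1}=T_0$. As $g\in(H^\theta)^\circ$ commutes with $\theta(\zeta)$ in $\Aut H$, the automorphism $\theta(\zeta)|_T$ is conjugate to the elliptic $\theta(\zeta)|_{T_0}$, hence $\frakt_0=0$. Therefore $\frakz_\frakg(x)=\frakt\cap\frakh_0=0$, so $Z_G(x)$ is finite, and $G\cdot x$ is closed since semisimple orbits are closed (Proposition \ref{proposition: basic invariant theory of vinberg representation full generality}); thus $x$ is stable.

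Finally, for the ``moreover'', let $x$ be stable, hence regular semisimple with $T=Z_H(x)$ a maximal torus on which (by the previous paragraph) $\theta(\zeta)$ acts elliptically. Let $\sigma$ be the automorphism of $(X^*(T),\Phi)$ induced by $\theta(\zeta)|_T$, so $\theta(\zeta)\cdot\frakh_\alpha=\frakh_{\sigma\alpha}$ for each root $\alpha$. Applying $\theta(\zeta)$ to $[x,u]=\alpha(x)u$ for a nonzero $u\in\frakh_\alpha$ and using $\theta(\zeta)x=\zeta x$ yields $\zeta\,(\sigma\alpha)(x)=\alpha(x)$, hence $(\sigma^j\alpha)(x)=\zeta^{-j}\alpha(x)$ for all $j$; since $x$ is regular, $\alpha(x)\ne 0$, so $\sigma^j\alpha=\alpha$ forces $\zeta^j=1$, i.e.\ $m\mid j$. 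As $\sigma^m=\mathrm{id}$, every $\langle\sigma\rangle$-orbit on $\Phi$ has size exactly $m$, so $\sigma$ has order $m$ and acts freely on $\Phi$; translating back, $\theta(\zeta)|_T$ is $\bZ$-regular, elliptic, of order $m$. The argument uses nothing special about $\zeta$, so it applies to every primitive $m$th root of unity.
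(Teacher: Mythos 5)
Your proof is correct, and its core is the same as the paper's: the converse direction (regular semisimple $\Rightarrow$ stable) is in both cases a transport argument using the conjugacy of Cartan subspaces to carry a known stable vector's maximal torus onto $Z_H(x)$, which the paper packages as a separate lemma (``$\theta$ restricts to a stable action on $Z_H(x)$'') and you carry out by hand. The two places where you genuinely diverge are both in the direction of more self-containment: the paper cites \cite[Lemma 13]{GrossLevyReederYu-GradingsPosRank} for the full implication ``stable $\Rightarrow$ regular semisimple,'' whereas you use it only for semisimplicity and then deduce regularity from $\frakz_{\frakg}(x)=0$ via the decomposition $\frakz_{\frakh}(x)=\frakz\oplus\fraks$ and the fact that a finite-order automorphism of a nonzero semisimple Lie algebra in characteristic zero has nonzero fixed subalgebra; and for the ``moreover'' you make the $\Z$-regularity explicit with the computation $(\sigma\alpha)(x)=\zeta^{-1}\alpha(x)$, which the paper compresses into the remark that the same argument ``also justifies the final sentence.'' Both extras check out; the argument as written has no gaps.
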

\begin{proof}
    If $x$ is stable, then $x$ is regular semisimple by \cite[Lemma 13]{GrossLevyReederYu-GradingsPosRank}. 
    Conversely, assume $x$ is regular semisimple. 
    By the previous lemma, the action of $\theta$ on the maximal torus $S = Z_H(x) \subset H$ is stable. 
    Since the conjugation action of $S$ on itself is trivial, this is equivalent to the statement that $(S^{\theta})^{\circ}$ is finite.
    This implies that $S^{\theta}$ and hence $Z_G(x) \subset Z_H(x)^{\theta} = S^{\theta}$ is finite. Since $x$ is semisimple, the orbit $G\cdot x$ is closed by a result Vinberg \cite[Prop. 3]{Vinberg-theweylgroupofgraded}.
    Combining the last two sentences shows that $x$ is indeed stable for the $G$-action.
    This argument also justifies the final sentence of the proposition.
\end{proof}

\subsection{Stable gradings over general fields}\label{subsec: stable gradings over general fields}

Let $H/k$ be a reductive group.
We make some remarks about when principal gradings on $H_{\bar{k}}$ descend to gradings on $H$; this section will not be used later in the paper.
Recall that given a $\mu_m$-grading $\theta\colon \mu_m \rightarrow \Aut_H$, there is an associated outer type $\tau\colon \mu_e\hookrightarrow \Out_H$.

\begin{lemma}\label{lemma: mum-action descends if and only if outer type descends}
    Suppose $H$ is a quasi-split reductive group over $k$.
    Let $\theta_0$ be a principal $\mu_{m,\bar{k}}$-action on $H_{\bar{k}}$ with outer type $\tau_0\colon \mu_{e,\bar{k}}\rightarrow \Out_{H_{\bar{k}}}$.
    Then there exists a $\mu_m$-action $\theta$ on $H$ such that $\theta_{\bar{k}}$ is $H_{\bar{k}}$-conjugate to $\theta_0$ if and only if $\tau_0$ is $\Gamma_k$-equivariant, i.e. descends to a (necessarily unique) homomorphism $\tau\colon \mu_e\rightarrow \Out_H$.
\end{lemma}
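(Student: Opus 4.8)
The plan is to use the pinning-splitting of the automorphism sequence to reduce the descent problem for $\mu_m$-actions to descent of the outer type, together with the rigidity of principal $\mu_m$-actions as $H^{\mathrm{ad}}(\bar k)$-conjugacy classes. Since $H$ is quasi-split over $k$, fix a $k$-rational pinning $(T,B,\{E_i\})$; this gives a $\Gamma_k$-stable splitting $\Aut_{H_{\bar k}} = H^{\mathrm{ad}}_{\bar k}\rtimes \Theta_{H_{\bar k}}$, where $\Theta_{H_{\bar k}}$ is the subgroup of pinned automorphisms, and $\Theta_H$ is already defined over $k$. The one direction is formal: if $\theta$ is a $\mu_m$-action on $H$ over $k$, postcomposing with $\Aut_H\to\Out_H$ produces a $k$-morphism of group schemes $\mu_m\to\Out_H$, which factors through the outer type $\tau\colon\mu_e\hookrightarrow\Out_H$ over $k$; base-changing to $\bar k$ recovers $\tau_0$ (up to the identification of outer types of $H_{\bar k}$-conjugate actions), so $\tau_0$ descends. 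Uniqueness of $\tau$ is clear since $\Out_H$ is separated.

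For the converse, suppose $\tau_0$ descends to $\tau\colon \mu_e\to\Out_H$ over $k$. Using the $k$-rational pinning, define $\theta\colon \mu_m\to\Aut_H$ over $k$ by the pinned formula \eqref{equation: definition pinned principal mu_m action}, namely $\theta(\zeta)=\Ad(\check\rho(\zeta))\circ\tau(\zeta^{m/e})$; here $\check\rho\in X_*(T^{\mathrm{ad}})$ is defined over $k$ because the pinning is, and $\tau$ lands in the $k$-subgroup $\Theta_H\subset\Aut_H$ via the splitting, so the composite is a genuine morphism of $k$-group schemes and $\check\rho(\zeta)$ commutes with $\tau(\zeta^{m/e})$ as in the discussion preceding Definition~\ref{definition: stable gradings}'s neighborhood. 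This $\theta$ is principal over $k$ by construction and has outer type $\tau$, so $\theta_{\bar k}$ is a principal $\mu_{m,\bar k}$-action on $H_{\bar k}$ with the same outer type $\tau_0$ as $\theta_0$. It remains to check $\theta_{\bar k}$ is $H_{\bar k}$-conjugate to $\theta_0$.

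The last point is where the only real content lies: one must know that a principal $\mu_m$-action on $H_{\bar k}$ is determined up to $H^{\mathrm{ad}}(\bar k)$-conjugacy by its outer type and the order $m$. This follows from the same rigidity argument used in the proof of Lemma~\ref{lemma: split principal if and only if regular nilpotent exists}: both $\theta_{\bar k}$ and $\theta_0$ are $H^{\mathrm{ad}}(\bar k)$-conjugate to pinned principal actions of the form \eqref{equation: definition pinned principal mu_m action} for (conjugate) pinnings and for outer-type injections that become equal after conjugating by the unique pinning-conjugating element of $H^{\mathrm{ad}}(\bar k)$; having matched the pinnings, the two pinned actions associated to $\check\rho$ and a common $\vartheta\in\Theta_{H_{\bar k}}$ literally coincide. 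Since $k$ has characteristic zero, $H^{\mathrm{ad}}(\bar k)$-conjugacy over $\bar k$ is the same as $H_{\bar k}$-conjugacy (over an algebraically closed field the map $H(\bar k)\to H^{\mathrm{ad}}(\bar k)$ is surjective), which gives the claimed conjugacy. The main obstacle to watch is purely bookkeeping: ensuring that the outer type of an $H_{\bar k}$-conjugate of $\theta_0$ is canonically independent of the conjugation (so that "$\tau_0$" is well-defined as a $\Gamma_k$-object), and that the splitting furnished by the $k$-rational pinning really does make $\Theta_H$ a $k$-subgroup of $\Aut_H$ through which $\tau$ factors; both are handled by \cite[Theorem~7.1.9]{Conrad-SGA} and \cite[Section~1.5]{Conrad-SGA}, already cited in the paper.
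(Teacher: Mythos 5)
Your proof is correct and follows essentially the same route as the paper's: the forward direction is the formal observation that conjugate actions share an outer type, and the converse takes the split principal action attached to the descended $\tau$ and a $k$-rational pinning. The extra rigidity step you spell out (that principal $\mu_m$-actions over $\bar{k}$ with the same outer type are $H(\bar{k})$-conjugate, via uniqueness of the pinning-conjugating element of $H^{\mathrm{ad}}(\bar{k})$) is left implicit in the paper but is exactly what its one-line converse relies on.
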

\begin{proof}
    If such a $\theta$ exists, then its outer type $\tau\colon \mu_e\rightarrow \Out_H$ must have the property that $\tau_{\bar{k}} = \tau_0$, since the outer types of two $H_{\bar{k}}$-conjugate $\mu_m$-actions must agree.
    Conversely, if $\tau_0$ descends to a homomorphism $\tau\colon \mu_e\rightarrow \Out_H$, we may take $\theta$ to be the split principal $\mu_m$-action associated to $\tau$ and a pinning of $H$.
\end{proof}

\begin{example}\label{example: field of definition gradings outer type 1 or 2}
    In the notation of the lemma, suppose that $\tau_0$ is trivial, i.e., $e=1$.
    Then every principal $\mu_m$-action on $H_{\bar{k}}$-can be defined over $k$.
    Similarly, if $e=2$, then every finite group scheme of order $e$ is isomorphic to $\mu_2\simeq \Z/2\Z$, so $\tau_0$ is automatically Galois equivariant.
\end{example}

Suppose that $H$ is quasi-split and $H_{\bar{k}}$ is adjoint and simple. 
    By the classification of Dynkin diagrams and their automorphism groups, we can describe $\Out_H$ very explicitly. 
    If $H$ is not of type $D_4$, then $\Out_H$ is trivial or isomorphic to $\Z/2\Z\simeq \mu_2$, and by Example \ref{example: field of definition gradings outer type 1 or 2} every principal $\mu_m$-action on $H_{\bar{k}}$ (in particular, every stable $\mu_m$-action) can be defined over $k$ by taking split principal gradings.

    It remains to consider $H$ of type $D_4$.
    Then $\Out_H$ is a finite \'etale group scheme with $\Out_{H, \bar{k}} \simeq S_3$.
    Since $H$ is quasi-split, the isomorphism class of $H$ determines a homomorphism $c\colon \Gamma_k \rightarrow S_3$, in other words, an \'etale $k$-algebra $K$ of degree $3$.
    The group scheme $\Out_{H}$ is a twist of the group $S_3$, with cocycle given by the composition $\ad\circ c\colon \Gamma_k \rightarrow \Aut(S_3)$, where $\ad\colon S_3\rightarrow \Aut(S_3)$ denotes conjugation; see \cite[Section 2 and Section 5.2]{GanSavin-TwistedBhargavaCubes} for these claims.
    From this description, it is easy to work out explicitly which injective homomorphisms $\mu_{3, \bar{k}} \rightarrow \Out_{H_{\bar{k}}}$ can be defined over $k$. 
    By postcomposing $c$ with the sign homomorphism $\mathrm{sgn}\colon S_3\rightarrow \{\pm 1\}$, we get a homomorphism $\Gamma_k \rightarrow \{\pm 1\}$, in other words a quadratic etale $k$-algebra $K_0$.
    Let $C\leq \Out_H$ be the unique subgroup scheme of order $3$.
    This is a twist of $\Z/3\Z$ with Galois action cut out by the quadratic character $\mathrm{sgn}\circ c$.
    It follows that $C\simeq \mu_3$ if and only if $\mathrm{sgn}\circ c$ corresponds to the Galois action on $\mu_3$, if and only if $K_0 \simeq \Q(\omega)\otimes_{\Q} k$ as $k$-algebras, where $\omega$ is a primitive third root of unity.
    So there exists an injective homomorphism $\tau\colon \mu_3\rightarrow \Out_H$ if and only if $K_0\simeq \Q(\omega) \otimes_{\Q} k$.

    Now suppose $k =\Q$ and let $H$ be the quasi-split adjoint group of type $D_4$ corresponding to the etale cubic algebra $K = \Q(\omega) \times \Q$.
    Then $K_0 = \Q(\omega)$ so by the previous paragraph there exists a (unique up to inversion) injective homomorphism $\tau\colon\mu_3\rightarrow  \Out_H$.
    Let $(T,B,\{E_i\})$ be a pinning of $H$ and let $\theta$ be the split principal $\mu_3$-action associated to this pinning and $\tau$.
    Let $(G,V)$ be the associated Vinberg representation. 
    Then $G\simeq \PGL_3$ and $V = \Sym^3(3)$ is the space of ternary cubic forms, with action given by $([g]\cdot F)(X,Y,Z) = F((X,Y,Z)\cdot g)/\det(g)$ where $[g]\in \PGL_3$ and $F\in \Q[X,Y,Z]_{\deg=3}$.
    This is the representation used in the study of $3$-descent on elliptic curves \cite{BS-3Selmer}.
    We have just exhibited it as a Vinberg representation arising from a quasi-split group of type $D_4$. 
    By Lemma \ref{lemma: mum-action descends if and only if outer type descends}, this $\mu_3$-action cannot be defined on the split form of type $D_4$, so the added generality of considering quasi-split forms is necessary to incorporate this natural example.

\subsection{Centralizers}

Let $H$ be a reductive group over $k$ with $\mu_m$-action $\theta\colon \mu_m \rightarrow \Aut_H$, giving rise to a $\Z/m\Z$-grading $\frakh = \oplus \frakh_i$ on the Lie algebra of $H$.
If $x\in \frakh$, write $\frak{z}_{\frakh_i}(x) = \frakz_{\frakh}(x)\cap \frakh_i$ for the intersection of the centralizer of $x$ in $\frakh$ with $\frakh_i$.
For later purposes, we record some lemmas concerning the dimensions of these centralizers.

\begin{lemma}\label{lem-pany}
If $x \in \frakh_{-1}$ and $n\in \Z$, then 
\begin{equation*}
    \dim \frakh_n - \dim \frakz_{\frakh_n}(x) = \dim \frakh_{1-n} - \dim \frakz_{\frakh_{1 - n}}(x).
\end{equation*}
If $x$ is additionally semisimple, then $\dim \frakh_n - \dim \frakz_{\frakh_n}(x)$ is independent of $n$.
\end{lemma}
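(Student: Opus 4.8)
The plan is to interpret each side of the identity as the rank of $\ad(x)$ between two graded pieces of $\frakh$, and then to compare these ranks by means of an invariant bilinear form. Since all the dimensions involved are unchanged by extension of scalars, I may freely pass to $\bar k$ (or to $\mathbb{C}$). First I would fix a nondegenerate symmetric bilinear form $\kappa$ on $\frakh$ that is invariant both under the adjoint action of $\frakh$ and under $\theta$. Such a form exists: the decomposition $\frakh=\frakz(\frakh)\oplus[\frakh,\frakh]$ is $\theta$-stable; the Killing form serves on the semisimple summand $[\frakh,\frakh]$ (it is invariant under every automorphism); and on the central torus $\frakz(\frakh)$ the operator $\theta(\zeta)$ acts by a finite-order integral matrix on a cocharacter lattice, so its $\zeta^i$- and $\zeta^{-i}$-eigenspaces have equal dimension (complex conjugation) and a $\theta$-invariant nondegenerate symmetric form exists there too. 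Invariance of $\kappa$ under $\theta$ forces $\kappa(\frakh_i,\frakh_j)=0$ whenever $i+j\not\equiv 0\pmod m$, and then nondegeneracy of $\kappa$ makes the restriction $\kappa\colon\frakh_i\times\frakh_{-i}\to k$ a perfect pairing for every $i$.

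Next, since $x\in\frakh_{-1}$, the operator $\ad(x)$ carries $\frakh_n$ into $\frakh_{n-1}$ with kernel $\frakz_{\frakh_n}(x)$, so
\[
\dim\frakh_n-\dim\frakz_{\frakh_n}(x)=\rank\bigl(\ad(x)\colon\frakh_n\to\frakh_{n-1}\bigr).
\]
For $a\in\frakh_n$ and $b\in\frakh_{1-n}$, invariance of $\kappa$ gives $\kappa([x,a],b)=-\kappa(a,[x,b])$, where both pairings are legitimate because $(n-1)+(1-n)=0$ and $n+(-n)=0$. Hence, with respect to the perfect pairings $\frakh_n\times\frakh_{-n}\to k$ and $\frakh_{n-1}\times\frakh_{1-n}\to k$, the transpose of $\ad(x)\colon\frakh_n\to\frakh_{n-1}$ is $-\ad(x)\colon\frakh_{1-n}\to\frakh_{-n}$. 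A linear map and its transpose have equal rank, so $\rank\bigl(\ad(x)\colon\frakh_n\to\frakh_{n-1}\bigr)=\rank\bigl(\ad(x)\colon\frakh_{1-n}\to\frakh_{-n}\bigr)=\dim\frakh_{1-n}-\dim\frakz_{\frakh_{1-n}}(x)$, which is the first identity.

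Finally, if $x$ is semisimple then $\ad(x)$ is a semisimple endomorphism of $\frakh$, hence $\frakh=\frakz_{\frakh}(x)\oplus[x,\frakh]$, and both summands are $\Z/m\Z$-graded since $\ad(x)$ is homogeneous of degree $-1$. Intersecting with $\frakh_j$ yields $\frakh_j=\frakz_{\frakh_j}(x)\oplus[x,\frakh_{j+1}]$, and $\dim[x,\frakh_{j+1}]=\dim\frakh_{j+1}-\dim\frakz_{\frakh_{j+1}}(x)$ by the rank-nullity theorem applied to $\ad(x)|_{\frakh_{j+1}}$. Therefore $\dim\frakh_j-\dim\frakz_{\frakh_j}(x)=\dim\frakh_{j+1}-\dim\frakz_{\frakh_{j+1}}(x)$ for every $j$, so this quantity does not depend on $n$. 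The only input that is not pure linear algebra with the grading is the construction of the invariant form $\kappa$ in the first step; once that is available, the rest follows formally (this lemma is standard in Vinberg theory, cf.\ the references cited above).
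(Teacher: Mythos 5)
Your proof is correct and rests on the same key idea as the paper's: an invariant bilinear form pairing $\frakh_i$ perfectly with $\frakh_{-i}$, which identifies $\ad(x)\colon\frakh_n\to\frakh_{n-1}$ with (the negative of the transpose of) $\ad(x)\colon\frakh_{1-n}\to\frakh_{-n}$ — the paper phrases this as the form $\kappa_x(y,z)=\kappa([x,y],z)$ inducing a perfect pairing of $\frakh_n/\frakz_{\frakh_n}(x)$ with $\frakh_{1-n}/\frakz_{\frakh_{1-n}}(x)$, after reducing to the semisimple case, whereas you build a nondegenerate invariant form on all of $\frakh$ directly. The only substantive difference is that you prove the semisimple statement from the graded decomposition $\frakh=\frakz_{\frakh}(x)\oplus[x,\frakh]$ rather than citing Panyushev, which is a clean and correct self-contained alternative.
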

\begin{proof}
The second statement is in \cite[Proposition 2.1]{Panyushev-Invarianttheorythetagroups}. The first statement follows from modifying the proof of that proposition; we provide the details.
Since there is a $\theta$-stable direct sum decomposition $\frakh = [\frakh,\frakh] \oplus \frak{z}(\frakh)$ with $[\frakh,\frakh]$ semisimple, and since the lemma is obvious when $\frakh$ is abelian, we may assume $\frakh$ is semisimple.
Let $\kappa$ be the Killing form on $\frakh$.
If $x\in \frak{h}$ let $\kappa_x$ be the bilinear form defined by $\kappa_x(y, z) = \kappa(x, [yz]) = \kappa([x,y],z)$. 
Since $\kappa$ is nondegenerate, the radical of $\kappa_x$ equals the centralizer $\frakz_\frakh(x)$. 
Moreover, $\kappa(\frakh_i , \frakh_j) = 0$ unless $i+ j =0$, so $\kappa_x(\frakh_i, \frakh_j) = 0$ unless $i + j = 1$. 
Therefore $\kappa_x$ induces a perfect pairing between $\frakh_n/\frakz_{\frakh_n}(x)$ and $\frakh_{1-n}/\frakz_{\frakh_{1-n}}(x)$ for each $n$, so $\dim \frakh_n/\frakz_{\frakh_n}(x) = \dim \frakh_{1 - n}/\frakz_{\frakh_{1 - n}}(x)$.
\end{proof}

\begin{lemma}\label{lem-ef-centralizer} 
Let $e \in \frakh_1$ be a nilpotent element, and let $(e, h, f)$ be a normal $\fraksl_2$-triple containing $e$. Then 
\begin{equation*}
\dim \frakz_{\frakh_0}(e) = \dim \frakz_{\frakh_0}(f)
\end{equation*}
\end{lemma}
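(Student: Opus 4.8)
The statement $\dim \frakz_{\frakh_0}(e) = \dim \frakz_{\frakh_0}(f)$ is the graded analogue of the fact that, for an $\fraksl_2$-triple in a semisimple Lie algebra, the centralizers of $e$ and $f$ have equal dimension. My plan is to deduce it from Lemma \ref{lem-pany} applied with $n = 0$. That lemma, applied to the nilpotent element $f \in \frakh_{-1}$, gives
\begin{equation*}
\dim \frakh_0 - \dim \frakz_{\frakh_0}(f) = \dim \frakh_1 - \dim \frakz_{\frakh_1}(f).
\end{equation*}
So it suffices to relate the right-hand side to $\dim \frakz_{\frakh_0}(e)$, i.e. to prove $\dim \frakh_1 - \dim \frakz_{\frakh_1}(f) = \dim \frakh_0 - \dim \frakz_{\frakh_0}(e)$. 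I would obtain this by running the same Killing-form argument as in Lemma \ref{lem-pany} but now with the roles of $e$ and $f$ interchanged: since $(f,-h,e)$ is also a normal $\fraksl_2$-triple (with $e \in \frakh_1$ playing the role of the ``$\frakh_{-1}$'' element is not quite right, so instead) one applies the perfect-pairing statement of Lemma \ref{lem-pany} directly to $e$. Concretely, Lemma \ref{lem-pany} with its element taken to be $e \in \frakh_1$: the cleanest route is to note that the symmetric statement obtained by replacing $1$ with $-1$ in that lemma — valid by the identical Killing-form computation, since $\kappa_x(\frakh_i,\frakh_j) = 0$ unless $i + j = -1$ when $x \in \frakh_1$ — yields $\dim \frakh_0 - \dim \frakz_{\frakh_0}(e) = \dim \frakh_{-1} - \dim \frakz_{\frakh_{-1}}(e)$.

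**The $\fraksl_2$-theory input.** The remaining identity to establish is
\begin{equation*}
\dim \frakh_1 - \dim \frakz_{\frakh_1}(f) = \dim \frakh_{-1} - \dim \frakz_{\frakh_{-1}}(e),
\end{equation*}
together with $\dim\frakz_{\frakh_1}(f) = \dim\frakz_{\frakh_{-1}}(e)$ being false in general — so I should instead argue more symmetrically. Here is the step I actually want: decompose $\frakh$ as a module over the $\fraksl_2$ spanned by $(e,h,f)$. For each irreducible summand $V_d$ of dimension $d+1$, the map $\ad(e)\colon V_d \to V_d$ has a $1$-dimensional kernel (the highest weight line) and $\ad(f)\colon V_d \to V_d$ likewise has a $1$-dimensional kernel (the lowest weight line). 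Because $e \in \frakh_1$, $f \in \frakh_{-1}$, and $h \in \frakh_0$, each irreducible summand is graded by $\Z/m\Z$ compatibly (the weight-$j$ line of $V_d$, which has $h$-eigenvalue $j$, sits in a single $\frakh_i$), and crucially the highest weight line of $V_d$ lies in the same $\frakh_i$ as its lowest weight line precisely when $d \equiv 0 \bmod m$... which need not hold. So this naive module-counting does not immediately give the $\frakh_0$-to-$\frakh_0$ statement, which is exactly why the Killing-form pairing argument of Lemma \ref{lem-pany} is the right tool: it works degree-by-degree without needing the grading to interact nicely with the $\fraksl_2$-module structure.

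**Cleanest execution.** Given the above, the shortest correct proof is: apply Lemma \ref{lem-pany} once with the element $f \in \frakh_{-1}$ and $n = 1$ to get $\dim\frakh_1 - \dim\frakz_{\frakh_1}(f) = \dim\frakh_0 - \dim\frakz_{\frakh_0}(f)$; and apply the mirror-image of Lemma \ref{lem-pany} — the statement for an element of $\frakh_{+1}$, proved by the same Killing-form argument since $\kappa_x(\frakh_i,\frakh_j)=0$ unless $i+j=1$ gets replaced by $i+j=-1$ — with the element $e \in \frakh_1$ and the appropriate index to get $\dim\frakh_0 - \dim\frakz_{\frakh_0}(e) = \dim\frakh_{-1} - \dim\frakz_{\frakh_{-1}}(e)$. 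Then I combine these with the equality $\dim\frakh_1 - \dim\frakz_{\frakh_1}(f) = \dim\frakh_{-1} - \dim\frakz_{\frakh_{-1}}(e)$, which follows because $\ad(f)\colon \frakh_1 \to \frakh_0$ and $\ad(e)\colon \frakh_{-1}\to\frakh_0$ are transpose to each other under the Killing form restricted to $\frakh_0 \times \frakh_0$ (which is nondegenerate), so they have the same rank; hence their kernels $\frakz_{\frakh_1}(f)$ and $\frakz_{\frakh_{-1}}(e)$ have codimensions summing correctly. Chaining the three equalities yields $\dim\frakz_{\frakh_0}(e) = \dim\frakz_{\frakh_0}(f)$.

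**Main obstacle.** The one genuine subtlety is making precise that $\ad(e)\colon \frakh_{-1}\to\frakh_0$ and $\ad(f)\colon \frakh_1 \to \frakh_0$ are adjoint with respect to a \emph{nondegenerate} form on $\frakh_0$: this requires reducing to $\frakh$ semisimple (as in Lemma \ref{lem-pany}, splitting off the center, on which everything is trivial) and invoking that $\kappa|_{\frakh_0}$ is nondegenerate because $\frakh_0$ is a reductive subalgebra whose Killing form is the restriction of $\kappa$ — more carefully, $\kappa(\frakh_0,\frakh_0)$ is nondegenerate since $\kappa(\frakh_i,\frakh_j)=0$ unless $i+j\equiv 0$, and $\frakh_{-0}=\frakh_0$. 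With that in hand the rank computation for the transpose maps is routine linear algebra. Everything else is bookkeeping with Lemma \ref{lem-pany}.
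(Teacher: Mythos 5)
Your reduction of the lemma to the rank identity $\dim\frakh_1 - \dim\frakz_{\frakh_1}(f) = \dim\frakh_{-1} - \dim\frakz_{\frakh_{-1}}(e)$ is fine, and your two applications of Lemma \ref{lem-pany} (and of its mirror for elements of $\frakh_1$, which is indeed proved by the identical Killing-form computation) are correct. The gap is in the justification of that remaining identity. The maps $\ad(f)\colon \frakh_1\to\frakh_0$ and $\ad(e)\colon \frakh_{-1}\to\frakh_0$ are \emph{not} Killing-form transposes of one another: for $x\in\frakh_1$ and $z\in\frakh_0$ one has $\kappa([f,x],z) = -\kappa(x,[f,z])$, so the transpose of $\ad(f)\colon\frakh_1\to\frakh_0$ with respect to the perfect pairings $\kappa\colon\frakh_0\times\frakh_0\to k$ and $\kappa\colon\frakh_1\times\frakh_{-1}\to k$ is $-\ad(f)\colon\frakh_0\to\frakh_{-1}$ --- again an $\ad(f)$, not $\ad(e)$. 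Hence the transpose argument only reproves an instance of Lemma \ref{lem-pany} that you already have. Structurally, invariance of $\kappa$ gives $\mathrm{rank}(\ad(e)\colon\frakh_0\to\frakh_1)=\mathrm{rank}(\ad(e)\colon\frakh_{-1}\to\frakh_0)$ and $\mathrm{rank}(\ad(f)\colon\frakh_1\to\frakh_0)=\mathrm{rank}(\ad(f)\colon\frakh_0\to\frakh_{-1})$, but no Killing-form identity links an $e$-rank to an $f$-rank; that link is precisely the content of the lemma, so as justified your final step is circular.

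The missing ingredient is exactly the $\fraksl_2$-input you considered and then discarded. One does not need to match highest and lowest weight lines summand by summand; one only needs the global decomposition $\frakh = [\frakh,e]\oplus\frakz_{\frakh}(f)$ from $\fraksl_2$-representation theory. Both summands are graded subspaces (since $\ad(e)$ shifts the grading by $1$ and $f\in\frakh_{-1}$), so intersecting with $\frakh_1$ yields $\frakh_1 = [\frakh_0,e]\oplus\frakz_{\frakh_1}(f)$, whence $\dim\frakh_1-\dim\frakz_{\frakh_1}(f) = \dim\frakh_0-\dim\frakz_{\frakh_0}(e)$. Combining this with Lemma \ref{lem-pany} applied to $f$ gives the lemma; this is the paper's proof, and it makes your mirror version of Lemma \ref{lem-pany} unnecessary.
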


\begin{proof}
The triple $(e,h,f)$ determines a homomorphism $\frak{sl}_2\rightarrow \frakh$ of Lie algebras.
The restriction of the adjoint action of $\frakh$ on itself to $\frak{sl}_2$ defines an action of $\frak{sl}_2$ on $\frakh$.
Using the representation theory of $\fraksl_2$, we see that 
\begin{equation*}
\frakh = [\frakh, e] \oplus \frakz_{\frakh}(f).
\end{equation*}
Since the summands on the right hand side are $\theta$-stable, we can intersect both sides of this equality with $\frakh_1$ and get
\begin{equation*}
\frakh_1 = [\frakh_0, e] \oplus \frakz_{\frakh_1}(f).
\end{equation*}
Taking dimensions, we have
\begin{equation*}
\dim \frakh_1 = \dim [\frakh_0,e] + \dim\frakz_{\frakh_1}(f) =  \dim \frakh_0 - \dim \frakz_{\frakh_0}(e) + \dim \frakz_{\frakh_1}(f). 
\end{equation*}
The result then follows directly from Lemma \ref{lem-pany}.
\end{proof}

\begin{lemma}\label{lemma: dim centralizers vs GIT quotient}
    Assume that $\theta$ is stable and let $x\in \frakh_{-1}$.
    Then 
    \begin{equation}\label{eqn-cent-quotient}
    \dim \frakz_{\frakh_1}(x) - \dim (V\GIT G) = \dim \frakz_{\frakh_0}(x).
\end{equation}
    In particular, 
    \begin{equation}\label{eqn-quotient}
    \dim V = \dim\frakh_0 + \dim (V\GIT G).
\end{equation}
\end{lemma}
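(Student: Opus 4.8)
The plan is to concentrate the use of the stability hypothesis in a single dimension count establishing \eqref{eqn-quotient}, and then to deduce \eqref{eqn-cent-quotient} from it by a purely formal manipulation using Lemma \ref{lem-pany}. Since $\dim\frakz_{\frakh_i}(x)$, $\dim V$, $\dim\frakh_0$ and $\dim(V\GIT G)$ are all unaffected by extension of scalars, I would begin by reducing to the case $k=\bar k$, so that Proposition \ref{proposition: basic invariant theory of vinberg representation full generality} and the existence of a stable vector in $V$ are both available.

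To prove \eqref{eqn-quotient} — equivalently $\dim(V\GIT G)=\dim V-\dim G$ — choose a stable vector $v\in V$ and consider the fibre $F=\pi^{-1}(\pi(v))$ of the quotient map. By parts (1) and (4) of Proposition \ref{proposition: basic invariant theory of vinberg representation full generality} one has $\dim F=\dim V-\dim\mathfrak c=\dim V-\dim(V\GIT G)$. On the other hand, part (2) of that proposition says $F$ is a finite union of $G$-orbits; each orbit $G\cdot w\cong G/Z_G(w)$ has dimension at most $\dim G$, so $\dim F\le\dim G=\dim\frakh_0$, while the orbit $G\cdot v\subseteq F$ has dimension exactly $\dim G$ because $v$ is stable and hence $Z_G(v)$ is finite. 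Thus $\dim F=\dim\frakh_0$, and comparing with the previous expression for $\dim F$ gives $\dim V=\dim\frakh_0+\dim(V\GIT G)$.

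Finally, for arbitrary $x\in\frakh_{-1}$, Lemma \ref{lem-pany} applied with $n=1$ gives $\dim\frakh_1-\dim\frakz_{\frakh_1}(x)=\dim\frakh_0-\dim\frakz_{\frakh_0}(x)$, i.e. $\dim\frakz_{\frakh_1}(x)-\dim\frakz_{\frakh_0}(x)=\dim\frakh_1-\dim\frakh_0=\dim V-\dim\frakh_0$, and by \eqref{eqn-quotient} this last quantity equals $\dim(V\GIT G)$; this is \eqref{eqn-cent-quotient}, with \eqref{eqn-quotient} recovered as the case $x=0$. The one place that needs care is the inequality $\dim F\le\dim G$: it relies on $F$ being genuinely a finite union of orbits (the finiteness clause of Proposition \ref{proposition: basic invariant theory of vinberg representation full generality}(2)), which holds over an algebraically closed field — hence the reduction to $k=\bar k$ — and which is precisely where stability does its work, forcing the generic fibre of $\pi$ to attain the minimal possible dimension $\dim G$. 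Everything else is bookkeeping with the dimension formulas already recorded in Proposition \ref{proposition: basic invariant theory of vinberg representation full generality} and Lemma \ref{lem-pany}.
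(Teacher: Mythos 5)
Your proof is correct, and while the overall skeleton matches the paper's (establish \eqref{eqn-quotient} from a stable vector, then get \eqref{eqn-cent-quotient} by feeding it into Lemma \ref{lem-pany}), the mechanism you use for \eqref{eqn-quotient} is genuinely different. The paper takes a stable $v\in V$, invokes Proposition \ref{proposition: regular semisimple iff stable} to see that $v$ is regular semisimple with $\frakz_{\frakh_0}(v)=0$ and that $\frakz_{\frakh_1}(v)$ is a Cartan subspace, and then applies the semisimple clause of Lemma \ref{lem-pany} to $v$ itself to compare $\dim\frakz_{\frakh_1}(v)$ with $\dim\frakh_0$. You instead work entirely on the quotient side: flatness of $\pi$ and the fibre-dimension formula from Proposition \ref{proposition: basic invariant theory of vinberg representation full generality}(4) give $\dim F=\dim V-\dim(V\GIT G)$, while the finiteness of orbits in a fibre together with the finiteness of $Z_G(v)$ pin $\dim F$ down to exactly $\dim G$. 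This buys you two things: you only need the bare GIT definition of stability (finite stabilizer) rather than the regular-semisimple characterization, and you avoid applying Lemma \ref{lem-pany} to an element of $\frakh_1$ — the lemma as stated is for $x\in\frakh_{-1}$, and the paper's use of it for the stable vector $v\in\frakh_1$ tacitly relies on the obvious degree-shifted variant, a step your argument sidesteps entirely (your only invocation of Lemma \ref{lem-pany} is the verbatim $n=1$ case at the end). The cost is that your argument needs the reduction to $k=\bar k$ for the finitely-many-orbits clause, but as you note this is harmless since all the dimensions are geometric invariants.
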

\begin{proof}
   We first prove \eqref{eqn-quotient}.
    Let $v\in V$ be a stable element.
    Then $v$ is regular semisimple and $\frakz_{\frakh_0}(v)=\{0\}$, by Proposition \ref{proposition: regular semisimple iff stable}.
    Moreover $\frak{c} = \frakz_{\frakh_1}(v)$ is a Cartan subspace of $V = \frakh_1$.
    By Proposition \ref{proposition: basic invariant theory of vinberg representation full generality}, $\dim(V\GIT G) = \dim \frak{c}$.
    By Lemma \ref{lem-pany}, $\dim \frakh_1 - \dim \frakz_{\frakh_1}(v) = \dim \frakh_0 = \dim \frakz_{\frakh_0}(v)$.
    Combining the last three sentences shows \eqref{eqn-quotient}.
    Lemmas \ref{lem-pany} and \eqref{eqn-quotient} then shows \eqref{eqn-cent-quotient}.
\end{proof}

\subsection{The nilpotent cone}\label{subsec: nilpotent cone}

Of particular interest to us are the nilpotent elements of a graded Lie algebra, since they will be used to construct transverse slices and families of curves in \S\ref{section: constructing families of curves}.
We prove some basic facts about these nilpotent elements here.
Let $H$ be a reductive group over $k$ with $\mu_m$-action $\theta$.
Recall the GIT quotient $\pi\colon V\rightarrow V\GIT G$ from \eqref{equation: GIT quotient map}.
Let $\mathcal{N}$ be the (scheme-theoretic) fiber of $\pi$ above $0$.
For every field extension $K/k$, $\mathcal{N}(K)$ equals the subset of nilpotent elements of $V(K)$; this follows from Proposition \ref{proposition: basic invariant theory of vinberg representation full generality}.
We are especially interested in the $G(k)$-action on $\mathcal{N}(k)$.
If $k$ is algebraically closed, then there are finitely many orbits for this action.
These nilpotent orbits have been extensively studied, especialy when $m=1$; see \cite{CollingwoodMcGovern-nilpotentorbits} for the $m=1$ case and \cite{Vinberg-nilpotent, Vinberg-classificationnilpotentelementsthetagroups, deGraaf-nilporbitsthetagroups} for the general case.

If $\calO_1, \O_2\subset \mathcal{N}$ are (geometric) $G$-orbits, then they are locally closed subvarieties of  $\mathcal{N}$; we write $\O_1\preceq \O_2$ if $\O_1$ is contained in the Zariski closure of $\O_2$. 
This defines a partial order on the finite set of $G$-orbits on $\mathcal{N}$.
Note that $\dim(\calO_1)< \dim(\calO_2)$ if $\calO_1\preceq \calO_2$ and $\calO_1\neq \O_2$.

We assume for the remainder of this section that $\theta$ is stable. 

\begin{lemma}\label{lemma: dimension nilpotent cone}
$\dim \mathcal{N} = \dim G$. 
\end{lemma}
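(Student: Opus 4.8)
The plan is to combine the two general facts we have at our disposal: the equidimensionality of the GIT quotient map $\pi\colon V\to V\GIT G$ from Proposition \ref{proposition: basic invariant theory of vinberg representation full generality}, and the dimension formula for $V$ in terms of $\dim\frakh_0=\dim G$ and $\dim(V\GIT G)$ established in Lemma \ref{lemma: dim centralizers vs GIT quotient} (valid because $\theta$ is assumed stable). Since $\mathcal{N}=\pi^{-1}(0)$ is a fiber of $\pi$, part (4) of Proposition \ref{proposition: basic invariant theory of vinberg representation full generality} gives $\dim\mathcal{N}=\dim V-\dim\mathfrak{c}$, where $\mathfrak{c}$ is a Cartan subspace; and since $\dim\mathfrak{c}=\dim(V\GIT G)$ by part (1), we get $\dim\mathcal{N}=\dim V-\dim(V\GIT G)$. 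Plugging in $\dim V=\dim\frakh_0+\dim(V\GIT G)$ from \eqref{eqn-quotient} immediately yields $\dim\mathcal{N}=\dim\frakh_0=\dim G$.

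First I would recall that $\mathcal{N}$ is the scheme-theoretic fiber of $\pi$ over $0$, so it has the same dimension as the underlying reduced variety $\mathcal{N}(\bar k)$; as the statement is geometric we may assume $k$ is algebraically closed, so that Proposition \ref{proposition: basic invariant theory of vinberg representation full generality} applies directly. Then I would invoke flatness of $\pi$ (part (4)) to conclude every fiber, in particular $\mathcal{N}$, has dimension $\dim V-\dim\mathfrak{c}$. Next I would identify $\dim\mathfrak{c}$ with $\dim(V\GIT G)$ using the isomorphism $\mathfrak{c}\GIT W(\mathfrak{c})\simeq V\GIT G$ of part (1) (a quotient by a finite group preserves dimension). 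Finally I would substitute the identity $\dim V=\dim\frakh_0+\dim(V\GIT G)$ from Lemma \ref{lemma: dim centralizers vs GIT quotient} and use $\Lie(G)=\frakh_0$, hence $\dim G=\dim\frakh_0$, to finish.

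There is essentially no obstacle here: the lemma is a short bookkeeping consequence of results already proved, and the only point requiring a word of care is that stability of $\theta$ is genuinely used — it is what makes \eqref{eqn-quotient} available (via the existence of a stable, hence regular semisimple, vector with $\frakz_{\frakh_0}(v)=0$). I would state this dependence explicitly so the reader sees where the hypothesis enters.
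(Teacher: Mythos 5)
Your proposal is correct and follows exactly the paper's argument: flatness of $\pi$ gives $\dim\mathcal{N}=\dim V-\dim(V\GIT G)$, and Lemma \ref{lemma: dim centralizers vs GIT quotient} (which is indeed where stability enters) gives $\dim(V\GIT G)=\dim V-\dim\frakh_0$, whence $\dim\mathcal{N}=\dim\frakh_0=\dim G$. Your extra remarks identifying $\dim\mathfrak{c}$ with $\dim(V\GIT G)$ and flagging the use of stability are just slightly more explicit bookkeeping of the same proof.
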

\begin{proof}
Since $\pi\colon V \rightarrow V \GIT G$ is flat, $\dim \mathcal{N} = \dim V -\dim(V\GIT G)$.
By Lemma \ref{lemma: dim centralizers vs GIT quotient}, $\dim(V \GIT G) = \dim V - \dim \lieh_0$.
So $\dim \mathcal{N} = \dim \lieh_0  = \dim G$.
\end{proof}

\begin{lemma}\label{lemma: codimension nilpotent orbit is centralizer}
If $e\in \mathcal{N}$, the $G$-orbit of $e$ (a locally closed subset of $\mathcal{N}$) has codimension $\dim \mathfrak{z}_{\lieh_0}(e)$. 
\end{lemma}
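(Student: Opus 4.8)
The plan is to compute $\dim\mathcal{N}$ and $\dim(G\cdot e)$ separately and subtract. By Lemma~\ref{lemma: dimension nilpotent cone} we already know $\dim\mathcal{N} = \dim G$, so the whole content is to show that $\dim(G\cdot e) = \dim G - \dim\mathfrak{z}_{\lieh_0}(e)$. For this I would use the orbit map $G\to G\cdot e$, which identifies $G\cdot e$ with the homogeneous space $G/Z_G(e)$ and hence gives $\dim(G\cdot e) = \dim G - \dim Z_G(e)$.

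The remaining input is the dimension of the stabilizer group scheme $Z_G(e)$, where $G$ acts on $V=\lieh_1$ by the adjoint representation. Here I would invoke the fact recorded in the notation section: since $k$ has characteristic zero, $\Lie(Z_G(e)) = \frakz_{\frakg}(e)$, and since $\frakg = \lieh_0$ this centralizer is exactly $\mathfrak{z}_{\lieh_0}(e)$. Therefore $\dim Z_G(e) = \dim\mathfrak{z}_{\lieh_0}(e)$, and combining the two identities yields $\dim(G\cdot e) = \dim\mathcal{N} - \dim\mathfrak{z}_{\lieh_0}(e)$.

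Finally I would convert this equality of dimensions into the asserted statement about codimension. Since all quantities involved are unchanged under extension of scalars, there is no harm in assuming $k=\bar k$, so that ``the $G$-orbit of $e$'' is unambiguous. To make sense of codimension in the possibly reducible scheme $\mathcal{N}$, I would note that $\mathcal{N} = \pi^{-1}(0)$ is equidimensional of dimension $\dim G$: the map $\pi\colon V\to V\GIT G$ is flat by Proposition~\ref{proposition: basic invariant theory of vinberg representation full generality} from the smooth, hence Cohen--Macaulay, variety $V$ to the regular variety $V\GIT G$, so every fiber is Cohen--Macaulay, and in particular equidimensional, of dimension $\dim V-\dim(V\GIT G)=\dim G$. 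Since $G\cdot e$ is an irreducible locally closed subvariety, its closure in $\mathcal{N}$ sits inside an irreducible component of $\mathcal{N}$, which also has dimension $\dim G$; as varieties over a field are catenary, $\codim_{\mathcal{N}}(G\cdot e) = \dim\mathcal{N} - \dim(G\cdot e) = \dim\mathfrak{z}_{\lieh_0}(e)$. I do not expect any genuine obstacle here; the only mildly delicate point is the bookkeeping around the meaning of codimension in $\mathcal{N}$, which the equidimensionality observation resolves — and if one simply defines the codimension of an orbit as $\dim\mathcal{N}-\dim(G\cdot e)$, compatibly with the way the partial order on nilpotent orbits is used just above, even that point evaporates.
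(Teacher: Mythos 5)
Your proof is correct and follows essentially the same route as the paper: compute $\dim(G\cdot e) = \dim G - \dim Z_G(e)$, identify $\dim Z_G(e)$ with $\dim\mathfrak{z}_{\lieh_0}(e)$ using the characteristic-zero fact from the notation section, and combine with $\dim\mathcal{N}=\dim G$ from Lemma~\ref{lemma: dimension nilpotent cone}. Your extra remarks on the equidimensionality of $\mathcal{N}$ and the meaning of codimension are a careful elaboration the paper leaves implicit, but they do not change the argument.
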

\begin{proof}
This follows from the equalities $\dim G\cdot e = \dim G  - \dim Z_{G}(e) = \dim \mathcal{N} -\dim \mathfrak{z}_{\lieh_0}(e)$. 
(Recall from \S\ref{subsec: notation} that $Z_G(e)$ has Lie algebra $\frakz_{\lieh_0}(e)$.)
\end{proof}

Recall that an element $x\in \frakh$ is called regular if $\dim \frakz_{\frakh}(x)$ equals the reductive rank of $H$.

\begin{lemma}\label{lemma: regular nilpotent defines top component of N}
Let $e\in V$ be regular nilpotent.
\begin{enumerate}
    \item If $H$ is adjoint, then $Z_{H^{\theta}}(e) = \{1\}$ and $Z_G(e)=\{1\}$.
    \item If $H$ is adjoint and $e'\in V$ is another regular nilpotent, there exists a unique $h\in H^{\theta}(k)$ such that $h\cdot e = e'$.
    \item The closure of the $G$-orbit of $e$ is an irreducible component of $\mathcal{N}$.
\end{enumerate}
\end{lemma}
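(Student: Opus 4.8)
The plan is to prove (1) first — it carries the essential content — then derive the existence half of (2) by an argument independent of (1), combine the two for uniqueness in (2), and finally deduce (3) by a dimension count. For (1) I would reduce to the case $k=\bar k$ and $H$ simple adjoint: triviality of the smooth group scheme $Z_{H^{\theta}}(e)$ may be checked after base change, and by \S\ref{subsec: stable gradings over algebraically closed field} a stable Vinberg representation of a semisimple adjoint group is a product of ones coming from simple adjoint groups, with a regular nilpotent decomposing compatibly. Then the grading is split principal (Lemma \ref{lemma: split principal if and only if regular nilpotent exists}); fixing a pinning realising it, let $E\in V$ be the associated regular nilpotent (the sum of the simple root vectors when $\theta$ is inner, and its analogue in general). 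By the existence part of (2) below, every regular nilpotent of $V$ is $H^{\theta}(\bar k)$-conjugate to $E$, so it suffices to show $Z_{H^{\theta}}(E)=\{1\}$.

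The crux is the vanishing $\frakz_{\frakh_{0}}(E)=0$. Since $H$ is adjoint, $Z_{H}(E)$ is a connected abelian unipotent group (Kostant \cite{Kostant-principalthreedimensional}), so $\exp$ is a $\theta$-equivariant isomorphism $\frakz_{\frakh}(E)\xrightarrow{\sim}Z_{H}(E)$ and hence $Z_{H^{\theta}}(E)=Z_{H}(E)^{\theta}\cong\frakz_{\frakh_{0}}(E)$. Now $\frakz_{\frakh}(E)$ has Kostant's basis $E^{(1)},\dots,E^{(\ell)}$ of $\liesl_2$-eigenvectors, which is preserved by $\theta(\zeta)$ since $h\in\frakh_{0}$, and $\theta(\zeta)$ scales $E^{(j)}$ by $\zeta^{m_{j}}$, the $m_{j}$ being the exponents of the relevant (possibly twisted) root system — immediate in the inner case because $E^{(j)}$ is supported on roots of height $m_{j}$, and otherwise part of the description of principal gradings in \cite{GrossLevyReederYu-GradingsPosRank}. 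By Theorem \ref{theorem: characterization stable gradings}, stability provides an elliptic $\Z$-regular element of order $m$ in $W\vartheta$, whose eigenvalues on the reflection representation are, up to complex conjugation, the $\zeta^{m_{j}}$ (Springer \cite{Springer}); ellipticity forces these to be $\neq 1$, i.e.\ $m\nmid m_{j}$ for all $j$. So no $E^{(j)}$ lies in $\frakh_{0}$, whence $\frakz_{\frakh_{0}}(E)=0$, $Z_{H^{\theta}}(E)=\{1\}$, and $Z_{G}(E)\subseteq Z_{H^{\theta}}(E)=\{1\}$.

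For the existence part of (2) I would complete $e,e'$ to normal $\liesl_2$-triples $(e,h,f),(e',h',f')$ over $k$ (Lemma \ref{lemma: graded Jacobson-Morozov}); these pass through regular nilpotents, so by Kostant there is a unique $g\in H^{\mathrm{ad}}(\bar k)=H(\bar k)$ carrying the first triple to the second, and applying $\Gamma_{k}$ and then $\theta(\zeta)$ to this identity — using $k$-rationality of the triples and that $\theta(\zeta)$ scales $e,f$ by $\zeta,\zeta^{-1}$ and fixes $h$ — forces $g\in H^{\theta}(k)$ by uniqueness; then $g\cdot e=e'$, and uniqueness in (2) is immediate from (1). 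For (3), where $H$ is only reductive, stability gives $\frakz(\frakh)^{\theta}=0$, so $\frakz_{\frakh_{0}}(e)=\frakz_{(\frakh^{\mathrm{der}})_{0}}(e)$, and (1) applied to $H^{\mathrm{ad}}$ (which shares this graded Lie algebra with $H^{\mathrm{der}}$) gives $\frakz_{\frakh_{0}}(e)=0$, so $Z_{G}(e)$ is finite and $\dim\overline{G\cdot e}=\dim G-\dim Z_{G}(e)=\dim G=\dim\mathcal{N}$ by Lemma \ref{lemma: dimension nilpotent cone}; since every irreducible component of $\mathcal{N}$ has dimension $\leq\dim\mathcal{N}$, the irreducible closed set $\overline{G\cdot e}$ must itself be one.

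I expect the only genuinely non-formal step to be the vanishing $\frakz_{\frakh_{0}}(E)=0$: pinning down the $\theta(\zeta)$-eigenvalues on the centraliser of the principal nilpotent in terms of the exponents and then exploiting ellipticity of the $\Z$-regular element is transparent when $\theta$ is inner, but in the twisted case it leans on the structure theory of principal gradings and on Springer's regular-element theory, so the references there would need to be handled with care.
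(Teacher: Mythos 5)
Your parts (2) and (3) follow the paper's proof essentially verbatim: the same Kostant uniqueness-of-conjugator argument with Galois and $\theta$-equivariance for (2), and the same reduction to the adjoint case plus the dimension count $\dim \overline{G\cdot e}=\dim G=\dim\mathcal{N}$ for (3). Part (1) is where you genuinely diverge. The paper obtains $\frakz_{\frakh_0}(e)=0$ in one stroke by combining Lemma \ref{lemma: dim centralizers vs GIT quotient} with Panyushev's computation that $k_0=0$ for stable gradings \cite[Theorem 3.3]{Panyushev-Invarianttheorythetagroups}, and then kills the group by noting that $Z_{H^{\theta}}(e)$ is a finite subgroup of the unipotent group $Z_H(e)$. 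You instead re-derive the Lie-algebra vanishing from scratch: reduce to $H$ simple, realize $\theta$ as split principal, diagonalize $\theta(\zeta)$ on Kostant's basis $E^{(1)},\dots,E^{(\ell)}$ of $\frakz_{\frakh}(E)$ with eigenvalues $\zeta^{m_j}$ controlled by the exponents, and use Springer's regular-element theory together with the ellipticity built into Theorem \ref{theorem: characterization stable gradings} to rule out eigenvalue $1$; the exponential isomorphism onto the abelian unipotent centralizer then replaces the ``finite subgroup of a unipotent group'' step. Both arguments are correct. Your route is more self-contained and makes visible \emph{why} stability forces the vanishing, but it costs you (a) the reduction from semisimple adjoint $H$ to simple $H$, which the paper only sketches in \S2.3 and for simply connected rather than adjoint groups (one must check that $Z_{H^{\theta}}(e)$ embeds into the corresponding centralizer for a simple factor), and (b) in the outer case, the matching of the $\theta(\zeta)$-eigenvalues on $\frakz_{\frakh}(E)$ with the eigenvalues of the $\Z$-regular element of $W\vartheta$, which requires the twisted exponents and the twisted half of Springer's theory --- precisely the bookkeeping that Panyushev's $k_0=0$ packages for you. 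You flag (b) yourself; it is the one place where your write-up would need real work to be airtight, whereas the paper's citation makes all of part (1) a two-line argument.
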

\begin{proof} 
\begin{enumerate}
\item The closed subgroup $Z_{H^{\theta}}(e)$ of $H^{\theta}\subset H$ has Lie algebra $\frakz_{\frakh_0}(e)$.
Since $\theta$ is stable, Lemma \ref{lemma: dim centralizers vs GIT quotient} and \cite[Section 3, Theorem 3.3]{Panyushev-Invarianttheorythetagroups} show, in the notation of that paper, that $\dim \frakz_{\frakh_0}(e)= k_0$ and $k_0=0$.
Therefore $\frakz_{\frakh_0}(e) = \{0\}$, so $Z_{H^{\theta}}(e)$ is finite. On the other hand, by combining \cite[Lemma 3.7.3]{CollingwoodMcGovern-nilpotentorbits} and the fact that the centralizer of an $\frak{sl}_2$-triple containing $e$ is trivial, it follows that the centralizer of $e$ in $H$ is a unipotent group.
So $Z_{H^{\theta}}(e)$ is a finite subgroup of the unipotent group $Z_H(e)$, hence trivial. 

\item Complete $e$ and $e'$ to normal $\fraksl_2$-triples $(e,x,f)$ and $(e',x',f')$.
By \cite[Section 5, Corollaries 5.2 and 5.5]{Kostant-principalthreedimensional}, there exists a unique $h\in H(\bar{k})$ such that $\Ad(h)\cdot (e,x,f) = (e',x',f')$. 
By uniqueness, $h$ is necessarily defined over $k$.
Since $\theta(h)$ also conjugates $(e,x,f)$ to $(e',x',f')$, again by uniqueness we have $\theta(h)=h$ hence $h \in H^{\theta}$.
So $h\cdot e = e'$ and by Part 1 such a $h\in H^{\theta}$ is unique.

\item After possibly replacing $H$ with the quotient by its center, we may assume that $H$ is adjoint.
Let $\bar{\mathcal{O}}$ be the closure of $\mathcal{O} = G\cdot e$. By the first part and Lemma \ref{lemma: codimension nilpotent orbit is centralizer}, $\dim \bar{\mathcal{O}} = \dim G$.
So $\mathcal{O}$ is an irreducible closed subset of $\mathcal{N}$ of dimension $\dim G = \dim \mathcal{N}$ (Lemma \ref{lemma: codimension nilpotent orbit is centralizer}), hence must be an irreducible component.
\end{enumerate}
\end{proof}

\begin{corollary}\label{corollary:Gaction_faithful}
    If $H$ is adjoint, then the $G$-action on $V$ is faithful.
\end{corollary}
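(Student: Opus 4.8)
The plan is to reduce the statement to Lemma~\ref{lemma: regular nilpotent defines top component of N}(1), which already records that the stabilizer in $G$ of a regular nilpotent element is trivial when $H$ is adjoint. The underlying observation is that the kernel of the $G$-action on $V$ fixes \emph{every} vector, in particular a regular nilpotent one, provided such a vector exists.

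First I would reduce to the case $k=\bar k$. Faithfulness of the representation $\rho\colon G\to\GL(V)$ amounts to the triviality of $K:=\ker\rho$, a closed normal subgroup scheme of $G$ whose formation commutes with base change; so one may replace $k$ by $\bar k$ and must then show $K=\{1\}$. If $H$ is not simple, I would moreover decompose $(G,V)$ as a product of Vinberg representations attached to simple adjoint groups, as in the discussion following Theorem~\ref{theorem: characterization stable gradings}, and note that $\rho$ is faithful if and only if it is faithful on each factor; so I may also assume $H$ simple.

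Next I would produce a regular nilpotent $E\in V$. Since $\theta$ is stable (an assumption in force throughout this subsection), Theorem~\ref{theorem: characterization stable gradings} shows that $\theta$ is principal, hence split principal over the algebraically closed field $k$, and then Lemma~\ref{lemma: split principal if and only if regular nilpotent exists} supplies a regular nilpotent $E\in V(k)$. Every element of $K$ (valued in any $k$-algebra) fixes $E$, so $K\subseteq Z_G(E)$ as subschemes of $G$; by Lemma~\ref{lemma: regular nilpotent defines top component of N}(1) we have $Z_G(E)=\{1\}$, hence $K=\{1\}$ and $\rho$ is faithful.

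I do not anticipate a genuine obstacle, since every ingredient is already available among the preceding results. The only points that need a word of care are the reductions (to $\bar k$, and for non-simple $H$ to simple $H$) and the fact—which should be stated rather than tacitly assumed—that stability of $\theta$ forces $V_{\bar k}$ to contain a regular nilpotent; once that is in place, the conclusion follows in one line from Lemma~\ref{lemma: regular nilpotent defines top component of N}(1).
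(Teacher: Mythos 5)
Your proof is correct and follows essentially the same route as the paper: reduce to $k=\bar k$, produce a regular nilpotent $E\in V$ via Theorem~\ref{theorem: characterization stable gradings} and Lemma~\ref{lemma: split principal if and only if regular nilpotent exists}, and conclude from $Z_G(E)=\{1\}$ in Lemma~\ref{lemma: regular nilpotent defines top component of N}(1) that the kernel of the action is trivial. The extra reductions you flag (to simple $H$, and the explicit remark that the kernel lies in $Z_G(E)$ as a subscheme) are harmless elaborations of what the paper leaves implicit.
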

\begin{proof}
    We may assume $k$ is algebraically closed.
    By Theorem \ref{theorem: characterization stable gradings} and Lemma \ref{lemma: split principal if and only if regular nilpotent exists}, there exists a regular nilpotent $e\in V$.
    The corollary then follows from Part 1 of Lemma \ref{lemma: regular nilpotent defines top component of N}.
\end{proof}

It follows that each regular nilpotent $e\in \lieh_1$ defines an irreducible component $\overbar{G\cdot e}$ of $\mathcal{N}$. 
If $m=2$, then every irreducible component is of this form \cite[Theorem 6]{KostantRallis-Orbitsrepresentationssymmetrisspaces}. For general $m\geq 3$, this need not be true: counterexamples may be found in the class of Coxeter gradings of Example \ref{ex-cox}, see \cite[Example 4.4]{Panyushev-Invarianttheorythetagroups}.

The next proposition shows that every nilpotent orbit in $\mathcal{N}_{\bar{k}}$ has a representative defined over $k$, at least when $H$ is split.

\begin{proposition}\label{proposition: nilpotents on split groups descend}
    Suppose that $H$ is split and let $e\in V(\bar{k})$ be nilpotent. 
    Then there exists a nilpotent $e_0 \in V$ that is $G(\bar{k})$-conjugate to $e$.
\end{proposition}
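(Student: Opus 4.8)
The plan is to complete $e$ to a normal $\fraksl_2$-triple, use splitness to replace it by a conjugate whose semisimple member is defined over $k$, and then identify the $G(\bar k)$-orbit of $e$ with a dense open subvariety — defined over $k$ — of an affine space, which therefore has a $k$-point. For the setup: since $\theta$ is stable it is principal by Theorem~\ref{theorem: characterization stable gradings}, and as $H$ is split we may assume $\theta$ is the split principal $\mu_m$-action attached to a pinning of $H$ via \eqref{equation: definition pinned principal mu_m action}, so that $G=(H^\theta)^\circ$ is a \emph{split} reductive group over $k$. Fix a split maximal torus $S\subset G$ and a Borel $S\subset B_G\subset G$, both over $k$; these determine a dominant chamber in $\Lie(S)$.

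\textbf{Rationalising the characteristic.} By Lemma~\ref{lemma: graded Jacobson-Morozov}, over $\bar k$ we may complete $e$ to a normal $\fraksl_2$-triple $(e,h,f)$ with $h\in\frakh_0(\bar k)=\frakg(\bar k)$. The element $h$ is $\ad$-semisimple with integral eigenvalues on $\frakh$. Conjugating $(e,h,f)$ by a suitable element of $G(\bar k)$ — which replaces $e$ by a $G(\bar k)$-conjugate still lying in $\mathcal{N}$, so it suffices to treat this new $e$ — we may arrange that $h$ lies in the dominant chamber of $\Lie(S)$. It is then the unique dominant representative of its $G(\bar k)$-conjugacy class and pairs integrally with every root of $(G,S)$; since $G$ is split this characterisation is Galois-stable, so $h\in\Lie(S)(k)\subset\frakg(k)$. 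From now on $h$ is defined over $k$.

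\textbf{The dense orbit.} Let $L=Z_G(h)$; since $h$ (or an integer multiple) is the derivative of a cocharacter of $S$, $L$ is a Levi subgroup of $G$, in particular connected reductive and defined over $k$. Write $\frakh=\bigoplus_j\frakh(j)$ for the decomposition into $\ad(h)$-eigenspaces and put $\frakh(j)_i=\frakh(j)\cap\frakh_i$; then $\frakh(2)_1$ is a representation of $L$ defined over $k$. Applying $\fraksl_2$-representation theory to the action of $(e,h,f)$ on $\frakh$ shows that $\ad(e)\colon\frakh(0)\to\frakh(2)$ is surjective: the weight-$2$ space receives contributions only from the isotypic components of even highest weight, and on each of these $\ad(e)$ is an isomorphism from the weight-$0$ line to the weight-$2$ line. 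As this map respects the $\Z/m\Z$-grading, $\ad(e)\colon\frakh(0)_0\to\frakh(2)_1$ is onto; and $\Lie(L)=\frakh(0)_0$, so the tangent space $[\Lie(L),e]$ to the orbit $L(\bar k)\cdot e$ inside $\frakh(2)_1$ equals all of $\frakh(2)_1$. Hence $L(\bar k)\cdot e$ is Zariski-open in the affine space $\frakh(2)_1$. The same argument applies verbatim to any $e'\in\frakh(2)_1$ that admits a normal $\fraksl_2$-triple with semisimple member $h$; all such $e'$ therefore lie in open $L(\bar k)$-orbits, and since $\frakh(2)_1$ is irreducible these orbits coincide. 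Thus the set
\[
\Omega=\{\,e'\in\frakh(2)_1\ :\ h\in\ad(e')(\frakh(-2))\,\}
\]
(which, as one checks, is exactly the set of $e'\in\frakh(2)_1$ admitting such a triple, replacing a chosen preimage of $h$ by its $\frakh_{-1}$-component to achieve normality) is precisely the single orbit $L(\bar k)\cdot e$.

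\textbf{Conclusion and obstacle.} The set $\Omega$ is $\Gamma_k$-stable, since it is cut out by a condition involving only the $k$-rational data $h$, $\frakh(-2)$ and $\frakh(0)$; being also Zariski-open by the previous paragraph, it is a nonempty open subvariety of $\frakh(2)_1$ defined over $k$. As $k$ is infinite, $\Omega(k)\neq\varnothing$. Any $e_0\in\Omega(k)\subset V(k)$ is nilpotent, since it belongs to an $\fraksl_2$-triple, and $e_0\in L(\bar k)\cdot e\subset G(\bar k)\cdot e$; this $e_0$ is the desired element. The main obstacle is the dense-orbit step: proving that $\Omega$ is a single open $L(\bar k)$-orbit which moreover descends to $k$. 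This is the graded analogue of a standard fact in the Dynkin--Kostant theory of $\fraksl_2$-triples, and the delicate point is verifying that the surjectivity of $\ad(e)$ between $\ad(h)$-eigenspaces is compatible with the $\Z/m\Z$-grading. A subsidiary point, where the splitness of $H$ is genuinely used, is arranging that the characteristic $h$ can be taken $k$-rational.
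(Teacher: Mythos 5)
Your overall route is the same as the paper's: complete $e$ to a normal $\liesl_2$-triple, use splitness to make the characteristic $h$ rational over $k$, and then produce a $k$-point of a dense open subset of $\{x\in\frakh_1:[h,x]=2x\}$ all of whose $\bar{k}$-points are $G(\bar{k})$-conjugate to $e$. Your treatment of the dense-orbit step is a genuine (and correct) improvement in self-containedness: the paper simply cites \cite[Proposition 10]{deGraaf-nilporbitsthetagroups} at this point, whereas you deduce it from the surjectivity of $\ad(e)\colon\frakz_{\frakg}(h)\to\frakh(2)_1$ (which does follow from the ungraded $\fraksl_2$-surjectivity $\ad(e)\colon\frakh(0)\to\frakh(2)$ by restricting to $\Z/m\Z$-graded components) together with the fact that an irreducible variety carries at most one open orbit. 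The identification of $\Omega$ with the set of $e'$ admitting a normal triple over $h$, via projecting a preimage of $h$ onto its $\frakh_{-1}$-component, is also fine.

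The gap is in your rationalisation of $h$. First, you may not ``assume $\theta$ is the split principal action attached to a pinning'': replacing $\theta$ by an $H^{\mathrm{ad}}(\bar{k})$-conjugate changes the $k$-structure of $(G,V)$, so this reduction is not available, and with it goes your $k$-rational pair $S\subset B_G$. More seriously, even granting such an $S$, the criterion you use to force $\sigma(h)=h$ does not determine $h$: the roots of $(G,S)$ need not span $X^*(S)_{\Q}$ (for the Coxeter gradings of \S\ref{subsection: Coxeter gradings} one has $G=T$, a torus with \emph{no} roots, so ``pairs integrally with every root of $(G,S)$'' is vacuous), and the appeal to ``the unique dominant representative of its $G(\bar{k})$-conjugacy class'' presupposes that $\sigma(h)$ is $G(\bar{k})$-conjugate to $h$, which you have not shown (a priori Galois could permute the finitely many nilpotent $G$-orbits nontrivially). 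The repair is the paper's argument: conjugate $h$ into the Lie algebra of a split maximal torus $T$ of $H$ and use that \emph{all} eigenvalues of $\ad(h)$ on $\frakh$ — equivalently the values $\alpha(h)$ for $\alpha\in\Phi(H,T)$ — are integers by $\fraksl_2$-theory; since $T$ is split these characters are Galois-fixed and they separate points of the coroot span containing $h=[e,f]$, whence $h\in\liet(k)$. With that substitution the rest of your argument goes through.
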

\begin{proof}
    Complete $e$ to a normal $\liesl_2$-triple $(e,h,f)$ in $\frakh_{\bar{k}}$.
    Let $(T,B,\{E_i\})$ be a pinning of $H$. 
    Let $\frak{t}\subset \frak{h}$ be the Lie algebra of $T$.
    Since $h$ is semisimple, we may assume after $G(\bar{k})$-conjugation that it lies in $\liet_{\bar{k}}$.
    The representation theory of $\frak{sl}_2$ shows that  $\alpha(h) \in \Z$ for every root $\alpha\in \Phi$.
    Since $T$ is split, it follows that $h \in \frak{t}$. 
    Using \cite[Proposition 10]{deGraaf-nilporbitsthetagroups}, there exists an open dense subset $U$ of the affine space $\{x\in \lieh_1\colon [h,x] = 2x \}$ with the property that every $e' \in U(\bar{k})$ is $G(\bar{k})$-conjugate to $e$.
    Since $k$ is of characteristic zero and hence infinite, $U(k)$ is nonempty and any element $e_0\in U(k)$ satisfies the conclusion of the proposition.
\end{proof}

\subsection{Invariant polynomials}\label{subsec: invariant polynomials}

Let $H$ be a reductive group over $k$ with a $\mu_m$-grading $\theta$.
Recall that $B = V\GIT G$ denotes the spectrum of the ring of invariant polynomials $k[V]^G$.
This ring is always a polynomial ring (Proposition \ref{proposition: basic invariant theory of vinberg representation full generality}).
In the case that $\theta$ is stable, Panyushev has determined the degrees of the generators of $k[V]^G$.

First consider the (ungraded) adjoint quotient $\frakh \GIT H = \Spec(k[\frakh]^H)$.
Let $F_1, \dots, F_{r}$ be homogeneous algebraically independent generators of $k[\frakh]^H$.
By diagonalizing the $\theta$-action on elements of $k[\frakh]^H$ of fixed degree, we can choose the $F_i$ so that there exist elements $e_i\in \Z/m\Z$ with the property that $\theta(\zeta)(F_i) = \zeta^{e_i} F_i$ for all $\zeta\in \mu_m(\bar{k})$.
The multiset $\{e_1,\dots, e_{r}\}$ only depends on the outer type of $\theta$, and equals $\{0, \dots, 0\}$ if $\theta$ is inner.

\begin{proposition}\label{proposition: invariant polys panyushev}
    Suppose that $\theta$ is stable.
    Then the restriction homomorphism $k[\frakh]^H \rightarrow k[V]^G$ is surjective and $k[V]^G$ is generated by $\bar{F}_i$ for all $F_i$ satisfying $\deg(F_i) +e_i\equiv 0 \mod m$.
\end{proposition}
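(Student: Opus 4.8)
The plan is to deduce both assertions from the description of $k[\lieh]^H$ via the Chevalley restriction isomorphism and the fact that $\mathfrak c \subset V$ realizes the Vinberg quotient, combined with the structure theory of $\mathbb Z$-regular elliptic elements of the (twisted) Weyl group. Fix a Cartan subspace $\mathfrak c \subset V$; by Proposition~\ref{proposition: basic invariant theory of vinberg representation full generality} the inclusion induces an isomorphism $k[\mathfrak c]^{W(\mathfrak c)} \xrightarrow{\sim} k[V]^G$, and since $\theta$ is stable, $\mathfrak c$ is (after conjugation) a maximal torus's Lie algebra inside $\lieh$ on which $\theta$ acts by a $\mathbb Z$-regular elliptic element $w\vartheta$ of order $m$ (Proposition~\ref{proposition: regular semisimple iff stable}). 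First I would set up the commutative square relating the Chevalley isomorphism $k[\mathfrak t]^W \xrightarrow{\sim} k[\lieh]^H$, restriction $k[\lieh]^H \to k[V]^G$, and restriction $k[\mathfrak t]^W \to k[\mathfrak c]^{W(\mathfrak c)}$; this reduces the whole statement to a computation with $W$-invariant polynomials on $\mathfrak t$ restricted to the subspace $\mathfrak c = \mathfrak t_{\zeta}$ (the $\zeta$-eigenspace of $w\vartheta$ acting on $\mathfrak t_{\bar k}$, where $\zeta$ is a fixed primitive $m$th root of unity).

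Next I would analyze this restriction eigenspace by eigenspace. Diagonalize the $\theta$-action on $k[\mathfrak t]^W$ compatibly with the generators, so $\theta(\zeta) F_i = \zeta^{e_i} F_i$ where now the $F_i$ are identified with the fundamental $W$-invariants and $e_i$ is determined by $\deg F_i$ and the action of $\vartheta$ on $\mathfrak t$. A monomial basis argument shows that the restriction of $F_i$ to $\mathfrak t_\zeta$ is nonzero precisely when the $\zeta$-homogeneous component of $F_i$ is nonzero, i.e.\ when $\deg(F_i) + e_i \equiv 0 \pmod m$ — here one uses that $w\vartheta$ acts on $\mathfrak t$ with eigenvalues $\zeta^{m_1},\dots,\zeta^{m_r}$ where (by Springer's theory of regular elements) the exponents $m_j$ are congruent mod $m$ to the degrees $d_j$, so that a product of linear forms spanning $\mathfrak t_\zeta$ does not kill the relevant $F_i$. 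The key input is that the $F_i$ whose restrictions survive are exactly $r' := \dim \mathfrak c = \dim(V \GIT G)$ in number (this count follows by comparing with Proposition~\ref{proposition: invariant polys panyushev}'s companion dimension formulas, or directly from Springer), and that their restrictions, being $W(\mathfrak c)$-invariants of the right degrees on a space where $k[\mathfrak c]^{W(\mathfrak c)}$ is a polynomial ring in $r'$ generators, must be a system of generators — algebraic independence is automatic once the count and degrees match, by a Jacobian / transcendence-degree argument.

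For surjectivity of $k[\lieh]^H \to k[V]^G$: since $k[\lieh]^H = k[F_1,\dots,F_r]$ and the images $\bar F_i$ generate $k[\mathfrak c]^{W(\mathfrak c)} \cong k[V]^G$ (the $\bar F_i$ for the "bad" indices vanish, the others generate by the previous paragraph), the composite $k[\lieh]^H \to k[V]^G$ is surjective; then its factorization through $k[V]^G$ via restriction forces the latter map to be surjective too. I expect the main obstacle to be the precise bookkeeping in the eigenspace computation: pinning down that the exponents of $w\vartheta$ on $\mathfrak t$ reduce mod $m$ to the invariant degrees (the twisted analogue of Springer's result, as in \cite{Springer}), and checking that "restriction is nonzero on $\mathfrak t_\zeta$" is equivalent to the congruence $\deg F_i + e_i \equiv 0$, rather than merely implied by it. Once that equivalence is in hand, the surjectivity and the generation statement both drop out, and algebraic independence of the surviving generators follows formally from the polynomiality of $k[V]^G$ together with the matching count. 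An alternative, possibly cleaner, route for the final steps is to cite \cite{Panyushev-Invarianttheorythetagroups} directly for the degrees and cardinality of a generating set of $k[V]^G$, and then only verify that the specific elements $\bar F_i$ with $\deg F_i + e_i \equiv 0$ realize those degrees and are nonzero — reducing the problem to the eigenspace nonvanishing claim alone.
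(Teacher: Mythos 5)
Your proposal is essentially correct but takes a genuinely different route from the paper. The paper's proof is a one-line citation: since $\theta$ is stable, it is principal (Theorem \ref{theorem: characterization stable gradings}) and hence $V$ contains a regular nilpotent (Lemma \ref{lemma: split principal if and only if regular nilpotent exists}), so $\theta$ is $N$-regular in Panyushev's sense and the statement is \cite[Theorem 3.5(i)]{Panyushev-Invarianttheorythetagroups}; under the hood that argument runs through the regular nilpotent and the Kostant section. You instead argue through the Cartan subspace: identify $\mathfrak c$ with the $\zeta$-eigenspace of the elliptic $\Z$-regular element $w\vartheta$ on $\mathfrak t$, pass through the Chevalley restriction isomorphism and Proposition \ref{proposition: basic invariant theory of vinberg representation full generality}, and invoke Springer's theory of regular elements to see which fundamental invariants survive restriction to $\mathfrak t_\zeta$ and that the survivors form a system of basic invariants for $W(\mathfrak c)$. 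Your route is self-contained modulo (twisted) Springer--Lehrer theory and ties the answer directly to the regular element that witnesses stability; the paper's route is shorter and applies under the weaker hypothesis of $N$-regularity. Your easy direction is also correctly identified: for $x\in\mathfrak c$ one has $(\theta(\zeta)F_i)(x)=F_i(\zeta^{-1}x)=\zeta^{-d_i}F_i(x)$, so a nonzero restriction forces $d_i+e_i\equiv 0\bmod m$, and the substance lies in the converse plus generation.

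One step as written is not sound: algebraic independence (equivalently, generation) of the surviving restrictions is \emph{not} automatic from matching the count and the degrees --- in a polynomial invariant ring one can easily produce homogeneous invariants with the correct degrees that are algebraically dependent (e.g.\ a fundamental invariant and its square when the degrees permit). This is genuine content, and it is exactly what Springer's theorem (and its extension by Lehrer--Springer to reflection cosets, needed when $\vartheta\neq 1$) provides: the restrictions of the basic invariants with $d_i+e_i\equiv 0\bmod m$ to the regular eigenspace \emph{are} basic invariants for the relative reflection group. If you cite that statement rather than the "Jacobian / transcendence-degree" gesture, your argument closes; your fallback of quoting Panyushev for the degrees and cardinality and only verifying nonvanishing would also work, but at that point you have essentially reproduced the paper's proof.
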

\begin{proof}
    Since every stable $\theta$ is $N$-regular in the sense of Panyushev (by combining Theorem \ref{theorem: characterization stable gradings} and Lemma \ref{lemma: split principal if and only if regular nilpotent exists}), this follows from \cite[Theorem 3.5(i)]{Panyushev-Invarianttheorythetagroups}.
\end{proof}

Rephrasing the above proposition, the canonical morphism $V\GIT G\rightarrow \frakh \GIT H$ is a closed immersion of affine spaces and is cut out by setting $F_i$ equal to zero whenever $\deg(F_i) +e_i\not\equiv 0 \mod m$.

\begin{example}
    Assume $\theta$ is inner.
    Then $k[V]^G$ is generated by the restriction of those invariant polynomials whose degree is divisible by $m$.
\end{example}

\begin{example}
    Assume $H$ is of type $E_6$ and $\theta$ has nontrivial outer type $\tau\colon \mu_2\hookrightarrow \Out_H$, so $m$ is even.
    The ring $k[\frakh]^H$ has generators $F_1, \dots, F_6$ of degree $2,5,8,6,9,12$.
    We claim that $\theta(\zeta)(F_i) = (-1)^iF_i$ if $\zeta\in \mu_m(\bar{k})$ has order $m$.
    Indeed, to prove this, we may assume $k = \bar{k}$, that $m=2$ and that $\theta$ equals the unique nontrivial pinned automorphism with respect to some pinning of $H$.
    This then follows from an explicit calculation, see \cite[Proposition 2.5(2)]{Laga-F4paper}.
\end{example}

\section{Constructing transverse slices}\label{section: constructing families of curves}

Following Slodowy and Thorne, we construct for each nilpotent $e\in \frakh_1$ a subvariety $X_e\subset \frakh_1$ such that the morphism $X_e\rightarrow B$ is a transverse slice to the $G$-action on $e$.
If the $G$-orbit of $e$ has codimension one in $\mathcal{N}$, $X_e\rightarrow B$ is a family of curves.
We make some observations about the families of curves arising in this way, before classifying such families arising from subregular nilpotent elements in Section \ref{section:subregular curves}. 
As before, let $k$ be a field of characteristic zero.

\subsection{Transverse slices}

Let $L$ be an algebraic group over $k$ acting on a variety $Y/k$.
Let $y\in Y(k)$.
Following Slodowy \cite[\S5.1]{Slodowy-simplesingularitiesalggroups}, we say a locally closed subvariety $S\subset Y$ is a \textit{transverse slice} to the $L$-action on $Y$ at $y$ if 
\begin{itemize}
    \item $y\in S(k)$,
    \item the action map $L\times S\rightarrow Y$ is smooth,
\end{itemize}
and the dimension of $S$ is minimal among locally closed subvarieties satisfying these two properties.

If $Y$ is smooth, then such a transverse slice always exists, and its dimension equals the codimension of the $L$-orbit of $y$; see \cite[Lemma 1 and Remark 2]{Slodowy-simplesingularitiesalggroups}.
Moreover the tangent spaces satisfy $T_y(L\cdot y) \oplus T_y S = T_yY$.

\subsection{Diagonalizable actions on affine spaces}\label{subsec: diagonalizable actions}

Let $T$ be a closed subgroup of $\G_m^n$ for some $n\geq 1$. 
Let $X^*(T_{\bar{k}}) = X^*(T)$ be its character group.
Let $V$ be a $k$-vector space equipped with a linear $T$-action, i.e., a homomorphism of algebraic groups $\lambda\colon T\rightarrow \GL(V)$.
If $w\in X^*(T)$ define the weight space $V_w = \{v\in V\colon \lambda(t)(v) = w(t)v \text{ for all }t\in T(\bar{k})\}$.
We will often make use of the following standard lemma \cite[Chapter 3, Section 8.4]{Borel-linearalgebraicgroups}.
\begin{lemma}\label{lemma: representations diagonalizable groups}
    $V$ is a direct sum of weight spaces: there exists a basis $v_1,\dots,v_n$ of $V$ and elements $w_1, \dots, w_n \in X^*(T)$ such that $\lambda(t)(v_i) = w_i(t) v_i$ for all $t\in T(\bar{k})$.
\end{lemma}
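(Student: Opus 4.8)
The plan is to deduce this from the structure of the coordinate ring of $T$ together with the comodule description of a representation; this is the argument underlying the reference \cite[Chapter 3, Section 8.4]{Borel-linearalgebraicgroups}.

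First I would record that, because $T$ is a closed subgroup of the split torus $\G_m^n$, it is diagonalizable: its coordinate ring is the group algebra $k[T] = k[M]$ on the finitely generated abelian group $M = X^*(T)$, with comultiplication $\Delta(e_m) = e_m \otimes e_m$ and counit $\epsilon(e_m) = 1$ on the standard basis $\{e_m\}_{m \in M}$. The point of the hypothesis $T \subseteq \G_m^n$ (rather than merely ``$T$ diagonalizable'') is that every character of $T_{\bar{k}}$ is already defined over $k$, i.e. $X^*(T_{\bar{k}}) = M$, so all weights that will appear are $k$-rational.

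Next, the homomorphism $\lambda \colon T \to \GL(V)$ of algebraic groups is the same datum as a comodule structure $\rho \colon V \to V \otimes_k k[M]$, subject to coassociativity $(\rho \otimes \mathrm{id})\circ \rho = (\mathrm{id} \otimes \Delta)\circ \rho$ and the counit identity $(\mathrm{id} \otimes \epsilon)\circ \rho = \mathrm{id}_V$. Writing $\rho(v) = \sum_{m \in M} \rho_m(v) \otimes e_m$ defines linear maps $\rho_m \colon V \to V$, of which only finitely many are nonzero since $V$ is finite-dimensional. Expanding the coassociativity identity and comparing coefficients of $e_{m'} \otimes e_m$ gives $\rho_{m'} \circ \rho_m = \delta_{m,m'}\, \rho_m$, while the counit identity gives $\sum_m \rho_m = \mathrm{id}_V$; thus the $\rho_m$ form a finite family of orthogonal idempotents summing to the identity, and $V = \bigoplus_m \rho_m(V)$. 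Setting $V_w := \rho_w(V)$, one checks from the description $\lambda(t) = (\mathrm{id} \otimes \mathrm{ev}_t)\circ \rho$ for $t \in T(\bar{k})$ that $V_w$ is exactly the weight space $\{v \in V : \lambda(t)v = w(t)v \text{ for all } t\}$. Choosing a $k$-basis in each of the finitely many nonzero $V_w$ and concatenating them produces the asserted basis of $V$ consisting of weight vectors, with $n = \dim_k V$.

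I do not expect a serious obstacle: the whole content is the translation of the Hopf-algebra axioms of $k[M]$ into the orthogonal-idempotent relations, which is routine bookkeeping, and one could equally well just invoke Borel. The only mild point worth stating explicitly is the $k$-rationality of all occurring weights, which is built into the hypothesis $T \subseteq \G_m^n$. An alternative, more hands-on route is to observe that $\lambda(T(\bar{k}))$ is a commuting family of semisimple operators on $V_{\bar{k}}$ and diagonalize it simultaneously, then descend; but this requires extra care with the infinitude of the family and with rationality, and is cleanly subsumed by the comodule argument.
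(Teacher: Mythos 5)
Your argument is correct: the comodule/orthogonal-idempotent computation, together with the observation that $T\subseteq\G_m^n$ forces all characters of $T_{\bar{k}}$ to be $k$-rational, is precisely the standard proof underlying the citation to \cite[Chapter 3, Section 8.4]{Borel-linearalgebraicgroups}, which is all the paper offers in place of a proof. Nothing further is needed.
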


The elements $w_1, \dots, w_n$ are called the \textit{weights} of the $T$-action.
For example, if $T = \G_m$, then $X^*(T) = \Z$ hence the weights are simply integers.
If $T = \mu_m$, then the weights are elements of $X^*(T) = \Z/m\Z$.

If $V,W$ are two vector spaces with linear $T$-action, we say an algebraic morphism $f\colon V\rightarrow W$ is \emph{quasi-homogeneous of weight} $(d_1,\dots,d_m; w_1 ,\dots ; w_n)$ if $f$ is $T$-equivariant, $V$ has weights $w_1,\dots,w_n$, and $W$ has weights $d_1,\dots,d_m$. (Here we count weights with multiplicity.)
The next lemma is proved in \cite[Section 8.1, Lemma 3]{Slodowy-simplesingularitiesalggroups} and will be useful in Section \ref{section-families-of-curves}.

\begin{lemma}\label{lemma: Gm-morphism same positive weights is isomorphism}
    In the above notation, suppose $T = \G_m$. Suppose we are given a morphism $f\colon V\rightarrow W$ that is quasi-homogeneous of weight $(d_1, \dots, d_n ; d_1, \dots, d_n)$, each $d_i$ is positive, and the central fiber $f^{-1}(0)$ is zero dimensional.
    Then $f$ is an isomorphism.
\end{lemma}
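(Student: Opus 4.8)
The plan is to use the $\G_m$-action to reduce the statement to a local assertion at the origin, and then to deduce global properties from graded Nakayama plus a dimension count. First I would record the elementary reformulation: giving a morphism $f\colon V\to W$ that is quasi-homogeneous of weight $(d_1,\dots,d_n;d_1,\dots,d_n)$ with all $d_i$ positive means choosing coordinates $x_1,\dots,x_n$ on $V$ (weights $d_i$) and $y_1,\dots,y_n$ on $W$ (weights $d_i$), so that $f^*(y_j)=f_j(x_1,\dots,x_n)$ where each $f_j$ is a polynomial that is quasi-homogeneous of weight $d_j$ for the grading $\deg(x_i)=d_i$; in particular $f_j$ has no constant term, so $f(0)=0$. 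The target and source are then graded polynomial rings $k[V]=k[x_1,\dots,x_n]$, $k[W]=k[y_1,\dots,y_n]$ over $k$, and $f^*\colon k[W]\to k[V]$ is a graded $k$-algebra homomorphism. All $d_i>0$ means the only elements of degree $0$ are the constants, so the irrelevant maximal ideals $\mathfrak m_V=(x_1,\dots,x_n)$, $\mathfrak m_W=(y_1,\dots,y_n)$ are homogeneous and $f^{*-1}(\mathfrak m_V)=\mathfrak m_W$ (using $f(0)=0$).

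Next I would exploit the hypothesis that the central fiber $f^{-1}(0)$ is zero-dimensional. Scheme-theoretically $f^{-1}(0)=\Spec\big(k[V]/(f_1,\dots,f_n)\big)$, so the hypothesis says $\dim k[V]/(f_1,\dots,f_n)=0$. Since $k[V]$ has Krull dimension $n$ and we are cutting by $n$ elements $f_1,\dots,f_n$, they form a system of parameters for $k[V]$ localized (or completed) at $\mathfrak m_V$; as $k[V]_{\mathfrak m_V}$ is a regular (hence Cohen--Macaulay) local ring of dimension $n$, a system of parameters consisting of $n$ elements is automatically a regular sequence, so $k[V]$ is a free — in particular flat — module over the subring $k[f_1,\dots,f_n]$. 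This is the standard "miracle flatness" argument, already invoked in the proof of Proposition~\ref{proposition: basic invariant theory of vinberg representation full generality} in this excerpt. Moreover, since $f_1,\dots,f_n$ are homogeneous and generate an $\mathfrak m_V$-primary ideal, $k[V]$ is a finitely generated module over $k[f_1,\dots,f_n]$ by the graded Nakayama lemma (its generators can be taken to be homogeneous of the degrees occurring in a $k$-basis of $k[V]/(f_1,\dots,f_n)$).

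Now I would finish by a Hilbert-series / generation argument to upgrade "finite flat" to "isomorphism." The subalgebra $R:=k[f_1,\dots,f_n]\subseteq S:=k[V]$ is a graded subring, and $S$ is a finite flat $R$-module, hence free of some rank $\rho$; comparing Hilbert series, $\mathrm{Hilb}_S(t)=\mathrm{Hilb}_R(t)\cdot P(t)$ where $P(t)$ is the Hilbert polynomial of the free generators. But $f_1,\dots,f_n$ are algebraically independent over $k$ (they form a regular sequence of length $n=\dim S$, or: if they satisfied a relation then $R$ would have dimension $<n$, contradicting that $S$ is finite over $R$ and $\dim S=n$), so $R\cong k[y_1,\dots,y_n]$ as graded rings with $\deg y_j=d_j$, giving $\mathrm{Hilb}_R(t)=\prod_{j=1}^n (1-t^{d_j})^{-1}=\mathrm{Hilb}_S(t)$. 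Hence $P(t)=1$, so $\rho=1$ and $S=R$, i.e. $f^*$ is an isomorphism of $k$-algebras and therefore $f$ is an isomorphism of schemes. (Alternatively, and perhaps more cleanly for the write-up: since $f^{-1}(0)$ is a single reduced point — being a zero-dimensional homogeneous scheme with the standard $\G_m$-action, its only point is the origin with $\mathbb{C}[V]/(f_i)\otimes k(\mathfrak m_V)$ of dimension equal to the fiber dimension — one checks the fiber length is $1$ directly, then flatness forces every fiber to have length $1$, and a finite flat degree-one morphism of reduced schemes is an isomorphism.) I expect the main obstacle to be bookkeeping the grading carefully enough to justify the graded Nakayama step and the Hilbert-series comparison; the ring-theoretic facts themselves (miracle flatness, graded Nakayama) are standard, but one must be careful that "quasi-homogeneous of the stated weight" really forces $f^*$ to be a \emph{graded} homomorphism in the way claimed, which is where the equality of source and target weight tuples $(d_1,\dots,d_n)=(d_1,\dots,d_n)$ is used.
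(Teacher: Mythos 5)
Your main argument is correct. Note, however, that the paper does not prove this lemma at all: it simply cites Slodowy \cite[Section 8.1, Lemma 3]{Slodowy-simplesingularitiesalggroups}, so what you have written is a self-contained replacement for that citation rather than a reproduction of an argument in the text. Your route --- translate the quasi-homogeneous morphism into a graded homomorphism $f^*\colon k[W]\to k[V]$, observe that the zero-dimensionality of the central fiber makes $f_1,\dots,f_n$ a homogeneous system of parameters, deduce that $k[V]$ is a finite free graded module over $R=k[f_1,\dots,f_n]$, and then compare Hilbert series $\prod_i(1-t^{d_i})^{-1}$ on both sides to force the rank to be $1$ --- is sound, and the Hilbert-series step is exactly where the hypothesis that the source and target weight tuples coincide enters, as you correctly point out. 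One caveat: the parenthetical ``alternative'' at the end does not work as stated. A zero-dimensional $\G_m$-invariant subscheme supported at the origin need not be reduced, and asserting that ``one checks the fiber length is $1$ directly'' begs the question --- the fiber length equals the rank $\rho$, and showing $\rho=1$ is precisely the content of the Hilbert-series comparison (indeed, without the equality of weight tuples the fiber length can be larger, e.g.\ $y\mapsto x^2$ with $\deg x=1$, $\deg y=2$). Keep the main argument and drop or repair the parenthetical.
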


\subsection{Graded Slodowy slices}\label{subsec: graded Slodowy slices}

Let $H/k$ be a reductive group with $\mu_m$-action $\theta\colon \mu_m \rightarrow \Aut_H$, giving rise to the Vinberg representation $(G,V)$.
Let $e\in V(k)$ be a nonzero nilpotent element. 
We explain how to construct an explicit transverse slice to the $G$-action at $e$.

By the graded Jacobson--Morozov Theorem (Lemma \ref{lemma: graded Jacobson-Morozov}), there exist $h\in \lieh_0$ and $f\in \lieh_{-1}$ such that $(e,h,f)$ is an $\liesl_2$-triple.
The centralizer $\frakz_{\lieh}(f)$ is a $\theta$-stable subalgebra of $\lieh$.
Define the affine linear subspaces
\begin{align}\label{equation: transverse slodowy slices}
    S_e &:= (e + \frakz_{\lieh}(f))  \subset \lieh, \\
    X_e &:= (e + \frakz_{\lieh}(f)) \cap V = e +\frakz_{\frakh_1}(f) \subset V.
\end{align}
Write $\varphi\colon X_e\rightarrow B$ for the restriction of $\pi \colon V \rightarrow V \GIT G = B$ to $X_e$.
\begin{remark}
    The construction of $S_e$ and $X_e$ depends on a choice of $h,f$. Since any two normal $\liesl_2$-triples $(e,h,f)$ and $(e,h',f')$ are $Z_G(e)$-conjugate, the respective slices $X_e$ and $X_e'$ are $Z_G(e)$-conjugate, so this dependence is usually harmless and we omit it from the notation.
\end{remark}
Slodowy has shown that $S_e$ is a transverse slice to the $H$-action on $\frakh$ at $e$.
The next theorem generalizes Slodowy's result to graded Lie algebras.
It has been proved by Thorne in the $m=2$ case \cite[Proposition 3.4]{Thorne-thesis}.

\begin{theorem}\label{theorem: slodowy slice is a transverse slice}
In the above notation, $X_e$ is a transverse slice to the $G$-action on $V$ at $e$.
More precisely,
\begin{enumerate}
    \item The multiplication map $G\times X_e\rightarrow V$ is smooth;
    \item $\dim X_e +\dim(G\cdot e) = \dim V$; and
    \item The map $\varphi\colon X_e\rightarrow B$ is faithfully flat.
\end{enumerate}
\end{theorem}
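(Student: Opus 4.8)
The plan is to deduce the graded statement from Slodowy's ungraded result together with the $\fraksl_2$-theory and the GIT facts established earlier. First I would recall from Slodowy \cite[\S5.1, \S7.4]{Slodowy-simplesingularitiesalggroups} that $S_e = e + \frakz_{\lieh}(f)$ is a transverse slice to the $H$-action on $\lieh$ at $e$: the multiplication map $H \times S_e \to \lieh$ is smooth, and $\dim S_e + \dim(H\cdot e) = \dim\lieh$. The key structural input is the $\fraksl_2$-decomposition: the triple $(e,h,f)$ gives $\lieh = [\lieh,e] \oplus \frakz_{\lieh}(f)$, and since all three summands of the ambient decomposition are $\theta$-stable (as $h\in\lieh_0$, $f\in\lieh_{-1}$, $e\in\lieh_1$ all preserve the grading up to a shift), we may intersect with $\lieh_1$ to get $\lieh_1 = [\lieh_0, e] \oplus \frakz_{\lieh_1}(f)$, exactly as in the proof of Lemma \ref{lem-ef-centralizer}. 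This is the graded analogue of the transversality of $S_e$ inside $\lieh$.

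For part (1), I would argue that smoothness of $G\times X_e \to V$ can be checked on tangent spaces, or alternatively deduced by a clean ``slicing'' argument: the map $H \times S_e \to \lieh$ is smooth at $(1,e)$, hence smooth in a $G$-stable, $\theta$-stable neighbourhood; restricting source and target to the $\theta$-fixed/eigendata and using that $G = (H^\theta)^\circ$, one gets that $G\times X_e \to V$ is smooth at $(1,e)$. Since the target of the multiplication map is covered by $G$-translates and the map is $G$-equivariant (for the left-translation action on the source), smoothness at $(1,e)$ propagates to smoothness everywhere on $G\times X_e$ — this is the standard homogeneity trick Slodowy uses, and Thorne's $m=2$ proof \cite[Proposition 3.4]{Thorne-thesis} follows the same pattern. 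Concretely, on tangent spaces at $(1,e)$ the differential of multiplication is $(\xi, s)\mapsto [\xi,e] + s$ from $\frakh_0 \oplus \frakz_{\frakh_1}(f)$ to $\frakh_1$, which is surjective precisely by the decomposition $\lieh_1 = [\lieh_0,e]\oplus \frakz_{\lieh_1}(f)$; equivariance then gives surjectivity at every point.

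Part (2) is a dimension count: $\dim X_e = \dim \frakz_{\frakh_1}(f)$, and from the direct sum above, $\dim V = \dim[\lieh_0,e] + \dim\frakz_{\lieh_1}(f) = (\dim\lieh_0 - \dim\frakz_{\lieh_0}(e)) + \dim X_e = \dim(G\cdot e) + \dim X_e$, using that $Z_G(e)$ has Lie algebra $\frakz_{\lieh_0}(e)$ and is $\theta$-stable. For part (3), flatness: $\varphi\colon X_e\to B$ is the restriction of the flat map $\pi\colon V\to B$ to the affine subspace $X_e$. By ``miracle flatness'' \cite[Theorem 23.1]{Matsumura-CommutativeRingTheory} it suffices that $X_e$ and $B$ are smooth (both affine spaces — $B$ by Proposition \ref{proposition: basic invariant theory of vinberg representation full generality}, $X_e$ by construction) and that all fibers of $\varphi$ have the expected dimension $\dim X_e - \dim B$. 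The fiber dimension is where part (1) pays off: smoothness of $G\times X_e\to V$ means that for $v\in X_e$ the orbit $G\cdot v$ meets $X_e$ in a subvariety of dimension $\dim X_e - (\dim V - \dim G\cdot v)$, and combined with Proposition \ref{proposition: basic invariant theory of vinberg representation full generality}(4) (each fiber of $\pi$ has dimension $\dim V - \dim\frakc$) and Lemma \ref{lemma: dim centralizers vs GIT quotient}, one checks $\varphi^{-1}(b)$ is equidimensional of dimension $\dim X_e - \dim B$ for every $b$. Faithful flatness then follows since $\varphi$ is flat and surjective — surjectivity because $\varphi^{-1}(0)\ni e$ is nonempty and the image is a $\G_m$-stable (via the grading $\G_m$-action) constructible set containing a neighbourhood of $0$, hence all of $B$.

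\medskip

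\textbf{Main obstacle.} The step I expect to be most delicate is part (1), specifically getting smoothness of $G\times X_e\to V$ everywhere (not just at $(1,e)$) and, relatedly, pinning down the fiber-dimension claim needed for part (3). The tangent-space computation at $(1,e)$ is easy, but propagating smoothness requires care because $G$-equivariance only handles translates of the identity — one must also know that $X_e$ itself behaves uniformly, i.e. that the multiplication differential is surjective at $(1,v)$ for \emph{every} $v\in X_e$, not just for $v=e$. In the ungraded setting Slodowy handles this via the contracting $\G_m$-action (from the grading built out of $h$ together with scaling), which deforms any $v\in S_e$ toward $e$; I would need to check this $\G_m$-action is compatible with $\theta$ so that it preserves $X_e$, and then the open locus of smoothness, being $\G_m$-stable and containing $e$, must be everything. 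This compatibility of the Slodowy $\G_m$-action with the $\mu_m$-grading is exactly the point where the graded case needs an argument beyond citing Slodowy verbatim, and is presumably what occupies the bulk of the authors' proof.
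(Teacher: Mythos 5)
Your proposal is correct and, for parts (1) and (2), follows the paper's proof essentially verbatim: the same tangent-space computation via $\frakh=[\frakh,e]\oplus\frakz_{\frakh}(f)$ intersected with $\frakh_1$, the same propagation by $G$-equivariance plus a contracting $\G_m$-action, and the same dimension count. You also correctly isolate the real technical point: one must check that the modified Slodowy action $\rho(t)=t^2\Ad(\lambda(t^{-1}))$ commutes with $\theta$ (which holds because $h\in\frakh_0$) and has strictly positive weights on $X_e$, so that the smooth locus, being open, $\G_m$-stable and containing $G\times\{e\}$, is everything; this is exactly Lemma \ref{lemma: weights of rho are positive} and the surrounding setup in the paper. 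Where you genuinely diverge is part (3): you apply miracle flatness directly to $\varphi\colon X_e\rightarrow B$, which forces you to verify that every fiber $\varphi^{-1}(b)$ is equidimensional of dimension $\dim X_e-\dim B$ (doable, e.g.\ by base-changing the smooth map $G\times X_e\rightarrow V$ along $\pi^{-1}(b)\hookrightarrow V$ and using Proposition \ref{proposition: basic invariant theory of vinberg representation full generality}(4)), whereas the paper avoids any fiber-dimension check by observing that $G\times X_e\rightarrow B$ is the composite of the smooth map $\mu$ with the flat map $\pi$, hence flat, and that flatness of $\varphi$ descends along the faithfully flat projection $G\times X_e\rightarrow X_e$. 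The paper's route is shorter and reuses part (1) directly; yours is equally valid but carries the extra equidimensionality verification. Your surjectivity argument (open, $\G_m$-stable, contains $0$, positive weights) is the paper's.
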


The proof of Theorem \ref{theorem: slodowy slice is a transverse slice} follows from adapting arguments of Thorne \cite[\S3]{Thorne-thesis} in a straightforward manner, which are itself based on arguments of Slodowy \cite[\S7.4]{Slodowy-simplesingularitiesalggroups}.
We start by defining important $\G_m\times \mu_m$-actions on $S_e$ and $\G_m$-actions on $X_e$ that will be useful in the proof and in Section \ref{section:subregular curves}.

Let $\frak{m}$ be the subalgebra of $\frakh$ spanned by $\{e,h,f\}$.
The adjoint action of $\frakh$ on itself restricts to an action of $\frak{m} \simeq \liesl_2$ on $\frakh$.
Let $\lambda\colon \G_m\rightarrow H$ be the unique homomorphism whose derivative at $1$ equals $h$.
Then $\Ad(\lambda)\colon \G_m\rightarrow \GL(\frakh)$ defines a linear $\G_m$-action on $\frakh$.
On the other hand, $\theta$ is a $\mu_m$-action on $\frakh$ that commutes with $\Ad(\lambda)$ since $h \in \frakh_0$.
These combine to a $\G_m\times \mu_m$-action $\Ad(\lambda) \times \theta$ on $\frakh$.
This action preserves $\frakz_{\frakh}(f)$, so by Lemma \ref{lemma: representations diagonalizable groups}, there exists a basis $v_1, \dots, v_n$ of $\frakz_{\frakh}(f)$ and elements $w_1, \dots, w_n$ of $X^*(T) = \Z \times (\Z/m\Z)$ such that $v_i$ has weight $w_i$.
Write $w_i = (n_i, k_i)$, where $n_i \in \Z$ and $k_i\in \Z/m\Z$.
If $x\in S_e$ then $x = e + \sum c_i v_i$ for some $c_i \in k$, and we have 
\begin{align*}
    \Ad(\lambda(t))(x) &= t^2 e + \sum c_i t^{n_i} v_i, \\
    \theta(\zeta)(x)&=\zeta e  + \sum c_i \zeta^{k_i}v_i.
\end{align*}
These expressions tell us how to modify $\Ad(\lambda)\times \theta$ so that $e$ becomes a fixed point for the new action. 
Define a $\G_m$-action $\rho$ on $\frakh$ by the formula $\rho(t)(v) = t^2 \Ad(\lambda(t^{-1}))(v)$.
Define a $\mu_m$-action $\sigma$ on $\frakh$ by the formula $\sigma(\zeta)(v) = \zeta \theta(\zeta^{-1})(v)$.
These combine to a $\G_m\times \mu_m$-action $\rho\times \sigma$ on $\frakh$. 
This action preserves $S_e$ and $e$.
Viewing $S_e$ as an affine space with origin $e$, $\rho\times \sigma$ is a linear $\G_m\times \mu_m$-action and its weights are $(2-n_i,1-k_i)$ for $i=1, \dots,n$.
The next lemma shows that $e$ is the unique fixed point for this action:
\begin{lemma}\label{lemma: weights of rho are positive}
    The weights of the $\rho$-action on $S_e$ are $\geq 2$.
\end{lemma}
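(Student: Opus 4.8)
The plan is to reduce this to a standard fact about the adjoint action of an $\fraksl_2$-triple. Recall from the construction that $\rho$ fixes $e$ and preserves $\frakz_{\frakh}(f)$, so it acts linearly on $S_e = e + \frakz_{\frakh}(f)$ with origin $e$, and that its weights are $2 - n_i$ for $i = 1, \dots, n$, where $n_i$ is the $\Ad(\lambda)$-weight of the basis vector $v_i$ of $\frakz_{\frakh}(f)$. Since $\lambda$ is the cocharacter with $d\lambda(1) = h$, the integer $n_i$ is exactly the eigenvalue of $\ad(h)$ on $v_i$. Thus it suffices to show that $\ad(h)$ acts on $\frakz_{\frakh}(f) = \ker(\ad f)$ with all eigenvalues $\le 0$, equivalently $n_i \le 0$ for all $i$, which gives $2 - n_i \ge 2$.

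To prove this, I would decompose $\frakh$ into irreducible modules for the subalgebra $\mathfrak m = \spn\{e,h,f\} \simeq \fraksl_2$ acting via the restriction of the adjoint representation. In an irreducible $\fraksl_2$-module of highest weight $d \ge 0$ (hence dimension $d+1$), the kernel of the lowering operator $f$ is one-dimensional, spanned by the lowest-weight vector, whose $\ad(h)$-eigenvalue is $-d \le 0$. Summing over the irreducible constituents of $\frakh$ shows that every $\ad(h)$-eigenvalue on $\ker(\ad f)$ is $\le 0$; in other words $n_i = -d_i$ for some integer $d_i \ge 0$, so the corresponding $\rho$-weight is $2 - n_i = 2 + d_i \ge 2$.

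The steps requiring (only) care are the bookkeeping ones: confirming that $\rho(t)(e) = t^2 \Ad(\lambda(t^{-1}))(e) = e$ and that $\Ad(\lambda(t))$ preserves $\frakz_{\frakh}(f)$ (because it scales $f$), so that ``weights of $\rho$ on $S_e$'' refers unambiguously to the induced linear action on the tangent space $\frakz_{\frakh}(f)$; and matching the $\Ad(\lambda)$-weight normalization with the $\ad(h)$-eigenvalues. I do not expect a genuine obstacle here: the lemma is essentially the statement that the Slodowy slice lives in nonpositive $h$-degree, shifted up by $2$, and the whole argument is a short consequence of $\fraksl_2$-representation theory.
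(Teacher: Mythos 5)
Your proof is correct and is essentially the argument the paper gives: both reduce to showing $n_i\leq 0$ via $\fraksl_2$-representation theory, using that elements of $\frakz_{\frakh}(f)$ are lowest-weight vectors for $\mathfrak{m}$ (the paper phrases this as the cyclic $\ad(e)$-span of $v_i$ being irreducible of dimension $m_i$ with $n_i = 1-m_i$, which is the same observation). The bookkeeping points you flag are handled the same way in the paper's setup, so there is nothing to add.
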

\begin{proof}
    In the above notation, this is equivalent to showing that $n_i\leq 0$ for all $i$.
    By the representation theory of $\fraksl_2$, the subspace of $\frakh$ generated by  $\{\ad(e)^k(v_i)\colon k\geq 0\}$ is an irreducible representation of $\frak{m}$.
    If it has dimension $m_i$, then $n_i = 1-m_i$. 
    Since $m_i\geq 1$, $n_i\leq 0$.
\end{proof}

The scalar $\G_m$-action on $\frakh$ induces a $\G_m$-action on $\frakh\GIT H$, which we simply denote by $t\cdot b$.
The $\mu_m$-action on $H$ induces one on $\frakh \GIT H$, which we also denote by $\theta$. (This $\mu_m$-action only depends on the outer type of $\theta$ and has been studied in \S\ref{subsec: invariant polynomials}.)
Define a $\G_m\times \mu_m$-action $\rho\times \sigma$ on $\frakh \GIT H$ by the formulae $\rho(t)(b) = t^2 \cdot b$ and $\sigma(\zeta)(b) = \zeta \cdot \theta(\zeta^{-1})(b)$.
Then the GIT quotient morphism $p\colon \frakh \rightarrow \frakh \GIT H$ is equivariant with respect to the action $\rho \times \sigma$ on both sides.

We may now restrict all these considerations to $V$ and $X_e$.
By definition, the fixed-point locus of $S_e$ under $\sigma$ is $X_e$, so $\rho$ restricts to a $\G_m$-action on $X_e$.
Proposition \ref{proposition: invariant polys panyushev} can be reinterpreted as saying that the inclusion $V\subset \frakh$ induces a closed immersion $V\GIT G\hookrightarrow \frakh \GIT H$ whose image equals the $\sigma$-fixed point locus of $\frakh \GIT H$, so again $\rho$ restricts to a $\G_m$-action on $V\GIT G$.

\begin{proof}[Proof of Theorem \ref{theorem: slodowy slice is a transverse slice}]
    \begin{enumerate}
        \item The differential of the action map $\mu\colon G\times X_e\rightarrow V$ at $(1,e)$ equals, after identifying $T_e(X_e)$ with $\frakz_{\frakh}(f)\cap V$ and $T_eV$ with $V$, the linear map $\lieg\times \frakz_{\frakh}(f)\rightarrow V$ given by $(x,z)\mapsto [x,e] + z$. The representation theory of $\fraksl_2$ implies $\frakh = [\frakh,e] \oplus \frakz_{\frakh}(f)$.
        Since both summands are $\theta$-stable, we have
        \begin{align}\label{equation: decomposition V into centralizer and orbit}
            V=[\lieg,e] \oplus (\frakz_{\frakh}(f)\cap V),
        \end{align}
        so this differential is surjective. 
        Therefore $\mu$ is smooth at $(1,e)$ \cite[Proposition III.10.4]{hartshorne-AGbook}.
        Moreover, $\mu(\gamma g,x) = \Ad(\gamma)\cdot \mu(g,x)$ for all $\gamma,g\in G$ and $x\in V$, so $\mu$ is smooth at every point of $G\times\{e\}$.
        Define a $\G_m$-action on $G\times X_e$ via $t\cdot (g,x) = (g\lambda(t), \rho(t)x)$ and on $V$ via $t\cdot v= t^2 v$.
        Then $\mu$ is equivariant with respect to these actions.
        The open subset of points in $G\times X_e$ where $\mu$ is smooth contains $G\times \{e\}$ and is $\G_m$-equivariant.
        By Lemma \ref{lemma: weights of rho are positive}, the $\G_m$-action $\rho$ on $X_e$ contracts every point to $e$, so this open subset must equal the whole of $G\times X_e$.
        Hence $\mu$ is smooth everywhere.
        \item We have $\dim X_e + \dim(G\cdot e) = \dim (\frakz_{\frakh}(f)\cap V) + \dim [\frakg,e]$, which equals $\dim V$ by the decomposition \eqref{equation: decomposition V into centralizer and orbit}.
        \item We first prove that $\varphi\colon X_e\rightarrow B$ is flat.
        It suffices to prove that $G\times X_e\rightarrow B, (g,x)\mapsto \varphi(x)$ is flat. This map is the composition $G\times X_e\xrightarrow{\mu} V\xrightarrow{\pi} B$. 
        Since both $\mu$ and $\pi$ are flat (the former is even smooth by Part 1 of this theorem, the latter is flat by Proposition \ref{proposition: basic invariant theory of vinberg representation full generality}), the flatness of $\varphi$ follows.
        To prove that $\varphi$ is faithfully flat, observe that its image is an open, $\G_m$-stable subset of $B$ that contains the central point $0\in B$.
        Since the $\G_m$-action on $B$ has positive weights, the only such open subset is $B$ itself, so $\varphi$ is surjective.
    \end{enumerate}
\end{proof}

An important and well understood special case of the above construction is when $e$ is regular. 

\begin{proposition}\label{proposition: transverse slice regular nilpotent is isomorphism}
    Assume that $e$ is regular nilpotent. 
    Then $\varphi\colon X_e\rightarrow B$ is an isomorphism.
\end{proposition}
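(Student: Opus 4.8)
The plan is to show that $\varphi \colon X_e \to B$ is a bijective morphism of smooth affine spaces which respects the $\G_m$-actions and induces a homogeneous graded isomorphism on coordinate rings, then invoke Lemma \ref{lemma: Gm-morphism same positive weights is isomorphism}. First I would observe that when $e$ is regular nilpotent, $\frakz_{\frakh}(f)$ has dimension equal to the rank $r$ of $H$ (since the centralizer of $f$, which is also regular, has dimension $r$), and hence by \eqref{equation: transverse slodowy slices} together with Proposition \ref{proposition: invariant polys panyushev} we get $\dim X_e = \dim \frakz_{\frakh_1}(f)$. By Theorem \ref{theorem: slodowy slice is a transverse slice}(2), $\dim X_e = \dim V - \dim(G\cdot e)$; and since $e$ is regular, $\dim(G\cdot e) = \dim G - \dim Z_G(e) = \dim \frakh_0$ because $\frakz_{\frakh_0}(e) = 0$ for regular $e$ in a stable grading (Lemma \ref{lemma: dim centralizers vs GIT quotient} and Proposition \ref{proposition: regular semisimple iff stable}, or directly Lemma \ref{lemma: regular nilpotent defines top component of N}). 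Combining with \eqref{eqn-quotient} gives $\dim X_e = \dim V - \dim \frakh_0 = \dim(V \GIT G) = \dim B$, so source and target have the same dimension, both being affine spaces.

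Next I would promote this dimension count to an isomorphism using the $\G_m$-equivariance established before the proof of Theorem \ref{theorem: slodowy slice is a transverse slice}. Recall $\rho$ acts on $X_e$ with all weights $\geq 2$ (Lemma \ref{lemma: weights of rho are positive} restricted to the $\sigma$-fixed locus), the GIT quotient $p\colon \frakh \to \frakh \GIT H$ is $\rho\times\sigma$-equivariant, and hence $\varphi$ is $\G_m$-equivariant for the $\rho$-action on $X_e$ and the induced $\rho$-action on $B$, which also has strictly positive weights. The central fiber $\varphi^{-1}(0) = X_e \cap \mathcal{N}$ consists of nilpotent elements in the slice; since $e$ is regular and the slice is transverse to the orbit, this fiber is zero-dimensional — indeed, $X_e \cap \mathcal{N}$ is a single point $\{e\}$, because any nilpotent $x \in X_e$ is $G$-conjugate to $e$ (all regular nilpotents being conjugate by Lemma \ref{lemma: regular nilpotent defines top component of N}, and any non-regular nilpotent in the slice would contradict the dimension/transversality count via \eqref{equation: decomposition V into centralizer and orbit}). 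Choosing coordinates on $X_e \cong \A^{\dim B}$ and on $B \cong \A^{\dim B}$ adapted to the $\G_m$-weights so that the two weight multisets agree — which follows from the fact that $\varphi$ is finite (being quasi-finite and, by Theorem \ref{theorem: slodowy slice is a transverse slice}(3), flat with zero-dimensional central fiber over a positively graded base) hence the induced map on graded rings is a homogeneous injection of polynomial rings with matching Hilbert series — Lemma \ref{lemma: Gm-morphism same positive weights is isomorphism} applies directly and yields that $\varphi$ is an isomorphism.

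The main obstacle I anticipate is verifying cleanly that the weight data on $X_e$ and on $B$ match up, so that Lemma \ref{lemma: Gm-morphism same positive weights is isomorphism} is literally applicable (it requires the morphism to be quasi-homogeneous of weight $(d_1,\dots,d_n;d_1,\dots,d_n)$ with the \emph{same} tuple on both sides). One way around this: rather than matching weights a priori, note that $\varphi$ is a $\G_m$-equivariant morphism between affine spaces with positive-weight $\G_m$-actions whose central fiber is a reduced point; such a map is automatically finite and flat (miracle flatness plus properness of the central fiber over the positively graded base, exactly as in the proof of Proposition \ref{proposition: basic invariant theory of vinberg representation full generality}), so $k[X_e]$ is a free graded $k[B]$-module, and freeness of rank one at the origin forces it to be free of rank one everywhere, i.e. $\varphi$ is an isomorphism. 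Alternatively — and this is perhaps the slickest route — one can first treat the ungraded statement (Slodowy: $S_e \to \frakh\GIT H$ is an isomorphism for regular $e$, which is classical) and then simply intersect with the $\sigma$-fixed loci: $X_e$ is the $\sigma$-fixed locus of $S_e$, $B$ is the $\sigma$-fixed locus of $\frakh \GIT H$ (Proposition \ref{proposition: invariant polys panyushev}), and a $\mu_m$-equivariant isomorphism restricts to an isomorphism of fixed loci. This reduces everything to the known regular case in the Lie algebra $\frakh$, and I would present this as the primary argument with the $\G_m$-contraction argument as a self-contained alternative.
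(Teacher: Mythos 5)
Your proposal is correct in substance, but it is worth noting that the paper does not argue this at all: its ``proof'' is a one-line citation to Panyushev's Theorem 3.5(ii), so you are supplying an argument where the paper supplies a reference. Of your two routes, the one you call slickest --- restrict the classical ungraded isomorphism $S_e = e+\frakz_{\frakh}(f)\xrightarrow{\sim}\frakh\GIT H$ (Kostant) to the $\sigma$-fixed loci, using that $X_e=S_e^{\sigma}$ and that $B=V\GIT G$ is identified with $(\frakh\GIT H)^{\sigma}$ --- is the right one, and is in fact essentially how Panyushev proves the statement. It buys a proof that works whenever a regular nilpotent exists in $\frakh_1$ (i.e.\ for $N$-regular gradings), which matters because the proposition as stated does \emph{not} assume $\theta$ stable, and the paper later invokes it for the ungraded ($m=1$) Kostant section as well. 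Two caveats: Proposition \ref{proposition: invariant polys panyushev}, which you use to identify $B$ with the $\sigma$-fixed locus of $\frakh\GIT H$, is stated in the paper only for stable $\theta$ (Panyushev's Theorem 3.5(i) gives it for $N$-regular $\theta$, which is what you actually need here); and your dimension count via $\frakz_{\frakh_0}(e)=0$ likewise silently assumes stability --- it fails already for $m=1$, where $\dim\frakz_{\frakh_0}(e)=\rank H$. Neither issue arises in the fixed-locus argument, which needs no dimension count at all.

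Your first route (the $\G_m$-contraction / finite-flat argument) has one genuine gap: you need the central fiber $\varphi^{-1}(0)$ to be a \emph{reduced} point (equivalently, the finite flat map to have degree one), but you only establish that it is set-theoretically $\{e\}$. ``Free of rank one at the origin'' is exactly the assertion that the scheme-theoretic fiber has length one, which is what requires proof; likewise Lemma \ref{lemma: Gm-morphism same positive weights is isomorphism} requires matching weight tuples, which you cannot extract from a length computation you have not done. The gap is fixable: for regular $e$ the differential $(dp)_e$ is surjective (Kostant), and decomposing into $\sigma$-eigenspaces exactly as in the proof of Proposition \ref{proposition: corank theta-subregular slice is 1} shows $(d\varphi)_e$ is surjective, hence bijective once the dimension count is in place; thus $\varphi$ is \'etale at $e$ and the central fiber is a reduced point, after which your finite-flat argument closes. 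I would present the fixed-locus argument as the proof and drop, or explicitly repair, the contraction argument.
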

\begin{proof}
    This is \cite[Theorem 3.5(ii)]{Panyushev-Invarianttheorythetagroups}.
\end{proof}

In other words, the transverse slice $X_e$ constructed from a regular $e$ defines a section of the quotient map $\pi\colon V\rightarrow B$.
Such a section is called a Kostant section (or sometimes a Kostant--Weierstrass slice).
If $H$ is quasi-split and $\theta$ is split principal, then regular nilpotent elements $e\in V$ always exist (Lemma \ref{lemma: split principal if and only if regular nilpotent exists}).

\subsection{Families of curves}\label{section-families-of-curves}

Assume $\theta$ is stable and $e\in V$ is a nilpotent element, giving rise to a transverse slice $\varphi\colon X_e\rightarrow B$ associated to some choice of $\fraksl_2$-triple $(e,h,f)$ containing $e$.
We will be interested in the situation where $X_e\rightarrow B$ is a family of curves, i.e. the fibers have dimension $1$.
This is equivalent, by Lemma \ref{lemma: codimension nilpotent orbit is centralizer}, to the condition $\dim Z_G(e)=\dim\frakz_{\frakg}(e)=1$.
So assume the latter condition for the remainder of this section.
We make some general observations about such families of curves.
We already know by Theorem \ref{theorem: slodowy slice is a transverse slice} that $\varphi\colon X_e\rightarrow B$ is affine, surjective, with one-dimensional fibers, and that the action map $G\times X_e\rightarrow V$ is smooth.
Write $C = \varphi^{-1}(0)$ for the central fiber, a singular affine curve.

The next lemma determines which nilpotent orbits appear in $C$: it is exactly those which lie above $e$ in the closure partial ordering.

\begin{lemma}
    Let $\calO$ be a nilpotent orbit in $V$. Then $\calO\cap C \neq \varnothing$ if and only if the closure $\bar{\calO}$ of $\calO$ contains $e$.
\end{lemma}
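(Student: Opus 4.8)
The plan is to prove the two implications separately. We may assume $k$ is algebraically closed, since both ``$\mathcal{O}\cap C\neq\varnothing$'' and ``$e\in\bar{\mathcal{O}}$'' are insensitive to base change to $\bar k$. Before starting, recall two facts about the contracting action $\rho(t)(v)=t^2\Ad(\lambda(t^{-1}))(v)$: first, the cocharacter $\lambda\colon\G_m\to H$ with $d\lambda=h$ actually lands in $G$, because $h\in\frakh_0$ forces $\theta(\zeta)\circ\lambda=\lambda$ and $\lambda(\G_m)$ is connected; second, by Lemma~\ref{lemma: weights of rho are positive} the weights of $\rho$ on the affine space $S_e$, hence a fortiori on the $\rho$-stable subspace $X_e$ (both with origin $e$), are all positive, so $t\mapsto\rho(t)(x)$ extends to a morphism $\psi\colon\A^1\to X_e$ sending $0$ to $e$, for any $x\in X_e$.

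For the forward implication I would first record a preliminary observation: every nilpotent $G$-orbit in $V$ is stable under scaling by $\bar k^\times$. Indeed, completing a nilpotent $y\in V$ to a normal $\liesl_2$-triple $(y,h_y,f_y)$ via Lemma~\ref{lemma: graded Jacobson-Morozov} and letting $\lambda_y\colon\G_m\to G$ be the associated cocharacter (which lands in $G$ by the same argument as for $\lambda$), we get $\Ad(\lambda_y(s))\cdot y=s^2y$, so $s^2y\in G\cdot y$; as every element of $\bar k^\times$ is a square, $cy\in G\cdot y$ for all $c$. Now suppose $x\in\mathcal{O}\cap C$. Since $\lambda(t^{-1})\in G$, the element $y_t:=\Ad(\lambda(t^{-1}))(x)$ lies in $\mathcal{O}$, and applying the observation gives $\rho(t)(x)=t^2y_t\in\mathcal{O}$ for every $t\in\G_m$. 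Thus the morphism $\psi$ maps the dense open $\G_m\subset\A^1$ into $\mathcal{O}$, so $\psi^{-1}(\bar{\mathcal{O}})$ is a closed set containing $\G_m$ and hence equals $\A^1$; evaluating at $0$ yields $e=\psi(0)\in\bar{\mathcal{O}}$.

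For the reverse implication, the key point is that $G\cdot C$ is open in $\mathcal{N}$. Since $\pi$ is $G$-invariant we have $\pi\circ\mu=\varphi\circ\mathrm{pr}_2$ on $G\times X_e$, hence $\mu^{-1}(\mathcal{N})=\mu^{-1}(\pi^{-1}(0))=\mathrm{pr}_2^{-1}(\varphi^{-1}(0))=G\times C$; thus $G\times C\to\mathcal{N}$ is the base change along $\mathcal{N}\hookrightarrow V$ of the smooth multiplication map $\mu$ (part~(1) of Theorem~\ref{theorem: slodowy slice is a transverse slice}), so it is smooth, hence open, and its image $G\cdot C$ is an open subset of $\mathcal{N}$ containing $e$ (as $e\in C$). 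Since $e\in\bar{\mathcal{O}}$ lies in this open set, $\mathcal{O}$ meets $G\cdot C$; writing a point of the intersection as $g\cdot c$ with $g\in G$ and $c\in C$, the $G$-stability of $\mathcal{O}$ gives $c=g^{-1}\cdot(g\cdot c)\in\mathcal{O}$, so $c\in\mathcal{O}\cap C$. The only place requiring genuine care is the scaling-stability of nilpotent orbits used in the first implication, which is why I would isolate it as a preliminary lemma; everything else is a formal consequence of the contracting $\G_m$-action and the smoothness of $\mu$, so I anticipate no real obstacle.
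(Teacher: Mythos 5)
Your proof is correct and follows essentially the same route as the paper: the forward implication via the contracting $\G_m$-action $\rho$ (taking the limit $t\to 0$ to land on $e$), and the reverse implication via openness of the image of the smooth action map $G\times X_e\to V$ restricted over the nilpotent cone. The only difference is that you make explicit the scaling-stability of nilpotent orbits needed to see that $\bar{\mathcal{O}}$ is $\rho$-stable, a point the paper leaves implicit in asserting that $\bar{\mathcal{O}}\cap C$ is $\G_m$-stable; this is a worthwhile clarification but not a different argument.
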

\begin{proof}
    Suppose that $\calO \cap C \neq \varnothing$.
    Then $\bar{\calO}\cap C$ is a closed nonempty $\G_m$-stable subset of $C$, so must contain $e$, by the same logic as the proof of Lemma \ref{lemma: fibers are connected}.
    So $e\in \bar{\calO}$.
    Conversely, suppose that $e\in \bar{\calO}$.
    Then every $G$-stable open subset of $V$ that contains $e$ also contains $\calO$.
    Since the morphism $m\colon G\times X_e \rightarrow V$ is smooth, its image is open, so applying the previous sentence to the image of this morphism shows that $X_e\cap \calO \neq \varnothing$, so $C\cap \calO \neq \varnothing$.
\end{proof}

\begin{lemma}
    $(G\cdot e) \cap  X_e = \{e\}$.
\end{lemma}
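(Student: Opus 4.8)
The plan is to combine the contracting $\G_m$-action $\rho$ on $X_e$ constructed just before the proof of Theorem~\ref{theorem: slodowy slice is a transverse slice} with the transversality of the slice. Set $Z = (G\cdot e)\cap X_e$, a locally closed subvariety of $X_e$ containing $e$; we must show $Z=\{e\}$. The claim is invariant under base change, so I may assume $k=\bar k$.

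First I would check that $Z$ is stable under $\rho$. The cocharacter $\lambda\colon\G_m\to H$ with $d\lambda(1)=h$ in fact factors through $G$: the cocharacter $\theta(\zeta)\circ\lambda$ has derivative $\theta(\zeta)(h)=h$, hence equals $\lambda$ by uniqueness, so $\lambda(\G_m)\subseteq(H^\theta)^\circ=G$. Since $\Ad(\lambda(t))(e)=t^2e$, the orbit $G\cdot e$ is stable under the homothety $v\mapsto t^2v$, because $t^2(g\cdot e)=\Ad(g)\Ad(\lambda(t))(e)=\Ad(g\lambda(t))(e)\in G\cdot e$; it is visibly stable under $\Ad(\lambda(t^{-1}))$, hence under $\rho(t)(v)=t^2\Ad(\lambda(t^{-1}))(v)$. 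As $\rho$ also preserves $X_e$, it preserves $Z$.

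Next I would observe that $e$ is an isolated point of $Z$. Since $X_e$ is a transverse slice to the $G$-action at $e$ and $V$ is smooth, $T_e(G\cdot e)\oplus T_eX_e=T_eV$, so $T_e(G\cdot e)\cap T_eX_e=0$; as $T_eZ$ lies in this intersection, $Z$ has trivial tangent space at $e$, so $e$ is a reduced isolated point of $Z$. To finish, take any $x\in Z$: by Lemma~\ref{lemma: weights of rho are positive} all weights of $\rho$ on the affine space $X_e$ (with origin $e$) are positive, so $t\mapsto\rho(t)(x)$ extends to a morphism $\bar\gamma\colon\A^1\to X_e$ with $\bar\gamma(0)=e$, whose image lies in $Z$ by $\rho$-stability. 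Choosing an open $U\ni e$ with $U\cap Z=\{e\}$, the morphism $\bar\gamma$ is constant equal to $e$ on the nonempty open $\bar\gamma^{-1}(U)\subseteq\A^1$, hence constant (as $\A^1$ is irreducible); therefore $x=\bar\gamma(1)=e$ and $Z=\{e\}$.

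There is no serious obstacle here: all the real inputs (smoothness of the action map, the tangent-space decomposition, positivity of the $\rho$-weights) are already in hand, and the proof just assembles them; the one point needing a little care is the stability of $G\cdot e$ under $\rho$. If one prefers to avoid the transversality step, it can be replaced by a dimension count using only Theorem~\ref{theorem: slodowy slice is a transverse slice}(1)--(2): $\mu\colon G\times X_e\to V$ is smooth of relative dimension $\dim Z_G(e)$, so $\mu^{-1}(e)$ is equidimensional of that dimension, and its second projection has image $Z$ with each fibre a torsor under $Z_G(e)$; hence $\dim Z=0$, so $Z$ is finite, and being $\rho$-stable with $\G_m$ connected it consists of $\rho$-fixed points, of which $e$ is the only one.
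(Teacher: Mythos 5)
Your proof is correct and rests on the same two ingredients as the paper's: the transversality decomposition $[\frakg,e]\oplus T_e(X_e)=V$ and the contracting $\G_m$-action $\rho$. The packaging differs slightly. The paper observes that $(G\cdot e)\cap X_e$ is a $\G_m$-invariant subset of the one-dimensional central fiber $C$, hence either $\{e\}$ or a union of irreducible components of $C$, and rules out the latter because such a component would contribute a nonzero tangent direction at $e$ inside $[\frakg,e]\cap T_e(X_e)=\{0\}$. You instead show that $e$ is an isolated (indeed reduced) point of the intersection and then let $\rho$ contract every point onto it; this avoids any appeal to $\dim C=1$, so your argument works verbatim for slices of higher relative dimension. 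Your explicit check that $\lambda$ factors through $G$ (so that $G\cdot e$ is genuinely $\rho$-stable) makes precise a point the paper leaves implicit, and the alternative dimension count via the smoothness of $G\times X_e\rightarrow V$ is also a valid route to the finiteness of the intersection.
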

\begin{proof}
    This follows from the fact that $[\frakg,e] \oplus T_e(X_e) = V$.
    Indeed, $(G\cdot e)\cap X_e$ is a closed $\G_m$-invariant subset of $C$, so either $\{e\}$ or a union of irreducible components of $C$.
    If $C_i\subset G\cdot e$, then $T_e C_i \subset [\frakg,e]\cap T_e(X_e)= \{0\}$, so $T_eC_i = 0$, contradiction.
\end{proof}

Let $\mathrm{Orb}^e$ be the set of nilpotent $G$-orbits $\calO\subset \mathcal{N}$ different from $G\cdot e$ with the property that $e\in \bar{\O}$.
In the notation of Section \ref{subsec: nilpotent cone}, $G\cdot e\preceq \O$ and $G\cdot e\neq \O$, so $\dim(G\cdot e) < \dim(\calO)$.
Since the codimension of $G\cdot e\subset \mathcal{N}$ is $1$, $\dim(\calO)=\dim(\mathcal{N})$, in other words $\bar{\O}$ is an irreducible component of $\mathcal{N}$.

The above two lemmas show that the association $x\mapsto G\cdot x$ induces a surjective map 
\[
\Psi\colon \{\text{irreducible components of }C\}\rightarrow \mathrm{Orb}^e.
\]


Let $\mathrm{Orb}^e = \{\O_1, \dots, \O_n\}$, and choose an orbit representative $E_i \in \O_i$ for each $i$. Let $\kappa_i = X_{E_i}$ be a transverse slice to the $G$-action on $V$ at $E_i$ constructed using an $\fraksl_2$-triple as in Section \ref{subsec: graded Slodowy slices}. Write $\varphi_i: \kappa_i \to B$ for the restriction of $\pi$ to $\kappa_i$.
The map $\varphi_i$ is a faithfully flat morphism between affine spaces of the same dimension.
Moreover there is a $\G_m$-action $\rho$ on the source and the target and the morphism is $\G_m$-equivariant.

\begin{definition}\label{definition: reduced orbit}
    We say $E_i$ is \emph{reduced} if $\varphi_i\colon \kappa_i\rightarrow B$ is an isomorphism.
\end{definition}
By Lemma \ref{lemma: Gm-morphism same positive weights is isomorphism}, $E_i$ is reduced if and only if the $\G_m$-weights on $\kappa_i$ and $B$ agree, if and only if the central fiber $\varphi_i^{-1}(0)$ is a reduced scheme.
If $E_i$ is regular, then $E_i$ is reduced by Proposition \ref{proposition: transverse slice regular nilpotent is isomorphism}.

\begin{lemma}\label{lemma: irred component reduced iff corr nilpotent is reduced}
    Let $C_i$ be an irreducible component of $C$.
    Then $C_i$ is a reduced scheme if and only if $\Psi(C_i)$ is reduced.
\end{lemma}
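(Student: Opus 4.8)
The plan is to match the local structure of $C$ at the generic point of $C_i$ with that of the central fibre of the transverse slice attached to $\Psi(C_i)$ by means of a common smooth cover, and then to invoke the standard fact that smooth morphisms neither create nor destroy reducedness. Since $k$ has characteristic zero we may extend scalars to $\bar{k}$. Write $\mathcal{O}$ for the orbit $\Psi(C_i)$, let $E\in\mathcal{O}$ be the chosen representative, let $\kappa=X_E\subset V$ be the associated transverse slice, and let $\varphi_E\colon\kappa\to B$ be the restriction of $\pi$.

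I would first record what each side of the equivalence amounts to. The central fibre $C=\varphi^{-1}(0)$ is a fibre of the flat morphism $\varphi$ between the smooth schemes $X_e$ and $B$, hence a local complete intersection; in particular $C$ is Cohen--Macaulay and has no embedded components, so $C_i$ (with its scheme structure as an irreducible component of $C$) is unmixed, and $C_i$ is reduced if and only if the Artinian local ring $\mathcal{O}_{C,\eta}$ is reduced, where $\eta$ is the generic point of $C_i$. By the construction of $\Psi$, the point $\eta$ lies in the orbit $\mathcal{O}$. On the other side, the flat morphism $\varphi_E$ has equidimensional fibres of dimension $\dim\kappa-\dim B=0$, so $\varphi_E^{-1}(0)$ is $0$-dimensional; since it is $\G_m$-stable and, by Lemma~\ref{lemma: weights of rho are positive}, the only $\G_m$-fixed point of $\kappa$ is $E$, it is supported at $E$, i.e.\ $\varphi_E^{-1}(0)=\Spec A$ for a local Artinian ring $A$. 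By Lemma~\ref{lemma: Gm-morphism same positive weights is isomorphism} and the discussion following Definition~\ref{definition: reduced orbit}, ``$\Psi(C_i)$ is reduced'' is equivalent to ``$\varphi_E$ is an isomorphism'', hence to ``$A$ is reduced''. So the lemma reduces to the statement that $\mathcal{O}_{C,\eta}$ is reduced if and only if $A$ is reduced.

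To connect the two, I would form the common cover
\[
Z=(G\times X_e)\times_{V}(G\times\kappa),
\]
the fibre product over $V$ of the action maps $\mu\colon G\times X_e\to V$ and $\mu_E\colon G\times\kappa\to V$, both of which are smooth by Theorem~\ref{theorem: slodowy slice is a transverse slice}(1). Then the two projections from $Z$ are smooth, so the composites $Z\to X_e$ and $Z\to\kappa$ are smooth, and since $\pi$ is $G$-invariant they are compatible with a common morphism $Z\to B$; consequently $Z_0:=Z\times_B\{0\}$ coincides with $Z\times_{X_e}C$ and with $Z\times_{\kappa}\varphi_E^{-1}(0)$, so $Z_0$ is smooth over both $C$ and $\varphi_E^{-1}(0)$. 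The image of $Z\to X_e$ contains every $x$ with $G\cdot x\subseteq\mu_E(G\times\kappa)$; as $\mu_E(G\times\kappa)$ is $G$-stable and contains $E$, it contains $\mathcal{O}$, so in particular $\eta$ lies in the image of $Z\to X_e$. Choosing a point $\zeta\in Z$ over $\eta$, we have $\zeta\in Z_0$, and its image in $\varphi_E^{-1}(0)=\{E\}$ is $E$. Now a smooth morphism induces on local rings a faithfully flat homomorphism with regular, in particular reduced, fibre, and such a homomorphism both preserves reducedness (reduced base and reduced fibres force the total ring to be reduced) and reflects it (the source embeds into the target). Applying this to the smooth local homomorphisms $\mathcal{O}_{C,\eta}\to\mathcal{O}_{Z_0,\zeta}$ and $A=\mathcal{O}_{\varphi_E^{-1}(0),E}\to\mathcal{O}_{Z_0,\zeta}$ shows that the reducedness of $\mathcal{O}_{C,\eta}$, of $\mathcal{O}_{Z_0,\zeta}$, and of $A$ are all equivalent; combined with the previous paragraph this proves the lemma.

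The main difficulty is the local comparison encoded in $Z$: this is the graded counterpart of Slodowy's compatibility of iterated transverse slices, and the only genuinely delicate points are checking that the generic point $\eta$ really lies over the orbit $\mathcal{O}$ to which the slice $\kappa$ is attached, that $\varphi_E^{-1}(0)$ is set-theoretically a single point, and keeping careful track of which arrows in the diagram are smooth and which commute over $B$. Once that bookkeeping is in place the argument is formal, and I do not anticipate any serious obstacle beyond it.
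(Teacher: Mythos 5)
Your argument is correct and is essentially the paper's proof: both hinge on the smoothness of the action maps $G\times X_e\to V$ and $G\times\kappa\to V$, the fact that smooth morphisms preserve and reflect reducedness, and Lemma \ref{lemma: Gm-morphism same positive weights is isomorphism} to translate reducedness of $\varphi_E^{-1}(0)$ into $E$ being reduced. The only cosmetic difference is that the paper compares $C_i$ and $\kappa_{i,0}$ through smooth surjections onto the orbit $\O_i$ itself, whereas you compare them through the fiber product $(G\times X_e)\times_V(G\times\kappa)$, which is a slightly more careful packaging of the same local computation.
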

\begin{proof}
    This follows from the fact that $G\times X_e\rightarrow V$ and $G\times \kappa_i\rightarrow V$ are smooth.
    Indeed, the orbit $\O_i = \psi(C_i)$ admits smooth surjective maps $G\times (C_i\setminus \{e\})\rightarrow \O_i$ and $G\times (\kappa_{i,0})\rightarrow \O_i$.
    Therefore $C_i$ is reduced if and only if $\O_i$ is reduced if and only if $\kappa_{i,0}$ is.
    The latter condition is equivalent (using Lemma \ref{lemma: Gm-morphism same positive weights is isomorphism}) to $E_i$ being reduced.
\end{proof}

We say $e$ is \emph{good} if all the $E_i$ are reduced. 

\begin{corollary}\label{corollary: good e has central fiber reduced and unique singular point}
    If $e$ is good, then $C$ is reduced and $e$ is the unique singular point.
\end{corollary}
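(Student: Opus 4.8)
The plan is to prove the two assertions of the corollary by complementary means. I would deduce that $C$ is \emph{reduced} by combining generic reducedness of $C$ --- essentially what the hypothesis that $e$ is good provides, via Lemma~\ref{lemma: irred component reduced iff corr nilpotent is reduced} --- with a Cohen--Macaulayness argument that rules out embedded components; and I would deduce the statement about the singular locus by exhibiting $C\setminus\{e\}$ as a finite disjoint union of transverse slices through nilpotent orbits, each of which is smooth. Since reducedness and the location of the singular locus may be checked after base change to $\overline k$, I would assume $k$ algebraically closed throughout.

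\emph{Reducedness.} Since $e$ is good, each $E_i$ is reduced, hence each orbit $\O_i\in\mathrm{Orb}^e$ is reduced, and therefore, by Lemma~\ref{lemma: irred component reduced iff corr nilpotent is reduced} together with the surjectivity of $\Psi$, the local ring of $C$ at the generic point of each of its irreducible components is reduced; that is, $C$ satisfies Serre's condition $(R_0)$. For condition $(S_1)$: $X_e$ is an affine space, $B=V\GIT G$ is an affine space, and $\varphi\colon X_e\to B$ is flat with one-dimensional fibres (Theorem~\ref{theorem: slodowy slice is a transverse slice}), so $C=\varphi^{-1}(0)$ is cut out in the regular scheme $X_e$ by $\dim B$ functions (the components of $\varphi$ in linear coordinates on $B$), which form a regular sequence because $\varphi$ is flat. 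Hence $C$ is a complete intersection in $X_e$, so it is Cohen--Macaulay, so it satisfies $(S_1)$. A Noetherian scheme satisfying $(R_0)$ and $(S_1)$ is reduced, so $C$ is reduced.

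\emph{Singular locus.} I would first record that $C\setminus\{e\}=\bigsqcup_{i=1}^n (X_e\cap\O_i)$: any $x\in C\setminus\{e\}$ is nilpotent, hence lies in a nilpotent orbit $\O$ meeting $C$, so $e\in\overline\O$ by the lemma characterizing which nilpotent orbits meet $C$; moreover $\O\neq G\cdot e$ since $(G\cdot e)\cap X_e=\{e\}$, so $\O=\O_i$ for a unique $i$; conversely $\O_i\subseteq\mathcal N$ gives $X_e\cap\O_i\subseteq C$, and the $\O_i$ are pairwise disjoint and avoid $e$. It then remains to show each $X_e\cap\O_i$ is smooth of dimension $1$. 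For this I would use that the action map $m\colon G\times X_e\to V$ is smooth (Theorem~\ref{theorem: slodowy slice is a transverse slice}): the ``shear'' automorphism $(g,v)\mapsto(g,g^{-1}v)$ of $G\times V$ identifies the locally closed subscheme $m^{-1}(\O_i)\subseteq G\times X_e$ with $G\times(X_e\cap\O_i)$; since $\O_i$ is smooth over $k$ and $m$ is smooth, $m^{-1}(\O_i)=G\times(X_e\cap\O_i)$ is smooth over $k$, whence $X_e\cap\O_i$ is smooth by descent of regularity along the flat projection $G\times(X_e\cap\O_i)\to X_e\cap\O_i$; its dimension is $1$ by a count using that $m$ has relative dimension $\dim Z_G(e)=1$ and that $\dim\O_i=\dim\mathcal N=\dim G$ (here $\overline{\O_i}$ is an irreducible component of $\mathcal N$, as noted before the corollary, and $\dim\mathcal N=\dim G$ by Lemma~\ref{lemma: dimension nilpotent cone}). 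Hence $C\setminus\{e\}$ is smooth of pure dimension $1$; together with reducedness of $C$, this yields the corollary.

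The argument is mostly bookkeeping built on the lemmas of this subsection. The step that needs real care is the scheme-theoretic identification $m^{-1}(\O_i)\cong G\times(X_e\cap\O_i)$ and the accompanying descent of regularity from $G\times(X_e\cap\O_i)$ to $X_e\cap\O_i$; on the reducedness side, one must be careful that reducedness is genuinely deduced from the \emph{combination} $(R_0)+(S_1)$, since neither generic reducedness nor the complete-intersection property alone suffices.
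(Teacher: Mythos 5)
Your proof of reducedness is the paper's own argument, just phrased through Serre's criterion: generic reducedness comes from Lemma~\ref{lemma: irred component reduced iff corr nilpotent is reduced} together with the goodness hypothesis, and the absence of embedded components from the fact that $C$ is a fibre of the flat map $\varphi$ between smooth varieties, hence a local complete intersection. For the singular locus you take a genuinely different route. The paper argues that, $C$ being a reduced curve, its singular locus is finite, and a closed finite subset of $C$ invariant under the contracting $\G_m$-action $\rho$ must be contained in the unique fixed point $\{e\}$. You instead write $C\setminus\{e\}$ as the disjoint union of the loci $X_e\cap\O_i$ (each open in $C$, the $\O_i$ being the top-dimensional, hence open, orbits in $\mathcal{N}$) and prove each is a smooth curve by pulling $\O_i$ back along the smooth action map $m\colon G\times X_e\rightarrow V$; note that since $\O_i$ is $G$-stable one has $m^{-1}(\O_i)=G\times(X_e\cap\O_i)$ directly, so the shear is not needed. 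This is correct, and it buys slightly more than the paper's argument — smoothness of $C\setminus\{e\}$ is obtained without appealing to reducedness or to the $\G_m$-action — at the cost of being longer.

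There is, however, one genuine gap: the corollary asserts that $e$ \emph{is} a singular point of $C$, and this is exactly what is used later (the proof of Proposition~\ref{proposition: corank theta-subregular slice is 1} cites the corollary for ``$e$ is not a smooth point of $\varphi$''). Your argument only shows that the singular locus is contained in $\{e\}$; nothing in it rules out $C$ being smooth at $e$ as well. The paper closes this by noting that $\pi\colon V\rightarrow B$ is not smooth at $e$, because the transverse slice at $e$ has relative dimension $1$ rather than $0$; since $V=[\frakg,e]\oplus T_e(X_e)$ with $[\frakg,e]\subseteq\ker(d\pi)_e$, the images of $(d\pi)_e$ and $(d\varphi)_e$ coincide, so $(d\varphi)_e$ is not surjective either, and as $\varphi$ is a morphism between smooth varieties whose fibres have dimension $1$, this forces $\dim T_eC\geq 2$, i.e.\ $C$ is singular at $e$. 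You need to supply a step of this kind to complete the proof.
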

\begin{proof}
    By Lemma \ref{lemma: irred component reduced iff corr nilpotent is reduced}, $C$ is generically reduced.
    Since $C$ is a fiber of a flat morphism between smooth varieties, $C$ is a local complete intersection. Hence $C$ is reduced.
    Since the transverse slice $\varphi \colon X_e\rightarrow B$ at $e$ has relative dimension $1$, the map $\pi\colon V\rightarrow V\GIT G$ is not smooth at $e$, so $\varphi$ is not smooth at $e$, so $e$ is a singular point of $C$.
    It follows that the singular locus of $C$ is a closed finite $\G_m$-invariant subset of $C$ containing $e$, so must equal $\{e\}$.
\end{proof}

\begin{lemma}\label{lemma: fibers are connected}
    Suppose that $e$ is good. 
    Then the fibers of $\varphi$ are geometrically reduced and connected.
\end{lemma}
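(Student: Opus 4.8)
We may assume $k=\bar k$, since both conclusions are geometric and may be checked after base change. Throughout we use the $\G_m$-action $\rho$ on $X_e$ and on $B$ from \S\ref{subsec: graded Slodowy slices}, for which $\varphi$ is equivariant, both actions have positive weights, and $e\in X_e$, $0\in B$ are the unique fixed points (Lemma \ref{lemma: weights of rho are positive}); in particular every orbit closure contains the fixed point.

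\emph{Reducedness.} Since $\varphi$ is flat and $X_e$, $B$ are smooth (affine spaces), each fibre $X_{e,b}$ is a local complete intersection of pure dimension $1$, hence Cohen--Macaulay; therefore $X_{e,b}$ is reduced if and only if it is generically reduced, i.e. if and only if $\varphi$ is smooth at each generic point of $X_{e,b}$. Let $Z\subseteq X_e$ be the (closed) non-smooth locus of $\varphi$; it is $\rho$-stable because $\varphi$ is $\rho$-equivariant. Hence $\Sigma:=\{b\in B: \dim(Z\cap X_{e,b})\geq 1\}$ is a closed (upper semicontinuity of fibre dimension for $Z\to B$) and $\G_m$-stable subset of $B$. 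If $\Sigma\neq\varnothing$ it must contain $0$; but $Z\cap C=\{e\}$ by Corollary \ref{corollary: good e has central fiber reduced and unique singular point}, which is $0$-dimensional, so $0\notin\Sigma$. Thus $\Sigma=\varnothing$, every fibre of $\varphi$ is reduced, and hence geometrically reduced as $\operatorname{char}k=0$.

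\emph{Connectedness of the central fibre.} The irreducible components of $C=\varphi^{-1}(0)$ are permuted by the connected group $\G_m$, hence each is $\rho$-stable. Each is therefore a nonempty $\rho$-stable closed subvariety of $X_e$ and so contains the unique $\rho$-fixed point $e$. Consequently all components of $C$ pass through $e$, so $C$ is connected.

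\emph{Connectedness of all fibres, and the main obstacle.} It remains to propagate connectedness from $C$ to an arbitrary fibre; the plan is to reduce to the proper case. Using the positive-weight $\G_m$-actions one forms a $\G_m$-equivariant relative compactification $\overline\varphi\colon \overline{X_e}\to B$ (a projective cone construction, adjoining to each fibre a boundary whose structure depends only on the associated graded of $\varphi$ and is therefore constant in the family); one then argues that $\overline\varphi$ is proper and flat with geometrically reduced fibres, so that $\overline\varphi_*\O_{\overline{X_e}}$ is finite locally free of rank equal to the number of connected components of the (reduced) fibre, whence this number is locally constant on the connected base $B$, hence everywhere equal to its value over $0$, which is $1$ by the previous paragraph; this forces each $X_{e,b}\subseteq \overline{X_{e,b}}$ to be connected as well. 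The main obstacle is precisely this compactification step: one must exhibit a $B$-flat proper model whose fibres remain reduced (equivalently, ensure the boundary at infinity deforms flatly without acquiring non-reduced or embedded structure), since — unlike reducedness — connectedness of fibres does \emph{not} follow formally from flatness together with geometrically reduced fibres and a contracting $\G_m$-action, so the affine-space structure of $X_e$ (or some equivalent extra input, e.g. geometric connectedness of the generic fibre $X_{e,\eta}$ combined with constructibility and $\G_m$-invariance of the connectedness locus) must genuinely be used.
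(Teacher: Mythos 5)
Your reducedness argument and your treatment of the central fibre are essentially the paper's: reducedness is propagated from $C$ by exhibiting a closed, $\G_m$-stable ``bad locus'' which would have to contain $e$ if nonempty (the paper instead invokes the openness of the locus where the fibre is geometrically reduced, \cite[Cor.~12.1.7(vi)]{EGAIV-3}, but the contraction principle doing the work is identical). One small repair: your $\Sigma\subset B$ is a priori only constructible --- upper semicontinuity of fibre dimension lives on the source, and $Z\to B$ is not obviously a closed map --- so you should instead run the contraction argument on the closed $\G_m$-stable subset $\{z\in Z:\dim_z(Z\cap X_{e,\varphi(z)})\ge 1\}$ of $X_e$, which does not contain $e$ and hence is empty.

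The genuine gap is the connectedness of the non-central fibres: you only sketch a compactification strategy and explicitly leave its key step (a $B$-flat proper model whose boundary stays reduced) as an unresolved obstacle, so as written this part is not a proof. The paper avoids compactification entirely. Given $b\in B(k)$, pull $\varphi$ back along the orbit map $\iota\colon\A^1\to B$, $t\mapsto t\cdot b$. The total space $Y$ of the resulting family carries a contracting $\G_m$-action covering the weight-one action on $\A^1$, so every nonempty closed invariant subset of $Y$ contains $e$ and hence $Y$ is connected. Since $Y\to\A^1$ is flat of finite type with geometrically reduced fibres (by the part of the lemma already proved) over the irreducible base $\A^1$ and has connected total space, \cite[Tag 055J]{stacksproject} gives that its generic fibre is connected; connectedness then spreads out to the fibres over a dense open of $\A^1$, hence to all fibres over $\A^1\setminus\{0\}$ by $\G_m$-homogeneity, in particular to the fibre over $t=1$, which is exactly $X_{e,b}$. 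If you wish to salvage your own route you must actually construct the flat proper model with reduced fibres at infinity, which is considerably more work than this one-dimensional base-change trick; your closing remark that the conclusion ``does not follow formally from flatness, reduced fibres and a contracting action'' is in fact contradicted by the paper's argument, which uses precisely those ingredients together with restriction to an orbit line.
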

\begin{proof}
    We may assume $k$ is algebraically closed.
    We first show that $C$ is connected.
    The $\G_m$-action $\rho$ on $X_e$ restricts to a $\G_m$-action on $C$ which contracts all points to $e$.
    Therefore any closed nonempty $\G_m$-invariant subset of $C$ must contain $e$.
    So every connected component of $C$ contains $e$, so there is exactly one such component.
    This verifies the connectedness of $C$; the reducedness of $C$ follows from Corollary \ref{corollary: good e has central fiber reduced and unique singular point}. 

    We now propagate these properties to the other fibers of $\varphi$. 
    By \cite[Corollaire 12.1.7(vi)]{EGAIV-3}, the locus of $x\in X_e$  for which the fiber $C_b:= \varphi^{-1}(b)$ (where $b = \varphi(x)$) is geometrically reduced at $x$ is an open subset of $X_e$. Since it is also $\G_m$-equivariant and contains the central point $e$ it must equal the whole of $X_e$.

    To prove connectedness, let $b\in B(k)$ and consider the map $\iota\colon \A^1\rightarrow B$ sending $t$ to $t\cdot b$.
    Let $\chi\colon Y\rightarrow \A^1$ be the base change of $\varphi$ along $\iota$.
    Then the $\G_m$-action on $X_e$ and $B$ restricts to contracting $\G_m$-actions on $Y$ and $\A^1$ and $\chi$ is $\G_m$-equivariant.
    Since every closed nonempty $\G_m$-invariant subset of $Y$ contains $e$, $Y$ is connected.
    By \cite[Tag \href{https://stacks.math.columbia.edu/tag/055J}{055J}]{stacksproject}, the generic fiber of $\chi$ is connected.
    This implies $\chi^{-1}(1) = C_b$ is connected, as desired.
\end{proof}

Recall from \S\ref{subsec: stable gradings over algebraically closed field} that the discriminant polynomial $\Delta\in k[V]^G = k[B]$ is a polynomial with the property that $v\in V$ is stable if and only if $\Delta(v)\neq 0$, if and only if $v$ is regular semisimple.

\begin{lemma}\label{lemma: nonzero discriminant implies smooth curve}
    If $b\in B(k)$ has nonzero discriminant, then $C_b = \varphi^{-1}(b)$ is smooth.
\end{lemma}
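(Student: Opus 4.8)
The plan is to reduce the statement to the claim that the GIT quotient map $\pi\colon V\to B$ is smooth at every regular semisimple point of $V$, and to prove \emph{that} using a Kostant section. First I would reduce to $k=\bar k$ (smoothness is insensitive to base change) and set up the relevant linear algebra. Since $\Delta(b)\neq 0$, every $\bar k$-point $x$ of the fibre $C_b=\varphi^{-1}(b)$ is regular semisimple by Proposition~\ref{proposition: regular semisimple iff stable}, hence stable, so $\frakz_{\frakg}(x)=0$ and $\dim[\frakg,x]=\dim\frakg$. The subvariety $X_e=e+\frakz_{\frakh_1}(f)$ is an affine-linear subspace of $V$, hence smooth, and $\varphi\colon X_e\to B$ is flat by Theorem~\ref{theorem: slodowy slice is a transverse slice}; therefore $C_b$ is smooth at $x$ exactly when $\mathrm{d}\varphi_x\colon T_xX_e\to T_bB$ is surjective. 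Identifying $T_xX_e$ with $\frakz_{\frakh_1}(f)\subset V=T_xV$, the map $\mathrm{d}\varphi_x$ is the restriction of $\mathrm{d}\pi_x$; and since smoothness of $G\times X_e\to V$ at $(1,x)$ (Theorem~\ref{theorem: slodowy slice is a transverse slice}) gives $V=[\frakg,x]+\frakz_{\frakh_1}(f)$ while $\mathrm{d}\pi_x$ annihilates $[\frakg,x]$, we get $\mathrm{d}\varphi_x(T_xX_e)=\mathrm{d}\pi_x(V)=\operatorname{im}(\mathrm{d}\pi_x)$. So it is enough to show $\mathrm{d}\pi_x$ is surjective whenever $x$ is regular semisimple (equivalently, that $V_b:=\pi^{-1}(b)$ is smooth for $\Delta(b)\neq 0$).

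For the latter I would exploit a Kostant section. By Theorem~\ref{theorem: characterization stable gradings} and Lemma~\ref{lemma: split principal if and only if regular nilpotent exists} there is a regular nilpotent $E\in V$, and by Proposition~\ref{proposition: transverse slice regular nilpotent is isomorphism} the slice $\varphi_E\colon X_E\to B$ is an isomorphism; let $s:=\varphi_E^{-1}\colon B\to V$ be the resulting section of $\pi$, and set $B^{\circ}:=\{\Delta\neq 0\}\subset B$ and $V^{\circ}:=\pi^{-1}(B^{\circ})$. Consider $a\colon G\times B\to V$, $(g,b')\mapsto g\cdot s(b')$. At $(1,b')$ with $b'\in B^{\circ}$ its differential is $(\xi,w)\mapsto[\xi,s(b')]+\mathrm{d}s_{b'}(w)$; as $s(b')$ is regular semisimple (so $\frakz_{\frakg}(s(b'))=0$) and $s$ is a section (so $\mathrm{d}\pi_{s(b')}\circ\mathrm{d}s_{b'}=\mathrm{id}$), this image is the direct sum $[\frakg,s(b')]\oplus\operatorname{im}(\mathrm{d}s_{b'})$, of dimension $\dim\frakg+\dim B=\dim V$ by~\eqref{eqn-quotient}; hence $\mathrm{d}a$ is surjective there, and at every point over $B^{\circ}$ by $G$-equivariance. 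Moreover $a$ hits every $\bar k$-point of $V^{\circ}$: for $x\in V^{\circ}(\bar k)$ the fibre $\pi^{-1}(\pi(x))$ consists only of regular semisimple elements, hence is a single $G$-orbit by Proposition~\ref{proposition: basic invariant theory of vinberg representation full generality}, so $x\in G\cdot s(\pi(x))$. Since $\pi\circ a\colon G\times B\to B$ is the second projection, hence smooth, writing $x=a(g,b')$ and using that $\mathrm{d}(\pi\circ a)_{(g,b')}=\mathrm{d}\pi_x\circ\mathrm{d}a_{(g,b')}$ is surjective while $\mathrm{d}a_{(g,b')}$ is surjective, I conclude $\mathrm{d}\pi_x$ is surjective. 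Combining this with the first paragraph: $C_b$ is smooth at every regular semisimple point, and since $\Delta(b)\neq 0$ forces every $\bar k$-point of $C_b$ to be regular semisimple, $C_b$ is smooth.

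The hard part is precisely this middle step. Naive generic smoothness (valid in characteristic zero) only yields smoothness of $\pi$ over \emph{some} dense open of $B$, whereas what is needed is smoothness over all of $B^{\circ}$, i.e.\ over every fibre of nonzero discriminant; scheme-theoretically one must know that such a fibre is reduced, and it is the Kostant section together with the equivariance under $G$ (and the $\G_m$-action, if one prefers to phrase the surjectivity of $a$ that way) that transports this from the generic fibre to all of them. Alternatively, one could simply cite the smoothness of the invariant map over the regular semisimple locus from the invariant theory of $\theta$-groups. Everything else — the transversality decomposition $V=[\frakg,x]+\frakz_{\frakh_1}(f)$ and the directness of the sum $[\frakg,s(b')]\oplus\operatorname{im}(\mathrm{d}s_{b'})$ — is routine $\mathfrak{sl}_2$-bookkeeping with the ingredients already established.
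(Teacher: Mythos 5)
Your argument is correct and is essentially the paper's proof: both reduce to $k=\bar k$, use the Kostant section $X_E\xrightarrow{\sim}B$ together with the single-orbit property of $V_b$ to see that $\pi$ is smooth along the regular semisimple fibre $V_b$, and then use the smoothness of the action map $G\times X_e\to V$ to transfer this to $C_b$. The only difference is presentational — you work with surjectivity of differentials, while the paper restricts the smooth morphisms $G\times\kappa\to V$ and $G\times X_e\to V$ to the fibre over $b$ and concludes directly.
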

\begin{proof}
    We may assume $k$ is algebraically closed.
    Choose a regular nilpotent $E\in V(k)$, complete it to an $\fraksl_2$-triple, and let $\kappa = X_E$ be the associated transverse slice.
    The map $\kappa \to B$ is an isomorphism by Proposition \ref{proposition: transverse slice regular nilpotent is isomorphism}.
    The morphism $G\times \kappa \rightarrow V$ is smooth, hence so is the restriction $G\simeq G\times \kappa_b\rightarrow V_b$ to the fiber at $b$.
    This restricted map is surjective, since all elements in $V_b$ are $G$-conjugate by Proposition \ref{proposition: basic invariant theory of vinberg representation full generality}.
    Therefore $V_b$ is smoothly covered by the smooth $G$, hence $V_b$ is smooth itself.
    Since the action map $G\times C_b \rightarrow V_b$ is smooth, the smoothness of $C_b$ follows from the smoothness of $V_b$.
\end{proof}

\begin{remark}
    It seems likely that, similar to \cite[Proposition 4.5]{Thorne-thesis}, the singularities of the fibers of $X_e\rightarrow B$ can be determined in terms of singularities of transverse slices for Levi subalgebras of $\frakh$.
\end{remark}

\subsection{Coxeter gradings}\label{subsection: Coxeter gradings}

As a first example, we now describe nilpotent orbits and graded Slodowy slices for Coxeter gradings of simple Lie algebras, as described in Example \ref{ex-cox}. Let $H$ be simple and split over $k$.

Let $\al_1, \dots, \al_\ell$ be simple roots for $\frakh$, and let $\al_0$ be the lowest root. Then $\{e_{\al_i} \mid i \in \{0, \dots, \ell\}\}$ is a basis for $\frakh_1$, and $G = T$ is a torus. The ring of invariants for the action of $G$ on $V$ is generated by the polynomial
\begin{equation*}
\sum_{i = 0}^\ell a_ie_{\al_i} \mapsto \prod_{i = 0}^\ell a_i^{c_i},
\end{equation*}
where the exponents $c_i$ are defined by $-\al_0 = \sum_{i = 1}^\ell c_i\al_i$ and $c_0 = 1$. 

A vector $v = \sum_{i = 0}^\ell a_ie_{\al_i}$ is nilpotent if and only if $a_i = 0$ for some $i$. 
If $v$ is nilpotent, then the dimension of its centralizer $\frakz_{\frakg}(v)$ is $\# \{i \mid a_i = 0\} - 1$. 
Thus a nilpotent vector $v$ has trivial centralizer in $\frakg$ if and only if exactly one $a_i = 0$ . If $k$ is algebraically closed, this tells us that there are exactly $\ell + 1$ nilpotent $G$-orbits of maximal dimension and $\frac{1}{2}\ell(\ell + 1)$ nilpotent orbits with centralizer in $\frakg$ of dimension 1. 

Let $E = \sum_{i = 0}^\ell b_ie_{\al_i}$ be a nilpotent with trivial centralizer in $\frakg$ and $b_j = 0$. We have $X_E = E + \spn\{e_{\al_j}\}$, so the nilpotent $E$ is reduced in the sense of Definition \ref{definition: reduced orbit} if and only if $c_j = 1$, which occurs if and only if $E$ is regular. (Here we're using the fact that if $c_j = 1$, then $\{\al_0, \al_1, \dots, \al_\ell\}\setminus \{\al_j\}$ forms a set of simple roots, which one can see using the affine Dynkin diagram for $H$ cf. \cite[Section 1.8]{Iwahori-Matsumoto}). 
A nilpotent $e = \sum a_ie_{\al_i}$ is in the closure of $G\cdot E$ if and only if $a_j = 0$, and $e$ is good as defined above if and only if $a_i = 0$ implies $c_i = 1$.

Fix $j, k \in \{0, \dots, \ell\}$ with $j \neq k$, and let $e = \sum_{i \neq j, k} e_{\al_i}$. 
We can complete $e$ to a normal $\fraksl_2$-triple with $f = \sum_{i \neq j, k} a_ie_{-\al_i}$ and all $a_i \neq 0$. Then $\frakz_{\frakh_1}(f) = \spn\{e_{\al_j}, e_{\al_k}\}$. If we consider the restriction of the map $V \to V\GIT G$ to $X_e$, the fiber over $c$ is given explicitly by $\{e + be_{\al_j} + de_{\al_k} \mid b^{c_j}d^{c_k} = c\}$, i.e. it is a curve of the form $x^{c_j}y^{c_k} = c$.

\begin{remark}
We may also consider \textit{twisted Coxeter gradings}, which generalize the Coxeter gradings considered here. Given a pinned automorphsim $\vartheta$, one may define the twisted Coxeter number $h_{\vartheta}$ (cf. \cite[Section 5]{Reeder-torsion}), and then the twisted Coxeter gradings are principal $h_{\vartheta}$-gradings of the form $\check\rho\vartheta$. A grading of this form is rank one, the subgroup $G$ is a subtorus of $T$, and the ring of invariants is described explicitly in \cite[Section 4.2]{VilonenXue}. 
Both nilpotent orbits and corresponding graded Slodowy slices can be described in a completely analagous way to the (non-twisted) Coxeter grading.
\end{remark}

\section{Subregular curves}\label{section:subregular curves}

Let $H$ be a reductive group over $k$ and $\theta$ a stable $\mu_m$-action on $H$.
In this section we completely describe the construction of Section \ref{section-families-of-curves} in the case when $e \in \frakh_1$ is a subregular nilpotent such that $\dim \frakz_{\frakg}(e) = 1$. To do so, we classify all stable gradings in which such a nilpotent appears when $H$ is simple. 
Given such an $e$, we explicitly describe the families of curves that arise as fibers of the map $\varphi: X_e \to B$, proving Theorem \ref{thm-intro}.
Specifically, Theorem \ref{thm-intro} follows from Theorems \ref{theorem: subregular classification: exclusion} and \ref{theorem:exhibitionsubregularcurves-algclosed}.
Our approach builds upon and resembles Slodowy's method to explicitly determine the family of subregular surfaces \cite{Slodowy-simplesingularitiesalggroups}; here we incorporate the $\mu_m$-action into the picture.
This is related to but slightly different from the approach of \cite[Section 3]{Thorne-thesis} in the case $m=2$.

\subsection{Subregular-adapted gradings}\label{subsection: subregular adapted gradings}

Recall that a nilpotent element $e\in \frakh(k)$ is called \textit{subregular} if $\dim \frakz_{\frakh}(e) = \mathrm{rk}(H) +2$, where $\mathrm{rk}(H)$ is the reductive rank of $H$ (cf. \cite[Section 5.4]{Slodowy-simplesingularitiesalggroups} or \cite[Section 4.2]{CollingwoodMcGovern-nilpotentorbits}).
The subregular orbits are the maximal elements of the set of non-regular orbits in the nilpotent cone of $\frakh$ under the closure partial ordering.
If $H$ is simple and $k$ algebraically closed, there exists a unique $H(k)$-orbit of subregular elements.

\begin{definition}
   A nonzero nilpotent element $e\in V(k)$ is called \textit{$\theta$-subregular} if $e$ is subregular and $\dim \frakz_{\frakg}(e)=1$.
   A stable $\mu_m$-action $\theta$ is called \emph{subregular adapted} if there exists a $\theta$-subregular nilpotent element in $V(k)$.
\end{definition}

(We add the condition that $e$ is nonzero to exclude the degenerate case that $H$ is of type $A_1$.)
In general, not every subregular element $e\in V(k)$ satisfies $\dim \frakz_{\frakg}(e)=1$, and not every nilpotent element $e$ satisfying $\dim \frakz_{\frakg}(e)=1$ is subregular (see the appendix for examples of this phenomena in the stable $8$-grading of $F_4$).
We single out the $\theta$-subregular nilpotents as an interesting class of nilpotents whose transverse slices $X_e\rightarrow B$ have good properties.

\begin{lemma}
    Suppose $e\in V(k)$ is $\theta$-subregular and that $E\in V$ is a nilpotent element with the property that $e\not\in G\cdot E$ but $e\in \overbar{G\cdot E}$.
    Then $E$ is regular.
    In particular, $e$ is good (in the sense of Section \ref{section-families-of-curves}).
\end{lemma}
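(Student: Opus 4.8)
The plan is to prove the stronger statement that \emph{every nilpotent element of the transverse slice $X_e$ other than $e$ is regular in $\frakh$}; the lemma then follows formally. Indeed, assume this. Given $E$ as in the statement, we have $e\in\overline{G\cdot E}$, so by the lemma above characterizing the nilpotent orbits meeting $C$ (namely those $\calO$ with $e\in\overline\calO$), the orbit $G\cdot E$ meets the central fibre $C=\varphi^{-1}(0)$, which is precisely the set of nilpotents in $X_e$. Since $e\notin G\cdot E$, this orbit meets $C\setminus\{e\}$, so $E$ is $G$-conjugate to a regular nilpotent and is therefore regular. Applying this to an arbitrary element of an orbit $\calO\in\mathrm{Orb}^e$ (which is $\neq G\cdot e$ and has $e$ in its closure) shows that $\calO$ consists of regular nilpotents, so each representative $E_i$ is reduced by Proposition \ref{proposition: transverse slice regular nilpotent is isomorphism}; hence $e$ is good.

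For the stronger statement I would argue as follows. The claim is geometric, so I may assume $k=\overline k$, and after decomposing $\frakh$ into its centre and simple ideals one reduces to the case $H$ simple. The triple $(e,h,f)$ is an $\fraksl_2$-triple in $\frakh$ with $e$ subregular, so $S_e=e+\frakz_{\frakh}(f)$ is exactly Slodowy's transverse slice to the subregular $H$-orbit $\calO_{\mathrm{sr}}=H\cdot e$; moreover $X_e=S_e\cap V$, and a vector in $V$ is nilpotent iff it is nilpotent in $\frakh$, so the nilpotents of $X_e$ are the points of $(S_e\cap\mathcal N_\frakh)\cap V$, where $\mathcal N_\frakh$ is the nilpotent cone of $\frakh$. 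It therefore suffices to show every point of $S_e\cap\mathcal N_\frakh$ other than $e$ is regular, which is Slodowy's description of the subregular slice \cite{Slodowy-simplesingularitiesalggroups}. The mechanism: Slodowy's $\G_m$-action contracts $S_e$ onto $e$, so for any nilpotent orbit $\calO$ the set $S_e\cap\overline\calO$, being closed and $\G_m$-stable, contains $e$ whenever it is nonempty; thus $S_e$ meets only orbits whose closure contains $e$, and since $\calO_{\mathrm{sr}}$ is maximal among non-regular nilpotent orbits these are just $\calO_{\mathrm{sr}}$ and the regular orbit $\calO_{\mathrm{reg}}$. Finally $S_e$ meets $\calO_{\mathrm{sr}}$ only in $e$: transversality gives $T_e(S_e\cap\calO_{\mathrm{sr}})\subseteq T_eS_e\cap T_e\calO_{\mathrm{sr}}=0$, so $e$ is an isolated point of $S_e\cap\calO_{\mathrm{sr}}$, whereas $\G_m$-stability would force any other point to lie on a one-dimensional $\G_m$-orbit inside $S_e\cap\calO_{\mathrm{sr}}$ accumulating at $e$. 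Hence $S_e\cap\mathcal N_\frakh\setminus\{e\}\subseteq\calO_{\mathrm{reg}}$.

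The only genuinely delicate points are the two ingredients about $S_e\subseteq\frakh$: identifying which $H$-orbits $S_e$ meets, and the fact that $S_e\cap\calO_{\mathrm{sr}}=\{e\}$. Both are due to Slodowy, and in practice I would cite \cite{Slodowy-simplesingularitiesalggroups} rather than reproduce the $\G_m$-contraction argument in full. Everything downstream — transferring the statement through $X_e=S_e\cap V$ and through the smooth action map $G\times X_e\to V$, and deducing goodness — is routine given the results already established in this section.
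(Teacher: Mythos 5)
Your proof is correct, but it rules out the subregular case by a genuinely different mechanism than the paper. The paper stays inside $V$: from $e\in\overline{H\cdot E}$ it deduces that $E$ is regular or subregular, and then kills the subregular case with Richardson's lemma that the connected components of $\calO_{\mathrm{sr}}\cap V$ are precisely the $G$-orbits it contains — so each such $G$-orbit is closed in $\calO_{\mathrm{sr}}\cap V$ and cannot have $e$ in its closure without containing it. You instead work in the ungraded slice $S_e\subset\frakh$ and prove the stronger statement that every nilpotent point of $X_e$ other than $e$ is a regular element of $\frakh$, using two facts essentially due to Slodowy: the contracting $\G_m$-action forces every $H$-orbit meeting $S_e$ to contain $e$ in its closure (hence to be regular or subregular), and transversality, $T_eS_e\cap T_e\calO_{\mathrm{sr}}=\frakz_{\frakh}(f)\cap[\frakh,e]=0$, combined again with the contraction, gives $S_e\cap\calO_{\mathrm{sr}}=\{e\}$. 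The transfer back to the lemma via the earlier characterization of which nilpotent orbits meet $C$ is sound, and the deduction of goodness is the same as the paper's. What each approach buys: the paper's is shorter given Richardson's result and never leaves the graded picture, while yours avoids Richardson entirely, reuses the ungraded subregular analysis that Section \ref{subsection: subregular adapted gradings} needs anyway, and yields the sharper geometric conclusion that $C\setminus\{e\}$ consists of regular nilpotents of $\frakh$. One small caveat: your opening reduction "to $H$ simple" by splitting off simple ideals is slightly glib — for reductive non-simple $H$ a subregular nilpotent is subregular in exactly one factor and regular in the rest, and the factors need not be $\theta$-stable individually — but the paper's own proof is equally informal on this point and the intended setting is $H$ simple.
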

\begin{proof}
    The hypotheses imply that $e\in \overbar{H\cdot E}\subset \frakh$.
    Since $e$ is subregular, this implies $E$ is regular or subregular. 
    For the sake of contradiction, assume we are in the latter case and write $\calO = H\cdot E = H\cdot e$ for the subregular orbit in $\frakh$.
    A result of Richardson \cite[Lemma 3.1]{Richardson-orbitsofalgebraicgroups} (see also \cite[Section 2.3]{Vinberg-theweylgroupofgraded}) shows that there is a decomposition $\calO \cap \mathcal{N} = \calO_1\sqcup \cdots \sqcup \calO_r$ into connected components such that each $\calO_i$ is a $G$-orbit.
    By assumption, we may order these orbits so that $e\in \calO_1$ and $E\in \calO_2$.
    Since $\calO_2$ is closed in $\calO\cap V$, $\overbar{\calO_2}\cap \calO = \calO_2$. 
    By assumption, $e\in \calO_1$ lies in $\overbar{\O_2} \cap \O = \O_2$.
    So $e\in \calO_1\cap \calO_2 = \varnothing$.
    This is a contradiction, so $E$ is indeed regular.
    Since every regular nilpotent is reduced, $e$ is good.
\end{proof}

Now we consider the GIT quotient map $\varphi\colon X_e\rightarrow B$ associated to a $\theta$-subregular nilpotent element $e$.
If $\alpha \colon V_1\rightarrow V_2$ is a linear map between vector spaces, write $\mathrm{corank}(\alpha)$ for the dimension of the cokernel of $\alpha$.
The following is the analogue of \cite[Proposition 3.10]{Thorne-thesis}.
\begin{proposition}\label{proposition: corank theta-subregular slice is 1}
If $e$ is $\theta$-subregular, then the corank of $(d\varphi)_e\colon T_e(X_e)\rightarrow T_0B$ is $1$.
\end{proposition}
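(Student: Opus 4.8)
The plan is to reduce to Slodowy's analysis of the subregular slice in the ungraded Lie algebra $\frakh$, using the $\mu_m$-action $\sigma$ and the exactness of the invariants functor. Write $p\colon\frakh\to B_H := \frakh\GIT H$ for the adjoint quotient, and recall the $\mu_m$-action $\sigma$ on $\frakh$ and on $B_H$ from before the proof of Theorem \ref{theorem: slodowy slice is a transverse slice}: it fixes $e$, makes $p$ equivariant, and has fixed loci $\frakh^\sigma = V$, $S_e^\sigma = X_e$ and $B_H^\sigma = B$, the last being a restatement of Proposition \ref{proposition: invariant polys panyushev}; so $\varphi$ is obtained from $p|_{S_e}\colon S_e\to B_H$ by passing to $\sigma$-fixed points. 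First I would note that $(d\pi)_e$ and $(d\varphi)_e\colon T_eX_e\to T_0B$ have the same image, since $\pi$ is $G$-invariant so $(d\pi)_e$ vanishes on $[\frakg,e] = T_e(G\cdot e)$ while $V = [\frakg,e]\oplus\frakz_{\frakh_1}(f) = T_e(G\cdot e)\oplus T_eX_e$ by \eqref{equation: decomposition V into centralizer and orbit}; the same argument shows $(dp)_e$ and $(dp)_e|_{S_e}$ share a common image $I_H\subseteq T_0B_H$.

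The essential input is the Brieskorn--Slodowy theorem \cite{Slodowy-simplesingularitiesalggroups} that for a subregular nilpotent $e$ in the simple Lie algebra $\frakh$, the slice $S_e\cap\mathcal{N}_\frakh$ is a simple surface singularity; as simple singularities are hypersurface singularities with $e$ as their singular point, this slice has embedding dimension $3$ at $e$, equivalently $(dp)_e\colon\frakh\to T_0B_H$ has corank exactly $1$, i.e.\ $I_H$ is a hyperplane in $T_0B_H$. (This is the ungraded version of the present statement, underlying \cite[Proposition 3.10]{Thorne-thesis}.) Since $(dp)_e$ is a $\sigma$-equivariant surjection $\frakh\twoheadrightarrow I_H$ of $\mu_m$-representations fixing $e$, applying the exact functor of $\mu_m$-invariants gives a surjection $V = \frakh^\sigma\twoheadrightarrow I_H^\sigma = I_H\cap T_0B$. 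Hence $\image((d\varphi)_e) = \image((d\pi)_e) = I_H\cap T_0B$, which has codimension $0$ or $1$ in $T_0B$ because $I_H$ is a hyperplane in $T_0B_H$; so $\mathrm{corank}((d\varphi)_e)\le 1$.

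To rule out corank $0$ it suffices to see that $(d\varphi)_e$ is not surjective. The $\theta$-subregular $e$ is good by the previous lemma, so Corollary \ref{corollary: good e has central fiber reduced and unique singular point} gives that the central fibre $\varphi^{-1}(0)$ is a reduced curve with a singular point at $e$; hence $\varphi$ is not smooth at $e$, so $(d\varphi)_e$ is not surjective and $\mathrm{corank}((d\varphi)_e) = 1$. I expect the main obstacle to be making the input from \cite{Slodowy-simplesingularitiesalggroups} precise: one must be sure that the scheme-theoretic fibre $(p|_{S_e})^{-1}(0)$, not merely its reduction, has embedding dimension $3$ at $e$, uniformly across all simple types, including the non-simply-laced ones where the subregular slice carries extra automorphisms. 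Everything else is formal.
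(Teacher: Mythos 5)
Your proof is correct and follows essentially the same route as the paper: both reduce to Slodowy's result that $(dp)_e$ has corank $1$ at a subregular nilpotent, transfer this to the graded setting via the $\sigma$-equivariance of $(dp)_e$ (your exactness-of-$\mu_m$-invariants argument is just a repackaging of the paper's eigenspace decomposition of $(dp)_e$ and summation of coranks), and rule out corank $0$ by observing that $\varphi$ is not smooth at $e$. The worry you flag at the end is already resolved by the reference the paper uses, \cite[Section 8.3, Proposition 1]{Slodowy-simplesingularitiesalggroups}, which states the corank of the restricted adjoint quotient at a subregular element directly, uniformly in all simple types.
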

\begin{proof}
    Recall from \S\ref{subsec: graded Slodowy slices} the GIT quotient maps $p\colon \frakh \rightarrow \frakh \GIT H$ and $\pi\colon V\rightarrow V\GIT G$ and the $\mu_m$-actions $\sigma$ on $\frakh$ and $\frakh\GIT H$.
    Then $e\in \frakh_1$ is fixed by $\sigma$, and the map $(dp)_e\colon T_e\frakh \rightarrow T_0(\frakh\GIT H)$, with the induced $\mu_m$-actions still denoted by $\sigma$, is equivariant.
    Denoting by $(\cdots)^{[i]}$ the $\sigma = \zeta^i$ eigenspace of this action, $(dp)_e$ breaks up into a direct sum of $(dp)_e^{[i]}\colon \frakh^{[i]} \rightarrow (T_0(\frakh\GIT H))^{[i]}$ for $i=0,1, \dots, m-1$.
    Unraveling the definition of $\sigma$, we find that $\frakh^{[i]}= \frakh_{1-i}$, $(T_0(\frakh\GIT H))^{[0]} = T_0B$ (by Proposition \ref{proposition: invariant polys panyushev}), and $(dp)_e^{[0]} = (d\pi)_e$.
    Since $X_e$ is a transverse slice at $e$, the image of $(d\pi)_e$ equals the image of $(d\varphi)_e$.
    We deduce that
    \[
    \mathrm{corank}((dp)_e)=
    \mathrm{corank}((d\varphi)_e)+
    \sum_{i=1}^{m-1} \mathrm{corank}((dp)_e^{[i]}).
    \]
    Since $e$ is subregular, the left-hand side equals $1$, by \cite[Section 8.3, Proposition 1]{Slodowy-simplesingularitiesalggroups}.
    Therefore exactly one of the terms on the right hand side is $1$ and all the others are zero.
    Since $e$ is not a smooth point of $\varphi$ by Corollary \ref{corollary: good e has central fiber reduced and unique singular point}, $\mathrm{corank}((d\varphi)_e)\geq 1$. 
    So $\mathrm{corank}((d\varphi)_e)=1$, as required.
\end{proof}

For the remainder of this subsection, suppose $e\in V(k)$ is a $\theta$-subregular nilpotent element, that $k$ is algebraically closed and that $H$ is adjoint and simple.
Complete $e$ to a normal $\fraksl_2$-triple $(e,h,f)$.
Consider the associated transverse slices $\psi\colon S_e \rightarrow \frakh\GIT H$ and $\varphi\colon X_e\rightarrow B$.
Recall the $\G_m \times \mu_m$-action $\rho\times \sigma$ on $S_e$ and $\frakh\GIT H$ (cf. Section \ref{subsec: graded Slodowy slices}).
The morphism $\psi\colon S_e\rightarrow \frakh\GIT H$ is a family of affine surfaces that has been explicitly determined by Slodowy \cite{Slodowy-simplesingularitiesalggroups}, building on work of Grothendieck and Brieskorn.
We will recall these results and incorporate $\mu_m$-action into the picture.
To this end, we first need to set up some notation and and preliminary results.

To deal with the non-simply laced cases, we need an additional symmetry group.
If $H$ is of type $A$, $D$ or $E$, define $\Gamma' = \{1\}$.
Otherwise, define $\Gamma'$ as the subgroup $Z_H(e)\cap Z_H(f)$ of $H$ of elements that preserve $e$ and $f$.
Then $\Gamma'$ is determined in \cite[Section 7.4, Lemma 4]{Slodowy-simplesingularitiesalggroups}; in particular, $\Gamma'$ always has reductive identity component.
If $H$ is not of type $B_{\rl}$ for some $\rl\geq 2$, let $\Gamma = \Gamma'$.
If $H$ is of type $B_{\rl}$, let $\Gamma$ be a subgroup of $\Gamma' \simeq \G_m \rtimes \mu_2$ of order two not contained in $\G_m$.
The adjoint $\Gamma'$-action on $\frakh$ restricts to a $\Gamma'$-action on $S_e$, and the morphism $\psi\colon S_e\rightarrow \frakh \GIT H$ is $\Gamma'$-invariant.
Inside $\Aut_H$, the subgroup $\theta(\mu_m)$ normalizes $\Gamma'$.
So the $\mu_m$-action $\sigma$ and $\Gamma'$-action on $S_e$ combine to a $\mu_m \ltimes \Gamma'$-action on $S_e$, where $\mu_m$ acts on $\Gamma'$ via $\zeta\cdot \gamma = \theta(\zeta) \gamma\theta(\zeta)^{-1}$.
Since these actions commute with the $\G_m$-action $\rho$, they combine to a $\G_m \times (\mu_m \ltimes \Gamma')$-action on $S_e$.

Let $r = \dim(\frakh \GIT H) = \rank(H)$. 
View $S = S_e = e + \frakz_{\frakh}(f)$ and $\frakh\GIT H$ as affine spaces with origins $e$ and $0$ respectively.
By \cite[Section 8.3, Proposition 1]{Slodowy-simplesingularitiesalggroups}, $(d\psi)_e$ has corank $1$ at $e$.
Therefore by \cite[Section 8.1, Lemma 2]{Slodowy-simplesingularitiesalggroups}, there exist $\G_m \times (\mu_m \ltimes \Gamma')$-stable decompositions $S_e = U_1 \oplus W$, $\frakh \GIT H = U_2 \oplus W$ with $\dim U_1 = 3$, $\dim U_2 = 1$, $\dim W =  r-1$, an equivariant morphism $F\colon  U_1 \oplus W \rightarrow U_2$ and an equivariant automorphism $\alpha$ of $S_e$ such that $(\psi\circ \alpha)(u,w)=(F(u,w),w)$.
The restriction of $F$ to $U_1$ is a $\G_m \times(\mu_m\ltimes \Gamma')$-equivariant map $f\colon U_1\rightarrow U_2$.
After choosing coordinates of $U_1$ and $U_2$ (in other words, isomorphisms $U_1\simeq \A^3$ and $U_2\simeq \A^1$), $f$ can be viewed as a morphism $\mathbb{A}^3 \rightarrow \mathbb{A}^1$, in other words a three-variable polynomial $f(x,y,z)$, and the central fiber $\psi^{-1}(0)$ is isomorphic to the closed subscheme of $\A^3$ defined by the vanishing of this polynomial.

Let $d_1, \dots, d_r$ be the invariant degrees of $H$ (i.e. the degrees of the $F_i$ described in Section \ref{subsec: invariant polynomials}), ordered so that $d_r$ is maximal.

\begin{proposition}[Slodowy]\label{prop: normal form surface singularity}
    There exist isomorphisms $U_1\simeq \A^3$ and $U_2\simeq \A^1$ equivariant with respect to the $\G_m \times \Gamma$-action such that $f\colon U_1\rightarrow U_2$ is identified with $\A^3\rightarrow \A^1, (x,y,z)\mapsto f_{\mathrm{std}}(x,y,z)$, where $f_{\mathrm{std}}$, the weights of $x,y,z$ and the $\Gamma$-action on $\A^3$ are given in Table \ref{table: surface singularities}.
    Moreover the $\rho$-weight of $U_2$ is $2d_r$.
\end{proposition}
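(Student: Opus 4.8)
The plan is to deduce the proposition from Slodowy's study of the subregular transverse slice \cite{Slodowy-simplesingularitiesalggroups}, while keeping track throughout of the $\G_m$-action $\rho$ and of the symmetry group $\Gamma$. The equivariant splitting $S_e = U_1\oplus W$, $\frakh\GIT H = U_2\oplus W$ and the morphism $f = F|_{U_1}\colon U_1\to U_2$ have already been produced above, so what is left is (i) to identify $f$ with $f_{\mathrm{std}}$ by an equivariant change of coordinates, and (ii) to compute the $\rho$-weights.

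For (i), I would first invoke the theorem of Brieskorn and Slodowy: since $e$ is subregular, the scheme $\psi^{-1}(0)\cong\{f = 0\}\subset U_1\cong\A^3$ is a rational double point, of the same Dynkin type as $H$ when $H$ is simply laced, and of type $A_{2\rl-1}$, $D_{\rl+1}$, $E_6$, $D_4$ when $H$ has type $B_\rl$, $C_\rl$, $F_4$, $G_2$ respectively, with $\Gamma$ acting through the graph automorphisms that fold this diagram onto the diagram of $H$; moreover $\psi\colon S_e\to\frakh\GIT H$ is identified with the $\Gamma$-semiuniversal deformation of this $\Gamma$-singularity (see \cite[\S\S 6.5--8.7]{Slodowy-simplesingularitiesalggroups}, with $\Gamma'$ as in \cite[\S 7.4, Lemma~4]{Slodowy-simplesingularitiesalggroups}). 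Since by Lemma \ref{lemma: weights of rho are positive} the action $\rho$ on $U_1$ has strictly positive weights, and $\G_m\times\Gamma$ is linearly reductive (this is where one must treat the non-abelian case $\Gamma = S_3$ in type $G_2$), I can choose linear coordinates $x,y,z$ on $U_1$ that diagonalise $\rho$ and that realise the standard graph-automorphism action of $\Gamma$ on the rational double point. In these coordinates $f$ is quasi-homogeneous with an isolated singularity at the origin; using the rigidity of simple singularities --- a quasi-homogeneous germ defining a rational double point is finitely determined and admits a unique quasi-homogeneous model, $\Gamma$-equivariantly --- one more $\G_m\times\Gamma$-equivariant change of coordinates brings $f$ to the polynomial $f_{\mathrm{std}}$ of Table \ref{table: surface singularities}, with $\Gamma$ acting as recorded there.

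For (ii), the GIT quotient $p\colon\frakh\to\frakh\GIT H$ is $\rho$-equivariant, and if $\frakh\GIT H = \Spec k[F_1,\dots,F_r]$ with $\deg F_i = d_i$ and $d_1\le\dots\le d_r$, then, since the $\Ad(\lambda)$-part of $\rho$ fixes every $H$-invariant, $F_i$ has $\rho$-weight $2d_i$. As the decomposition $\frakh\GIT H = U_2\oplus W$ is $\G_m$-equivariant with $\dim U_2 = 1$, the line $U_2$ carries $\rho$-weight $N = 2d_j$ for some $j$, and $W$ carries the weights $\{2d_i : i\ne j\}$. To see that $j = r$, I would use that $F = f_{\mathrm{std}} + (\text{terms involving }W)$ is a $\Gamma$-semiuniversal deformation: the multiset of weights of its parameters, which is $\{2d_1,\dots,2d_r\}$, must coincide with $\{N-\delta\}$, where $\delta$ runs over the $\rho$-weights of a monomial basis of the $\Gamma$-invariant part of the Jacobian algebra $k[x,y,z]/(\partial_x f_{\mathrm{std}},\partial_y f_{\mathrm{std}},\partial_z f_{\mathrm{std}})$, computed with the weights that $f_{\mathrm{std}}$ of degree $N$ imposes on $x,y,z$. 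A short case check over the five standard forms shows this equality forces $N = 2d_r$. Finally, once $N = 2d_r$ is known, requiring every monomial of $f_{\mathrm{std}}$ to have $\rho$-weight $N$ fixes $(\mathrm{wt}\,x,\mathrm{wt}\,y,\mathrm{wt}\,z)$, yielding the remaining entries of the table.

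The part I expect to be the main obstacle is this last weight bookkeeping in (ii): showing that $U_2$ is the \emph{top} weight line $2d_r$ of $\frakh\GIT H$ and not merely some $2d_j$. The Jacobian-algebra count above is one route; alternatively one can read it off from Slodowy's explicit, type-by-type descriptions of the subregular slice and its quotient (in types $B_\rl$, $C_\rl$, $F_4$, $G_2$ one may even use the matrix models of Examples \ref{ex-Bn-2grading}, \ref{ex-Cn-2grading} and the explicit exceptional-type computations). A secondary subtlety is making the change of coordinates in (i) genuinely $\Gamma$-equivariant when $\Gamma$ is non-abelian, but this is already handled in \cite{Slodowy-simplesingularitiesalggroups}.
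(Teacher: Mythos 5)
Your proposal is correct and follows essentially the same route as the paper: both reduce the statement to Slodowy's results, citing the identification of the central fibre with a (possibly $\Gamma$-folded) rational double point, the $\G_m\times\Gamma$-equivariant normal form of \cite[\S 8.3--8.4]{Slodowy-simplesingularitiesalggroups}, and the semiuniversal-deformation structure of $\psi$. The one place where you genuinely diverge is the claim that the $\rho$-weight of $U_2$ equals $2d_r$: the paper simply quotes Slodowy's explicit weight computations (\S 7.4, Proposition 2 and \S 8.2), whereas you rederive it by matching the multiset $\{2d_1,\dots,2d_r\}$ of parameter weights of $\frakh\GIT H$ against the weights $N-\delta$ indexed by a monomial basis of the $\Gamma$-invariant Jacobian algebra. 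Your argument in fact closes more cleanly than a case check: since the constant monomial $1$ always occurs with $\delta=0$ and every other monomial has strictly positive weight, $N$ is the maximum of the parameter weights, hence $N=2d_r$ without inspecting the five normal forms; the weights of $x,y,z$ then follow from quasi-homogeneity of $f_{\mathrm{std}}$ of total weight $N$. Two small points worth recording if you write this up: the paper's table uses $x^3+y^3$ rather than Slodowy's $-xy^2+x^3$ in type $D_4$, obtained by a further linear change of variables; and in type $B_\rl$ the group $\Gamma'$ is positive-dimensional, so one must fix a finite order-two subgroup $\Gamma\leq\Gamma'$ not contained in $\G_m$ before speaking of ``the'' $\Gamma$-action in the table.
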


In the final row, $\zeta$ denotes a generator of $\mu_3$ and $\tau$ a generator of $\Z/2\Z$.
\begin{proof}
    Slodowy has determined the $\G_m$-weights of $U_1,U_2$, see \cite[Section 7.4, Proposition 2 and Section 8.2]{Slodowy-simplesingularitiesalggroups}, which in particular shows $U_2$ has $\rho$-weight $2d_r$.
    Moreover he shows \cite[Section 8.3, Proposition 2]{Slodowy-simplesingularitiesalggroups} that there exists a $\G_m$-equivariant isomorphisms $U_1\simeq \A^3$ and $U_2\simeq \A^1$ so that $f$ has the normal form of a simple singularity displayed at the end of \cite[Section 8.2]{Slodowy-simplesingularitiesalggroups}, depending on the Dynkin diagram of $H$.
    If $H$ is not simply laced, he also gives a normal form for the $\Gamma$-action in \cite[Section 8.4]{Slodowy-simplesingularitiesalggroups}.
    This leads to the entries of Table \ref{table: surface singularities}.
    (In case $D_4$, we have transformed the cubic form $-xy^2 + x^3$ to $x^3 +y^3$ using a linear change of variables.)
\end{proof}

\begingroup

\renewcommand{\arraystretch}{1.5} 
\begin{table}
\centering
\begin{tabular}{|l | l | l | l| l| }
	\hline
    Type & $f_{\mathrm{std}}(x,y,z)$ & Weights of $x,y,z$ &$\Gamma$ & $\Gamma\curvearrowright \A^3$  \\
	\hline       
	$A_{\rl}\, (\rl\geq 2)$ & $z^2-y^2+ x^{\rl+1}$ & $2,\rl+1,\rl+1$ & $\{1\}$ &   \\
    \hline 
    $B_{\rl}\, (\rl\geq 2)$ & $z^2- y^2 + x^{2\rl}$ & $2,2\rl,2\rl$ &$\{1,\gamma\}$ & $\gamma\cdot (x,y,z) = (-x,-y,z)$ \\
    \hline
    $C_{\rl}\, (\rl\geq 3)$ & $z^2 - xy^2 + x^{\rl}$ & $4,2\rl-2,2\rl$ &$\{1,\gamma\}$ & $\gamma\cdot (x,y,z) = (x,-y,-z)$  \\
    \hline
	$D_{\rl}\, (\rl\geq 5)$ & $z^2  -xy^2 + x^{\rl-1}$  & $4,2\rl-4,2\rl-2$& $\{1\}$ &    \\
    \hline
    $D_{4}$ & $z^2  +y^3 + x^{3}$  & $4,4,6$& $\{1\}$ &    \\
    \hline
	$E_6$ & $z^2 - y^3 + x^4$ & $6,8,12$ & $\{1\}$ &   \\
    \hline
	$E_7$ & $z^2 - y^3 + x^3y $ & $8,12,18$ & $\{1\}$ &  \\
    \hline
	$E_8$ & $z^2 - y^3 + x^5 $ & $12,20,30$& $\{1\}$ &  \\
    \hline
    $F_4$ & $z^2 - y^3 + x^4$ & $6,8,12$ &$\{1,\gamma\}$ & $\gamma\cdot (x,y,z) = (-x,y,-z)$\\
    \hline
    $G_2$ & $z^2 + y^3 + x^3$& $4,4,6$ &$\mu_3\rtimes (\Z/2\Z)$ & $\zeta\cdot (x,y,z) = (\zeta x,\zeta^2 y,z)$\\
     & &  && $\tau\cdot (x,y,z) = (y,x,-z)$\\
	\hline
\end{tabular}
\caption{Subregular surface singularities and their weights}
\label{table: surface singularities}
\end{table}
\endgroup

\begin{lemma}\label{lemma: sigma action on U1 and U2}
    The $\mu_m$-action $\sigma$ on $U_2$ is trivial, and the $\mu_m$-action $\sigma$ on $U_1$ has weights $0,0,1$.
\end{lemma}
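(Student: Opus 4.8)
\emph{Strategy.} The proof will combine the corank computation from the proof of Proposition~\ref{proposition: corank theta-subregular slice is 1}, the dimension identity $\dim X_e=\dim B+1$ (which holds because $e$ is $\theta$-subregular, by Theorem~\ref{theorem: slodowy slice is a transverse slice}(2) and Lemma~\ref{lemma: codimension nilpotent orbit is centralizer}), and a first-order analysis of the invariant map $p$ at $e$. A running observation: the automorphism $\alpha$ of $S_e$ is $\G_m\times(\mu_m\ltimes\Gamma')$-equivariant, hence $\sigma$-equivariant and compatible with $\theta$-gradings; and since $\sigma(\zeta)=\zeta\,\theta(\zeta^{-1})$, a $\theta$-stable subspace of $\frakh$ has $\sigma$-weight-$c$ part equal to its $\theta$-weight-$(1-c)$ part. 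Finally, $f_{\mathrm{std}}$ has no linear term (Table~\ref{table: surface singularities}), so $(d_ef)|_{U_1}=0$ and $\ker((d(\psi\circ\alpha))_e)=U_1$, which (using equivariance of $(d\alpha)_e$) identifies $U_1$ with $\ker((d\psi)_e)$ as a $\G_m\times\mu_m$-representation, compatibly with $\theta$-gradings.

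To see that $\sigma$ acts trivially on $U_2$, suppose otherwise; then $U_2^\sigma=0$ and $B=(\frakh\GIT H)^\sigma=W^\sigma$. As $\alpha$ is $\sigma$-equivariant it restricts to an automorphism of $X_e=S_e^\sigma=U_1^\sigma\oplus W^\sigma$, and $(\psi\circ\alpha)(u,w)=(F(u,w),w)$ carries $X_e$ into $B=W^\sigma$, forcing $F|_{X_e}=0$. Then $\varphi\circ\alpha|_{X_e}$ is the linear projection $U_1^\sigma\oplus W^\sigma\to W^\sigma$, so $\varphi$ is smooth, contradicting the fact that $e$ is a singular point of $C=\varphi^{-1}(0)$ (Corollary~\ref{corollary: good e has central fiber reduced and unique singular point}; recall $e$ is good since it is $\theta$-subregular). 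Equivalently, from the normal form of $\psi\circ\alpha$ one has $\coker((d\psi)_e)\cong U_2$ $\mu_m$-equivariantly, and since $S_e$ is a transverse slice $\coker((d\psi)_e)=\coker((dp)_e)$, which by the proof of Proposition~\ref{proposition: corank theta-subregular slice is 1} lies in the $\sigma$-weight-$0$ eigenspace.

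Granting $U_2^\sigma=U_2$, we get $X_e=U_1^\sigma\oplus W^\sigma$ and $B=U_2\oplus W^\sigma$, so $\dim U_1^\sigma=\dim X_e-\dim W^\sigma=(\dim B+1)-\dim W^\sigma=\dim U_2+1=2$. As $\dim U_1=3$ and $\sigma$ acts semisimply, the $\sigma$-weights on $U_1$ are $0,0,b$ for a unique $b\in(\Z/m\Z)\setminus\{0\}$; equivalently the $\theta$-weights of $U_1$ are $1,1,1-b$. It remains to show $b=1$, for which it suffices to exhibit a nonzero $\theta$-weight-$0$ vector in $U_1$. I will show $\frakz_{\frakh_0}(f)\subseteq\ker((d\psi)_e)\cong U_1$. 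Fix $v\in\frakz_{\frakh_0}(f)$; writing $p=(F_1,\dots,F_r)$ with $F_i$ homogeneous of degree $d_i$ and $\theta(\zeta)F_i=\zeta^{e_i}F_i$ (\S\ref{subsec: invariant polynomials}), it suffices to check $dF_i(e)(v)=0$ for all $i$. Applying $\theta(\zeta)$ to $e+sv$, using $\theta(\zeta)e=\zeta e$, $\theta(\zeta)v=v$, $F_i(\zeta e)=0$, and $dF_i(\zeta e)=\zeta^{d_i-1}dF_i(e)$ (homogeneity), and comparing the order-$s$ terms gives $\zeta^{d_i-1}dF_i(e)(v)=\zeta^{-e_i}dF_i(e)(v)$, so $dF_i(e)(v)=0$ unless $d_i+e_i\equiv1\pmod m$. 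But no invariant of a stable $\mu_m$-action has $d_i+e_i\equiv1\pmod m$: this is the ellipticity of the $\Z$-regular element of $W\vartheta$ furnished by Theorem~\ref{theorem: characterization stable gradings} (in the inner case $d_i-1$ is an exponent, and $m\nmid d_i-1$, by Springer's description of the eigenvalues of a regular element on the reflection representation; compare Proposition~\ref{proposition: regular semisimple iff stable}). Hence $dF_i(e)(v)=0$ for every $i$, proving the claim. Since $\frakz_{\frakh_0}(f)$ is one-dimensional (Lemma~\ref{lem-ef-centralizer}) and lies in $\frakh_0$, it is a line of $\theta$-weight $0$, hence of $\sigma$-weight $1$, inside $U_1$; as the unique nonzero $\sigma$-weight on $U_1$ is $b$, this gives $b=1$.

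The crux is the input used in the last step—that $d_i+e_i\not\equiv1\pmod m$ for all invariants. For inner gradings this is immediate from ellipticity together with Springer's theory; for the non-inner stable gradings one needs the analogous statement for regular elements of the coset $W\vartheta$ (equivalently, for the $\vartheta$-twisted exponents), which I would either cite or simply verify against the short list of stable gradings. Everything else—the first-order expansion of the $F_i$, the equivariance of $\alpha$, and the $\theta$/$\sigma$ weight bookkeeping—is routine.
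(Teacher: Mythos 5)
Your proposal is correct, and for the first two assertions it is essentially the paper's argument: the triviality of $\sigma$ on $U_2$ is obtained by contradiction (if $U_2^\sigma=0$ then $\varphi$ would be, up to the automorphism $\alpha$, the linear projection $U_1^\sigma\oplus W^\sigma\to W^\sigma$, contradicting either Proposition \ref{proposition: corank theta-subregular slice is 1} or the singularity of $e$ in $C$ — the paper uses the former, you offer both), and $\dim U_1^\sigma=2$ then follows from a dimension count (the paper reads it off the corank statement, you use $\dim X_e=\dim B+1$; these are equivalent).

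Where you genuinely diverge is in pinning down the remaining weight. The paper's route is: $\frakz_{\frakh_0}(f)$ is a $\sigma$-weight-$1$ line in $S_e=U_1\oplus W$; the Kostant section $E+\frakz_{\frakh}(F)\xrightarrow{\sim}\frakh\GIT H$ of a \emph{regular} nilpotent is $\sigma$-equivariant and has $\frakz_{\frakh_0}(F)=0$, so $1$ is not a $\sigma$-weight of $\frakh\GIT H=U_2\oplus W$; since $W$ is a common summand, the weight-$1$ vector must sit in $U_1$. You instead show directly that $\frakz_{\frakh_0}(f)\subseteq\ker((d\psi)_e)$, which is $\G_m\times\mu_m$-isomorphic to $U_1$, by expanding the invariants $F_i$ to first order at $e$; this reduces the claim to the assertion that $d_i+e_i\not\equiv 1\pmod m$ for every invariant. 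That assertion is exactly the statement that the $\sigma$-weight-$1$ eigenspace of $T_0(\frakh\GIT H)$ vanishes, so the two arguments are proving the same underlying fact; but the paper's Kostant-section argument establishes it uniformly for all outer types with no computation, whereas your exponent argument needs Springer's eigenvalue description of regular elements and, in the non-inner cases, its twisted analogue (or a case-by-case check against the short list of stable gradings). You flag this loose end yourself; it is true and citable, so this is a matter of efficiency rather than a gap, and you might simply substitute the paper's Kostant-section observation for it, since everything needed (Proposition \ref{proposition: transverse slice regular nilpotent is isomorphism} and $\frakz_{\frakh_0}(F)=0$ for regular $F$) is already available at this point in the paper.
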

\begin{proof}
    The $\sigma$-fixed-point locus of $S_e\rightarrow \frakh \GIT H$ is exactly $\varphi\colon X_e \rightarrow B$. 
    Assume for the sake of contradiction that $U_2^{\sigma} \neq U_2$.
    Then since $\dim U_2 = 1$, we have $U_2^{\sigma} = 0$. 
    So $\psi^{\sigma} = \varphi$ equals the map $U_1^{\sigma} \oplus W^{\sigma} \rightarrow W^{\sigma}$, $(u,w)\mapsto w$, which is surjective. 
    This contradicts Proposition \ref{proposition: corank theta-subregular slice is 1}, hence proves that $U_2^{\sigma} = U_2$.
    
    By Proposition \ref{proposition: corank theta-subregular slice is 1}, since $\dim U_2^{\sigma} = 1$, we have $\dim U_1^{\sigma} = 2$.
    Since $e$ is $\theta$-subregular, $\dim \frakz_{\frakg}(e) =1$, hence $\dim \frakz_{\frakg}(f) = 1$ by Lemma \ref{lem-ef-centralizer}.
    By definition of the $\sigma$-action on $S_e$, it follows that $1\in \Z/m\Z$ is a weight for the $\sigma$-action on $S_e = U_1 \oplus W$.
    Let $E\in \frakh_1$ be a regular nilpotent element, let $(E,Y,F)$ be a normal $\fraksl_2$-triple containing it and let $\kappa = E + \frakz_{\frakh}(F) \xrightarrow{\sim} \frakh \GIT H$ be the associated transverse slice, which is an isomorphism by Proposition \ref{proposition: transverse slice regular nilpotent is isomorphism}.
    This isomorphism is $\sigma$-equivariant and $\dim \frakz_{\frakg}(F) = 0$, so $1$ is not a weight for the $\sigma$-action on $\frakh \GIT H = U_2 \oplus W$.
    Let $(u, w) \in U_1 \oplus W$ be a nonzero $\sigma$-eigenvector with weight 1. If $w \neq 0$, then since $\psi$ is $\sigma$-equivariant, we have that $w$ is a $\sigma$-eigenvector for the $\sigma$-action on $\frakh\GIT H$, a contradiction. So $u \in U_1$ is a $\sigma$-eigenvector of weight $1$.
    It follows that $1 \in \bZ/m\bZ$ is a weight for $U_1$.
\end{proof}

In the next proposition, fix a primitive $m$th root of unity $\zeta_m\in k$ so that we may identify the $\mu_m$-action $\sigma$ with the order-$m$ automorphism $\sigma(\zeta_m)$.

\begin{proposition}\label{proposition: normal form Gamma and sigma}
    There exist coordinates $x,y,z$ on $U_1$ and $p_{d_r}$ on $U_2$ (and in the case of $B_{\rl}$, a choice of subgroup $\Gamma\leq \Gamma'$) that satisfy the conclusions of Proposition \ref{prop: normal form surface singularity} and such that one of the following hold for the integer $m$ and the automorphism $\sigma$:
    \begin{itemize}
        \item $m=2$ and $\sigma(x,y,z) = (x,y,-z)$;
        \item Type $A_{\rl}$: $m=\rl+1$ and $\sigma(x,y,z) = (\zeta_m x, y,z)$;
        \item Type $B_{\rl}$: $m=2$ and $\sigma(x,y,z)= (x,-y,z)$, or $m$ divides $2r$ and $\sigma(x,y,z) = (\zeta_mx,y,z)$;
        \item Type $C_{\rl}$: $m=2$ and $\sigma(x,y,z) = (x,-y,z)$;
        \item Type $D_{4}$: $m=3$ and $\sigma(x,y,z) = (\zeta_3x ,y,z)$ ;
        \item Type $E_6$, $m=3$ and $\sigma(x,y,z) = (x,\zeta_3 y,z)$, or $m=4$ and $\sigma(x,y,z) = (\zeta_4 x,y,z)$;
        \item Type $E_8$: $m=3$ and $\sigma(x,y,z) = (x, \zeta_3 y,z)$ or $m=5$ and $\sigma(x,y,z) = (x,y,\zeta_5 z)$;
        \item Type $F_4$, $m=3$ and $\sigma(x,y,z) = (x,\zeta_3 y,z)$, or $m=2,4$ and $\sigma(x,y,z) = (\zeta_mx ,y,z)$;
        \item Type $G_2$: $m=2$ and $\sigma(x,y,z) = (\omega y,\omega^2 x,z)$ for some $\omega\in k$ with $\omega^3=1$, or $m=3$ and $\sigma(x,y,z) = (\zeta_3x,y,z)$ or $\sigma(x,y,z) = (x,\zeta_3y,z)$.
    \end{itemize}
\end{proposition}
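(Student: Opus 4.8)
The plan is to run a case analysis over the stable subregular-adapted gradings, which by Theorem~\ref{theorem: characterization stable gradings} and the tables of \cite{GrossLevyReederYu-GradingsPosRank} (equivalently, the rows of Table~\ref{table:examples intro}) form a short explicit list organized by the Dynkin type of $H$ and the divisor $m$. For each such $(H,m)$ the normal form $f_{\mathrm{std}}$, the $\G_m$-weights of $x,y,z$, and the $\Gamma$-action on $\A^3$ are fixed by Proposition~\ref{prop: normal form surface singularity}, and the remaining task is to identify the order-$m$ automorphism $\sigma=\sigma(\zeta_m)$ of $U_1\cong\A^3$. First I would assemble the structural constraints on $\sigma$: it commutes with the $\G_m$-action $\rho$ and normalizes $\Gamma'$ inside $\mu_m\ltimes\Gamma'$, hence preserves the joint $\rho$-weight decomposition of $U_1$ and permutes the $\Gamma'$-isotypic pieces; by Lemma~\ref{lemma: sigma action on U1 and U2} it is diagonalizable on $U_1$ with eigenvalue multiset $\{1,1,\zeta_m\}$ and acts trivially on $U_2$; and since $f_{\mathrm{std}}\colon U_1\to U_2$ is $\sigma$-equivariant with $\sigma|_{U_2}=\mathrm{id}$, the polynomial $f_{\mathrm{std}}$ is $\sigma$-invariant.

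Next I would argue that when the three $\G_m$-weights (refined by the $\Gamma'$-grading, in the non-simply-laced cases where $\Gamma'\cong\mu_2$) are pairwise distinct, $\sigma$ is automatically diagonal in the coordinates $x,y,z$: it then scales exactly one of them by a primitive $m$th root of unity, and $\sigma$-invariance of $f_{\mathrm{std}}$ forces $m$ to divide every exponent with which that coordinate occurs among the monomials of $f_{\mathrm{std}}$. For every $(H,m)$ with $m\geq 3$ this singles out the coordinate uniquely, and for $m=2$ it often does too (e.g.\ in $E_7$, where $f_{\mathrm{std}}=z^2-y^3+x^3y$ forces $\sigma$ to scale $z$); but in the remaining $m=2$ cases several coordinates are admissible (e.g.\ in $E_6$ both scaling $x$ and scaling $z$ preserve $z^2-y^3+x^4$), and I would then cut down using $\dim B=\dim(\frakh\GIT H)^{\sigma}$, which is computed from the outer type of $\theta$ via Proposition~\ref{proposition: invariant polys panyushev} (equivalently from the known $G$-representation $V$ in Table~\ref{table:examples intro}): since $\varphi\colon X_e\to B$ is the $\sigma$-fixed slice, setting the scaled coordinate to zero in Slodowy's surface equation must retain the correct number of parameters, and the weights attached to the deformation monomials then single out which admissible coordinate does so. When the subregular orbit of $\frakh$ meets $V$ in more than one $G$-orbit --- which by Richardson's decomposition happens in a few cases, such as $G_2$ with $m=2$ --- the distinct $\theta$-subregular classes yield the several $\sigma$'s that the proposition lists (for $G_2$, $m=2$, one class gives the $(x,y,-z)$-type action and the other the swap-type $\sigma(x,y,z)=(\omega y,\omega^2 x,z)$); and in the cases with a genuinely two-dimensional weight space ($D_4$ and $G_2$) I would move the $\sigma$-eigenline to a coordinate axis by a change of variables preserving $f_{\mathrm{std}}$ and the weights.

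The main obstacle I expect is the bookkeeping for the non-simply-laced types, where one must track the $\G_m$-weights, the $\Gamma'$-weights, and $f_{\mathrm{std}}$-invariance simultaneously; in particular, for $G_2$ the group $\Gamma'\cong S_3$ is non-abelian, so "commutes with $\Gamma'$" must be weakened to "normalizes $\Gamma'$", and it is this condition --- not $f_{\mathrm{std}}$-invariance --- that rules out spurious candidates such as $\sigma(x,y,z)=(x,-y,z)$. For the handful of exceptional-type cases where the abstract argument is not decisive, I would confirm the asserted normal form by direct computation: using the explicit matrix models of Examples~\ref{ex-Bn-2grading} and \ref{ex-Cn-2grading} for types $B$ and $C$, de Graaf's \texttt{SLA} package as described in the appendix for $F_4$, $G_2$, $E_6$ and $E_8$, and the already-known shapes of the corresponding families of curves recorded in Table~\ref{table:examples intro} and, for $m=2$, in \cite{Thorne-thesis}.
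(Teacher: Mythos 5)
Your proposal follows essentially the same route as the paper: a case-by-case classification of the order-$m$ automorphism $\sigma$ of $U_1$ using $\rho$-equivariance, the eigenvalue constraint $\{1,1,\zeta_m\}$ from Lemma \ref{lemma: sigma action on U1 and U2}, invariance of $f_{\mathrm{std}}$, and compatibility with the $\Gamma'$-action. The one genuinely different ingredient is how you resolve the ambiguous cases (e.g.\ $E_6$ with $m=2$, or type $A_\rl$ with $m$ a proper divisor of $\rl+1$): you propose comparing $\dim B$, computed from the outer type via Proposition \ref{proposition: invariant polys panyushev}, with the number of $\sigma$-fixed deformation parameters, whereas the paper instead cites \cite[Proposition 3.11]{Thorne-thesis} and the classification of stable gradings; your count is legitimate (and not circular, since it does not rely on Table \ref{table:examples intro}), though it front-loads the semiuniversal-deformation machinery that the paper only sets up afterwards. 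Two slips should be repaired. First, pairwise distinct $\G_m$-weights do \emph{not} force $\sigma$ to be diagonal, or even linear: a $\rho$-equivariant automorphism can carry weight-compatible higher-order terms, e.g.\ $z\mapsto \lambda z+\mu x^2$ in type $E_6$ (weights $6,8,12$) or $y\mapsto g(y,z)+\alpha x^{(\rl+1)/2}$ in type $A_\rl$ with $\rl$ odd; these must be killed by $f_{\mathrm{std}}$-invariance \emph{before} you may assume diagonality, and since the proposition is an exclusion statement this step cannot be skipped. Second, your $G_2$ example is misattributed: $\sigma(x,y,z)=(x,-y,z)$ fails to preserve $z^2+y^3+x^3$ already (since $(-y)^3=-y^3$), so it is ruled out by $f_{\mathrm{std}}$-invariance; the place where normalization of $\Gamma$ is actually needed beyond invariance is type $C_3$, where it eliminates the candidates $(x,y,z)\mapsto(y-x,y,z)$ and $(x,y,z)\mapsto(x+y,-y,z)$. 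Finally, note that the discussion of several $\theta$-subregular orbits realizing the different listed $\sigma$'s belongs to the exhibition Theorem \ref{theorem:exhibitionsubregularcurves-algclosed}, not to this proposition, which only asserts that any given $\sigma$ takes one of the listed forms.
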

\begin{proof}
    Choose coordinates $x,y,z$ on $U_1$ and $p_{d_r}$ on $U_2$ satisfying the conclusions of Proposition \ref{prop: normal form surface singularity}, so we identify the morphism $U_1\rightarrow U_2$ with the morphism $\A^3\rightarrow \A^1, (x,y,z)\mapsto f_{\mathrm{std}}(x,y,z)$. 
    The automorphism $\sigma$ on $U_1$ induces an automorphism of $\A^3$, which we denote by $s$ in this proof.
    By Lemma \ref{lemma: sigma action on U1 and U2}, $s \colon \A^3\rightarrow \A^3$ is a $\rho$-equivariant isomorphism whose derivative at the origin has eigenvalues $1,1,\zeta_m$ and such that $f_{\mathrm{std}}$ is $s$-invariant.
    To prove the proposition, it suffices to classify all possible such actions.
    To this end, we argue on a case-by-case basis.

     Type $A_\rl$:  In this case $x, y, z$ have $\rho$-weights $2, \rl + 1, \rl + 1$ respectively. 
     We claim that $s$ is linear in the variables $x,y,z$. Indeed, if $r$ is even, then this follows from $\rho$-equivariance. If $r$ is odd, then $s$ has the form 
     \begin{equation*}
     (x,y,z)\mapsto (\varepsilon x, g_0(y,z) + \al x^{\frac{r + 1}{2}}, g_1(y,z) + \be x^{\frac{r + 1}{2}})
    \end{equation*}
    for some scalars $\varepsilon, \al, \be$ and some linear combinations $g_0(y,z),g_1(y,z)$ of $y$ and $z$.
    Since the derivative of $s$ at $0$ is an isomorphism $g_0(y,z), g_1(y,z)\in \spn\{y,z\}$ are linearly independent.
    The fact that $s$ preserves $f_{\mathrm{std}} = y^2-z^2 + x^{\rl+1}$ shows that $\al = \be=0$ and so $s$ is linear in $x,y,z$.
    The linear map $s$ has eigenvalues $1,1,\zeta_m$ and preserves the form $y^2-z^2 + x^{\rl+1}$.
    Therefore $s$ preserves the subspaces $\spn\{x\}$ and $\spn\{y,z\}$ and the form $y^2 - z^2$.
    An elementary calculation shows that up to linear substitution the only such linear maps are $s_1(x,y,z)=(x,y,-z)$ and $s_2(x,y,z)=(\zeta_mx,y,z)$, where last case only occurs when $m$ divides $\rl+1$.
    Suppose for the sake of contradiction that $m\neq \rl+1$.
    Then the classification of stable gradings \cite[Section 7.2]{GrossLevyReederYu-GradingsPosRank} implies that $m = 2d$ for some odd $d$ and $\theta^d$ is the stable $\Z/2\Z$-grading $\theta_0$.
    Therefore $e$ is a subregular nilpotent element of $\frakh^{\theta_0=-1}$ such that the corresponding $\mu_2$-action $\sigma_0=\sigma^{d}$ on $S_e$ acts via $(x,y,z)\mapsto (-x,y,z)$, and the central fiber of $\psi^{\sigma} = \varphi\colon X_e\rightarrow B$ is isomorphic to $y^2 - z^2 = 0$.
    This contradicts \cite[Proposition 3.11]{Thorne-thesis}, so $m=\rl+1$, as desired.

Type $B_\rl$ ($\rl \geq 2$):
In this case $x, y, z$ have $\rho$-weights $2, 2r, 2r$ respectively, and the restriction of $s$ to $U_1$ must preserve the form $z^2 - y^2 + x^{2r}$. 
A very similar proof to the $A_\rl$ case shows that after possibly a linear change of variables $s$ is a linear map that equals either $s_1(x,y,z) = (x,-y,z)$, $s_2(x,y,z) = (x,y,-z)$ or $s_3(x,y,z) = (\zeta_mx,y,z)$ (where in the last case $m$ must divide $2\rl$).
This linear change of variables might change the action of $\Gamma$, but in each case we claim that we can replace $\Gamma$ by a conjugate subgroup in $\Gamma'$ for which the action is as given in Table \ref{table: surface singularities}.
Indeed, there exists an isomorphism $\Gamma' \simeq\G_m \rtimes \mu_2$, where the semidirect product structure is determined by $(\lambda, 1) \cdot (1,t) = (1,t) (\lambda^{-1},1)$ if $t\in \mu_2$ is the nontrivial element.
Let $\varepsilon\colon \Gamma'\rightarrow \mu_2$ the nontrivial character sending $(\lambda, t^i)$ to $t^i$.
By the discussion in \cite[Section 8.4]{Slodowy-simplesingularitiesalggroups}, the $\Gamma'$-action on $\A^3$ is linear, preserves the subspaces $V_1=\spn\{x\}$, $V_2=\spn \{y,z\}$, acts on $V_1$ via $\sigma$ and the action on $V_2$ is via an isomorphism $\Gamma'\rightarrow \mathrm{O}(V_2)$, the orthogonal group of the form $y^2-z^2$. A calculation shows that there exists an order two element of $\Gamma'$ not lying in $\G_m$ that acts via $(x,y,z)\mapsto (-x,-y,z)$.
Replacing $\Gamma$ by the subgroup generated by this element gives the desired normal form. 

Type $C_\rl$ $(\rl\geq 3)$:
In this case $x, y, z$ have $\rho$-weights $4, 2r - 2, 2r$ respectively, and $s$ must preserve $z^2 - xy^2 + x^r$. First assume $r \geq 4$. If $r$ is even, then $s$ acts by $(x, y,z)\mapsto (\al_1 x,\al_2 y,\al_3 z + \al_4 x^{\frac{r}{2}})$ for some scalars $\alpha_i$.
Since $s$ preserves the form $z^2 - xy^2 + x^r$, we must have $\al_4 = 0, \al_1 = 1$, and $\al_2, \al_3 \in \{\pm 1\}$. 
By a similar argument in the case that $\rl$ is odd, we conclude that $s$ must be of the form 
$s_1(x,y,z) = (x,-y,z)$ or $s_2(x,y,z) = (x,y,-z)$ and $m=2$.
If $\rl=3$, then $x,y,z$ have weights $4,4,6$ respectively. 
Thus $s$ is a linear map with eigenvalues $1, 1, \zeta_m$ preserving $\spn\{x, y\}$, $\spn\{z\}$, and the form $z^2 - xy^2+ x^3$. 
By the classification of stable gradings \cite[\S7.2, Table 13]{GrossLevyReederYu-GradingsPosRank}, either $m=2$ or $m=6$.
    An explicit computation shows that no such linear map exists when $m=6$, and when $m=2$, $s$ must be of the form $(x, y, z) \mapsto (x, y, -z)$, $(x,y, z)\mapsto (x,-y, z)$, $(x,y, z)\mapsto (y-x,y, z)$ or $(x,y, z)\mapsto (x+y,-y, z)$. 
    Since the $\sigma$-action must normalize the $\Gamma$-action, we must be in the first or second case.

Type $D_\rl (\rl \geq 4)$:  
If $m=2$, then by Proposition \cite[Proposition 3.11]{Thorne-thesis} we must have $s(x,y,z) = (x,y,-z)$.
If $\rl\geq 5$, then similar logic as in type $C_\rl$ shows that $m=2$.
So we may assume $\rl=4$ and $m\geq 3$. In this case $x, y, z$ have weights $4, 4, 6$ respectively, so $s$ is linear in the variables $x,y,z$. 
Moreover it has eigenvalues $1, 1, \zeta_m$ and preserves the form $z^2 + x^3 + y^3$.
Such a map must fix $z$, preserve $\spn\{x,y\}$ and $s\vert_{\spn\{x,y\}}$ must preserve $x^3+ y^3$.
A calculation shows that $m=3$ and $s$ equals either $s_1(x,y,z) = (\zeta_3x,y,z)$ or $s_2(x,y,z) = (x, \zeta_3y,z)$.
The latter case can be reduced to the first case after swapping $x$ and $y$.

Type $E_6$: the coordinates $x,y,z$ have weights $6,8,12$, hence $s(x,y,z)= (\lambda_1 x,\lambda_2y,\lambda_3 z + \mu x^2)$ for some $\lambda_i, \mu \in k$.
Since $\sigma$ preserves $x^4 + y^3 + z^2$, we must have $\mu=0$ and $s$ equals either $s_1(x,y,z) = (\zeta_mx,y,z)$ with $m=2$ or $4$, $s_2(x,y,z) = (x,\zeta_3y,z)$ with $m=3$, or $s_3(x,y,z) = (x,y,-z)$ with $m=2$.
The first case cannot occur with $m=2$ because it would contradict \cite[Proposition 3.11]{Thorne-thesis}.

Type $E_7$: A calculation shows $s$ equals either $s_1(x,y,z) = (\zeta_3x,y,z)$ or $s_2(x,y,z) = (x,y,-z)$.
Since there is no stable $\Z/3\Z$-grading on $E_7$ \cite[Section 7.1, Table 4]{GrossLevyReederYu-GradingsPosRank}, the first case does not occur.

Type $E_8$: A calculation shows $s$ is either $(x,y,z)\mapsto (x,y,-z)$, $(x,y,z)\mapsto (x,\zeta_3y,z)$, or $(x,y,z)\mapsto (\zeta_5x,y,z)$. 

Type $F_4$: Similarly to the $E_6$ case, we calculate that $s$ is either $s_1(x,y,z) = (\zeta_mx,y,z)$ with $m=2$ or $4$, $s_2(x,y,z) = (x,\zeta_3y,z)$ with $m=3$, or $s_3(x,y,z) = (x,y,-z)$ with $m=2$.

Type $G_2$: Similarly to the $D_4$ case, $s$ is linear in $x,y,z$ and preserves the form $z^2+ y^3 + x^3$ with eigenvalues $1,1,\zeta_m$.
A computation shows that $s$ equals either $s_1(x,y,z) = (x,y,-z)$, $s_2(x,y,z) = (\lambda y,\lambda^{-1} x,z)$ for some $\lambda\in k$ with $\lambda^3 = 1$, $s_3(x,y,z) = (\zeta_3x,y,z)$ or $s_4(x,y,z) = (x,\zeta_3y,z)$.
\end{proof}

Fix coordinates $x,y,z$ on $U_1$ and $p_{d_r}$ on $U_2$ satisfying the conclusions of Proposition \ref{proposition: normal form Gamma and sigma}.
Inspecting the $\Gamma$ and $\sigma$-action shows that they combine to a $\mu_m \ltimes \Gamma$-action on $S_e$. 
Let $J$ be the ideal of $k[x,y,z]$ generated by $\partial f_{\mathrm{std}}/\partial x, \partial f_{\mathrm{std}}/\partial y$ and $\partial f_{\mathrm{std}}/\partial z$.
Then the quotient $k[x,y,z]/J$ is finite-dimensional and receives an action of $\G_m\times (\mu_m \ltimes \Gamma)$ by combining the $\rho, \sigma$ and $\Gamma$-action on $U_1$.
Since the identity component of $\G_m\times (\mu_m \ltimes \Gamma)$ is reductive, there exists a $\G_m \times(\mu_m\ltimes \Gamma)$-equivariant section $\chi\colon k[x,y,z]/J \rightarrow k[x,y,z]$.
The group $\G_m \times \mu_m$ acts on the $\Gamma$-fixed points $(k[x,y,z]/J)^{\Gamma}$ and the restriction $\chi^{\Gamma}\colon (k[x,y,z]/J)^{\Gamma}\rightarrow k[x,y,z]$ is $\G_m\times \mu_m$-equivariant.
Choose a basis $g_1, \dots, g_{n}\in k[x,y,z]$ for the image of $\chi^\Gamma$ that are eigenvectors for the $\G_m\times \mu_m$-action.
Let $Z = \{f_{\mathrm{std}} + t_1 g_1 + \cdots + t_n g_n = 0\}\subset \A^3 \times \A^n$, where $x,y,z$ are the coordinates on $\A^3$ and $t_1, \dots, t_n$ are the coordinates on $\A^n$. 
Let $\psi_{\mathrm{std}}\colon Z\rightarrow D = \A^n$ denote the projection to the second factor.
If $f_{\mathrm{std}}$ has $\G_m\times \mu_m$-weight $(d,d')$ and $g_i$ has weight $(r_i,r_i')$, let $\G_m\times \mu_m$ act on $t_i$ with weight $(d-r_i, d'-r'_i)$. 
Let $\Gamma$ act on $Z$ by acting trivially on $\A^n$ and on $\A^3$ via its action on $U_1$.
Then $\psi_{\mathrm{std}} \colon Z\rightarrow \A^n$ is $\G_m\times \mu_m$-equivariant and $\Gamma$-invariant.

\begin{proposition}\label{proposition: isomorphism between semiversal deformations}
    There exists a pullback diagram whose horizontal arrows are $\G_m\times \mu_m$-equivariant isomorphisms:
\[\begin{tikzcd}
	{S_e} & Z \\
	{\frakh \GIT H} & D
	\arrow["\sim", from=1-1, to=1-2]
	\arrow["\psi"', from=1-1, to=2-1]
	\arrow["{\psi_{\mathrm{std}}}", from=1-2, to=2-2]
	\arrow["\sim", from=2-1, to=2-2]
\end{tikzcd}\]
\end{proposition}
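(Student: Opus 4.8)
The plan is to recognize both families $\psi\colon S_e\to\frakh\GIT H$ and $\psi_{\mathrm{std}}\colon Z\to D$ as $\G_m\times(\mu_m\ltimes\Gamma)$-equivariant semiuniversal deformations of one and the same simple surface singularity $X_0:=\{f_{\mathrm{std}}=0\}\subset\A^3$, and then to invoke uniqueness of equivariant semiuniversal deformations. This is the strategy Slodowy uses (in our notation, with $\mu_m$ trivial) to compare $S_e\to\frakh\GIT H$ with his ``standard'' deformation $Z\to D$; the only new ingredient is to carry the extra finite cyclic group $\mu_m$ — which merely normalizes $\Gamma$ and commutes with the $\G_m$-action $\rho$ — through his arguments, and this is harmless in characteristic zero.

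\textbf{Both maps are flat, equivariant deformations of $X_0$.} For $\psi$ this combines the faithful flatness of Theorem \ref{theorem: slodowy slice is a transverse slice}, the identification of the central fibre $\psi^{-1}(0)$ with $X_0$ furnished by Propositions \ref{prop: normal form surface singularity} and \ref{proposition: normal form Gamma and sigma}, and the $\G_m\times(\mu_m\ltimes\Gamma)$-equivariance set up in Section \ref{subsec: graded Slodowy slices}. For $\psi_{\mathrm{std}}$, flatness is the usual flatness of a family of affine hypersurfaces over a polynomial base (since $f_{\mathrm{std}}+\sum t_ig_i$ specializes to a nonzero polynomial at every point of $D$, the $g_i$ being linearly independent modulo the Jacobian ideal $J$), and equivariance holds by the prescribed weights on the $t_i$ together with the $\G_m\times(\mu_m\ltimes\Gamma)$-equivariance of the section $\chi$. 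Next, $\psi_{\mathrm{std}}$ is equivariantly semiuniversal: $X_0$ is a quasi-homogeneous isolated hypersurface singularity, so its Tjurina and Milnor algebras agree, both equal to $T^1=k[x,y,z]/J$, the deformation functor is unobstructed, and $\{f_{\mathrm{std}}+\sum_j t_jg_j'=0\}$ with $(g_j')$ any eigenbasis of $T^1$ is a semiuniversal deformation with smooth base $\Spec k[T^1]$. Since $\Gamma$ is finite and $\operatorname{char}k=0$, passing to $\Gamma$-invariants yields an unobstructed subfunctor with tangent space $(T^1)^\Gamma$ whose semiuniversal object is, by construction of the $g_i$, exactly $\psi_{\mathrm{std}}$; and because $\chi$ and the $g_i$ were chosen $\G_m\times(\mu_m\ltimes\Gamma)$-equivariantly while the $\G_m$-weights of $f_{\mathrm{std}}$ and of the $g_i$ — hence of the $t_i$ — are positive by Proposition \ref{prop: normal form surface singularity}, this deformation is $\G_m\times(\mu_m\ltimes\Gamma)$-equivariantly semiuniversal in the sense of Slodowy's equivariant deformation theory \cite[Ch.~2]{Slodowy-simplesingularitiesalggroups}.

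\textbf{$\psi$ is equivariantly semiuniversal, and conclusion.} Slodowy \cite[\S8.4,\S8.7]{Slodowy-simplesingularitiesalggroups} shows $\psi\colon S_e\to\frakh\GIT H$ is a $\G_m\times\Gamma$-equivariant semiuniversal deformation of $X_0$. The automorphism $\sigma$ commutes with $\rho$, normalizes $\Gamma$, fixes $e$, and restricts on $X_0$ to the normal form of Proposition \ref{proposition: normal form Gamma and sigma}; as $\mu_m\ltimes\Gamma$ is finite and $\operatorname{char}k=0$, averaging over it (exactly as Slodowy treats $\Gamma$) promotes the $\G_m\times\Gamma$-equivariant semiuniversal structure on $\psi$ to a $\G_m\times(\mu_m\ltimes\Gamma)$-equivariant one, after an equivariant change of coordinates that is the identity on $X_0$. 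Now both $\psi$ and $\psi_{\mathrm{std}}$ are $\G_m\times(\mu_m\ltimes\Gamma)$-equivariant semiuniversal deformations of $X_0$, so there is a classifying morphism from one to the other; it induces an isomorphism on the tangent spaces of the bases (both identified with $(T^1)^\Gamma$), and since the $\G_m$-weights on the deformation parameters are positive this formal versality statement globalizes to an honest isomorphism of the smooth affine bases $\frakh\GIT H\xrightarrow{\sim}D$ covered by an isomorphism of total spaces $S_e\xrightarrow{\sim}Z$, which can be taken $\G_m\times(\mu_m\ltimes\Gamma)$-equivariant because $\G_m\times\Gamma^\circ$ is reductive (invariant-theoretic averaging) and $\mu_m\ltimes\Gamma/\Gamma^\circ$ is finite. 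In particular the horizontal arrows are $\G_m\times\mu_m$-equivariant isomorphisms; the resulting commuting square is automatically cartesian, since a commuting square whose horizontal arrows are isomorphisms is a pullback.

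\textbf{Main obstacle.} The crux is the equivariant deformation theory for the non-connected, non-abelian group $\G_m\times(\mu_m\ltimes\Gamma)$: that $X_0$ admits a semiuniversal deformation as an equivariant object, that it is unique up to equivariant isomorphism, and that formal versality upgrades to an actual isomorphism of affine spaces thanks to the contracting $\G_m$-action (all $\G_m$-weights on $T^1$ being positive). Slodowy develops precisely this machinery for $\G_m\times\Gamma$, and Thorne \cite[\S3]{Thorne-thesis} carries it out for $m=2$; the work here is the bookkeeping needed to thread the extra cyclic factor $\mu_m$ through, using that it normalizes $\Gamma$, commutes with $\rho$, and — being finite in characteristic zero — is amenable to averaging.
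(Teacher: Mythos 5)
Your proposal is correct and follows essentially the same route as the paper: both identify $\psi$ and $\psi_{\mathrm{std}}$ as $\G_m\times(\mu_m\ltimes\Gamma)$-equivariant semiuniversal deformations of the central fibre and conclude by uniqueness together with algebraization via the positive $\G_m$-weights. The only cosmetic difference is that the paper produces the classifying map by first invoking the equivariant semiuniversality of the full family $Z'\to D'=\A^N$ (built from the equivariant section $\chi$ of $k[x,y,z]\to k[x,y,z]/J$) and then observing that the map factors through the $\Gamma$-fixed locus $Z\to D$, whereas you upgrade the semiuniversality of $\psi$ itself by averaging over the finite group; both steps rest on the same machinery from Slodowy, \S2.5 and \S8.7.
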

\begin{proof}
Extend $g_1, \dots, g_m$ to a basis $g_1, \dots, g_n , \dots, g_N$ of the image of $\chi\colon k[x,y,z]/J\rightarrow  k[x,y,z]$, let $Z' = \{f_{\mathrm{std}} + t_1 g_1 + \cdots + t_N g_N = 0\}\subset \A^3 \times \A^N$ and let $\psi'_{\mathrm{std}}\colon Z'\rightarrow D' = \A^N$ denote the projection to the second factor.
Then by \cite[Section 2.5]{Slodowy-simplesingularitiesalggroups}, the formal completion of $\psi'_{\mathrm{std}}\colon Z'\rightarrow D' = \A^N$ is a $\G_m \times (\mu_m \ltimes \Gamma)$-semiuniversal deformation of the variety $f^{-1}(0)$ with its $\G_m \times (\mu_m \ltimes \Gamma)$-action.
Moreover the restriction of $\psi_{\mathrm{std}}$ to the $\Gamma$-fixed points $Z'^{\Gamma}$ and $D'^{\Gamma}$ is exactly $\psi_{\mathrm{std}}$.
Since $\psi\colon S_e\rightarrow \frakh \GIT H$ is $\G_m\times (\mu_m\ltimes \Gamma)$-equivariant and $\Gamma$ acts trivially on the base $\frakh \GIT H$, the semiuniversality of $Z'\rightarrow D'$ shows that there is a pullback diagram between the formal completions at the origin
\[\begin{tikzcd}
	{\widehat{S_e}} & {\widehat{Z}} \\
	{\widehat{\frakh \GIT H}} & {\widehat{D}}
	\arrow[from=1-1, to=1-2]
	\arrow["\psi"', from=1-1, to=2-1]
	\arrow["{\psi_{\mathrm{std}}}", from=1-2, to=2-2]
	\arrow[from=2-1, to=2-2]
\end{tikzcd}\]
where the top horizontal map is $\G_m\times (\mu_m\ltimes \Gamma)$-equivariant and the bottom one is $\G_m\times \mu_m$-equivariant.
Forgetting the $\mu_m$-action, we know by \cite[Section 2.5, Theorem]{Slodowy-simplesingularitiesalggroups} that $Z\rightarrow D$ is a $\G_m$-semiuniversal deformation of $(f_{\mathrm{std}}^{-1}(0), \Gamma)$. 
So the proof \cite[Section 8.7, Theorem]{Slodowy-simplesingularitiesalggroups} shows that these horizontal maps have canonical algebraizations which are isomorphisms, proving the proposition.
\end{proof}

\subsection{Classification of subregular curves: exclusion}

Using the $\mathbb{G}_m \times \mu_m$ action on $S_e$ and $\frakh\GIT H$ and the preliminary results of \S\ref{subsection: subregular adapted gradings}, we now begin to prove Theorem \ref{thm-intro}. 
We first prove that if $\theta$ is a subregular-adapted $\mu_m$-action and $e\in V(k)$ is $\theta$-subregular, then $(m, H)$ and $X_e\rightarrow B$ appear in Table \ref{table:examples intro}.

Let $\theta\colon \mu_m\rightarrow \Aut_H$ be a stable and subregular-adapted $\mu_m$-action and $e\in V(k)$ a $\theta$-subregular nilpotent element.
Complete $e$ to a normal $\frak{sl}_2$-triple $(e,h,f)$ and let $X_e\rightarrow B$ be the corresponding graded Slodowy slice.

\begin{theorem}\label{theorem: subregular classification: exclusion}
    Suppose $k$ is algebraically closed and let $\theta\colon \mu_m\rightarrow \Aut_H$ be subregular adapted.
    \begin{enumerate}
        \item The pair $(m, {}^nH)$ appears in the first two columns of Table \ref{table:examples intro}.
        (The left superscript $n$, when present in the Dynkin type, indicates that the $\mu_m$-action has nontrivial outer type of order $n$.)
        \item Let $e\in V(k)$ be a $\theta$-subregular nilpotent element, complete it to a normal $\liesl_2$-triple, and let $X_e\rightarrow B$ be the associated graded Slodowy slice.
        Then there exist coordinates $p_{n_1}, \dots, p_{n_d}$ on $B$ and coordinates $x,y,p_{n_1}, \dots, p_{n_{d-1}}$ on $X_e$ such that each $x,y,p_{n_i}$ is $\rho$-homogeneous, $p_{n_i}$ has weight $2n_i$ and the map $X_e \rightarrow B$ is identified with the map $\A^{d+1} \rightarrow \A^d$ given by  
        \[(x,y,p_{n_1}, \dots,p_{n_{d-1}})\mapsto (-F_0(x,y,p_{n_1}, \dots,p_{n_{d-1}}), p_{n_1}, \dots, p_{n_{d-1}}),\]
        where $F_0 + p_{n_d}=0$ is displayed in the fifth column of Table \ref{table:examples intro}, depending on $(H,m)$.
    \end{enumerate}
\end{theorem}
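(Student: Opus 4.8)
The plan is to reduce the whole statement to Slodowy's explicit description of the subregular transverse slice $\psi\colon S_e\to\frakh\GIT H$, together with the classification of the possible $\mu_m$-symmetries $\sigma$ carried out in \S\ref{subsection: subregular adapted gradings}, and then to pass to $\sigma$-fixed points. First I would reduce to the case that $H$ is adjoint and simple: replacing $H$ by $H^{\mathrm{ad}}$ changes neither $V=\frakh_1$ nor $k[V]^G$ (by Corollary \ref{corollary:Gaction_faithful}) nor the notion of a $\theta$-subregular nilpotent, and the reduction to $H$ simple was discussed in \S\ref{subsec: stable gradings over algebraically closed field}. Fix a $\theta$-subregular $e$, a normal $\fraksl_2$-triple $(e,h,f)$, and the associated slices $\psi\colon S_e\to\frakh\GIT H$ and $\varphi\colon X_e\to B$, equipped with the commuting $\G_m$-action $\rho$ and $\mu_m$-action $\sigma$ of \S\ref{subsec: graded Slodowy slices}. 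The observation driving everything is that $X_e=S_e^\sigma$ by construction and $B=(\frakh\GIT H)^\sigma$ by Proposition \ref{proposition: invariant polys panyushev}, so that $\varphi$ is exactly the morphism of $\sigma$-fixed loci induced by $\psi$.

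For the first part, I would simply transcribe Proposition \ref{proposition: normal form Gamma and sigma}. After the normalizations of Propositions \ref{prop: normal form surface singularity} and \ref{proposition: normal form Gamma and sigma}, the pair $(m,\sigma)$ belongs to an explicit finite list indexed by the Dynkin type of $H$; that list was produced using the classification of stable gradings of \cite{GrossLevyReederYu-GradingsPosRank}, which simultaneously records the outer type of each stable $\mu_m$-action and hence the superscript ${}^nH$ appearing in the first two columns of Table \ref{table:examples intro}. Matching the entries of Proposition \ref{proposition: normal form Gamma and sigma} against that table---and checking, from the tables of \cite{GrossLevyReederYu-GradingsPosRank}, that for each type the listed values of $m$ are exactly those admitting a stable grading---yields the first claim.

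For the second part, I would feed this into Proposition \ref{proposition: isomorphism between semiversal deformations}, which gives a $\G_m\times\mu_m$-equivariant pullback square identifying $\psi$ with the standard family $\psi_{\mathrm{std}}\colon Z\to D=\A^n$, where $Z=\{f_{\mathrm{std}}+\sum_i t_i g_i=0\}\subset\A^3\times\A^n$, the subscript $J\subset k[x,y,z]$ is the Jacobian ideal of $f_{\mathrm{std}}$, the $g_i$ are a $\rho$- and $\sigma$-homogeneous lift to $k[x,y,z]$ of a basis of the $\Gamma$-invariant part $(k[x,y,z]/J)^\Gamma$, and the $t_i$ are the coordinates on $D$. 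Passing to $\sigma$-fixed points on source and target realizes $\varphi\colon X_e\to B$. By Lemma \ref{lemma: sigma action on U1 and U2} the $\sigma$-action on $U_1\simeq\A^3$ has weights $0,0,1$, so $\A^{3,\sigma}$ is the coordinate plane obtained by killing the unique weight-$1$ variable (in the sole non-diagonal case, $G_2$ with $m=2$, one first diagonalizes $\sigma$ by a linear change inside the weight-$4$ span of $x,y$, which preserves $\rho$-homogeneity); and since $f_{\mathrm{std}}$ is $\sigma$-invariant, $t_i$ has $\sigma$-weight $-\mathrm{wt}_\sigma(g_i)$, so $D^\sigma$ is spanned by those $t_i$ with $g_i$ $\sigma$-invariant, among which one may always take $g_{i_0}=1$. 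Restricting the equation of $Z$ to $\A^{3,\sigma}\times D^\sigma$ and relabelling the surviving coordinates as $x,y,p_{n_1},\dots,p_{n_d}$---where $p_{n_j}$ has $\rho$-weight $2n_j$ since $t_i$ has $\rho$-weight $2d_r-\mathrm{wt}_\rho(g_i)$, which one checks is even and equals $2n_j$ for the $n_j$ recorded in Table \ref{table:examples intro}---one reads off that $X_e\subset\A^2_B$ is cut out by an equation $F_0+p_{n_d}=0$, with $p_{n_d}$ coming from the term $t_{i_0}g_{i_0}$, and that $\varphi$ is the map displayed in the theorem. Running this through type-by-type, using the explicit $f_{\mathrm{std}}$, the weights, and the $\Gamma$- and $\sigma$-actions of Table \ref{table: surface singularities} and Proposition \ref{proposition: normal form Gamma and sigma}, reproduces precisely the fifth column of Table \ref{table:examples intro}.

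The main obstacle is the non-simply-laced bookkeeping in the second part: one must take $\Gamma$-invariants of $k[x,y,z]/J$ before passing to $\sigma$-fixed points and check that $\sigma$ normalizes $\Gamma$ exactly in the coordinates furnished by Proposition \ref{proposition: normal form Gamma and sigma}, and in types $B_{\rl}$, $C_{\rl}$ and $G_2$---where Proposition \ref{proposition: normal form Gamma and sigma} permits more than one normal form for $\sigma$, or where more than one $\theta$-subregular $G$-orbit lies over the subregular $H$-orbit---one must verify that the two resulting families are exactly the two displayed equations. Everything else is a finite and elementary computation, parallel to Slodowy's determination of the subregular family of surfaces.
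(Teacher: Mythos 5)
Your proposal is correct and follows essentially the same route as the paper: Part 1 is read off from Proposition \ref{proposition: normal form Gamma and sigma} together with the uniqueness of the outer type from the classification of stable gradings, and Part 2 is obtained by identifying $\psi$ with $\psi_{\mathrm{std}}$ via Proposition \ref{proposition: isomorphism between semiversal deformations} and passing to $\sigma$-fixed points case by case, including the same diagonalization step in the non-diagonal $G_2$, $m=2$ case. The only (harmless) addition is your explicit reduction to $H$ adjoint and simple, which the paper instead builds into the standing hypotheses of \S\ref{subsection: subregular adapted gradings}.
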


For example, when $m=2$ and $H$ is of type $^2A_\rl$, the curves $X_e\rightarrow B$ in Table \ref{table:examples intro} are described as $y^2 = x^{\rl+1} + p_2 x^{\rl-1} +\cdots + p_{\rl+1}$, which means that $p_{n_d} = p_{\rl+1}$ and $F_0 = -y^2 + x^{\rl+1} + p_2 x^{\rl-1} + \cdots p_{\rl} x$.

\begin{proof} 
Choose decompositions $S_e = U_1\oplus W$, $\frakh\GIT H = U_2\oplus W$ and coordinates $x,y,z$ on $U_1$ and $p_{d_r}$ on $U_2$ satisfying the conclusions of Proposition \ref{proposition: normal form Gamma and sigma}.
That proposition immediately shows that the pair $(m,H)$ appears in the first two columns of Table \ref{table:examples intro}.
The classification of stable gradings \cite[Section 7]{GrossLevyReederYu-GradingsPosRank} shows that in each of those cases there is a unique $n$ for which there is a stable $\mu_m$-action on $H$ with outer type of order $n$, so this automatically determines $n$ too, proving the first part of the proposition.

We now prove the second part.
By Proposition \ref{proposition: isomorphism between semiversal deformations}, $\psi$ is isomorphic to $\psi_{\mathrm{std}}$, where $\psi_{\mathrm{std}}$ is the map $Z\rightarrow D$ constructed above Proposition \ref{proposition: isomorphism between semiversal deformations}.
This construction depends on $f_{\mathrm{std}}(x,y,z)$ and the $\mu_m\ltimes \Gamma$-action on $\A^3$.
We can carry out this construction for each case of Proposition \ref{proposition: normal form Gamma and sigma}; in fact the elements $g_1, \dots, g_n$ can always be chosen to be monomials.
Noting that $\psi_{\mathrm{std}}^{\sigma}$ is isomorphic to $\psi^{\sigma} = \varphi\colon X_e\rightarrow B$, we obtain all the cases of Table \ref{table:examples intro}.

As an example, consider the case when $H$ is of type $B_{\rl}$, $m=2$, $\gamma\cdot (x,y,z) = (-x,-y,z)$ and $\sigma(x,y,z) = (x,y,-z)$.
Since $f_{\mathrm{std}} = z^2-y^2 + x^{2r}$, $J = (x^{2r-1},y,z)$ and $k[x,y,z]/J$ has $k$-basis the images of the elements $1,x,\dots, x^{2r-2}$.
The map $\chi\colon k[x,y,z]/J\rightarrow k[x,y,z]$ sending the coset of $x^i$ to $x^i$ for each $0\leq i \leq 2r-2$ is equivariant with respect to the $\G_m\times (\mu_2\ltimes \Gamma)$-action.
Taking $\Gamma$-fixed points, we are left with the elements $1,x^2, \dots, x^{2r-2}$.
So $Z = \{z^2-y^2+ x^{2r} + p_2 x^{2r-2} + \cdots + p_{2r} = 0\}\subset \A^{3+r}$, $D = \A^r$ and $\psi_{\mathrm{std}}\colon Z\rightarrow D$ sends a tuple $(x,y,z,p_2, \dots, p_{2r})$ to $(p_2, \dots, p_{2r})$.
The $\sigma$-fixed point locus $\psi_{\mathrm{std}}$ is isomorphic to $\varphi\colon X_e\rightarrow B$ and is given by $Z^{\sigma}\rightarrow D^{\sigma}\colon (x,y,p_2, \dots, p_{2r})\mapsto (p_2, \dots, p_{2r})$.
Projecting onto the first $r+1$ coordinates defines an isomorphism $Z^{\sigma} \rightarrow \A^{r+1}$, $(x,y,p_2, \dots, p_{2r})\mapsto (x,y,p_2, \dots, p_{2r-2})$.
Under this isomorphism $\psi^{\sigma}_{\mathrm{std}}\colon \A^{r+1} \rightarrow \A^r$ defines the family of curves displayed in the second row of Table \ref{table:examples intro}.

The other cases are similar. 
We give one more example of a slightly less straightforward calculation in the case when $H$ is type $G_2$ and $m = 2$. Suppose $H$ is type $G_2$ and $\sigma$ acts on $U_1$ by $(x, y, z) \mapsto (\omega y, \omega^2 x, z)$ for some primitive third root of unity $\omega$. 
Since $f_{\mathrm{std}} = z^2 + y^3 + x^2$, we have that $J = (z, y^2, x^2)$, $k[x, y, z]/J$ has $k$-basis given by the images of $1, x, y, xy$, and we may take the section $\chi$ to be the obvious map with respect to this basis. Looking at the $\Gamma$-action, we see that $(k[x, y, z]/J)^\Gamma$ is spanned by $1$ and $xy$, and $Z = \{z^2 + y^3 + x^3 + p_2xy + p_6 = 0\} \subset \A^5$. We have that $u := x + \omega y$ and $v := x - \omega y$ are eigenvectors for the $\sigma$-action. Writing $Z$ in terms of these variables and setting $v = 0$, we have that $Z^\sigma$ is given by $z^2 + \frac{1}{4}u^3 + \frac{\omega^2 p_2}{4}u^2 + p_6 = 0$, which after a linear change of variables yields the family of the form $y^2 = x^3 + p_2x^2 + p_6$ listed in Table \ref{table:examples intro}.
\end{proof}

\subsection{Classification of subregular curves: exhibition}\label{subreg-exhibition}

In the previous subsection, we have constrained the subregular-adapted gradings on simple groups and the curves they give rise to. 
We now show that all the possibilities listed in Table \ref{table:examples intro} do indeed arise.
Specifically, we prove the following theorem.

\begin{theorem}\label{theorem:exhibitionsubregularcurves-algclosed}
    Suppose $k$ is algebraically closed.
    For every row in Table \ref{table:examples intro}, there exists a stable $\mu_m$-action on a simple $H$ of the corresponding type and a $\theta$-subregular nilpotent $e\in V(k)$ such that $X_e\rightarrow B$ is of the form corresponding to that entry.
\end{theorem}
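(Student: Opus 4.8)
The plan is to verify Theorem~\ref{theorem:exhibitionsubregularcurves-algclosed} row by row, splitting the table into three groups according to the nature of the argument. First, for the classical groups ($A_\rl$, $B_\rl$, $C_\rl$, $D_\rl$) and the genus-zero final rows, I would exhibit the triple $(H,\theta,e)$ explicitly using the matrix models: for $m=2$ this uses the models of Examples~\ref{ex-Bn-2grading} and \ref{ex-Cn-2grading} (and the analogous models for $^2A_\rl$, $D_\rl$, $^2D_\rl$ via $\SO$ or $\GL$ acting on symmetric/bilinear forms), together with the classification of nilpotent orbits in these $\theta$-groups via ab-diagrams \cite{Ohta-classificationadmissiblenilpotent}. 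In each case one picks the subregular ab-diagram, checks that the resulting $e\in\frakh_1$ has $\dim\frakz_{\frakg}(e)=1$ (so $e$ is $\theta$-subregular), and identifies $\tilde G$, $V$, $Z_G(v)$ by inspection. The shape of the curve $X_e\to B$ is then forced by Theorem~\ref{theorem: subregular classification: exclusion}(2): once we know $(m,H)$, the displayed equation is already determined, so the only thing left to check per row is that a $\theta$-subregular $e$ exists at all. For the $A_\rl$ and $B_\rl$ final rows, which come from Coxeter and twisted-Coxeter gradings, I would instead cite the explicit description in \S\ref{subsection: Coxeter gradings}: there we already saw that the slice at $e=\sum_{i\neq j,k}e_{\al_i}$ is the genus-zero curve $x^{c_j}y^{c_k}=c$, and choosing $j,k$ so that $c_j=c_k=1$ (which is possible, e.g.\ the two end nodes in type $A$, giving $xy=p_{\rl+1}$) produces exactly the listed family.

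Second, for the exceptional cases with $m=2$ ($^2E_6$, $E_7$, $E_8$, $F_4$, $G_2$), these are precisely the cases treated in \cite{Thorne-thesis} (for $E_6, E_7, E_8$ simply laced) and \cite{Laga-F4paper} (for $F_4$), and for $G_2$ the grading is the Coxeter grading of Example~\ref{ex-cox}, so existence of a $\theta$-subregular $e$ and the form of $X_e\to B$ can be cited; I would just re-derive the curve shape from Table~\ref{table: surface singularities} and Proposition~\ref{proposition: normal form Gamma and sigma} to make the paper self-contained, exactly as in the sample calculations at the end of the proof of Theorem~\ref{theorem: subregular classification: exclusion}.

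Third, for the genuinely new exceptional cases with $m\geq 3$ --- the $\mu_3$-gradings on $^3D_4$, $E_6$, $E_8$, $F_4$, $G_2$, the $\mu_4$-gradings on $^2E_6$ and $F_4$, and the $\mu_5$-grading on $E_8$ --- I would argue as follows. By Theorem~\ref{theorem: characterization stable gradings} there is a unique $H$-conjugacy class of stable $\mu_m$-action of each relevant outer type, so $\theta$ is pinned down. It then suffices to produce \emph{one} nilpotent $e\in\frakh_1$ with $e$ subregular in $\frakh$ and $\dim\frakz_{\frakg}(e)=1$. I would do this computationally with de~Graaf's \texttt{SLA} package (as described in the appendix): list the nilpotent $G$-orbits in $\frakh_1$, find those whose $H$-saturation is the subregular orbit of $\frakh$, and among those check which have one-dimensional centralizer in $\frakg=\frakh_0$. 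Existence of such an orbit having been verified, Theorem~\ref{theorem: subregular classification: exclusion}(2) automatically delivers the displayed equation; and the columns $\tilde G$, $V$, $Z_G(v)$ are read off from the known structure of the stable grading (e.g.\ for $(5,E_8)$ one has $G\cong(\SL_5\times\SL_5)/\mu_5$ acting on $(5)\boxtimes\wedge^2(5)$, which is reproved in detail in \S\ref{section: 5 grading E8}).

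The main obstacle is the third group: unlike the $m=2$ simply-laced case of \cite{Thorne-thesis}, there is no uniform Lie-theoretic reason a subregular orbit must meet $\frakh_1$, let alone with one-dimensional $\frakg$-centralizer, so each of these $m\geq 3$ exceptional rows requires a concrete orbit computation. I would organize these as a single lemma asserting ``for each such $(m,H)$ the \texttt{SLA} computation produces a $\theta$-subregular $e$'', record the relevant orbit data (e.g.\ weighted Dynkin diagram of the $H$-orbit and the dimension count), and defer the raw \texttt{GAP} session to the appendix. One secondary subtlety worth flagging: in type $G_2$ with $m=2$ and in type $B_\rl$ one must also match up the choice of subgroup $\Gamma\leq\Gamma'$ and the possibility of two different $e$'s giving the two listed equations (the ``$xy^2=\dots$ vs.\ $y^2=\dots$'' phenomenon in $C_\rl$, $F_4$, $G_2$); this is handled by noting that $\mathrm{Orb}^e$ and the reducedness criterion of Lemma~\ref{lemma: irred component reduced iff corr nilpotent is reduced} distinguish the two subregular-adapted choices, and both occur.
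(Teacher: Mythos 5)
Your overall architecture coincides with the paper's: reduce each row to the existence of a $\theta$-subregular nilpotent (Theorem \ref{theorem: subregular classification: exclusion}(2) then forces the displayed equation whenever $(m,H)$ determines a unique one), produce such nilpotents for the classical groups via ab-diagrams and Coxeter gradings, quote \cite{Thorne-thesis} for $m=2$, and fall back on \texttt{SLA} computations for the remaining exceptional cases. This is exactly Proposition \ref{proposition: tabulated gradings are subregular adapted} together with the final paragraph of the paper's proof.

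The genuine gap is in the rows where a single pair $(m,H)$ carries \emph{two} distinct equations: type $B_\rl$ (second row versus the last row $xy=p_{2\rl}$), and types $C_\rl$, $F_4$, $G_2$ with $m=2$. There Theorem \ref{theorem: subregular classification: exclusion}(2) only says that $X_e\rightarrow B$ is \emph{one of} the listed families, and you must exhibit a nilpotent realizing \emph{each}. You flag this but then assert ``both occur'' and propose to separate the two cases using $\mathrm{Orb}^e$ and the reducedness criterion of Lemma \ref{lemma: irred component reduced iff corr nilpotent is reduced}. That criterion does not distinguish them: in the $F_4$, $m=2$ case, for instance, both subregular $G$-orbits are good (each lies below only the regular orbit) and both central fibers $y^3=x^4$ and $y^2=x^3$ are reduced and irreducible. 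The invariant that actually separates the two possibilities is the multiset of $\rho$-weights on $X_e$, i.e.\ the $\ad(h)$-eigenvalues on $\frakz_{\frakh_1}(f)$, and the paper spends Lemmas \ref{lemma: subregular curves m2 Bn}--\ref{lemma: subregular curves m2 F4 G2} producing, for each such $(m,H)$, two explicit normal $\fraksl_2$-triples with different weight multisets: two different ab-diagrams $[ab\cdots ab,ab]$ and $[ab\cdots ab,ba]$ in type $C_\rl$, the Coxeter-power trick together with an ab-diagram computation in type $B_\rl$, and a \texttt{GAP} weight computation in types $F_4$ and $G_2$. Without an argument of this kind your proof does not establish the ``moreover'' clause of Theorem \ref{thm-intro} for those rows. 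A secondary slip: the stable $\mu_2$-grading on $G_2$ is not the Coxeter grading (the Coxeter number of $G_2$ is $6$, and for the stable $2$-grading $G$ is $\SL_2\times\SL_2$, not a torus), so the explicit description of \S\ref{subsection: Coxeter gradings} is not available there.
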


We prove this theorem in a few steps. 
\begin{proposition}\label{proposition: tabulated gradings are subregular adapted}
    Every pair $({}^nH, m)$ (where $n$ denotes the order of the outer type) appearing in Table \ref{table:examples intro} is subregular adapted.
\end{proposition}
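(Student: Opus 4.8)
The plan is to verify, for each pair $({}^nH,m)$ in Table~\ref{table:examples intro}, that the stable $\mu_m$-action on $H$ (which exists and is unique up to conjugacy by Theorem~\ref{theorem: characterization stable gradings} and the remark following it) admits a $\theta$-subregular nilpotent, i.e.\ a subregular nilpotent $e\in V(k)$ with $\dim\frakz_{\frakg}(e)=1$. For the $m=2$ rows this is essentially classical: when $H$ is simply laced this is part of Thorne's work \cite[Section 3]{Thorne-thesis}, and when $H$ is of type $B_\rl$ or $C_\rl$ one can argue directly using the explicit matrix models set up in Examples~\ref{ex-Bn-2grading} and \ref{ex-Cn-2grading} together with Ohta's classification of nilpotent $G$-orbits \cite{Ohta-classificationadmissiblenilpotent} (this is precisely the content of \S\ref{subreg-exhibition} below, so here we should simply cite the relevant exhibition in that section). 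For the remaining rows, where $H$ is exceptional or where $m\geq 3$, the cleanest uniform approach is computational: using de Graaf's \texttt{SLA} package \cite{deGraaf-nilporbitsthetagroups} in \texttt{GAP} \cite{GAP4} (as explained in the appendix), we enumerate the nilpotent $G(\bar k)$-orbits on $V=\frakh_1$, compute for each orbit representative $e$ the dimension of $\frakz_{\frakh}(e)$ (to test subregularity: this should equal $\rank(H)+2$) and of $\frakz_{\frakg}(e)=\frakz_{\frakh_0}(e)$, and check that some orbit satisfies both conditions $\dim\frakz_{\frakh}(e)=\rank(H)+2$ and $\dim\frakz_{\frakh_0}(e)=1$. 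Since all these orbits have representatives defined over $k$ when $H$ is split (Proposition~\ref{proposition: nilpotents on split groups descend}), and in the one genuinely non-split case $(m,H)=(3,{}^3D_4)$ one can work over $\Q$ with a quasi-split form as discussed after Theorem~\ref{thm-intro}, this produces the desired $e\in V(k)$.

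The key steps, in order, are: (1) reduce to the split (or quasi-split $D_4$) case and fix a pinned model of $(H,\theta)$ via the split principal grading attached to the outer type $\tau\colon\mu_e\hookrightarrow\Out_H$ and a choice of elliptic $\Z$-regular class in $W\vartheta$; (2) for the infinite families $A_\rl$, $B_\rl$, $C_\rl$, $D_\rl$ (with $m=2$, or the genus-zero rows at the bottom of the table), give a direct Lie-theoretic construction of $e$ using root-space data or matrix models, computing $\frakz_{\frakh}(e)$ and $\frakz_{\frakh_0}(e)$ by hand — for instance, for the Coxeter-type rows $(m,H)=(\rl+1,A_\rl)$ and the $B_\rl$ row at the bottom, the analysis of \S\ref{subsection: Coxeter gradings} and its twisted analogue already exhibits such $e$ and computes the centralizers; (3) for each of the finitely many exceptional pairs — $(2,{}^2E_6)$, $(2,E_7)$, $(2,E_8)$, $(2,F_4)$, $(2,G_2)$, $(3,{}^3D_4)$, $(3,E_6)$, $(3,E_8)$, $(3,F_4)$, $(3,G_2)$, $(4,{}^2E_6)$, $(4,F_4)$, $(5,E_8)$ — run the \texttt{SLA} computation described above, recording the resulting orbit and confirming $\dim\frakz_{\frakh_0}(e)=1$ and $\dim\frakz_{\frakh}(e)=\rank(H)+2$; (4) observe that subregularity together with $\dim\frakz_{\frakg}(e)=1$ is exactly the definition of $\theta$-subregular, so these computations establish that the grading is subregular adapted.

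I expect the main obstacle to be organizational rather than conceptual: there is no single slick argument covering all rows uniformly, so the proof is really a finite case analysis, and the exceptional cases genuinely rely on machine computation with nilpotent orbits in large Lie algebras (type $E_8$ has dimension $248$, and the $\mu_5$-grading has $\dim\frakh_1=50$). The nontrivial mathematical points hidden inside are: (a) that the computed orbit is actually the subregular one — this requires knowing that the $H(\bar k)$-orbit of $e$ meets $V$, which follows from Richardson's lemma \cite[Lemma 3.1]{Richardson-orbitsofalgebraicgroups} on the decomposition $\calO\cap\mathcal{N}$ into $G$-orbits, and that subregularity can be certified purely by the dimension count $\dim\frakz_{\frakh}(e)=\rank(H)+2$ since the subregular orbit is the unique non-regular orbit of that dimension; and (b) descent of the orbit representative to $k$, handled by Proposition~\ref{proposition: nilpotents on split groups descend} in the split case and by an \emph{ad hoc} argument with the quasi-split form of $D_4$ corresponding to $\Q(\omega)\times\Q$ in the remaining case. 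For the bottom two rows of the table (the genus-zero families $xy=p_{\rl+1}$ and $xy=p_{2\rl}$), subregular adaptedness should follow from the explicit description of twisted Coxeter gradings in \S\ref{subsection: Coxeter gradings}, where one checks directly that a nilpotent $e=\sum_{i\neq j,k}e_{\al_i}$ of the indicated shape is subregular with one-dimensional centralizer in $\frakg$.
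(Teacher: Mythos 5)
Your overall strategy matches the paper's: explicit Lie-theoretic constructions for the infinite families and machine verification for (most of) the exceptional cases, with $\theta$-subregularity certified by the two dimension counts $\dim\frakz_{\frakh}(e)=\rank(H)+2$ and $\dim\frakz_{\frakh_0}(e)=1$. The differences are in the division of labor. The paper disposes of \emph{every} $m=2$ row at once by citing Thorne's Lemma 2.21 and Theorem 2.22, which apply to all simple types (not only the simply laced ones) via the correspondence between stable involutions and real forms; you instead propose to treat the non-simply-laced $m=2$ cases by hand through Ohta's matrix models and to feed the exceptional $m=2$ cases to \texttt{GAP}. Both routes work, but the Thorne citation is cheaper and also supplies a fact the paper reuses in \S\ref{subreg-exhibition}, namely that for a stable involution any subregular $e\in\frakh_1$ automatically satisfies $\dim\frakz_{\frakh_0}(e)=1$. (The paper also spares one computation by reading the $(3,E_8)$ case off the Vinberg--Ela\v{s}vili tables.) Your descent considerations are moot here: the proposition is only invoked over algebraically closed fields (Theorem \ref{theorem:exhibitionsubregularcurves-algclosed} assumes $k=\bar k$), so Proposition \ref{proposition: nilpotents on split groups descend} and the quasi-split $D_4$ discussion are not needed at this stage.

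The one place where your plan has a genuine hole is the bottom $B_\rl$ row for $m=2\rl/n$ with $1<n<\rl$. These gradings are proper powers $\theta_c^n$ of the Coxeter grading, so $\frakg=\frakh^{\theta_c^n}$ is strictly larger than the torus $\frakt$, and the centralizer computation of \S\ref{subsection: Coxeter gradings} (which computes $\dim\frakz_{\frakt}(e)$) does not by itself certify $\dim\frakz_{\frakg}(e)=1$; the ``twisted analogue'' does not help either, since $B_\rl$ has no outer automorphisms. The paper closes this by a sandwich argument: taking $e$ supported on all but one simple root vector, one has $\frakz_{\frakh}(e)^{\theta_c}\subset\frakz_{\frakh}(e)^{\theta_c^n}\subset\frakz_{\frakh}(e)^{\theta_c^{\rl}}$, and both outer terms are one-dimensional --- the left by the direct Coxeter computation, the right by Thorne's Lemma 2.21 applied to the stable involution $\theta_c^{\rl}$ --- which forces the middle term to be one-dimensional. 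You would need to add this (or an equivalent direct computation inside $\frakh^{\theta_c^n}$) to complete the argument for those rows.
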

\begin{proof}
    If $m=2$, this was established by Thorne \cite[Lemma 2.21 and Theorem 2.22]{Thorne-thesis}, using the connection between involutions on $H$ and real forms of $H$.
    For the remaining entries, we may assume $m\geq 3$ and argue on a case-by-case basis.
    
    When $H$ is of type $A_\rl$, then $\theta$ is conjugate to the Coxeter grading of Example \ref{ex-cox}. 
    In the notation of that example, we have $\frakg = \frakt$ and $V = \sum_{i=0}^{\rl} \frakh_{\alpha_i}$.
    Choose a nonzero element $X_{\alpha_i} \in \frakh_{\alpha_i}$ for each simple root $\alpha_i$. Then $e = X_{\alpha_1} + \cdots + X_{\alpha_{\rl-1}}$ is a subregular nilpotent of $\frakh \simeq \fraksl_{\rl+1}$, by considering Jordan normal forms. It is easily checked to satisfy $\dim \frakz_{\frakt}(e) = 1$, so it is $\theta$-subregular.

    Suppose $H$ is of type $B_\rl$. 
    By the classification of stable gradings \cite[Section 7.2, Table 12]{GrossLevyReederYu-GradingsPosRank}, $m$ is an even divisor of $2\rl$ and we may assume $\theta$ equals $\theta_{c}^n$, where $\theta_{c}$ is the Coxeter grading associated with a pinning of $H$ and $n = 2\rl/m$.
    There exists a subregular nilpotent element $e$ which is supported on all but one of the simple root vectors; combine \cite[Propositions 5.2.5 and 5.4.1]{CollingwoodMcGovern-nilpotentorbits}.
    Such an element lies in $\frakh^{\theta_{c} = \zeta_{2\rl}}$ and is $\theta_{c}$-subregular by direct computation.
    We have inclusions $\frakz_{\frakh}(e)^{\theta_{c}} \subset \frakz_{\frakh}(e)^{\theta} \subset \frakz_{\frakh}(e)^{\theta_{c}^\rl}$.
    We have already shown $\dim \frakz_{\frakh}(e)^{\theta_{c}} = 1$.
    Since $e$ is subregular and $\theta_{c}^\rl$ is a stable $2$-grading, we also have $\dim\frakz_{\frakh}(e)^{\theta_{c}^\rl} = 1$ by \cite[Lemma 2.21]{Thorne-thesis}.
    Therefore $\frakz_{\frakh}(e)^{\theta} = \frakz_{\frakh_0}(e)$ has dimension $1$ and so $e$ is $\theta$-subregular.

    There are a finite number of cases left to consider. When $H$ is of type $E_8$ and $m=3$, a $\theta$-subregular nilpotent is listed as the second entry in \cite[Table 6]{VinbergElasvili-trivectors}.
    For the remaining seven cases, we can check using the \texttt{SLA} package \cite{SLA} in \texttt{GAP} \cite{GAP4}  that they all contain $\theta$-subregular nilpotent elements; see the appendix for more details. 
\end{proof}

We now pay particular attention to those pairs $({}^nH,m)$ for which there are multiple families of curves $X_e\rightarrow B$ listed in Table \ref{table:examples intro}.

\begin{lemma}\label{lemma: subregular curves m2 Bn}
    Theorem \ref{theorem:exhibitionsubregularcurves-algclosed} holds true when $H$ is of type $B_\rl$ $(\rl\geq 3)$ and $m=2$.
\end{lemma}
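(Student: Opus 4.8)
The plan is to use the explicit matrix description of the stable $2$-grading on $H = \SO(W)$ of type $B_\rl$ from Example \ref{ex-Bn-2grading}, together with the general normal-form machinery of \S\ref{subsection: subregular adapted gradings}, to produce two inequivalent $\theta$-subregular nilpotents $e$ whose slices $X_e \to B$ realize the two families listed in the second row of Table \ref{table:examples intro} — namely the hyperelliptic family $y^2 = x^{2\rl} + p_2 x^{2\rl-2} + \cdots + p_{2\rl}$. Wait: in the $B_\rl$ row there is actually only \emph{one} curve listed, so the content of the lemma is to exhibit a single $\theta$-subregular nilpotent $e \in \frakh_1$ and check that its graded Slodowy slice is the stated hyperelliptic family; the ``multiple families'' remark applies to $C_\rl$, $D_\rl$, $F_4$, $G_2$, not $B_\rl$. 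So the task is: (i) write down an explicit subregular nilpotent $e$ inside $\frakh_1 \simeq \Hom(W_1, W_2)$, (ii) verify $\dim \frakz_{\frakg}(e) = 1$ so that $e$ is $\theta$-subregular, and (iii) identify the resulting curve with the one in the table.

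First I would recall from the proof of Proposition \ref{proposition: tabulated gradings are subregular adapted} (specialized to $m=2$, which there was handled by citing Thorne) that a subregular nilpotent of $\mathfrak{so}_{2\rl+1}$ has partition $[2\rl-1, 1, 1]$, and I would choose $e \in \Hom(W_1, W_2)$ concretely using the bases $v_1, \dots, v_{\rl+1}$ of $W_1$ and $w_1, \dots, w_\rl$ of $W_2$: send a ``long'' string of the $v_i$ and $w_j$ alternately to build up the size-$(2\rl-1)$ Jordan block, with the remaining basis vectors in the kernel, and then symmetrize with respect to the forms $(-,-)_1$, $(-,-)_2$ so that $e$ together with the determined $Z$-block lands in $\frakh_1$. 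Concretely one can take $e(v_i) = w_i$ for an appropriate range of $i$ and $e(v_{\rl+1}) = 0$, then complete to a normal $\fraksl_2$-triple $(e,h,f)$ with $h \in \frakh_0$ diagonal and $f \in \frakh_{-1}$; this is a short linear-algebra computation. Then I would compute $\frakz_{\frakg}(e) = \frakz_{\frakh_0}(e)$ directly: an element of $\frakh_0$ is a block-diagonal pair $(X, Y) \in \mathfrak{so}(W_1) \times \mathfrak{so}(W_2)$, and commuting with $e \in \Hom(W_1,W_2)$ forces $Y e = e X$; solving this together with the skew-symmetry constraints should leave a one-dimensional space (spanned by a single nilpotent ``long-string'' endomorphism), confirming $e$ is $\theta$-subregular.

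Next, to identify the curve, I would invoke Proposition \ref{proposition: normal form Gamma and sigma} and Theorem \ref{theorem: subregular classification: exclusion}: since $(H, m) = (B_\rl, 2)$, the surface singularity is of type $B_\rl$, with $f_{\mathrm{std}} = z^2 - y^2 + x^{2\rl}$, weights $(2, 2\rl, 2\rl)$, $\Gamma = \{1, \gamma\}$ acting by $\gamma\cdot(x,y,z) = (-x,-y,z)$, and the involution $\sigma$ acting by $(x,y,z) \mapsto (x, y, -z)$. Taking $\Gamma$-invariants of $k[x,y,z]/J$ where $J = (\partial f_{\mathrm{std}})$ leaves the monomials $1, x^2, x^4, \dots, x^{2\rl-2}$, so the $\Gamma$-semiuniversal deformation is $z^2 - y^2 + x^{2\rl} + p_2 x^{2\rl-2} + \cdots + p_{2\rl} = 0$; taking the $\sigma$-fixed locus ($z = 0$) and relabeling gives exactly $y^2 = x^{2\rl} + p_2 x^{2\rl - 2} + \cdots + p_{2\rl}$, matching the table, with $p_{2j}$ of $\rho$-weight $4j$ — i.e., invariant degree $2j$ after halving, consistent with the invariant degrees $2, 4, \dots, 2\rl$ of $B_\rl$. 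This last identification is essentially a bookkeeping matter once the normal forms are in hand.

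The main obstacle I anticipate is step (ii): carefully choosing $e$ inside the explicit matrix model so that it genuinely lies in $\frakh_1$ (the $Z$-block is forced by the pairing condition $\langle Av, w\rangle_2 + \langle v, Aw\rangle_1 = 0$, so one must check the chosen $Y$-block is compatible) \emph{and} has $\dim \frakz_{\frakh_0}(e) = 1$ rather than something larger. The centralizer computation in $\mathfrak{so}(W_1) \times \mathfrak{so}(W_2)$ is delicate because the two orthogonal factors have different sizes ($\rl+1$ and $\rl$), so one cannot simply quote the $\mathfrak{gl}$-centralizer dimension formula; one has to intersect the naive commutant with the two skew-symmetry conditions and track cancellations. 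An alternative route that sidesteps some of this is to argue abstractly: any subregular nilpotent $e \in \frakh_{\bar k}$ with $h$-grading compatible with $\theta$ has $\frakz_{\frakh}(e)^{\theta_c^\rl}$ one-dimensional by \cite[Lemma 2.21]{Thorne-thesis} (as used in Proposition \ref{proposition: tabulated gradings are subregular adapted}), and here $\theta = \theta_c^\rl$ is \emph{itself} the stable $2$-grading, so $\dim \frakz_{\frakg}(e) = \dim \frakz_{\frakh}(e)^\theta = 1$ comes for free from Thorne's result — making step (ii) essentially automatic and reducing the lemma to the identification of the curve via the normal-form computation above. I would present the proof in this streamlined form, citing Thorne for the $\theta$-subregularity and doing only the (routine) $\Gamma$- and $\sigma$-invariant computation to pin down the equation.
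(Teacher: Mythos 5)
Your proposal rests on a misreading of Table \ref{table:examples intro} that removes exactly the content the lemma is there to supply. The pair $(B_\rl, m=2)$ occurs in \emph{two} rows of the table: the second row (the hyperelliptic family $y^2 = x^{2\rl}+p_2x^{2\rl-2}+\cdots+p_{2\rl}$) \emph{and} the last row, since taking $n=\rl$ in $\{\tfrac{2\rl}{n}\colon n\mid \rl\}$ gives $m=2$ and the genus-zero family $xy=p_{2\rl}$. This is precisely why $B_\rl$ is singled out in the proof of Theorem \ref{theorem:exhibitionsubregularcurves-algclosed} alongside $C_\rl$, $F_4$ and $G_2$: there are two inequivalent families to exhibit, corresponding to the two possible $\sigma$-normal forms $\sigma(x,y,z)=(x,y,-z)$ or $(x,-y,z)$ (hyperelliptic case) versus $\sigma(x,y,z)=(-x,y,z)$ (the case $xy=p_{2\rl}$) in Proposition \ref{proposition: normal form Gamma and sigma}. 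Your proof exhibits only one nilpotent and explicitly dismisses the need for a second, so it cannot prove the lemma as stated.

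This misreading also creates a gap in the half you do treat. Since both $\sigma$-normal forms are a priori possible for a given $\theta$-subregular $e$, it is not enough to produce the ab-diagram nilpotent, quote Thorne for $\dim\frakz_{\frakg}(e)=1$, and then ``do the bookkeeping'' assuming $\sigma$ acts by $(x,y,-z)$: you must \emph{decide which case your chosen $e$ falls into}. The two cases are distinguished by the multiset of $\rho$-weights on $X_e$ ($2,2\rl,4,8,\dots,4\rl-4$ versus $2\rl,2\rl,4,8,\dots,4\rl-4$), so one has to show that $2$ is a weight, equivalently that $\frakz_{\frakh_1}(h)\cap\frakz_{\frakh_1}(e)\neq 0$; the paper does this by exhibiting the explicit element $A$ with $A(v_{\rl+1})=w_\rl$, $A(w_\rl)=v_{\rl+1}$. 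For the other family, the paper takes $e$ supported on all but one simple root vector and uses that $\theta=\theta_c^{\rl}$ is a power of the Coxeter grading, whose $\sigma_c$-action is $(\zeta_{2\rl}x,y,z)$, forcing $\sigma(x,y,z)=(-x,y,z)$ and hence the curve $xy=p_{2\rl}$. Both of these verifications are missing from your plan; the parts you do describe (the ab-diagram nilpotent from Example \ref{ex-Bn-2grading}, Thorne's lemma for $\theta$-subregularity, and the $\Gamma$- and $\sigma$-invariant computation of the semiuniversal deformation) agree with the paper's treatment of the hyperelliptic case.
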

\begin{proof}
    Let $e\in V(k)$ be a $\theta$-subregular nilpotent element, complete it to a normal $\frak{sl}_2$-triple and let $\psi\colon S_e\rightarrow \frakh\GIT H$ be the associated Slodowy slice.
    Choose decompositions $S_e = U_1 \oplus W$ and $\frakh \GIT H = U_2 \oplus W$ and coordinates $x,y,z$ of $U_1$ and $p_{d_r}$ of $U_2$ satisfying the conclusions of Proposition \ref{proposition: normal form Gamma and sigma}.
    Then $\sigma(x,y,z)$ equals $(x,y,-z)$, $(x,-y,z)$ or $(-x,y,z)$.
    By Theorem \ref{theorem: subregular classification: exclusion} and its proof, the first two cases give rise to the family of curves in the second row of Table \ref{table:examples intro}, while the third case gives rise to the family in the last row.
    Therefore the $\rho$-action on $X_e$ either has weights $2\rl,2\rl,4,8,\dots, 4\rl-4$ (Case I) or $2,2\rl,4,8,\dots, 4\rl-4$ (Case II).
    We need to show that for both cases there exists an $e$ for which $X_e$ has those weights.

    To exhibit Case I, take $e$ to be supported on all but one simple root vector, as discussed in the proof of Proposition \ref{proposition: tabulated gradings are subregular adapted}.
    Let $\frakh = \oplus_{j \in \bZ/2\rl\bZ} \frakh(\theta_c, j)$ be the Coxeter grading of $\frakh$. Then $e \in \frakh(\theta_c, 1)$. 
    If $\sigma_c$ is the action of $\mu_{2\rl}$ on $S_e$ corresponding to the Coxeter grading, then $\sigma_c^\rl = \sigma$. 
    Since we know that $\sigma_c(x,y,z) = (\zeta_{2r}x,y,z)$ by Proposition \ref{proposition: normal form Gamma and sigma}, we must have $\sigma(x,y,z) = (-x,y,z)$, which we calculate corresponds to Case I. 

    We now exhibit an $e$ in Case II, using the results of \cite{Ohta-classificationadmissiblenilpotent}.
    We use the explicit description of the stable $\Z/2\Z$-grading $\theta$ on $\frakh = \frak{so}_{2\rl+1}$ and notation given in Example \ref{ex-Bn-2grading}. 
    We can then describe the nilpotent orbits in terms of the ab-diagrams of \cite[p. 305, Proposition 2]{Ohta-classificationadmissiblenilpotent}.
    The subregular nilpotent orbit in $\mathfrak{so}_{2\rl + 1}$ corresponds to the partition $[2\rl-1,1,1]$ (see \cite[Sections 5.1 and 5.4]{CollingwoodMcGovern-nilpotentorbits} for a proof and an explanation). 
    The ab-diagram $[abab\dots aba,a,b]$ gives rise to the subregular nilpotent element 
\[
e: v_1\mapsto w_1\mapsto v_2 \mapsto w_2 \mapsto \cdots w_{\rl-1}\mapsto v_{\rl}\mapsto 0, w_\rl\mapsto 0, v_{\rl+1}\mapsto 0.
\]
which lies in $\frakh_1$.
Since $m=2$, $e$ is automatically $\theta$-subregular by \cite[Lemma 2.21]{Thorne-thesis}.
Using \cite[Case (BDI) of Section (4.2) and Section (3.1), Lemma 4]{Ohta-classificationadmissiblenilpotent}, $e$ can be completed to a normal $\fraksl_2$-triple $(e,h,f)$, where $h \in \frakh_0$ is a diagonal matrix with entries $(h_1, \dots, h_{\rl+1}, h'_1, \dots, h'_\rl)$, where
\[
h_i = -2\rl+4i-2, \, (1\leq i\leq \rl), \, h_{\rl+1}=0, \, h_i' =-2\rl+4i , \, (1\leq i \leq \rl-1), \,h_\rl' = 0.
\]
Comparing the two cases above, we see that to show we're in Case II, it suffices to show that $2$ is a weight for the $\mathbb{G}_m$-action on $X_e$.
Unwinding the definition of the $\rho$-action, it is enough to show that $0$ is an eigenvalue for the action of $\ad(h)$ on $\frakz_{\frakh_1}(f)$, or equivalently, that $\frakz_{\frakh_1}(h) \cap \frakz_{\frakh_1}(e)$ is nonzero. If we let $A \in \frakh_1$ be the matrix corresponding to the map $A(v_{\rl + 1}) = w_\rl, A(w_\rl) = v_{\rl + 1}$, and $A(v_i) = A(w_j) = 0$ for all $i \neq \rl + 1, j \neq \rl$, then $A$ centralizes both $e$ and $h$, so we are indeed in Case II. 
\end{proof}

\begin{lemma}\label{lemma: subregular curves m2 Cn}
    Theorem \ref{theorem:exhibitionsubregularcurves-algclosed} holds true when $H$ is of type $C_\rl$ and $m=2$.
\end{lemma}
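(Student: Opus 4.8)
The plan is to follow the proof of Lemma~\ref{lemma: subregular curves m2 Bn} closely, working inside the explicit model $H=\PSp(W)$, $\frakh=\mathfrak{sp}(W)=\mathfrak{sp}_{2\rl}$ of the stable $\mathbb{Z}/2$-grading on $C_\rl$ from Example~\ref{ex-Cn-2grading}, and using Ohta's classification of nilpotent $G$-orbits in $V$ for case (CI) \cite[p.~305, Proposition~2]{Ohta-classificationadmissiblenilpotent}. By the proof of Theorem~\ref{theorem: subregular classification: exclusion} together with Proposition~\ref{proposition: normal form Gamma and sigma} (type $C_\rl$), for any $\theta$-subregular $e\in V(k)$ there are coordinates $x,y,z$ on $U_1$ with $\rho$-weights $4,2\rl-2,2\rl$ in which $\sigma$ acts either by $(x,y,z)\mapsto(x,-y,z)$, yielding the family $y^2=x^\rl+p_2x^{\rl-1}+\cdots+p_{2\rl}$ (second line of the $C_\rl$ row of Table~\ref{table:examples intro}) with $\rho$-weights $\{4,2\rl\}\cup\{4,8,\dots,4\rl-4\}$ on $X_e$, or by $(x,y,z)\mapsto(x,y,-z)$, yielding $xy^2=x^\rl+p_2x^{\rl-1}+\cdots+p_{2\rl}$ (first line) with $\rho$-weights $\{4,2\rl-2\}\cup\{4,8,\dots,4\rl-4\}$. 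These two weight multisets differ, so it suffices to exhibit a $\theta$-subregular nilpotent in $V(k)$ of each type. In contrast with type $B_\rl$, the Coxeter grading offers no shortcut here: the nilpotents of $C_\rl$ supported on all but one simple root vector have partition $[2\rl-2,1,1]$ or $[\rl,\rl]$, never the subregular partition.

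First I would record that the subregular orbit of $\mathfrak{sp}_{2\rl}$ has partition $[2\rl-2,2]$ \cite[\S5.1]{CollingwoodMcGovern-nilpotentorbits}, and that, since $m=2$, every subregular $e\in\frakh_1$ automatically satisfies $\dim\frakz_{\frakg}(e)=1$ by \cite[Lemma~2.21]{Thorne-thesis} and so is $\theta$-subregular; thus the content of the lemma is purely the exhibition (existence of at least one such $e$ being Proposition~\ref{proposition: tabulated gradings are subregular adapted}). Ohta's classification \cite[p.~305, Proposition~2, Case (CI)]{Ohta-classificationadmissiblenilpotent} parametrises the $G$-orbits of nilpotents in $V$ inside the $H$-orbit $[2\rl-2,2]$ by the admissible ab-diagrams of that shape; I would use it to pick out two such orbits and, for each, write down an explicit representative $e\in\frakh_1$ as a sum of elementary matrices in the basis $v_1,\dots,v_\rl,w_1,\dots,w_\rl$ of Example~\ref{ex-Cn-2grading} --- a pair of alternating "$v\leftrightarrow w$" chains of lengths $2\rl-2$ and $2$ --- taking care to impose the symplectic condition, so that the two off-diagonal blocks $Y,Z$ of $e$ (in the block form of Example~\ref{ex-Cn-2grading}) are both symmetric, which a naive single chain of basis vectors is not.

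For each of the two representatives I would complete $e$ to a normal $\fraksl_2$-triple $(e,h,f)$ with $h\in\frakh_0$ a diagonal matrix whose entries are dictated by the two chain lengths, exactly as in the $B_\rl$ computation, and then determine which $\sigma$-case it lies in. The two cases are distinguished by the $\ad(h)$-eigenvalue on the one-dimensional space $\frakz_{\frakh_0}(f)$ (it has dimension $1$ by Lemma~\ref{lem-ef-centralizer} and the $\theta$-subregularity of $e$): this space is the $(-1)$-eigenspace of $\sigma$ on $\frakz_\frakh(f)$, so under the normal form of Proposition~\ref{proposition: normal form Gamma and sigma} it spans the $z$-line of $U_1$ (of $\rho$-weight $2\rl$, i.e.\ $\ad(h)$-weight $2-2\rl$) in the first case and the $y$-line (of $\rho$-weight $2\rl-2$, i.e.\ $\ad(h)$-weight $4-2\rl$) in the second; equivalently, conjugating by a representative of the nontrivial Weyl element of $\fraksl_2=\langle e,h,f\rangle$, the space $\frakz_{\frakh_0}(e)$ has $\ad(h)$-weight $2\rl-2$ in the first case and $2\rl-4$ in the second. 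As in the $B_\rl$ argument, it should suffice, for each $e$, to exhibit one explicit $\ad(h)$-homogeneous element of $\frakh_0$ centralising $e$ of the appropriate weight, thereby forcing the case; producing these two matrices is the concrete heart of the proof. The low-rank case $\rl=3$ (where $U_1$ has weights $4,4,6$ and the normal-form step of Proposition~\ref{proposition: normal form Gamma and sigma} takes a different form) should be checked separately, by hand or with the \texttt{SLA} package in \texttt{GAP} \cite{deGraaf-nilporbitsthetagroups,GAP4}.

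The main obstacle is exactly this explicit $\mathfrak{sp}_{2\rl}$ linear algebra: imposing the symmetry of $Y$ and $Z$ correctly so the chain nilpotents genuinely lie in $\frakh_1$, checking that there really are two admissible ab-diagrams of shape $[2\rl-2,2]$ in case (CI) meeting $V$, and carrying out enough of the $\ad(h)$-weight bookkeeping on $\frakz_\frakh(f)$ to pin down each case. None of this is conceptually difficult --- it runs parallel to Lemma~\ref{lemma: subregular curves m2 Bn} --- but it requires care.
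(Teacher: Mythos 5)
Your proposal is correct and follows essentially the same route as the paper: the explicit symplectic model of Example \ref{ex-Cn-2grading}, Ohta's two ab-diagrams $[abab\dots ab,ab]$ and $[abab\dots ab,ba]$ of shape $[2\rl-2,2]$, completion to normal $\fraksl_2$-triples with diagonal $h$, and a $\rho$-weight comparison to separate the two $\sigma$-normal forms of Proposition \ref{proposition: normal form Gamma and sigma}. The only (harmless) divergence is in the last bookkeeping step --- the paper computes the full multiset of $\ad(h)$-eigenvalues on $\frakh_1$ for each representative and compares combinatorially, whereas you isolate the single $\ad(h)$-weight on the one-dimensional $\frakz_{\frakh_0}(e)$ --- and your extra caution about $\rl=3$ is unnecessary since Proposition \ref{proposition: normal form Gamma and sigma} already covers that case.
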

\begin{proof}
    Let $e\in V(k)$ be a $\theta$-subregular nilpotent element, complete it to a normal $\frak{sl}_2$-triple and let $\psi\colon S_e\rightarrow \frakh\GIT H$ be the associated Slodowy slice.
    Choose decompositions $S_e = U_1 \oplus W$ and $\frakh \GIT H = U_2 \oplus W$ and coordinates $x,y,z$ of $U_1$ and $p_{d_r}$ of $U_2$ satisfying the conclusions of Proposition \ref{proposition: normal form Gamma and sigma}.
    Then we either have $\sigma(x,y,z) = (x,y,-z)$ or $(x,-y,z)$.
    By Theorem \ref{theorem: subregular classification: exclusion}, these two cases give rise to the families of curves displayed in Table \ref{table:examples intro}.
    A calculation shows the $\rho$-action on $X_e$ either has weights $4,2\rl-2,4,8,\dots, 4\rl-4$ (Case I) or $4,2\rl,4,8,\dots,4\rl-4$ (Case II).
    We need to show that for both cases there exists an $e$ for which $X_e$ has those weights.
    We use the notation of Example \ref{ex-Cn-2grading}.
Therefore we can use the description of nilpotent orbits in terms of ab-diagrams of \cite[p. 305, Proposition 2]{Ohta-classificationadmissiblenilpotent} to write down subregular nilpotents in $\frakh_1$.
More precisely, the subregular nilpotent orbit in $\mathfrak{sp}_{2\rl}$ corresponds to the partition $[2\rl-2,2]$ (see \cite[Section 5.1 and 5.4]{CollingwoodMcGovern-nilpotentorbits} for a proof and an explanation). 
The ab-diagram $[abab\dots ab,ab]$ gives rise to the subregular nilpotent element 
\[
e: v_1\mapsto w_{\rl-1}\mapsto v_2 \mapsto w_{\rl-2} \mapsto \cdots v_{\rl-1}\mapsto w_{\rl-1}\mapsto 0, v_\rl\mapsto w_\rl
\]
which lies in $\frakh_1$.
Since $m=2$, $e$ is automatically $\theta$-subregular by \cite[Lemma 2.21]{Thorne-thesis}.
Using \cite[Case (CI) of Section (4.2) and Section (3.1), Lemma 4]{Ohta-classificationadmissiblenilpotent}, $e$ can be completed to a normal $\fraksl_2$-triple $(e,h,f)$, where $h \in \frakh_0$ is a diagonal matrix with entries $(h_1, \dots, h_\rl, -h_1, \dots, -h_\rl)$, where
\[
h_i = -2\rl+4i-1 ,\, (1\leq i \leq \rl-1), \quad h_\rl = -1
\]
The action of $\ad(h)$ on $\frakh_1$ has eigenvalues (with multiplicities) equal to $\{\pm (h_i + h_j) \colon 1\leq i\leq j \leq \rl\}$, which equals
\[\{\pm(-4\rl + 4(i+j)-2) \colon 1\leq i\leq j\leq \rl-1  \}
\cup
\{ \pm (-2\rl+4i-2) \colon 1\leq i\leq \rl-1\} \cup \{\pm 2\}.\]
Similarly, the ab-diagram $[abab\dots ab,ba]$ gives rise to the subregular nilpotent element
\[
e': v_1\mapsto w_{\rl-1}\mapsto v_2 \mapsto w_{\rl-2} \mapsto \cdots v_{\rl-1}\mapsto w_{\rl-1}\mapsto 0, w_\rl\mapsto v_\rl
\]
which can be completed to a normal $\fraksl_2$-triple $(e',h',f')$ where $h '  = \text{diag}(h'_1, \dots, h'_\rl, -h_1', \dots, -h_\rl')$ and
\[
h'_i = -2\rl+4i-1 ,\, (1\leq i \leq \rl-1), \quad h'_\rl = 1
\]
We compute that the action of $\ad(h')$ on $\frakh_1$ has eigenvalues (with multiplicities)
\[\{\pm(-4\rl + 4(i+j)-2) \colon 1\leq i\leq j\leq \rl-1  \}
\cup
\{ \pm (-2\rl+4i) \colon 1\leq i\leq \rl-1\} \cup \{\pm 2\}.\]
A combinatorial calculation shows that $(e,h,f)$ gives rise to the weights of Case II and $(e',h',f')$ to those of Case I.
\end{proof}

\begin{lemma}\label{lemma: subregular curves m2 F4 G2}
    Theorem \ref{theorem:exhibitionsubregularcurves-algclosed} holds true when $m=2$ and $H$ is of type $F_4$ or $G_2$.
\end{lemma}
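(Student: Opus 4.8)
The plan is to argue exactly as in the proofs of Lemmas \ref{lemma: subregular curves m2 Bn} and \ref{lemma: subregular curves m2 Cn}: first separate the two families in each case by the $\sigma$-normal form on $U_1$, and then exhibit a $\theta$-subregular nilpotent realizing each. So I would begin by fixing a $\theta$-subregular nilpotent $e \in V(k)$, completing it to a normal $\fraksl_2$-triple $(e,h,f)$, forming the Slodowy slice $\psi\colon S_e\to\frakh\GIT H$ together with the decompositions $S_e = U_1\oplus W$, $\frakh\GIT H = U_2\oplus W$ of \S\ref{subsection: subregular adapted gradings}. By Proposition \ref{proposition: normal form Gamma and sigma}, for $H$ of type $F_4$ with $m=2$ the action $\sigma$ on $U_1$ is, in the coordinates of that proposition, either $\sigma(x,y,z) = (x,y,-z)$ or $\sigma(x,y,z) = (-x,y,z)$, and for $H$ of type $G_2$ with $m=2$ it is either $\sigma(x,y,z) = (x,y,-z)$ or $\sigma(x,y,z) = (\omega y,\omega^2 x,z)$ with $\omega^3 = 1$. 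Running the deformation computation of Proposition \ref{proposition: isomorphism between semiversal deformations} with $f_{\mathrm{std}}$ and $\Gamma$ taken from Table \ref{table: surface singularities}, and then passing to $\sigma$-fixed points (verbatim as in the $B_\rl$ and $G_2$ examples worked out in the proof of Theorem \ref{theorem: subregular classification: exclusion}), one checks: for $F_4$ the case $\sigma=(x,y,-z)$ yields $y^3 = x^4+(p_2x^2+p_8)y+(p_6x^2+p_{12})$ and the case $\sigma=(-x,y,z)$ yields $y^2 = x^3+p_8x+p_{12}$; for $G_2$ the case $\sigma=(x,y,-z)$ yields, after the linear change of variables identifying $x^3+y^3+p_2xy+p_6=0$ with $xy^2=x^3+p_2x^2+p_6$, the curve $y^2x = x^3+p_2x^2+p_6$, and the case $\sigma=(\omega y,\omega^2 x,z)$ yields $y^2 = x^3+p_2x^2+p_6$. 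These are precisely the rows of Table \ref{table:examples intro} for the two types, so it remains to produce, for each of the four normal forms, a $\theta$-subregular nilpotent realizing it.

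Two of these realizations come almost for free. For $F_4$, the family $y^2 = x^3+p_8x+p_{12}$ is realized via the stable $\mu_4$-grading: that grading is subregular adapted by Proposition \ref{proposition: tabulated gradings are subregular adapted}, so it contains a $\mu_4$-subregular nilpotent $e$, whose slice has normal form $\sigma_4(x,y,z) = (\zeta_4 x,y,z)$ by Proposition \ref{proposition: normal form Gamma and sigma}. Restricting the $\mu_4$-action along $\mu_2\hookrightarrow\mu_4$ gives the stable $\mu_2$-grading $\Ad(\check\rho(-1))$, under which $e$ still lies in $\frakh_1$ and is subregular in $\frakh$, hence $\mu_2$-subregular by \cite[Lemma 2.21]{Thorne-thesis}; the same $\fraksl_2$-triple is normal for it, and the induced $\sigma$-action on $U_1$ is $\sigma_4^2$, namely $(x,y,z)\mapsto(-x,y,z)$, which is exactly the second $F_4$ case. (A similar bootstrap does not seem available for the other three families, so those I would treat directly.)

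For the remaining realizations — the first $F_4$ family and both $G_2$ families — I would follow the strategy of Lemmas \ref{lemma: subregular curves m2 Bn} and \ref{lemma: subregular curves m2 Cn}: exhibit an explicit subregular nilpotent $e\in\frakh_1$ (automatically $\theta$-subregular for $m=2$ by \cite[Lemma 2.21]{Thorne-thesis}), complete it to a normal $\fraksl_2$-triple $(e,h,f)$, and compute the eigenvalues of $\ad(h)$ on $\frakz_{\frakh_1}(f)$. These eigenvalues are exactly $\{\,2-w : w \text{ a } \rho\text{-weight of }X_e\,\}$, and the $\rho$-weights forced by the two normal forms are different — for $F_4$ the case $\sigma=(x,y,-z)$ puts $6$ among the $\rho$-weights of $X_e$ while the case $\sigma=(-x,y,z)$ does not, and the two $G_2$ cases are distinguished in the same way using Lemma \ref{lem-ef-centralizer} and the weights of Table \ref{table: surface singularities} — so the computation pins down the family. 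Since $F_4$ and $G_2$ carry no convenient classical matrix model of the kind used for $B_\rl$ and $C_\rl$, I would carry out these computations with de Graaf's \texttt{SLA} package \cite{SLA} in \texttt{GAP} \cite{GAP4}, recording the explicit nilpotents and triples in the appendix; I would also verify there the planar change of variables $x^3+y^3+p_2xy+p_6=0 \cong xy^2=x^3+p_2x^2+p_6$ needed above.

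The main obstacle is this last step. There is no uniform, model-free argument that both normal forms occur for each of $F_4$ and $G_2$; one genuinely has to produce the right concrete subregular nilpotents in $\frakh_1$ and read off the $\rho$-weights of their slices, and for the exceptional types this is a direct (machine) computation rather than the ab-diagram combinatorics that handled types $B_\rl$ and $C_\rl$. Everything else is bookkeeping with Table \ref{table: surface singularities}, Propositions \ref{prop: normal form surface singularity} and \ref{proposition: isomorphism between semiversal deformations}, and the $\G_m\times\mu_m$-equivariance established in \S\ref{subsection: subregular adapted gradings}.
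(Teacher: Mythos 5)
Your proposal is correct and follows essentially the same route as the paper: reduce to showing that both $\sigma$-normal forms of Proposition \ref{proposition: normal form Gamma and sigma} occur, observe that these are distinguished by the multiset of $\rho$-weights on $X_e$, and verify by exhibiting explicit subregular nilpotents in \texttt{GAP}. The only (harmless) variation is your bootstrap of one $F_4$ family from the stable $\mu_4$-grading, in the spirit of the Coxeter-grading argument in Lemma \ref{lemma: subregular curves m2 Bn}; the paper instead handles all cases by the same machine computation.
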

\begin{proof}
    Suppose $H$ is of type $F_4$.
    If $e\in V(k)$ is a $\theta$-subregular nilpotent element, then in the notation of the proof of Theorem \ref{theorem: subregular classification: exclusion}, $\sigma|_{U_1}$ either equals $(-x,y,z)$ or $(x,y,-z)$, and these two cases give rise to the two families of curves listed in Table \ref{table:examples intro}.
    To show that there exists an $e\in V(k)$ for which both cases occur, it suffices to prove there exists two subregular nilpotent elements $e, e'\in V(k)$ such that the multiset of $\rho$-weights on $X_e$ and $X_{e'}$ are not equal. 
    This calculation can be done in \texttt{GAP}; see the appendix for details.
    The case of $G_2$ is similar.
\end{proof}

\begin{proof}[Proof of Theorem \ref{theorem:exhibitionsubregularcurves-algclosed}]
    If $m=2$ and $H$ is of type $B_{\rl}$ $(\rl\geq 3)$, $C_{\rl}$, $F_4$ or $G_2$, we are done by Lemmas \ref{lemma: subregular curves m2 Bn}, \ref{lemma: subregular curves m2 Cn} and \ref{lemma: subregular curves m2 F4 G2}.
    Assume we are not in one of these cases.
    Then there is a unique row of Table \ref{table:examples intro} corresponding to the pair $(H,m)$, in other words there is only one possibility for the family of curves.
    By Proposition \ref{proposition: tabulated gradings are subregular adapted}, there exists a $\theta$-subregular $e \in V(k)$.
    Complete it to a normal $\liesl_2$-triple and let $X_e\rightarrow B$ be the corresponding family of curves.
    By the second part of Theorem \ref{theorem: subregular classification: exclusion}, $X_e\rightarrow B$ must be of the form corresponding to the unique row of Table \ref{table:examples intro} corresponding to $(H,m)$, proving the theorem.
\end{proof}

\begin{proof}[Proof of Theorem \ref{thm-intro}]
    Theorems \ref{theorem: subregular classification: exclusion} and \ref{theorem:exhibitionsubregularcurves-algclosed} imply that Table \ref{table:examples intro} displays exactly all triples $(^nH,m,X_e\rightarrow B)$ arising from $\theta$-subregular nilpotents $e\in V(k)$.
    To complete the proof, we need to explain how the third, fourth and fifth column of the table can be obtained from the pair $(^nH,m)$.
    The tables in \cite[Sections 7.1 and 7.2]{GrossLevyReederYu-GradingsPosRank} display the centralizer of a stable element $v\in V$ and the Kac diagram of $\theta$.
    The determination of the isogeny class of $\tilde{G}$ and $V$ follows from applying the results of \cite{Reeder-torsion} to these Kac diagrams.
\end{proof}

\section{$5$-grading on $E_8$ and $5$-Selmer groups of elliptic curves}\label{section: 5 grading E8}

In this final section, we treat the stable $5$-grading on $E_8$ as an extended example. 
The corresponding Vinberg representation $(G,V)$ is the one used by Bhargava and Shankar in their determination of the average size of the $5$-Selmer group of elliptic curves \cite{BS-5Selmer}.
To achieve this, they rely on a series of papers by Fisher \cite{Fisher-invariantsgenusone, Fisher-invarianttheorynormalquinticI, Fisher-invarianttheoryquinticII, Fisher-minimisationreduction5coverings, FisherSadek-genusonedegree5squarefree} that studies the explicit invariant theory of $(G,V)$.
We show that one can alternatively use the Lie-theoretic methods of this paper to obtain the same orbit parametrization, namely we prove Theorem \ref{theorem:5Selmerintro}.
The strategy is similar to the one for the stable $\Z/3\Z$-grading on $E_8$ employed in \cite[Section 4.4]{Thorne-Romano-E8}.

\subsection{Notation}

We will use the concepts introduced in Section \ref{subsec: stable gradings over algebraically closed field}.
Let $H$ be a split simple group of type $E_8$ over $\Q$.
(It is simultaneously adjoint and simply connected, and unique up to non-unique isomorphism.)
Choose a pinning $(T,P, \{E_{\alpha}\})$, which gives rise to a set of roots $\Phi = \Phi(H,T)$ and a set of simple and positive roots $\Delta\subset \Phi^+\subset \Phi$.
Let $\theta\colon \mu_5\hookrightarrow H$ be the principal $\mu_5$-action with trivial outer type associated to this pinning, see \eqref{equation: definition pinned principal mu_m action}.
Theorem \ref{theorem: characterization stable gradings} and \cite[Section 7.1, Table 5]{GrossLevyReederYu-GradingsPosRank} show that $\theta$ is a stable $\mu_5$-grading.
This gives rise to a $\Z/5\Z$-grading on the Lie algebra $\lieh = \oplus_{i\in \Z/5\Z} \lieh_i$, and as usual we set $G = (H^{\theta})^{\circ}$ and $V = \lieh_1$.
Since $H$ is simply connected, a theorem of Steinberg \cite[Theorem 8.1]{Steinberg-Endomorphismsofalgebraicgroups} shows that $H^{\theta}$ is connected, so $G = H^{\theta}$ in this case.

\begin{lemma}\label{lemma: explicitGV5grading}
    There are isomorphisms of algebraic groups $G\simeq (\SL_5\times\SL_5)/\mu$ and representations $V \simeq (5) \boxtimes \wedge^2(5)$, where $\mu = \{(\zeta, \zeta^2)\colon \zeta\in \mu_5\}$ and $(5)$ denotes the standard representation of $\SL_5$.
\end{lemma}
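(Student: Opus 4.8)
The plan is to extract $(G,V)$ from the Kac diagram of $\theta$ and then to fix the precise isogeny type of $G$ using faithfulness of the $G$-action. Since $\theta$ is the principal inner $\mu_5$-action on the group $H$ of type $E_8$, it is by Theorem~\ref{theorem: characterization stable gradings} the unique stable such action, and its Kac diagram is the one recorded in \cite[\S7.1, Table~5]{GrossLevyReederYu-GradingsPosRank}: the affine Dynkin diagram $\widetilde{E}_8$ with a single marked node, namely the node whose coefficient in the highest root of $E_8$ equals $5$.

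From this the structure theory of torsion gradings recalled in \S\ref{subsec: stable gradings over algebraically closed field} (see also \cite{Reeder-torsion}) reads off $G$ up to isogeny: $G=H^\theta$ is connected (as noted just before the lemma), contains $T$, and its root system has as a base the simple roots corresponding to the unmarked nodes of $\widetilde{E}_8$. Deleting the mark-$5$ node from $\widetilde{E}_8$ leaves two disjoint chains of four nodes, a Dynkin diagram of type $A_4\sqcup A_4$; since $\rank G=\rank H=8$, the group $G$ is semisimple of type $A_4\times A_4$, hence $G\simeq(\SL_5\times\SL_5)/N$ for some central subgroup $N\subseteq\mu_5\times\mu_5=Z(\SL_5\times\SL_5)$. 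In particular $\dim\frakh_0=\dim G=48$, and since the invariant degrees of $E_8$ divisible by $5$ are exactly $20$ and $30$, Proposition~\ref{proposition: invariant polys panyushev} gives $\dim(V\GIT G)=2$, so $\dim V=48+2=50$ by \eqref{eqn-quotient}.

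Next I would identify the pull-back $\bar V$ of $V$ along $\SL_5\times\SL_5\to G$. It is $50$-dimensional and both factors act nontrivially on it (each $A_4$ contributes root spaces to $\frakh_1$), and the only such representation, up to interchanging the two factors and composing each with an outer automorphism, is $(5)\boxtimes\wedge^2(5)$; this can alternatively be seen by computing the highest weight of $\frakh_1$ as a $G$-module directly from the marked affine diagram via \cite{Reeder-torsion}. Adjusting the isomorphism $G\simeq(\SL_5\times\SL_5)/N$ by such automorphisms, we may assume $\bar V=(5)\boxtimes\wedge^2(5)$. Finally, by Corollary~\ref{corollary:Gaction_faithful} (which applies since $H$ is adjoint) the $G$-action on $V$ is faithful, so $N$ equals the kernel of $\SL_5\times\SL_5\to\GL(\bar V)$; since the central element $(\zeta_1,\zeta_2)\in\mu_5\times\mu_5$ acts on $(5)\boxtimes\wedge^2(5)$ by the scalar $\zeta_1\zeta_2^2$, we get $N=\{(\zeta_1,\zeta_2):\zeta_1\zeta_2^2=1\}=\{(\zeta,\zeta^2):\zeta\in\mu_5\}=\mu$, and therefore $G\simeq(\SL_5\times\SL_5)/\mu$ with $V\simeq(5)\boxtimes\wedge^2(5)$.

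The only genuinely delicate point is the identification of $V$: the dimension count together with the constraint that both $A_4$ factors act nontrivially pins $\bar V$ down only up to interchanging and dualizing the two $\SL_5$-factors — operations that do not change the isomorphism class of the pair $(G,V)$, since they are matched by a corresponding reparametrization of $N$ — and turning this into a clean argument requires either Reeder's recipe for the $\frakh_1$-highest weight or an explicit weight computation inside $\frakh=\Lie(H)$, essentially the same input that underlies the final paragraph of the proof of Theorem~\ref{thm-intro}.
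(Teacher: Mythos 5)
Your argument is correct and is essentially the paper's proof written out in full: the paper simply says the lemma ``follows from the results of \cite{Reeder-torsion} using the Kac diagram of $\theta$ given in \cite[\S7.1, Table 5]{GrossLevyReederYu-GradingsPosRank}'', and your deletion of the mark-$5$ node of $\widetilde{E}_8$ to get $A_4\times A_4$, the dimension count $\dim V=48+2=50$, and the kernel computation $\zeta_1\zeta_2^2=1$ giving $N=\mu$ are exactly the details that citation hides. The one point you rightly flag as delicate --- that the dimension count alone does not rule out a reducible $V$ such as $((5)\boxtimes(5))\oplus((5)^\vee\boxtimes(5)^\vee)$ --- is settled by Reeder's recipe, since a Kac diagram with a single marked node forces $\frakh_1$ to be an irreducible $G$-module with lowest weight the restriction of that simple root, so your ``alternative'' via the highest weight is in fact the necessary step rather than an optional one.
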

\begin{proof}
    This follows from the results of \cite{Reeder-torsion} using the Kac diagram of $\theta$ given in \cite[\S7.1, Table 5]{GrossLevyReederYu-GradingsPosRank}.
\end{proof}

Let $B = V\GIT G$ and $B_0 = \lieh \GIT H$.
By Proposition \ref{proposition: invariant polys panyushev}, the inclusion $V\subset \lieh$ induces a closed immersion $B\subset B_0$.
Let $\pi\colon V\rightarrow V\GIT G$ and $p\colon \lieh \rightarrow \lieh \GIT H$ be the quotient maps.

Let $E = \sum_{\alpha \in \Delta} E_{\alpha}\in V(\Q)$ be the regular nilpotent associated to the chosen pinning. By Lemmas \ref{lemma: graded Jacobson-Morozov} and \ref{lemma: regular nilpotent defines top component of N}, $E$ can be completed to a unique normal $\liesl_2$-triple $(E,X,F)$, giving rise to Kostant sections $\kappa_0 = E + \frakz_{\frakh}(F)$ and $\kappa = \kappa_0 \cap V$ of $p$ and $\pi$ respectively (Proposition \ref{proposition: transverse slice regular nilpotent is isomorphism}).

Propositions \ref{proposition: tabulated gradings are subregular adapted} and \ref{proposition: nilpotents on split groups descend} show that there exists a $\theta$-subregular nilpotent element $e\in V(\Q)$.
Complete it to a normal $\liesl_2$-triple $(e,h,f)$.
As in Section \ref{subsec: graded Slodowy slices} (but with slightly different notation), let
\begin{align*}
    S_0 &:= e + \frakz_{\frakh}(f), \\
    S &:= S_0 \cap p^{-1}(B), \\
    C &:= S_0 \cap V = S \cap V.
\end{align*}
We consider the restrictions $\psi_0 \colon S_0\rightarrow B_0$, $\psi\colon S\rightarrow B$ and $\varphi\colon C\rightarrow B$ of $p$ and $\pi$ to the respective affine subspaces.
Let $\lieh^s\subset \lieh$ be the open subset of regular semisimple elements and $V^s = \lieh^s\cap V\subset V$ its restriction to $V$, which equals the open subset of stable elements for the $G$-action by Proposition \ref{proposition: regular semisimple iff stable}.
Let $B_0^s = p(\lieh^s)\subset B$ and $B^s =\pi(V^s)\subset B$.
Consider the discriminant polynomial $\Delta_0\in \Q[B_0]$ and its restriction $\Delta \in \Q[B]$. 

\subsection{Elliptic elements of order $5$ in $W(E_8)$}

Let $(L,(-,-))$ be a root lattice of type $E_8$, i.e., a free abelian group $L$ with a symmetric positive definite pairing $(-,-)\colon L\times L\rightarrow \Z$ such that $(\lambda, \lambda)$ is even for all $\lambda \in \Lambda$ and $\{\alpha\in L\colon (\alpha, \alpha)=2\}$ is a root system of type $E_8$ that generates $L$.
Let $W$ be the Weyl group of $L$, which equals $\Aut((L,(-,-))) = W$ since the Dynkin diagram of $E_8$ has no nontrivial symmetries.
Recall that an element $w\in W$ is elliptic if it has no nonzero fixed points on $L$, equivalently if $L/(1-w)L$ is finite.

\begin{lemma}\label{lemma:propertiesellipticelementorder5E8}
There exists a unique conjugacy class of elliptic elements of order $5$ in $W$. If $w$ is in this conjugacy class, then: 
\begin{enumerate}
    \item $L/(1-w)L\simeq \mathbb{F}_5^2$.
    \item The centralizer $C(w)$ of $w$ in $W$ acts on $L/(1-w)L$ and the induced homomorphism $C(w) \rightarrow \GL(L/(1-w)L)\simeq \GL_2(\mathbb{F}_5)$ has image $\SL_2(\mathbb{F}_5)$ and kernel $\langle w\rangle$. As abstract groups, $C(w) \simeq \SL_2(\mathbb{F}_5) \times \langle w\rangle$.
    \item Every nonzero element of $L/(1-w)L$ is the image of a root in $L$.
\end{enumerate}
\end{lemma}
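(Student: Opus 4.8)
\textbf{Proof plan for Lemma \ref{lemma:propertiesellipticelementorder5E8}.}

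The plan is to use the known classification of conjugacy classes in $W(E_8)$, for which a convenient bookkeeping device is Carter's classification by ``admissible diagrams'': an element of order $5$ in $W$ has characteristic polynomial on $L\otimes\Q$ that is a product of cyclotomic polynomials $\Phi_1$ and $\Phi_5$, and ellipticity forces the $\Phi_1$-part to be absent, so the characteristic polynomial must be $\Phi_5(t)^2$ (degree $8$). The unique conjugacy class with this characteristic polynomial is the class labelled $4A_4$ (equivalently $E_8(a_8)$ in some tables); uniqueness follows because a $\Z[\zeta_5]$-lattice structure of rank $2$ over $\Z[\zeta_5]$ (which is a PID) making $L$ an $E_8$-lattice is unique up to isomorphism. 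Alternatively, and more self-containedly, I would realize $w$ as a regular element of order $5$ via Springer's theory: since $5$ is $\Z$-regular for $E_8$ (it divides the degree $20$), there is a regular element of order $5$, its centralizer and eigenvalue multiplicities are governed by \cite{Springer}, and regularity plus ellipticity pins down the class. The first route is probably cleanest to cite.

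For part (1): once the characteristic polynomial of $w$ on $L$ is $\Phi_5(t)^2$, the module $L$ becomes a module over $\Z[t]/\Phi_5(t)=\Z[\zeta_5]$, free of rank $2$ since $\Z[\zeta_5]$ is a Dedekind domain (in fact a PID, so projective $=$ free). Then $(1-w)$ acts as multiplication by $(1-\zeta_5)$, and $L/(1-w)L \simeq (\Z[\zeta_5]/(1-\zeta_5))^2 \simeq \mathbb{F}_5^2$ because $(1-\zeta_5)$ is the prime above $5$ with residue field $\mathbb{F}_5$. This also shows $\det(1-w) = \pm N_{\Q(\zeta_5)/\Q}(1-\zeta_5)^2 = 25$ as a consistency check.

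For part (2): the centralizer $C(w)$ commutes with the $\Z[\zeta_5]$-action, so it acts $\Z[\zeta_5]$-linearly on $L\simeq \Z[\zeta_5]^2$, giving $C(w)\hookrightarrow \mathrm{GL}_2(\Z[\zeta_5])$; intersecting with the finite group $W$ and using that $W$ preserves the positive definite form forces the image to land in a unitary group over $\Z[\zeta_5]$. Counting orders: $|C(w)| = |W|/|\text{class of }w|$, and $|W(E_8)| = 696729600 = 2^{14}\cdot 3^5\cdot 5^2\cdot 7$; the class $4A_4$ has a known size, yielding $|C(w)| = 600 = |\SL_2(\mathbb{F}_5)|\cdot 5$. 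Reduction mod the prime $(1-\zeta_5)$ gives $C(w)\to \mathrm{GL}_2(\mathbb{F}_5)$; the kernel consists of elements congruent to $1$ mod $(1-\zeta_5)$ that have finite order prime-to-$5$ times $5$-power order, and a short argument (elements of $\mathrm{GL}_2(\Z[\zeta_5])$ that are $\equiv 1$ and of finite order must be $5$-elements, and here the only such are powers of $w$) identifies the kernel as $\langle w\rangle$. The image then has order $120$ and lies in $\mathrm{GL}_2(\mathbb{F}_5)$; since it must preserve the reduction of the Hermitian form and the determinant of $w$ itself reduces appropriately, one checks the image lands in $\mathrm{SL}_2(\mathbb{F}_5)$, hence equals it by order count. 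Finally $\langle w\rangle$ is central of order $5$ coprime to $|\mathrm{SL}_2(\mathbb{F}_5)|/\gcd$... more precisely $\mathrm{SL}_2(\mathbb{F}_5)$ is perfect with Schur multiplier of order $2$, so any central extension of $\mathrm{SL}_2(\mathbb{F}_5)$ by $\Z/5$ splits, giving $C(w)\simeq \mathrm{SL}_2(\mathbb{F}_5)\times\langle w\rangle$ as abstract groups.

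For part (3): the roots of $L$ are the $240$ vectors of norm $2$; I would argue that their images in $L/(1-w)L\simeq \mathbb{F}_5^2$ cover all $24$ nonzero elements. One clean way: the $240$ roots are permuted freely by $\langle w\rangle$ (since $w$ is $\Z$-regular, it acts without fixed roots by Springer's theorem, exactly the property recalled in the excerpt), giving $48$ orbits; each orbit of $\langle w\rangle$-translates maps to a single $\langle \bar w\rangle$-orbit in $\mathbb{F}_5^2\setminus\{0\}$, and since $\bar w$ acts on $\mathbb{F}_5^2\setminus\{0\}$ with the nonzero vectors forming... here one notes $\bar w$ has order $5$ and no nonzero fixed vector, so it acts freely on $\mathbb{F}_5^2\setminus\{0\}$ too, in $24/5$... which is not an integer, so instead I should say: $\bar w$ has eigenvalues that are primitive $5$th roots of unity over $\overline{\mathbb{F}_5}$, but over $\mathbb{F}_5$ we have $\Phi_5(t)\equiv (t-1)^4$, so $\bar w$ is unipotent and fixes a line $\ell\subset\mathbb{F}_5^2$. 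The simplest argument: the $240$ roots together with $0$, taken mod $(1-\zeta_5)$, form a $\bar w$-stable subset of $\mathbb{F}_5^2$ containing $0$; it is also $C(w)$-stable, hence $\mathrm{SL}_2(\mathbb{F}_5)$-stable, and the only $\mathrm{SL}_2(\mathbb{F}_5)$-stable subsets of $\mathbb{F}_5^2$ are $\{0\}$, $\mathbb{F}_5^2\setminus\{0\}$ and $\mathbb{F}_5^2$; since the image is clearly not just $\{0\}$, it is everything.

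\textbf{Main obstacle.} The genuinely delicate point is part (2), specifically identifying the kernel of $C(w)\to\mathrm{GL}_2(\mathbb{F}_5)$ precisely with $\langle w\rangle$ and confirming the image is $\mathrm{SL}_2$ rather than a larger subgroup of order $120$ (there is a unique such subgroup up to conjugacy, namely $\mathrm{SL}_2(\mathbb{F}_5)=2.A_5$, but one should rule out $\mathrm{S}_5\cap\dots$ type possibilities — order $120$ subgroups of $\mathrm{GL}_2(\mathbb{F}_5)$ of order $120$ are conjugate to $\mathrm{SL}_2(\mathbb{F}_5)$, so this is fine once the order is known). I expect to handle this by a direct order computation using the known conjugacy class data for $W(E_8)$ (e.g.\ from \cite{GAP4} or Carter's tables), rather than a from-scratch lattice argument. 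If one prefers to avoid class-size lookups entirely, the regular-element route via \cite{Springer, Reeder-torsion} gives $|C(w)|$ directly as the product of degrees congruent to $0$ mod $5$ divided by appropriate factors, which also lands on $600$.
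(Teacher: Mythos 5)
Your route is genuinely different from the paper's for the first two parts: the paper simply cites Reeder's tables (\cite{reeder-ellipticcentralizers}, Table 1 and \S 4.2.1) for the uniqueness of the class, for $L/(1-w)L\simeq\mathbb{F}_5^2$, and for the structure of $C(w)$, and only proves part (3) by hand. Your $\Z[\zeta_5]$-module argument for part (1) is clean and correct ($\Phi_5(w)=0$ on $L$, so $L$ is a torsion-free hence free rank-$2$ module over the PID $\Z[\zeta_5]$, and $1-w$ acts as the uniformizer above $5$); this is a nice self-contained replacement for the table lookup. For part (3) your final argument — the image of the set of roots is a $C(w)$-stable, hence $\SL_2(\mathbb{F}_5)$-stable, subset of $\mathbb{F}_5^2$ containing a nonzero element, and $\SL_2(\mathbb{F}_5)$ is transitive on $\mathbb{F}_5^2\setminus\{0\}$ — is exactly the paper's argument (the paper makes the ``nonzero element exists'' step explicit: the roots generate $L$ and $(1-w)L$ is proper). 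Your part (2) is only a sketch with the hard points (kernel $=\langle w\rangle$, image $=\SL_2(\mathbb{F}_5)$) deferred to an order count from the character-table data, which is legitimate and no worse than the paper's citation; what your approach buys is independence from Reeder's specific computation, at the cost of having to actually carry out the unitary-group and reduction-mod-$(1-\zeta_5)$ analysis.

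A few slips to fix before writing this up. The elliptic order-$5$ class in $W(E_8)$ is labelled $2A_4$ in Carter's notation, not $4A_4$ (a subsystem $4A_4$ would have rank $16$); its characteristic polynomial is $\Phi_5(t)^2$ as you say, and uniqueness of the class with this characteristic polynomial should be read off from Carter's table rather than from the asserted ``uniqueness of the $\Z[\zeta_5]$-lattice structure,'' which as stated is not a proof (one would need uniqueness of the Hermitian $\Z[\zeta_5]$-lattice underlying $E_8$, a nontrivial fact). The Schur multiplier of $\SL_2(\mathbb{F}_5)$ is trivial (it is the universal central extension of $A_5$), not of order $2$; your splitting conclusion survives either way since $\Hom(\Z/2\Z,\Z/5\Z)=0$, but the statement should be corrected. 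Finally, you correctly abandoned the free-action-on-roots count for part (3) — $\bar{w}$ is unipotent on $\mathbb{F}_5^2$, so that route cannot work — and the argument you settled on is the right one.
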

\begin{proof}
    This is contained in \cite[Table 1 and \S4.2.1]{reeder-ellipticcentralizers}, except the claims made in Part 3, which we prove now.
    Since the roots generate $L$ and $(1-w)L$ is a proper subgroup of $L$, there exists a nonzero element of $L/(1-w)L$ that is the image of a root.
    By Part 2, $C(w)$ acts transitively on the nonzero elements of $L/(1-w)L$ and it preserves the property of being the image of a root. 
    Therefore every nonzero element of $L/(1-w)L$ is the image of a root.
\end{proof}

\begin{lemma}\label{lemma:5gradingstable elements give root lattice}
    Let $k/\Q$ be a field, $v\in V^s(k)$, $A = Z_H(v)$, and $\Lambda = X^*(A_{\bar{k}})$. 
    Then:
    \begin{enumerate}
        \item $\Lambda$ is a root lattice of type $E_8$.
        \item For every fifth root of unity $\zeta\in \bar{k}$, the order $5$ automorphism $\theta(\zeta)$ preserves $A$ and the induced automorphism $w$ of $\Lambda$ is elliptic of order $5$. 
        \item The subgroup $(1-w)\Lambda\subset \Lambda$ is independent of the choice of $\zeta$. Write this subgroup as $(1-\theta)\Lambda$ and let $\Lambda_{\theta} = \Lambda/(1-\theta)\Lambda$.
        \item The projection $\Lambda = X^*(A) \rightarrow X^*(Z_G(v)_{\bar{k}})$ is surjective with kernel $(1-\theta)\Lambda$.
        This kernel is $\Gamma_k$-stable and induces an isomorphism of Galois modules $X^*(Z_G(v)_{\bar{k}})\simeq \Lambda_{\theta}$.
    \end{enumerate}
\end{lemma}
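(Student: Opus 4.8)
The plan is to establish the four assertions in order, reducing each to an earlier structural result or to an elementary lattice computation.

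\emph{Parts (1) and (2).} Since $v$ is stable it is regular semisimple by Proposition~\ref{proposition: regular semisimple iff stable}, so $A = Z_H(v)$ is a maximal torus of $H$; because $H$ is of type $E_8$ (hence simultaneously adjoint and simply connected), the character lattice $\Lambda = X^*(A_{\bar{k}})$ is the $E_8$ root (= weight) lattice, and equipping it with the $W$-invariant form normalized so roots have square-length~$2$ exhibits the roots of $H$ relative to $A$ as precisely the square-length-$2$ vectors, which generate $\Lambda$. Thus $(\Lambda,(-,-))$ is a root lattice of type $E_8$ and $\Aut((\Lambda,(-,-))) = W$. For (2): from $\theta(\zeta)(v) = \zeta v$ we get $\theta(\zeta)(A) = Z_H(\zeta v) = A$, so each $\theta(\zeta)$ preserves $A$; since $\Out_H$ is trivial for $H$ of type $E_8$, $\theta(\zeta)$ is inner and induces an element $w_\zeta \in W$. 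When $\zeta$ is primitive, Proposition~\ref{proposition: regular semisimple iff stable} says $w_\zeta$ is $\Z$-regular and elliptic; ellipticity gives $w_\zeta \neq 1$, and $w_\zeta^5 = 1$ then forces $w_\zeta$ to have order exactly~$5$. Lemma~\ref{lemma:propertiesellipticelementorder5E8} then applies to $w_\zeta$.

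\emph{Part (3).} For primitive $\zeta$ one has $w_{\zeta^j} = w_\zeta^j$ for $j = 1,\dots,4$. From $1 - w^j = (1-w)(1 + w + \cdots + w^{j-1})$ we get $(1-w_\zeta^j)\Lambda \subseteq (1-w_\zeta)\Lambda$, and choosing $j'$ with $jj' \equiv 1 \bmod 5$ and writing $1 - w_\zeta = 1 - (w_\zeta^j)^{j'}$ gives the reverse inclusion. Hence all these sublattices agree; call the common one $(1-\theta)\Lambda$ and put $\Lambda_\theta = \Lambda/(1-\theta)\Lambda$.

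\emph{Part (4).} Recall that $G = H^\theta$ here, so $Z_G(v) = G \cap Z_H(v) = A^\theta$, which is finite (as $\theta$ is stable) and, being a closed subgroup scheme of a torus, diagonalizable. Over $\bar{k}$ the $\mu_5$-action on $A_{\bar{k}}$ is generated by $\theta(\zeta)$, so $Z_G(v)_{\bar{k}}$ is the scheme-theoretic fixed-point subgroup of the order-$5$ automorphism $\theta(\zeta)$ of the torus $A_{\bar{k}}$; under the anti-equivalence between diagonalizable group schemes and finitely generated abelian groups, passing to the fixed points of $\theta(\zeta)$ corresponds to passing to the coinvariants of $w_\zeta$, so $X^*(Z_G(v)_{\bar{k}}) = \Lambda/(1-w_\zeta)\Lambda = \Lambda_\theta$, with $X^*(A_{\bar{k}}) \to X^*(Z_G(v)_{\bar{k}})$ the natural projection. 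Finally, $A$, $G$, $Z_G(v)$ and the inclusion $Z_G(v) \hookrightarrow A$ are all defined over $k$, so this projection is $\Gamma_k$-equivariant; hence its kernel $(1-\theta)\Lambda$ is $\Gamma_k$-stable and $X^*(Z_G(v)_{\bar{k}}) \simeq \Lambda_\theta$ as Galois modules. ($\Gamma_k$-stability of $(1-\theta)\Lambda$ also follows directly from the $\Gamma_k$-equivariance of $\theta$, i.e.\ $\sigma w_\zeta \sigma^{-1} = w_{\sigma(\zeta)}$, combined with part (3).)

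\emph{Expected main obstacle.} The only step that is not a direct citation or a routine computation is the identification in (4) of the character group of the fixed-point subgroup scheme $A^\theta$ with the coinvariant lattice $\Lambda_\theta$: one must work with scheme-theoretic (not merely $\bar{k}$-point) fixed points so that finiteness and the group-scheme structure are correctly captured, and invoke Cartier duality / the anti-equivalence of categories to translate ``fixed points of an automorphism of a diagonalizable group'' into ``coinvariants of the induced automorphism of the character lattice.'' Everything else follows from stability (regular semisimple and elliptic), the self-duality of $E_8$, Steinberg's connectedness theorem, and the elementary cyclotomic lattice manipulation of part (3).
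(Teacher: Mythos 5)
Your proof is correct and follows essentially the same route as the paper's (much terser) argument: stability gives regular semisimplicity and ellipticity via Proposition~\ref{proposition: regular semisimple iff stable}, part (3) is the cyclotomic identity $(1-w)\Lambda=(1-w^{j})\Lambda$ for $j$ coprime to $5$, and part (4) is the identification $Z_G(v)=A^{\theta}$ together with the duality between diagonalizable group schemes and their character lattices. Your expansion of the fixed-points-versus-coinvariants step is exactly the content the paper leaves implicit.
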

\begin{proof}
    Since $v$ is regular semisimple, $A$ is a maximal torus, so its character group $\Lambda$ is an $E_8$ root lattice, proving Part 1.
    Part 2 follows from Proposition \ref{proposition: regular semisimple iff stable}.
    Part 3 follows from the elementary fact that $(1-w)\Lambda = (1-w^k)\Lambda$ for every integer $k$ coprime to $5$.
    Part 4 follows from the identification $Z_G(v) = A^{\theta}$ and duality of character groups.
\end{proof}

\subsection{Identifying the transverse slices}\label{subsec:5gradingtransverseslice}

In Section \ref{subsec: graded Slodowy slices}, we have defined $\G_m\times \mu_5$-actions $\rho\times \sigma$ on $S_0$ and $B_0$ with the property that $\psi_0\colon S_0\rightarrow B_0$ is $\G_m\times\mu_5$-equivariant, and that the $\sigma$-fixed points of $\psi_0$ equals $\varphi\colon C\rightarrow B$.
We make the morphisms and actions completely explicit in the next proposition.

\begin{proposition}\label{proposition: determination surface slice 5-grading}
    There exist elements $p_2, p_8, p_{12}, p_{14}, p_{18}, p_{20}, p_{24}, p_{30} \in \Q[\frakh]^H$ and $x,y,z\in \Q[S_0]$ with the following properties:
    \begin{enumerate}
        \item Each polynomial $p_i$ is homogeneous of $\rho\times \sigma$-weight $(2i,4i)$. The polynomials $p_i$ are algebraically independent and generate $\Q[\frakh]^H$. The elements $x, y, z$ are homogeneous with $\rho\times \sigma$-weights $(20, 0)$, $(30,0)$ and $(12,1)$, respectively.
        \item The restrictions of the elements $p_2, p_8, p_{12}, p_{14}, p_{18}, p_{20}, p_{24}$ to $S_0$, together with $x, y, z$, form an algebraically independent set and generate $\Q[S_0]$. 
        In these coordinates, the morphism $\psi_0 : S_0 \rightarrow  B_0$ is given by $(p_2, \dots, p_{24},x,y,z)\mapsto (p_2, \dots, p_{24},-F(x,y,z,p_2, \dots, p_{24}))$, where $F+p_{30}$ is given by 
    \begin{align}\label{equation: surfaces 5-grading}
    -y^2+  x^3 + x(p_2z^3 + p_8z^2 + p_{14}z + p_{20}) + (z^5+ p_{12}z^3 + p_{18}z^2 + p_{24}z + p_{30}).
    \end{align}
    \end{enumerate}
\end{proposition}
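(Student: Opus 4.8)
The plan is to specialize the general construction of \S\ref{subsection: subregular adapted gradings} to the pair $(H,m)=(E_8,5)$ and then to make the resulting family of affine surfaces completely explicit by identifying $\psi_0$ with the semiversal deformation of the $E_8$ surface singularity. Since $E_8$ is of type $E$ one has $\Gamma=\{1\}$, so the only symmetry to track besides $\mathbb{G}_m$ is $\sigma$. Note that, by the setup of this section, $H$, $\theta$, $e$, the $\mathfrak{sl}_2$-triple $(e,h,f)$, and hence $S_0$ and $\psi_0\colon S_0\to B_0$, are all defined over $\mathbb{Q}$, so \S\ref{subsection: subregular adapted gradings} must be run over $\mathbb{Q}$ rather than over $\bar k$; this is harmless, because the $\mathbb{G}_m\times\mu_5$-representations occurring in $S_0$ and $B_0$ are direct sums of characters, hence already split over $\mathbb{Q}$, and Slodowy's identification of the slice with the Grothendieck--Brieskorn semiuniversal deformation of the $E_8$ rational double point is defined over $\mathbb{Q}$.

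For Part (1): applying Propositions \ref{prop: normal form surface singularity} and \ref{proposition: normal form Gamma and sigma} to $(E_8,5)$ gives $\mathbb{G}_m\times\mu_5$-equivariant splittings $S_0=U_1\oplus W$, $B_0=U_2\oplus W$ with $\dim U_1=3$, $\dim U_2=1$, $\dim W=7$; the three coordinates on $U_1$ have $\rho$-weights $12,20,30$, the coordinate $p_{30}$ on $U_2$ has $\rho$-weight $2d_r=60$, and $f_{\mathrm{std}}$ is the $E_8$ normal form. By Lemma \ref{lemma: sigma action on U1 and U2} the action $\sigma$ on $U_1$ has weights $0,0,1$; since $f_{\mathrm{std}}$ is $\sigma$-invariant (as $\sigma$ acts trivially on $U_2$), the nontrivial $\sigma$-weight must be carried by the coordinate of lowest $\rho$-weight, which we call $z$, and we call the other two $x$ ($\rho$-weight $20$) and $y$ ($\rho$-weight $30$). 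By $\rho$-homogeneity $f_{\mathrm{std}}$ is a $\mathbb{Q}^\times$-combination of $x^3$, $y^2$, $z^5$ with all coefficients nonzero (as the singularity is isolated), so after a $\mathbb{Q}$-linear rescaling we may take $f_{\mathrm{std}}=-y^2+x^3+z^5$. Finally, the invariant degrees of $E_8$ are $2,8,12,14,18,20,24,30$; being pairwise distinct, $\mathbb{Q}[\mathfrak h]^H$ has $\mathbb{Q}$-rational homogeneous generators $p_2,\dots,p_{30}$, unique up to $\mathbb{Q}^\times$, which are automatically $\rho\times\sigma$-eigenvectors of the stated weights because the grading is inner.

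For Part (2): the Jacobian ideal of $f_{\mathrm{std}}=-y^2+x^3+z^5$ is $(x^2,y,z^4)$, so the Milnor algebra has monomial basis $1,x,z,xz,z^2,xz^2,z^3,xz^3$ (eight elements, matching the Milnor number of $E_8$). By Proposition \ref{proposition: isomorphism between semiversal deformations}, $\psi_0$ is $\mathbb{G}_m\times\mu_5$-equivariantly isomorphic to $\psi_{\mathrm{std}}\colon Z\to D=\mathbb{A}^8$, where $Z$ is cut out by $f_{\mathrm{std}}+\sum_i t_ig_i=0$ and $t_i$ has $\rho$-weight $60$ minus that of $g_i$; these weights come out as $60,40,48,28,36,16,24,4$, i.e. twice the invariant degrees and pairwise distinct, so the equivariant isomorphism $B_0\xrightarrow{\sim}D$ identifies each $t_i$ with a $\mathbb{Q}^\times$-multiple of the $p_{2d}$ of matching weight; after rescaling the $p_{2d}$ we may take these equal. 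Substituting the $g_i$ and regrouping, $f_{\mathrm{std}}+\sum_i t_ig_i$ becomes exactly the polynomial \eqref{equation: surfaces 5-grading}, which is therefore $F+p_{30}$, and reading off $\psi_{\mathrm{std}}$ gives the claimed formula for $\psi_0$. The defining equation of $Z$ expresses $t_1=p_{30}$ as a polynomial in $x,y,z$ and the other $t_i$, so $\mathbb{Q}[S_0]=\mathbb{Q}[x,y,z,t_2,\dots,t_8]=\mathbb{Q}[x,y,z,p_2,p_8,p_{12},p_{14},p_{18},p_{20},p_{24}]$ is a polynomial ring in these ten elements, which yields the remaining independence and generation claims.

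The main obstacle is pinning down the normal form of $f_{\mathrm{std}}$ over $\mathbb{Q}$ and the precise matching $t_i\leftrightarrow p_{2d}$, including the exact coefficients in \eqref{equation: surfaces 5-grading}: the $\rho$-weights alone force the combinatorial shape of the answer and the bijection between deformation parameters and invariants, but confirming the coefficients requires either carefully tracking the rescalings through Slodowy's construction or, more concretely, computing the normal $\mathfrak{sl}_2$-triple $(e,h,f)$, the centralizer $\mathfrak z_{\mathfrak h}(f)$, and the restrictions $p_i|_{S_0}$ with the \texttt{SLA} package, as explained in the appendix.
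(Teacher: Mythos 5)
Your proposal is correct and follows essentially the same route as the paper: run the machinery of \S\ref{subsection: subregular adapted gradings} over $\Q$ (using Propositions \ref{prop: normal form surface singularity} and \ref{proposition: isomorphism between semiversal deformations} and Lemma \ref{lemma: sigma action on U1 and U2}), pin down the normal form $-y^2+x^3+z^5$ and the $\sigma$-action, and read off the semiversal deformation from the Milnor algebra of $(x^2,y,z^4)$, matching deformation parameters to the $p_{2d}$ by their pairwise distinct $\rho$-weights. The one step you wave at — reducing $-ay^2+bx^3+cz^5$ with $a,b,c\in\Q^\times$ to $-y^2+x^3+z^5$ — is handled in the paper by also rescaling the target coordinate $p_{30}$ and using that $2,3,5$ are pairwise coprime (so one can solve $a\mu^2=b\lambda^3=c\nu^5=\epsilon$ over $\Q$); a rescaling of $x,y,z$ alone would not suffice.
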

In other words, $S_0$ can be realized as the closed subscheme of $\A^3_{B_0}$ defined by the zero locus of \eqref{equation: surfaces 5-grading}.
\begin{proof}
    This follows from adapting the methods of Section \ref{subsection: subregular adapted gradings} to a non-algebraically closed field.
    In more detail, $(d\psi_0)_e$ has rank $7$ by \cite[Section 8.3, Proposition 1]{Slodowy-simplesingularitiesalggroups}, so by \cite[Section 8.1, Lemma 2]{Slodowy-simplesingularitiesalggroups}, there exist $\G_m \times \mu_5$-stable decompositions $S_0 = U_1 \oplus W$, $B_0 = U_2 \oplus W$ with $\dim U_1 = 3$, $\dim U_2 = 1$, $\dim W = 7$, an equivariant morphism $F\colon  U_1 \oplus W \rightarrow U_2$ and an equivariant automorphism $\alpha$ of $S_e$ such that $(\psi\circ \alpha)(u,w)=(F(u,w),w)$.
    By Proposition \ref{prop: normal form surface singularity}, $U_1$ has $\rho$-weights $20,30,12$, $U_2$ has $\rho$-weight $60$, and we can choose homogeneous coordinates $x,y,z$ of $(U_1)_{\bar{\Q}}$ and $p_{30}$ of $(U_2)_{\bar{\Q}}$ such that $F(u,0) = f(x,y,z)$ is given by $-y^2 + x^3+ z^5$.
    Since such coordinates are unique up to scaling, there exist coordinates $x,y,z$ of $U_1$ and $p_{30}$ of $U_2$ such that $F(u,0) = f(x,y,z)$ equals $-ay^2 + bx^3+ cz^5$ for some nonzero $a,b,c\in\Q$.
    Since moreoever $2,3$ and $5$ are coprime, we can write $a = u^2/v^{15}$ for some $u,v \in \Q$ and similarly for $b$ and $c$, which shows that we can rescale $x,y,z,p_{30}$ so that $a=b=c=1$.
    Lemma \ref{lemma: sigma action on U1 and U2} shows that in these coordinates, the $\mu_5$-action $\sigma$ must be $\zeta\cdot (x,y,z) = (x,y,\zeta z)$ if $\zeta\in \mu_5$.
    Applying the recipe for the construction of $\psi_{\mathrm{std}}\colon Z\rightarrow D$ to the polynomial $f = -y^2 + x^3 + z^5$ and the $\mu_5$-action $\sigma$ (where we choose the $g_i$ to be monomials) determines a $\G_m\times \mu_5$-equivariant family of surfaces $Z\rightarrow D$. 
    Proposition \ref{proposition: isomorphism between semiversal deformations} shows that there is a $\G_m\times \mu_5$-equivariant isomorphism between $\psi_0\colon S_0\rightarrow B_0$ and $Z\rightarrow D$.
    An explicit computation shows that $Z\rightarrow D$ is exactly the family of surfaces appearing in \eqref{equation: surfaces 5-grading}, as required.
\end{proof}

We fix elements $p_2,\dots,p_{30} \in \Q[\lieh]^H$ and $x,y,z\in \Q[S_0]$ satisfying the conclusions of Proposition \ref{proposition: determination surface slice 5-grading}.
Using these elements, we can think of $S_0\rightarrow B$ as the explicit family of surfaces defined by the vanishing of \eqref{equation: surfaces 5-grading}.
Let $c_i$ denote the restriction of $p_i$ to $\Q[V]^G$.
Since $S_0^{\mu_5} = C$ and $B_0^{\mu_5} = B$, Proposition \ref{proposition: determination surface slice 5-grading} immediately implies that $c_{20},c_{30}$ are algebraically independent, $\Q[V]^G = \Q[c_{20},c_{30}]$ and $C$ is the subvariety of $S$ given by setting $z = 0$; in other words, $C\rightarrow B = \A^2_{(c_{20},c_{30})}$ is the family of affine curves with equation 
\begin{align}\label{equation: curves 5-grading}
    y^2  = x^3 + c_{20}x +c_{30}.
\end{align}
Lemma \ref{lemma: nonzero discriminant implies smooth curve} shows if $b\in B(k)$ then $C_b$ is smooth if $\Delta(b)\neq 0$.
In other words, if $C_b$ is singular then $\Delta(b) = 0$.
Since $4c_{20}^3+27c_{30}^2\in \Q[B]$ is irreducible, it follows by examining degrees that $\Delta = \lambda (4c_{20}^3+27c_{30}^2)^4$ for some nonzero $\lambda \in \Q$.

We now analyze the Picard groups of the fibers of $S_0\rightarrow B_0$.
For each $b = (p_2,\dots,p_{30})\in B_0(k)$, $S_{0,b}$ is an affine surface in $\A^3$ defined by Equation \eqref{equation: surfaces 5-grading}.
The projection map onto the $z$-coordinate defines a morphism $S_{0,b} \rightarrow \A^1$ whose fibers are affine cubic curves.
Therefore we may and do compactify $S_{0,b}$ to a projective surface $\bar{S}_{0,b}$ admitting a Weierstrass fibration $\bar{S}_{0,b}\rightarrow \P^1_k$ in the sense of \cite{Miranda-moduliofWeierstrassfibrations}.
The complement of $S_{0,b}$ in $\bar{S}_{0,b}$ is the union $F \cup P_{\infty}$, where $F$ is the fiber over $z=\infty$ and $P_{\infty}$ is the zero section of the fibration.
This construction works in families, giving a family of projective surfaces $\bar{S}_0\rightarrow B_0$ whose fiber over $b\in B_0(k)$ equals $\bar{S}_{0,b}$.

Now suppose $b\in B^s_0(k)$.
Then $\bar{S}_{0,b}$ is a smooth rational elliptic surface \cite[\S8]{shoidschutt-ellipticsurfaces}.
Therefore $\Pic(\bar{S}_{0,b,\bar{k}})$ is free $\Z$-module of rank $10$ and comes equipped with a unimodular intersection pairing.
The restriction map $r\colon \Pic(\bar{S}_{0,b,\bar{k}}) \rightarrow \Pic(S_{0,b,\bar{k}})$ is surjective with kernel $\langle F, P_{\infty}\rangle$.
Since the restriction of the intersection pairing to $\langle F, P_{\infty}\rangle$ is also unimodular, $r$ restricts to an isomorphism $\langle F, P_{\infty}\rangle^{\perp} \xrightarrow{\sim} \Pic(S_{0,b,\bar{k}})$.
This identifies $\Pic(S_{0,b,\bar{k}})$ with a rank-$8$ submodule of $\Pic(\bar{S}_{0,b,\bar{k}})$; we will use this identification without further mention.

Let $\frakh_b = p^{-1}(b)$.
Since the Kostant section $\kappa_0\rightarrow B_0$ is an isomorphism, there exists a unique element $\kappa_{0,b} \in \frakh_b(k)$ that lies in $\kappa_0(k)$.
Since every two elements in $\frakh_b(\bar{k})$ are $H(\bar{k})$-conjugate, the map $H\rightarrow \lieh_b, h\mapsto \Ad(h)(\kappa_{0,b})$ is a $Z_H(\kappa_{0,b})$-torsor.
Pulling this torsor back along the inclusion $S_{0,b} \hookrightarrow \frakh_b$ determines a $Z_H(\kappa_{0,b})$-torsor $T_b\rightarrow S_{0,b}$.
Pushing out this torsor along a homomorphism $Z_{H}(\kappa_{0,b})_{\bar{k}} \rightarrow \G_{m,\bar{k}}$ determines a $\G_m$-torsor on $S_{0,b,\bar{k}}$, in other words an element of $\Pic(S_{0,b,\bar{k}})$.
This procedure determines a $\Gamma_k$-equivariant homomorphism 
\begin{align*}
    \gamma\colon \Lambda_b = X^*(Z_H(\kappa_{0,b})_{\bar{k}}) \rightarrow \Pic(S_{0,b,\bar{k}}).
\end{align*}

\begin{proposition}\label{proposition:isorootlatticepicardgroup}
    The map $\gamma$ is an isomorphism that intertwines the pairing on $\Lambda_b$ with minus the intersection pairing on $\Pic(S_{0,b,\bar{k}})$.
\end{proposition}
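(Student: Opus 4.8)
The statement is an equivariant, family version of the classical ADE story: for an $E_8$ surface singularity, the exceptional divisor of the minimal resolution has intersection lattice the negative of the $E_8$ root lattice, and this matches up with the weight lattice of the regular centralizer via Slodowy's simultaneous resolution. The plan is to deduce the proposition from this classical fact (which is exactly how Thorne handles the $m=2$ analogue, e.g.\ in \cite{Romano-Thorne-ArithmeticofsingularitiestypeE}), being careful about (i) replacing the resolution picture by the affine-surface Picard-group picture described just above, and (ii) descending from $\bar k$ to $k$ using the naturality of the construction. Since $b\in B_0^s(k)$, we may first base change to $\bar k$: once $\gamma_{\bar k}$ is shown to be an isometry (onto, and compatible with pairings up to sign), the fact that $\gamma$ was defined by a Galois-equivariant procedure (pushing out the $Z_H(\kappa_{0,b})$-torsor $T_b$ along characters) forces $\gamma$ itself to be an isomorphism of Galois modules respecting the pairings.

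\textbf{Key steps over $\bar k$.} First I would recall the Grothendieck--Brieskorn--Slodowy picture: the restriction $\psi_0\colon S_0\to B_0$ is (after the isomorphism of Proposition~\ref{proposition: determination surface slice 5-grading}) the semiuniversal deformation of the $E_8$ surface singularity $-y^2+x^3+z^5=0$, and over $B_0^s$ its fibers $S_{0,b}$ are smooth affine surfaces whose smooth projective models $\bar S_{0,b}$ are rational elliptic surfaces, as already noted in the excerpt. Second, I would invoke the simultaneous resolution: there is a base change $\tilde B_0\to B_0$ (a $W$-cover, ramified along discriminant loci) and a smooth family $\tilde S_0\to \tilde B_0$ with a map $\tilde S_0\to S_0\times_{B_0}\tilde B_0$ resolving the singular fibers; over a point lying above $b\in B_0^s$ nothing is resolved, so $\tilde S_{0,b}\cong S_{0,b}$. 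The classical input (Brieskorn, Slodowy \cite[\S6]{Slodowy-simplesingularitiesalggroups}) identifies, for a subregular-type slice, the relevant piece of the Picard/homology lattice with the root lattice; combined with the identification in the excerpt of $\Pic(S_{0,b,\bar k})$ with $\langle F,P_\infty\rangle^\perp\subset\Pic(\bar S_{0,b,\bar k})$, one gets that $\Pic(S_{0,b,\bar k})$ is an $E_8$ lattice for the negative of the intersection form. Third, I would check that the homomorphism $\gamma_{\bar k}$ agrees with the classical identification. For this the cleanest route is to compare with the Kostant section: the monodromy action of $W$ on $\Lambda_b = X^*(Z_H(\kappa_{0,b})_{\bar k})$ coming from the $Z_H(\kappa_{0,b})$-torsor $T_b\to S_{0,b}$ matches the monodromy action of $W$ on $\Pic(\bar S_{0,b,\bar k})$ (equivalently on the Mordell--Weil/Néron--Severi lattice of the rational elliptic surface, which is the $E_8$ lattice), because both arise from the same family over $B_0$ and the same $W$-cover $\tilde B_0\to B_0$; the map $\gamma_{\bar k}$ is then the unique (up to the Weyl-group symmetry, which is exactly the ambiguity we don't see) $W$-equivariant map of $E_8$ lattices between them, hence an isometry up to the stated sign. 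Fourth, a degree/rank count (both sides free of rank $8$, $\gamma_{\bar k}$ surjective onto a finite-index sublattice preserving a unimodular form up to sign) forces $\gamma_{\bar k}$ to be an isomorphism.

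\textbf{Descent and the main obstacle.} The descent step is formal: $\Lambda_b$, $\Pic(S_{0,b,\bar k})$, the torsor $T_b$, and the pushout construction are all defined over $k$ in a $\Gamma_k$-equivariant way (this is why the proposition asserts $\gamma$ is a $\Gamma_k$-equivariant homomorphism), so $\gamma = (\gamma_{\bar k})^{\Gamma_k}$ being an isomorphism on $\bar k$-points that respects the Galois actions implies it is an isomorphism of Galois modules; the pairing compatibility descends likewise. I expect the main obstacle to be Step three: pinning down that $\gamma_{\bar k}$ is \emph{exactly} the classical isometry (and in particular gets the sign of the pairing right), rather than merely some injective lattice map. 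The safest way to nail this is to reduce to the universal situation over $B_0$ — i.e.\ to observe that the whole package ($T_b$ varying in $b$, the Picard sheaf of $\bar S_0/B_0$, the $W$-covers) is pulled back from the corresponding package for the \emph{ungraded} Slodowy slice $S_e\to B_0$ attached to the subregular nilpotent $e\in\frakh$, where this is precisely Slodowy's theorem relating the subregular slice to the resolution; one then only has to quote \cite{Slodowy-simplesingularitiesalggroups} (or its arithmetic refinement as used in \cite{Thorne-thesis,Romano-Thorne-ArithmeticofsingularitiestypeE}) that the sign is $-1$ for the root-lattice normalization $(\lambda,\lambda)$ even. So the proof will be short modulo citing the classical Brieskorn--Slodowy identification and checking that the present $\gamma$ is the map it produces.
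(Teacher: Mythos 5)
Your proposal is correct and follows the same underlying route as the paper: the paper's entire proof is a citation of \cite[Lemma 4.13]{Thorne-Romano-E8} (with the remark that the $3$-grading hypothesis there is not used), and that cited lemma is proved by exactly the Brieskorn--Slodowy/simultaneous-resolution comparison with the Kostant section that you outline. The one point you flag as the "main obstacle" --- pinning down that $\gamma_{\bar k}$ is the classical isometry rather than a rational multiple of it --- is indeed where the real content lies (one checks that roots are sent to $(-2)$-classes coming from sections of the elliptic fibration), and it is handled in the cited lemma just as you propose.
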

\begin{proof}
    This follows from \cite[Lemma 4.13]{Thorne-Romano-E8}.
    (It is assumed in loc. cit. that the nilpotents $E$ and $e$ lie in the $1$-part of a stable $3$-grading on $\lieh$, but this assumption is not used in the proof.)
\end{proof}

\subsection{Identifying the $5$-torsion}

Let $k/\Q$ be a field and $b\in B^s(k)$.
Since the Kostant section $\kappa\rightarrow B$ is an isomorphism, we may write $\kappa_b$ for the unique element in $\kappa(k)$ above $b$.
Write $V_b\subset V$ for the fiber of $b$ under $\pi\colon V\rightarrow B$.
Since every two elements in $V_b(\bar{k})$ are $G(\bar{k})$-conjugate by Proposition \ref{proposition: basic invariant theory of vinberg representation full generality}, the map $G\rightarrow V_b$ defined by $g\mapsto \Ad(g)(\kappa_b)$ is a torsor under $Z_G(\kappa_b)$.
Pulling back this torsor along the inclusion $C_b\hookrightarrow V_b$ determines a $Z_G(\kappa_b)$-torsor $\mathcal{T}_b\rightarrow C_b$. 
Every homomorphism $Z_G(\kappa_b)_{\bar{k}} \rightarrow \G_{m, \bar{k}}$ determines, via pushout, a $\G_m$-torsor on $C_{b,\bar{k}}$, in other words an element of $\Pic(C_{b,\bar{k}})$. 
The above recipe thus defines a $\Gamma_k$-equivariant map $\alpha\colon X^*(Z_G(\kappa_b)_{\bar{k}}) \rightarrow \Pic(C_{b,\bar{k}})$.
Since the domain of $\alpha$ is $5$-torsion by Lemma \ref{lemma:5gradingstable elements give root lattice}, its image must land in $\Pic(C_{b,\bar{k}})[5]$.

Let $E\rightarrow B$ be the family of projective curves with Weierstrass equation \eqref{equation: curves 5-grading}.
The complement of the section at infinity in $E$ is the family of affine curves $C\rightarrow B$.
If $b \in B^s(k)$, then $E_b$ is an elliptic curve and the inclusion $C_b\subset E_b$ induces an isomorphism $\Pic(C_{b,\bar{k}})\rightarrow \Pic^0(E_{b,\bar{k}})=E_b(\bar{k})$. 
Precomposing this isomorphism with the map $\alpha$, we obtain a map 
\begin{align*}
    \beta\colon X^*(Z_G(\kappa_b)_{\bar{k}}) \rightarrow E_b[5](\bar{k}),
\end{align*}
where $E_b[5]$ denotes the $5$-torsion subscheme of $E_b$.

\begin{theorem}\label{theorem: centralizer isomorphic to 5-torsion}
    The map $\beta$ is an isomorphism of finite Galois modules.
\end{theorem}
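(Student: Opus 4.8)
The plan is to show that $\beta$ is an isomorphism by combining the identification of the Picard group of the surface slice (Proposition~\ref{proposition:isorootlatticepicardgroup}) with the concrete description of the $5$-torsion of an elliptic curve. First I would compare the surface torsor and the curve torsor: both the $Z_H(\kappa_{0,b})$-torsor $T_b \rightarrow S_{0,b}$ and the $Z_G(\kappa_b)$-torsor $\mathcal{T}_b\rightarrow C_b$ are obtained by pulling back Kostant-section torsors, so restricting $T_b$ along $C_b = S_{0,b}\cap V \hookrightarrow S_{0,b}$ and reducing the structure group along $Z_G(\kappa_b) = Z_H(\kappa_{0,b})^{\theta}\hookrightarrow Z_H(\kappa_{0,b})$ identifies $\mathcal{T}_b$ with the restriction of $T_b$ (after checking that the two Kostant sections are compatible, i.e.\ $\kappa_b = \kappa_{0,b}$, which holds because $\kappa = \kappa_0\cap V$). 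Consequently the map $\alpha$ factors through $\gamma$ and the restriction map $r'\colon \Pic(S_{0,b,\bar k})\rightarrow \Pic(C_{b,\bar k})$ as follows: there is a commutative square relating $\alpha$, $\gamma$, the projection $\Lambda_b = X^*(Z_H(\kappa_{0,b})_{\bar k})\twoheadrightarrow X^*(Z_G(\kappa_b)_{\bar k})$, and $r'$. By Lemma~\ref{lemma:5gradingstable elements give root lattice}(4) the kernel of the projection on character groups is exactly $(1-\theta)\Lambda_b$, so $X^*(Z_G(\kappa_b)_{\bar k})\simeq (\Lambda_b)_\theta$ and, under $\gamma$, this is the quotient $\Pic(S_{0,b,\bar k})/(1-\theta)\Pic(S_{0,b,\bar k})$ (with $\theta$ acting via the $\mu_5$-action on the surface, which fixes $C_b$).

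Second I would compute both sides numerically and match them. On the lattice side, Lemma~\ref{lemma:propertiesellipticelementorder5E8}(1) gives $\Lambda_b/(1-\theta)\Lambda_b\simeq \FF_5^2$, so the domain of $\beta$ has order $25$. On the curve side, $E_b[5](\bar k)\simeq (\Z/5\Z)^2$ also has order $25$, since $E_b$ is an elliptic curve over a field of characteristic zero. So it suffices to prove $\beta$ is \emph{injective} (equivalently surjective). For injectivity I would argue as follows: an element of $X^*(Z_G(\kappa_b)_{\bar k})$ in the kernel of $\beta$ lifts to a character $\lambda\in\Lambda_b$ whose image $\gamma(\lambda)\in\Pic(S_{0,b,\bar k})$ restricts to $0$ in $\Pic(C_{b,\bar k}) = \Pic^0(E_{b,\bar k})$; I want to conclude that $\lambda\in (1-\theta)\Lambda_b$. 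Using Proposition~\ref{proposition:isorootlatticepicardgroup}, $\gamma$ is an isometry from $\Lambda_b$ to $\Pic(S_{0,b,\bar k})$ with the negative intersection form, so the question becomes: which classes on the rational elliptic surface $\bar S_{0,b}$ restrict to a torsion class on $C_b$ — equivalently, to a principal divisor after adding the zero section? Here I would use the structure of the Weierstrass fibration $\bar S_{0,b}\rightarrow\P^1$: $\Pic(C_{b,\bar k}) = \Pic(\bar S_{0,b,\bar k})/\langle F,P_\infty\rangle\cdot(\text{vertical components of the fiber }F_\infty\text{ over }z=\infty)$, so the kernel of $r'$ restricted to the sublattice $\Pic(S_{0,b,\bar k}) = \langle F,P_\infty\rangle^{\perp}$ is spanned by the components of the singular fiber over $z=\infty$, and the generic smoothness of the surface slice (it is the subregular slice, so $\bar S_{0,b}$ has only the reducible fiber forced by the $z=\infty$ degeneration) pins this down to a specific rank-$2$ saturated sublattice. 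I would then check that this sublattice corresponds, under $\gamma^{-1}$, exactly to $(1-\theta)\Lambda_b$ — which follows because $(1-\theta)\Lambda_b$ is the unique index-$25$ sublattice on which $\theta$ acts trivially (Lemma~\ref{lemma:propertiesellipticelementorder5E8}), and the $\mu_5$-action on $\bar S_{0,b}$ acts trivially precisely on the fiber over $z=\infty$ and its components (since $\sigma$ scales $z$).

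Third, having established that $\beta$ is an isomorphism of abstract groups, I would note $\Gamma_k$-equivariance: $\alpha$ (hence $\beta$) is built out of torsors defined over $k$ and functorial pushout/pullback operations, all of which are Galois-equivariant by construction, and the isomorphisms $\Pic(C_{b,\bar k})\xrightarrow{\sim}\Pic^0(E_{b,\bar k}) = E_b(\bar k)$ and $X^*(Z_G(\kappa_b)_{\bar k})\simeq (\Lambda_b)_\theta$ are also Galois-equivariant. This promotes $\beta$ to an isomorphism of finite Galois modules, as claimed.

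The main obstacle I anticipate is the third ``middle'' step: carefully identifying the kernel of $r'$ on $\langle F,P_\infty\rangle^\perp$ with $(1-\theta)\Lambda_b$ under $\gamma$, i.e.\ matching the lattice-theoretic quotient with the geometric one. This requires knowing the precise Kodaira type of the fiber over $z=\infty$ of the subregular Weierstrass fibration \eqref{equation: surfaces 5-grading} — equivalently, understanding the singularity of $S_{0,b}$ at the point lying over $z=\infty$ — and verifying that the $\mu_5$-fixed part of $\Pic(\bar S_{0,b,\bar k})$ behaves as predicted by the representation theory of the elliptic element $w\in W(E_8)$. One clean way to sidestep the delicate geometry would be to instead invoke directly the analogue in \cite{Thorne-Romano-E8}: the proof of the corresponding statement for the $3$-grading (their identification of $3$-torsion with the centralizer) is formal once Proposition~\ref{proposition:isorootlatticepicardgroup} is in hand, and one can run the same argument word-for-word with $5$ in place of $3$, citing \cite[\S4.4]{Thorne-Romano-E8} for the formal part and only checking the two numerical inputs ($\Lambda_\theta\simeq\FF_5^2$ and $\#E_b[5] = 25$). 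I would present the proof in that streamlined form, flagging the parallel with loc.\ cit.\ and filling in only the points where the $5$-grading genuinely differs.
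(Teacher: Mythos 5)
Your overall architecture matches the paper's: a commutative square relating $\gamma$, the projection $\Lambda_b \twoheadrightarrow X^*(Z_G(\kappa_b)_{\bar k})$ (surjective with kernel $(1-\theta)\Lambda_b$ by Lemma \ref{lemma:5gradingstable elements give root lattice}(4)), the restriction map $\Pic(S_{b,\bar k})\to\Pic(C_{b,\bar k})$, and $\beta$; then the order count $25=25$ reduces everything to injectivity, and Galois equivariance comes for free from the functoriality of the torsor constructions. Up to that point you are on the paper's track.

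The gap is in your injectivity step. You propose to identify $\ker\bigl(r'\colon \Pic(S_{0,b,\bar k})\to\Pic(C_{b,\bar k})\bigr)$ as ``spanned by the components of the singular fiber over $z=\infty$'' and to pin it down as a rank-$2$ saturated sublattice. This cannot be right: the subgroup you need to recover is $(1-\theta)\Lambda_b$ under $\gamma$, which is a \emph{full-rank}, index-$25$ sublattice of the rank-$8$ lattice $\Lambda_b$, not a rank-$2$ one; and the fiber of $\bar S_{0,b}\to\P^1$ over $z=\infty$ is irreducible (the paper's argument in fact depends on $\bar S_{0,b}\to\P^1$ having \emph{no} reducible fibers at all), so fiber components contribute nothing. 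What the paper actually does (Proposition \ref{proposition: coinvariants isomorphic to 5-torsion}) is prove injectivity of the induced map $\delta\colon\Lambda_b/(1-w)\Lambda_b\to\Pic(C_{b,\bar k})[5]$ directly: every nonzero class in the quotient is represented by a \emph{root} $\alpha$ (Lemma \ref{lemma:propertiesellipticelementorder5E8}(3)); $\gamma(\alpha)$ is a class of self-intersection $-2$, so by Shioda's lemma --- applicable precisely because there are no reducible fibers --- it is represented by $(P)-(P_\infty)-(F)$ for a section $P$ disjoint from $P_\infty$; hence $\delta(\alpha)$ is the class of the point $P\cap\{z=0\}$, which is distinct from the origin of $E_b$ and therefore nonzero. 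Your fallback of transporting the $m=3$ argument of Thorne--Romano is in the right spirit (the paper does cite it for Proposition \ref{proposition:isorootlatticepicardgroup}), but the root-and-section computation is exactly the content you would still have to supply; as written, your proposal replaces it with an incorrect description of the kernel.
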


The proof of Theorem \ref{theorem: centralizer isomorphic to 5-torsion} is given at the end of this subsection and follows from embedding $C\rightarrow B$ in the family of surfaces $S\rightarrow B$ and using the description of $\Pic(S_{b,\bar{k}})$ of Section \ref{subsec:5gradingtransverseslice}.

Note that $\theta$ preserves the maximal torus $Z_H(\kappa_b)$ and induces a $\mu_5$-action on the character group $\Lambda_b$. 
On the other hand, the assignment $\zeta\cdot (x,y,z) = (x,y,\zeta z)$ defines a $\mu_5$-action on $S_{b,\bar{k}}$ and hence a $\mu_5$-action on $\Pic(S_{b,\bar{k}})$. 
\begin{lemma}\label{lemma: mu5 actions coincide}
    If $b\in B^s(k)$, then the $\mu_5$-actions just constructed agree under the isomorphism $\gamma$ of Proposition \ref{proposition:isorootlatticepicardgroup}.
\end{lemma}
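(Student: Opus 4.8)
The plan is to deduce the lemma from the assertion, which I would verify directly, that the isomorphism $\gamma$ of Proposition \ref{proposition:isorootlatticepicardgroup} is $\mu_5$-equivariant for the two actions in the statement. The one genuinely substantive input is that for $b\in B^s(k)$ the two Kostant sections agree, $\kappa_{0,b}=\kappa_b$ (indeed $\kappa_b\in\kappa=\kappa_0\cap V$ lies over $b$ and $\kappa_0\to B_0$ is an isomorphism), so that $\kappa_{0,b}\in V=\frakh_1$; consequently $\theta(\zeta)(\kappa_{0,b})=\zeta\,\kappa_{0,b}$ for every fifth root of unity $\zeta$, and in particular $\theta(\zeta)$ preserves the maximal torus $A:=Z_H(\kappa_{0,b})$, since the centralizer is unchanged by nonzero scaling; this is the $\mu_5$-action on $\Lambda_b=X^*(A_{\bar{k}})$ appearing in the statement.

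First I would record the compatibility of the torsor map $q\colon H\to\frakh_b$, $h\mapsto\Ad(h)(\kappa_{0,b})$, with the two $\mu_5$-actions. Using that $\theta(\zeta^{-1})$ is an automorphism of $H$ with $\Ad(\theta(\zeta^{-1})(h))=\theta(\zeta^{-1})\circ\Ad(h)\circ\theta(\zeta)$ and that $\theta(\zeta)(\kappa_{0,b})=\zeta\kappa_{0,b}$, one computes
\[
q(\theta(\zeta^{-1})(h))=\theta(\zeta^{-1})\big(\Ad(h)(\zeta\kappa_{0,b})\big)=\zeta\,\theta(\zeta^{-1})(q(h))=\sigma(\zeta)(q(h)),
\]
recalling $\sigma(\zeta)(v)=\zeta\,\theta(\zeta^{-1})(v)$. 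Thus $q$ intertwines the $\theta(\zeta^{-1})$-action on $H$ with the $\sigma(\zeta)$-action on $\frakh_b$; here $\frakh_b$ and $S_{0,b}$ are $\sigma$-stable because $\theta$ acts trivially on $B_0$ (trivial outer type) and $b$ is fixed by $\sigma$ as it lies in $B=B_0^{\sigma}$, while $\sigma$ preserves $S_0=e+\frakz_{\frakh}(f)$ as in \S\ref{subsec: graded Slodowy slices}. Moreover $\theta(\zeta^{-1})$ restricts on $A$ to precisely the automorphism governing the $\theta$-action on $\Lambda_b$, and is $A$-semilinear over it with respect to the right-multiplication torsor structure on $q$.

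Next I would transport this through the pushout construction defining $\gamma$. Pulling the displayed relation back along $S_{0,b}\hookrightarrow\frakh_b$, the $A$-torsor $T_b\to S_{0,b}$ acquires an automorphism $\widetilde{\sigma}(\zeta)$ lifting $\sigma(\zeta)|_{S_{0,b}}$ — which in the coordinates of Proposition \ref{proposition: determination surface slice 5-grading} is exactly $(x,y,z)\mapsto(x,y,\zeta z)$ — and which is $A$-semilinear over $\theta(\zeta^{-1})|_A$. Pushing out along a character $\chi\in\Lambda_b$ then produces a canonical isomorphism of $\G_m$-torsors $\sigma(\zeta)^{*}\gamma(\chi)\cong\gamma(\chi\circ\theta(\zeta^{-1})|_A)$ on $S_{0,b,\bar{k}}$; matching this identity against the definitions of the two $\mu_5$-actions is the content of the lemma. (Alternatively, this is the $5$-grading analogue of the compatibility proved for the $3$-grading in \cite[\S4.4]{Thorne-Romano-E8}, and can be deduced by the same method.)

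The step requiring the most care is not the geometry but the bookkeeping of variances: one must normalize conventions — left versus right actions, $\zeta$ versus $\zeta^{-1}$, pullback versus pushforward of line bundles — for the action on $\Lambda_b$ induced by $\theta$ and the action on $\Pic(S_{0,b,\bar{k}})$ induced by $z\mapsto\zeta z$ consistently, so that $\sigma(\zeta)^{*}\gamma(\chi)\cong\gamma(\chi\circ\theta(\zeta^{-1})|_A)$ literally expresses the equality of the two actions under $\gamma$. Everything else is formal once the key identity $q\circ\theta(\zeta^{-1})=\sigma(\zeta)\circ q$, and hence the $\mu_5$-equivariant lift of $\sigma(\zeta)$ to $T_b$, is in hand.
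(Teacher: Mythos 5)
Your proposal is correct and follows essentially the same route as the paper: establish that the torsor $H\to\frakh_b$, $h\mapsto\Ad(h)(\kappa_{0,b})$, intertwines $\theta$ with $\sigma$, deduce equivariance of $\gamma$ via pullback and pushout, and then read off $\sigma|_{S_{b,\bar k}}=(x,y,z)\mapsto(x,y,\zeta z)$ from Proposition \ref{proposition: determination surface slice 5-grading}. The only difference is that you write out the key identity $q\circ\theta(\zeta^{-1})=\sigma(\zeta)\circ q$ (using $\theta(\zeta)(\kappa_{0,b})=\zeta\kappa_{0,b}$, which relies on the correct observation that $\kappa_{0,b}=\kappa_b\in\frakh_1$) and flag the variance conventions explicitly, both of which the paper leaves implicit.
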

\begin{proof}
    The formula $\zeta \cdot x = \zeta^{-1} \theta(\zeta)(x)$ defines the $\mu_5$-action $\sigma$ on $\frakh_b$ (and hence on $S_b$ via restriction) and $\theta$ defines a $\mu_5$-action on $H$.
    The torsor $H\rightarrow \frakh_b$, and hence the torsor $\mathcal{T}'_b\rightarrow S_b$, is equivariant with respect to these actions. 
    It follows that the homomorphism $\gamma$ is $\mu_5$-equivariant too. 
    Proposition \ref{proposition: determination surface slice 5-grading} displays the $\sigma$-weights on $S_e$, which shows that $\sigma$ acts as $(x,y,z)\mapsto (x,y,\zeta z)$ on $S_{b, \bar{k}}$, as claimed.
\end{proof}

Let $b\in B^s(k)$ and consider the composite $\Lambda_b\xrightarrow{\gamma} \Pic(S_{b,\bar{k}})\rightarrow \Pic(C_{b,\bar{k}})$, where the second map is given by restriction along the closed subscheme $C_b \subset S_b$.
Lemma \ref{lemma: mu5 actions coincide} and the equality $C_b = S_b^{\mu_5}$ imply that this composite map commutes with the $\mu_5$-action on the domain.
Let $\zeta\in \mu_5(\bar{k})$ be a choice of primitive $5$th root of unity, and let $w$ be the induced order-$5$ automorphism on $\Lambda_b$.
By Lemma \ref{lemma:5gradingstable elements give root lattice},  $w$ is an elliptic element of order $5$ of the $E_8$ root lattice $\Lambda_b$. 
We thus obtain a map $\Lambda_b/(1-w)\Lambda_b\rightarrow \Pic(C_{b,\bar{k}})$ which must land in the $5$-torsion by Lemma \ref{lemma:propertiesellipticelementorder5E8}.
In conclusion, we have constructed a Galois equivariant map 
\begin{align*}
    \delta\colon \Lambda_b/(1-w)\Lambda_b\rightarrow \Pic(C_{b,\bar{k}})[5].
\end{align*}

\begin{proposition}\label{proposition: coinvariants isomorphic to 5-torsion}
    The map $\delta$ is an isomorphism.
\end{proposition}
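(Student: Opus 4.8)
The plan is to deduce Proposition~\ref{proposition: coinvariants isomorphic to 5-torsion} from the already-established relationship between the $E_8$-lattice $\Lambda_b$ and the Picard group of the rational elliptic surface $S_{0,b}$, together with the $\mu_5$-equivariance encoded in Lemma~\ref{lemma: mu5 actions coincide}. First I would note that $\delta$ factors through the map $\alpha$ of the previous subsection (precomposed with the descent $\Lambda_b \twoheadrightarrow \Lambda_b/(1-w)\Lambda_b = X^*(Z_G(\kappa_b)_{\bar k})$ from Lemma~\ref{lemma:5gradingstable elements give root lattice}(4)), so that it suffices to prove that $\alpha$, or equivalently the map $\beta$ of Theorem~\ref{theorem: centralizer isomorphic to 5-torsion}, is an isomorphism. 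Thus the real content is Theorem~\ref{theorem: centralizer isomorphic to 5-torsion}, and Proposition~\ref{proposition: coinvariants isomorphic to 5-torsion} will be essentially a restatement once the identification $X^*(Z_G(\kappa_b)_{\bar k}) \simeq \Lambda_b/(1-w)\Lambda_b$ is in hand.

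For Theorem~\ref{theorem: centralizer isomorphic to 5-torsion} itself, the strategy is to embed everything in the surface picture. By Proposition~\ref{proposition:isorootlatticepicardgroup}, $\gamma\colon \Lambda_b \xrightarrow{\sim} \Pic(S_{0,b,\bar k})$ is an isometry (up to sign) onto a rank-$8$ sublattice of the unimodular lattice $\Pic(\bar S_{0,b,\bar k})$ of the rational elliptic surface. Restricting along the closed embedding $C_b \subset S_b \subset S_{0,b}$ (the fibre $z = 0$ of the Weierstrass fibration $\bar S_{0,b}\to \mathbb P^1$) gives a map $\Pic(S_{0,b,\bar k}) \to \Pic(C_{b,\bar k}) = E_b[5](\bar k) \cdot$ (after the identification $\Pic(C_{b,\bar k}) = \Pic^0(E_{b,\bar k}) = E_b(\bar k)$, noting the image lands in $5$-torsion). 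I would compute the kernel and image of this restriction map directly: the restriction of a section $P$ of the Weierstrass fibration to the fibre over $z = 0$ gives a point of $E_b$, and the Mordell--Weil lattice of $\bar S_{0,b}$ is exactly the $E_8$ root lattice with the Mordell--Weil pairing, so the restriction-to-a-fibre homomorphism is precisely reduction modulo $5$ composed with the specialization isomorphism $\mathrm{MW}(\bar S_{0,b}) \to E_b$ for a general fibre. Concretely, I would use that $z = 0$ is a general (smooth) fibre for $b \in B^s$, invoke the Shioda--Tate / Mordell--Weil-lattice theory of rational elliptic surfaces, and observe that the composite $\Lambda_b \xrightarrow{\gamma} \Pic(S_{0,b,\bar k}) \to E_b[5]$ kills exactly $5\Lambda_b$ (since restricting a degree-$0$ divisor class on the surface to a general fibre and then taking the class in $E_b$ is multiplication by $5$ away from the identity component — more precisely, on the Mordell--Weil lattice the specialization to the fibre $z=0$ lands in $E_b[5]$ because the $\mu_5$-action fixes that fibre pointwise and acts by a primitive fifth root of unity on a transverse coordinate, forcing the height-paired classes into $5$-torsion). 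This is the step I expect to be the main obstacle: pinning down precisely why restriction to the fibre $z=0$ induces reduction mod $5$ (rather than some other multiple) and verifying surjectivity onto $E_b[5]$, which has order $25 = 5^2$ matching $|\Lambda_b/(1-w)\Lambda_b| = 5^2$ from Lemma~\ref{lemma:propertiesellipticelementorder5E8}(1). The cleanest route is probably to factor through Lemma~\ref{lemma: mu5 actions coincide}: the $\mu_5$-fixed locus $C_b = S_b^{\mu_5}$ sits inside $S_b$, and the composite $\Lambda_b \to \Pic(S_{b,\bar k}) \to \Pic(C_{b,\bar k})$ factors through the coinvariants $\Lambda_b/(1-w)\Lambda_b$ because $C_b$ being $\mu_5$-fixed forces $w$ to act trivially on the image; then one checks injectivity of the induced map on coinvariants using the nondegeneracy of the pairing modulo $5$, and surjectivity by a dimension/order count since both sides have order $25$.

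Finally, to assemble the proof of Proposition~\ref{proposition: coinvariants isomorphic to 5-torsion}: by Lemma~\ref{lemma: mu5 actions coincide}, the map $\gamma$ is $\mu_5$-equivariant, so the composite $\Lambda_b \xrightarrow{\gamma} \Pic(S_{b,\bar k}) \xrightarrow{\text{restr}} \Pic(C_{b,\bar k})[5]$ is $\mu_5$-equivariant where $\mu_5$ acts trivially on the target (since $C_b = S_b^{\mu_5}$ and the action on $\Pic(C_{b,\bar k})$ induced by a $\mu_5$-action fixing $C_b$ is trivial), hence it factors uniquely through $\Lambda_b/(1-w)\Lambda_b$, giving $\delta$. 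Injectivity of $\delta$ follows because $\ker(\text{restr}\circ\gamma)$ is exactly $(1-w)\Lambda_b$ — one inclusion is the factorization just noted; for the reverse inclusion, if a class restricts to $0$ on the $\mu_5$-fixed fibre and lies outside $(1-w)\Lambda_b$, one derives a contradiction with the nondegeneracy (mod $5$) of the discriminant form of the $E_8$ lattice, using that $\Lambda_b/(1-w)\Lambda_b \simeq \mathbb F_5^2$ carries a perfect pairing. Surjectivity then follows from the order count $|\Lambda_b/(1-w)\Lambda_b| = 25 = |E_b[5](\bar k)| = |\Pic(C_{b,\bar k})[5]|$, using Lemma~\ref{lemma:propertiesellipticelementorder5E8}(1) and the standard fact that $E_b[5](\bar k) \simeq (\mathbb Z/5\mathbb Z)^2$. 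Since all maps involved are manifestly $\Gamma_k$-equivariant (everything is defined over $k$ except the choices of $\bar k$-points), $\delta$ is an isomorphism of finite Galois modules, which is the assertion.
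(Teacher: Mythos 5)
Your skeleton is right in outline: $\delta$ is induced by restricting $\gamma$ along $C_b = S_b^{\mu_5}\subset S_b$, it factors through the coinvariants by the $\mu_5$-equivariance of Lemma \ref{lemma: mu5 actions coincide}, and since both sides have order $25$ it suffices to prove $\delta$ has trivial kernel. (As a side remark, your opening reduction ``it suffices to prove $\beta$ is an isomorphism'' inverts the paper's logic — Theorem \ref{theorem: centralizer isomorphic to 5-torsion} is \emph{deduced from} this proposition via the commutative square, not the other way around — but that is only a presentational issue.) The genuine gap is the injectivity step, which is the entire content of the proposition and which you do not actually prove. Your proposed mechanism — deriving a contradiction from ``the nondegeneracy (mod $5$) of the discriminant form of the $E_8$ lattice'' and the perfect pairing on $\Lambda_b/(1-w)\Lambda_b\simeq\FF_5^2$ — is not an argument: you never relate the pairing on the coinvariants to anything on $\Pic(C_{b,\bar k})[5]$, and nondegeneracy of a pairing on the source of a homomorphism says nothing about that homomorphism's kernel without a compatibility you do not establish. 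Several intermediate assertions are also wrong or garbled: the composite $\Lambda_b\to\Pic(C_{b,\bar k})$ kills $(1-w)\Lambda_b$ (of index $5^2$), not $5\Lambda_b$ (of index $5^8$); and restriction of $\Pic(\bar S_{0,b,\bar k})$ to a smooth fibre of the Weierstrass fibration is not ``reduction modulo $5$'' — the image lands in $5$-torsion only because $z=0$ is the $\mu_5$-fixed fibre, as you eventually note.

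The missing ingredient is a concrete geometric input. The paper's injectivity argument runs as follows: every nonzero class of $\Lambda_b/(1-w)\Lambda_b$ is the image of a \emph{root} $\alpha\in\Lambda_b$, by part (3) of Lemma \ref{lemma:propertiesellipticelementorder5E8} (transitivity of $C(w)\simeq\SL_2(\FF_5)\times\langle w\rangle$ on the nonzero classes); under the isometry $\gamma$ of Proposition \ref{proposition:isorootlatticepicardgroup}, a root gives a class $D_\alpha$ of self-intersection $-2$ in $\Pic(\bar S_{b,\bar k})$; because the rational elliptic surface $\bar S_b\to\P^1$ has no reducible fibres, Shioda's lemma represents $D_\alpha$ by a divisor $(P)-(P_\infty)-(F)$ with $P$ a section disjoint from the zero section $P_\infty$; hence $\delta(\alpha)$ is the class of the point $P\cap\{z=0\}$, which is distinct from the origin of $E_b$ and therefore nonzero. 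Without the reduction to roots and the representation of $(-2)$-classes by sections of the fibration, injectivity does not follow from anything in your sketch.
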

\begin{proof}
    We may and do assume $k$ is algebraically closed.
    Let $O_{\infty,b}$ be the origin of $E_b$, so $C_b = E_b \setminus \{O_{\infty,b}\}$ and $\Pic(C_b) \simeq E_b(k)$.
    Since the domain and target of $\delta$ both have order $25$, it suffices to prove that $\delta$ has trivial kernel.
    Since every nonzero class in $\Lambda_b/(1-w)\Lambda_b$ is represented by a root $\alpha\in \Lambda_b$ (Lemma \ref{lemma:propertiesellipticelementorder5E8}), it suffices to prove that $\delta(\alpha)\neq 0$ for every root $\alpha$ of $\Lambda_b$.
    Such a root corresponds, under the isomorphism $\gamma$ of Proposition \ref{proposition:isorootlatticepicardgroup}, to an element $D_{\alpha}\in \Pic(S_{b})\subset \Pic(\bar{S}_b)$ with self-intersection $-2$.
    Since $f\colon \bar{S}_b\rightarrow \P^1$ has no reducible fibers, \cite[Lemma 3.1]{Shioda-grobnerbasisMordellWeil} implies that $D_{\alpha}$ is represented by a divisor on $\bar{S}_b$ of the form $(P) - (P_{\infty})-(F)$, where $P$ is a section of $f$ that does not intersect the section at infinity $P_{\infty}$.
    Consequently, $\delta(\alpha)$ is represented by the point $(P\cap \{z=0\})$ on $C_b$. 
    Since $P$ does not intersect $P_{\infty}$, this point is distinct from the point at infinity, so is nonzero, as desired.
\end{proof}

\begin{proof}[Proof of Theorem \ref{theorem: centralizer isomorphic to 5-torsion}]
    We may and do assume $k$ is algebraically closed.
    The maps constructed so far fit in a commutative diagram:
\[\begin{tikzcd}
	{\Lambda_b} & {\Pic(S_b)} \\
	{X^*(Z_G(\kappa_b))} & {\Pic(C_b)[5]}
	\arrow["\gamma", from=1-1, to=1-2]
	\arrow[from=1-1, to=2-1]
	\arrow[from=1-2, to=2-2]
	\arrow["\beta", from=2-1, to=2-2]
\end{tikzcd}\]
    The top horizontal map is an isomorphism by Proposition \ref{proposition:isorootlatticepicardgroup}.
    The left vertical map is surjective with kernel $(1-w)\Lambda_b$ by Lemma \ref{lemma:5gradingstable elements give root lattice}.
    The composition $\Lambda_b\rightarrow \Pic(C_b)[5]$ is also surjective with kernel $(1-w)\Lambda_b$, by Proposition \ref{proposition: coinvariants isomorphic to 5-torsion}.
    This implies $\beta$ is an isomorphism.
\end{proof}

\begin{theorem}\label{theorem: global iso centralizer 5-torsion}
    Let $E\rightarrow B$ be the compactification of $C\rightarrow B$ and let $E^s\rightarrow B^s$ be the restriction to $B^s$, a family of elliptic curves.
    Then there is an isomorphism $Z_G(\kappa|_{B^s})\simeq E^s[5]$ of finite etale group schemes over $B^s$ intertwining the respective pairings.
\end{theorem}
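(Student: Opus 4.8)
The strategy is to promote the fibrewise isomorphism $\beta$ of Theorem~\ref{theorem: centralizer isomorphic to 5-torsion} to an isomorphism of finite étale group schemes over $B^s$, by globalizing the construction of $\beta$ and then checking that the resulting homomorphism is an isomorphism on geometric fibres.

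First one records that $Z_G(\kappa|_{B^s})$ and $E^s[5]$ are both finite étale of constant order $25$ over $B^s$. For $E^s[5]$ this is standard in characteristic zero. For $Z_G(\kappa|_{B^s})$, it is a quasi-finite closed subgroup scheme of $G\times B^s$ whose fibre over $b$ is $Z_G(\kappa_b)=Z_H(\kappa_b)^{\theta}$, of constant length $|\Lambda_{\theta}|=25$ by Lemmas~\ref{lemma:5gradingstable elements give root lattice} and~\ref{lemma:propertiesellipticelementorder5E8}; since $B^s$ is smooth and $k$ has characteristic zero it is finite étale. Because $\Lambda_b$ is an $E_8$-lattice, hence unimodular, $\Lambda_{\theta}=\Lambda_b/(1-\theta)\Lambda_b$ carries a canonical perfect pairing valued in $\tfrac15\Z/\Z$; under the identification $X^*(Z_G(\kappa_b)_{\bar k})\cong\Lambda_{\theta}$ of Lemma~\ref{lemma:5gradingstable elements give root lattice} this is dual to the pairing on $Z_G(\kappa|_{B^s})$ appearing in the statement.

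The construction of $\beta$ then globalizes as follows. The morphism $G\times B^s\to V^s$, $(g,b)\mapsto\Ad(g)(\kappa_b)$, is surjective with the expected fibres (Proposition~\ref{proposition: basic invariant theory of vinberg representation full generality}), hence a torsor under the pullback of $Z_G(\kappa|_{B^s})$ to $V^s$. Restricting this torsor to the closed subscheme $C\times_B B^s\subset V^s$ produces a torsor $\mathcal{T}\to C\times_B B^s$ under $N$, the pullback of $Z_G(\kappa|_{B^s})$ to $C\times_B B^s$. Pushing $\mathcal{T}$ out along characters of $Z_G(\kappa|_{B^s})$ --- which exist étale-locally on $B^s$ --- gives line bundles on $C\times_B B^s$, and hence, via the identification $\Pic_{(C\times_B B^s)/B^s}\cong\Pic^0_{E^s/B^s}=E^s$ coming from the compactification $C\times_B B^s\subset E^s$ whose complement is the degree-one section at infinity, a homomorphism of finite étale $B^s$-group schemes
\[
\widetilde{\beta}\colon \bigl(Z_G(\kappa|_{B^s})\bigr)^{\vee}\longrightarrow E^s[5],
\]
the target being the $5$-torsion since the source is killed by $5$. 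By construction $\widetilde{\beta}$ is compatible with base change and recovers $\beta$ on every geometric fibre (under $(Z_G(\kappa_b))^{\vee}(\bar k)=X^*(Z_G(\kappa_b)_{\bar k})$). As $\beta$ is an isomorphism on all geometric fibres by Theorem~\ref{theorem: centralizer isomorphic to 5-torsion}, the kernel and cokernel of $\widetilde{\beta}$ are finite étale $B^s$-group schemes with trivial geometric fibres, hence trivial; so $\widetilde{\beta}$ is an isomorphism, and dualizing together with the Weil self-duality of $E^s[5]$ yields the isomorphism $E^s[5]\xrightarrow{\sim}Z_G(\kappa|_{B^s})$.

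Finally, compatibility with the pairings can be checked on geometric fibres. There $\gamma$ of Proposition~\ref{proposition:isorootlatticepicardgroup} carries the $E_8$-pairing on $\Lambda_b$ to minus the intersection pairing on $\Pic(S_{b,\bar k})\subset\Pic(\bar S_{b,\bar k})$, while the Weil pairing on $E_b[5]=\Pic^0(E_{b,\bar k})[5]$ agrees up to sign with the intersection pairing on the rational elliptic surface $\bar S_{b,\bar k}$ restricted to classes of the form $(P)-(P_{\infty})-(F)$ representing $5$-torsion (Shioda's theory; compare the divisor bookkeeping in the proof of Proposition~\ref{proposition: coinvariants isomorphic to 5-torsion}). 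Chasing through $\gamma$ and the restriction $\Pic(S_{b,\bar k})\to\Pic(C_{b,\bar k})\cong E_{b,\bar k}$ shows the two pairings agree on each fibre, and since both are morphisms $E^s[5]\times_{B^s}E^s[5]\to(\Z/5\Z)_{B^s}$ of finite étale $B^s$-schemes agreeing on all geometric points, they coincide. I expect the main obstacle to be this family formalism in the third paragraph --- checking that $\mathcal{T}$ and its push-outs define a genuine homomorphism of $B^s$-group schemes into $\Pic_{(C\times_B B^s)/B^s}$ and identifying that relative Picard scheme canonically with $E^s$ --- together with pinning down the sign in the Weil-pairing comparison; both can be handled by following \cite[Section~4]{Thorne-Romano-E8} essentially verbatim.
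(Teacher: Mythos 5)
Your argument is correct in outline, but it takes a genuinely different route from the paper. The paper's proof is much shorter: since $B^s$ is a normal integral scheme, the category of finite \'etale covers of $B^s$ embeds fully faithfully into that of its generic point $\eta$ (Stacks Tag 0BQM), so it suffices to produce an isomorphism of Galois modules $Z_G(\kappa_\eta)_{\bar k}\simeq E_\eta[5](\bar k)$ for $k$ the function field of $B$; this is exactly Theorem \ref{theorem: centralizer isomorphic to 5-torsion} applied to that field, giving $Z_G(\kappa_\eta)_{\bar k}\simeq \Hom(E_\eta[5]_{\bar k},\mu_{5,\bar k})$, followed by the Weil pairing to remove the dual. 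This generic-point reduction completely sidesteps what you correctly identify as the main obstacle in your approach --- setting up the torsor, its push-outs, and the identification of the relative Picard scheme with $E^s$ uniformly over $B^s$ --- at the cost of producing the isomorphism slightly less explicitly. Your globalized construction is viable (it is essentially the strategy of \cite{Thorne-thesis} and \cite{Thorne-Romano-E8} in their settings) and has the merit of yielding a concrete homomorphism whose formation commutes with base change, and of actually addressing the compatibility of pairings, which the paper's proof asserts in the statement but does not verify; on the other hand it requires the extra foundational input on $\Pic_{(C\times_B B^s)/B^s}$ and the sign comparison between the Weil pairing and the intersection pairing that the paper never needs. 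Both arguments use the same two essential ingredients: the fibrewise Theorem \ref{theorem: centralizer isomorphic to 5-torsion} and the Weil self-duality of $E[5]$.
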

\begin{proof}
    By \cite[Tag \href{https://stacks.math.columbia.edu/tag/0BQM}{0BQM}]{stacksproject}, it suffices to prove that there is an isomorphism $Z_G(\kappa_{\eta})_{\bar{k}} \simeq E_{\eta}[5](\bar{k})$ of Galois modules, where $k$ is the function field of $B$ and $\eta\in B(k)$ the generic point.
    Theorem \ref{theorem: centralizer isomorphic to 5-torsion} shows that there is an isomorphism $X^*(Z_{G}(\kappa_{\eta})_{\bar{k}})\simeq E_{\eta}[5](\bar{k})$, in other words an isomorphism $Z_G(\kappa_{\eta})_{\bar{k}}\simeq \Hom(E_{\eta}[5]_{\bar{k}},\mu_{5,\bar{k}})$.
    The existence of the Weil pairing on $E_{\eta}[5]$ shows that there is an isomorphism $\Hom(E_{\eta}[5]_{\bar{k}},\mu_{5,\bar{k}}) \simeq E_{\eta}[5](\bar{k})$.
    We conclude that $Z_G(\kappa_{\eta})_{\bar{k}} \simeq E_{\eta}[5](\bar{k})$.
\end{proof}

\subsection{The orbit parametrization}

Let $k/\Q$ be a field and $b\in B^s(k)$.
Then every element in $V_b(\bar{k})$ is stable and $G(\bar{k})$-conjugate to $\kappa_b$ by Proposition \ref{proposition: basic invariant theory of vinberg representation full generality}(2).
However, $V_b(k)$ might break up into multiple $G(k)$-orbits.
If $v\in V_b(k)$, then the transporter scheme $T(v,\kappa_b) = \{g\in G(\bar{k}) \colon g\cdot v = \kappa_b\}$ is a torsor under the stabilizer $Z_G(\kappa_b)$. 
The following lemma is a special case of \cite[Proposition 1]{BhargavaGross-AIT}:

\begin{lemma}\label{lemma: AIT bhargava gross}
    The map $v\mapsto T(v, \kappa_b)$ induces a bijection 
    \begin{align*}
    G(k) \backslash V_b(k) \xleftrightarrow{1:1} \ker(\HH^1(k,Z_{G}(\kappa_b))\rightarrow \HH^1(k,G)).
    \end{align*}
    (Here $\HH^1(k,-)$ denotes Galois cohomology.)
\end{lemma}

We call an isomorphism of group schemes $Z_G(\kappa|_{B^s})\simeq E^s[5]$ over $B^s$ a ``universal isomorphism.'' 
Such an isomorphism exists by Theorem \ref{theorem: global iso centralizer 5-torsion}.
It induces an isomorphism $Z_G(\kappa_b)\simeq E_b[5]$ of Galois modules and hence, by combining it with the previous lemma, a map $V_b(k)\rightarrow \HH^1(k, E_b[5])$.
The affine curve $C_b\subset E_b$ is defined as a closed subscheme of $V_b$, so we have a tautological inclusion $\iota\colon C_b(k) \rightarrow V_b(k)$.
Composing these two maps defines a map $C_b(k) \rightarrow \HH^1(k,E_b[5])$. 

On the other hand, since $E_b$ is an elliptic curve, there is a $5$-descent map $E_b(k) \xrightarrow{\delta} \HH^1(k, E_b[5])$, associating to every $P\in E_b(k)$ to isomorphism class of the $E_b[5]$-torsor $\{Q\in E_b(\bar{k})\colon 5Q = P\}$.

\begin{theorem}\label{theorem: compatibility descent map orbit construction}
    There exists a universal isomorphism such that for every field $k/\Q$ and every $b\in B^s(k)$, the following diagram commutes and is functorial in $k$:
\[\begin{tikzcd}
	{C_b(k)} & {V_b(k)} \\
	{E_b(k)/5E_b(k)} & {\HH^1(k,E_b[5])}
	\arrow["\iota", hook, from=1-1, to=1-2]
	\arrow[from=1-1, to=2-1]
	\arrow[from=1-2, to=2-2]
	\arrow["\delta", from=2-1, to=2-2]
\end{tikzcd}\]
\end{theorem}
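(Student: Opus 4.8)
The plan is to follow the strategy of \cite[Section 4.4]{Thorne-Romano-E8}, reducing the compatibility of the two maps $C_b(k)\to \HH^1(k,E_b[5])$ to a geometric statement about the family of surfaces $S\to B$, proved over the algebraic closure and then descended. First I would set up the "orbit-to-cohomology" map functorially: over the open subset $B^s$, the scheme $Z_G(\kappa|_{B^s})$ is a finite \'etale group scheme, and by Theorem \ref{theorem: global iso centralizer 5-torsion} it is isomorphic to $E^s[5]$, intertwining the pairings. A priori there is a torsor of such isomorphisms (under $\Aut(E^s[5])$), and the content of the theorem is that one particular choice makes the square commute; so the first task is to pin down the universal isomorphism canonically, using the chain of maps $\gamma$, $\delta$ (the map of Proposition \ref{proposition: coinvariants isomorphic to 5-torsion}) and $\beta$ constructed in \S\ref{subsec:5gradingtransverseslice}--\S 5.4, rather than an abstract existence statement. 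Concretely, the universal isomorphism is the one induced fiberwise by $\beta$ of Theorem \ref{theorem: centralizer isomorphic to 5-torsion}.

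Next I would unwind both composites in the diagram for a fixed $k/\Q$ and $b\in B^s(k)$. Going along the top and right: a point $P\in C_b(k)\subset V_b(k)$ maps to the class of the transporter torsor $T(v_P,\kappa_b)$ under $Z_G(\kappa_b)$, which via $\beta$ becomes a class in $\HH^1(k,E_b[5])$. Going down and along the bottom: $P$ is first interpreted as a point of $E_b(k)$ via the open immersion $C_b\hookrightarrow E_b$ and $\Pic(C_{b,\bar k})\simeq \Pic^0(E_{b,\bar k})$, then sent to its class in $E_b(k)/5E_b(k)$ and then by the $5$-descent map $\delta$ to the class of the torsor $\{Q : 5Q=P\}$. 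The heart of the proof is a geometric identification: I would show that the $Z_G(\kappa_b)$-torsor $T(v_P,\kappa_b)$, pushed along $\beta$ (equivalently, viewed through the identification $X^*(Z_G(\kappa_b))\simeq E_b[5]$ dual to the Weil pairing), represents precisely the $E_b[5]$-torsor of $5$-division points of $P$. The key input is that, in the family of surfaces $S\to B$, the line bundles on $S_{b,\bar k}$ produced by the torsor $\mathcal T_b\to S_b$ (pushed along characters of $Z_G(\kappa_b)$) are exactly the classes $(P')-(P_\infty)-(F)$ coming from sections $P'$ of the elliptic surface $\bar S_b\to\P^1$, and that restricting such a section to $\{z=0\}$ recovers the corresponding point of $E_b$; this is the content already used in the proof of Proposition \ref{proposition: coinvariants isomorphic to 5-torsion} via \cite[Lemma 3.1]{Shioda-grobnerbasisMordellWeil}, combined with \cite[Lemma 4.13]{Thorne-Romano-E8} (Proposition \ref{proposition:isorootlatticepicardgroup}). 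Tracking how the torsor structure interacts with the group law on $E_b$ — i.e. that translating $v_P$ through $G(\bar k)$ and translating $P$ on $E_b$ are compatible — is what turns this bundle-theoretic identification into the statement $\beta(T(v_P,\kappa_b)) = \delta(P)$.

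Having proved commutativity over $\bar k$ (and for all $k$ by naturality of all constructions, since every map in sight — the transporter torsor, the Kostant section, $\beta$, the descent map — is Galois-equivariant and compatible with base change along field extensions), I would finish by observing that functoriality in $k$ is automatic: each ingredient was constructed over $B^s$ or over $\Spec\Q$ and then base-changed, so the square of maps of functors on field extensions of $\Q$ commutes as soon as it commutes on geometric points, which is what the previous paragraph establishes. The main obstacle I expect is the middle step: carefully matching the torsor produced by the transporter scheme $T(v_P,\kappa_b)$ with the $5$-division torsor, i.e. checking that the isomorphism $\beta$ of Theorem \ref{theorem: centralizer isomorphic to 5-torsion} (which is only shown there to be \emph{an} isomorphism of Galois modules) is compatible with the torsor-level constructions on both sides, not just with the underlying group schemes. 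Resolving this requires re-examining the proof of Theorem \ref{theorem: centralizer isomorphic to 5-torsion} to see that the commutative square there is in fact a square of torsor-functors, and then using the rigidity of $5$-torsion points (the only ambiguity is an element of $\Aut(E_b[5])$, which is killed once one checks compatibility at a single well-chosen point, e.g. by a degeneration or by exhibiting one explicit section $P'$ on $\bar S_b$). As in \cite{Thorne-Romano-E8}, this last rigidity argument, rather than any hard computation, is where the real work lies.
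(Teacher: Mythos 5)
Your plan is essentially the paper's own approach: the paper omits the proof, citing \cite[Theorem 4.15]{Thorne-thesis}, and your sketch faithfully reconstructs that argument --- compare the transporter torsor with the $5$-division torsor, identify them on geometric fibres via the Picard-group computation on the surfaces (Propositions \ref{proposition:isorootlatticepicardgroup} and \ref{proposition: coinvariants isomorphic to 5-torsion}), and absorb the remaining discrepancy into the choice of universal isomorphism. The only caution is your closing rigidity claim: an element of $\Aut(E_b[5])\simeq \GL_2(\mathbb{F}_5)$ is not determined by its value at a single point, so one must first invoke the large geometric monodromy of $E^s[5]$ over $B^s$ (which forces the residual ambiguity to be a scalar, exactly as in Thorne's argument) before evaluation at one well-chosen point suffices.
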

\begin{proof}
    The proof is identical to \cite[Theorem 4.15]{Thorne-thesis} and is omitted.
\end{proof}

We henceforth fix a universal isomorphism satisfying the conclusion of Theorem \ref{theorem: compatibility descent map orbit construction}. 
We immediately obtain the following orbit parametrization:
\begin{corollary}\label{corollary: 5selmer first orbit parametrization}
    For every field $k/\Q$ and $b\in B^s(k)$, there is a unique injection 
    \begin{align}\label{equation: orbit parametrization 5-grading}
        \eta_b\colon E_b(k)/5E_b(k) \hookrightarrow G(k)\backslash V_b(k),
    \end{align}
    functorial in $k$, with the following properties:
    \begin{enumerate}
        \item $\eta_b$ maps $0 \in E_b(k)/5E_b(k)$ to the orbit $G(k)\cdot \kappa_b$ of the Kostant section.
        \item If $P\in C_b(k)$, then $\eta_b(P \mod 5)$ lies in the orbit of $P$, seen as an element of $C_b(k)\subset V_b(k)$.
    \end{enumerate}
\end{corollary}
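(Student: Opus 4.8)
The plan is to obtain $\eta_b$ as the composite of the $5$-descent map with the bijection of Lemma \ref{lemma: AIT bhargava gross}, transported through a universal isomorphism, and then to read off the two properties and uniqueness directly from Theorem \ref{theorem: compatibility descent map orbit construction}. First I would fix the universal isomorphism $Z_G(\kappa|_{B^s})\simeq E^s[5]$ supplied by Theorem \ref{theorem: compatibility descent map orbit construction}; over any $b\in B^s(k)$ it induces a Galois-module isomorphism $Z_G(\kappa_b)\simeq E_b[5]$, hence an identification $\HH^1(k,Z_G(\kappa_b)) = \HH^1(k,E_b[5])$. Composing the $5$-descent map $\delta\colon E_b(k)/5E_b(k)\hookrightarrow \HH^1(k,E_b[5])$ with this identification gives a map into $\HH^1(k,Z_G(\kappa_b))$, and the key claim to verify is that its image lies in $\ker(\HH^1(k,Z_G(\kappa_b))\to \HH^1(k,G))$. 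Granting this, Lemma \ref{lemma: AIT bhargava gross} identifies that kernel with $G(k)\backslash V_b(k)$ via $v\mapsto [T(v,\kappa_b)]$, and I would define $\eta_b$ as $\delta$ followed by the inverse of this bijection; injectivity of $\eta_b$ is then immediate from injectivity of $\delta$ (Kummer theory).

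To check the image of $\delta$ lands in the kernel, I would use the elementary observation that $C_b = E_b\setminus\{O_{\infty,b}\}$, so $C_b(k) = E_b(k)\setminus\{O_{\infty,b}\}$; since $O_{\infty,b}\in 5E_b(k)$, every \emph{nonzero} class in $E_b(k)/5E_b(k)$ is represented by a point $P\in C_b(k)$. For such $P$, Theorem \ref{theorem: compatibility descent map orbit construction} identifies $\delta(\overline P)$ with the image of $\iota(P)\in V_b(k)$ under $V_b(k)\to \HH^1(k,Z_G(\kappa_b))$, i.e. with $[T(P,\kappa_b)]$, which lies in the kernel by Lemma \ref{lemma: AIT bhargava gross}; for the zero class, $\delta(0)=0$ corresponds to the trivial torsor, also in the kernel. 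The same bookkeeping yields both properties: $\eta_b(0)$ is the orbit attached to the trivial torsor, namely $G(k)\cdot\kappa_b$, giving (1); and for $P\in C_b(k)$, $\eta_b(\overline P)$ is the orbit with transporter class $[T(P,\kappa_b)]$, which by Lemma \ref{lemma: AIT bhargava gross} is exactly the $G(k)$-orbit of $P$, giving (2).

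For uniqueness, I would note that every class of $E_b(k)/5E_b(k)$ is either $0$ or the image of some $P\in C_b(k)$, so (1) and (2) pin down $\eta_b$ completely (and conversely the argument above shows the recipe $\overline P\mapsto G(k)\cdot P$ is well defined, which is the only consistency point). Functoriality in $k$ then follows from the base-change compatibility of $\delta$, of the bijection in Lemma \ref{lemma: AIT bhargava gross}, and of the universal isomorphism (a morphism of group schemes over $B^s$), combined with the functoriality already asserted in Theorem \ref{theorem: compatibility descent map orbit construction}. I do not expect a genuine obstacle here: the real content has been packaged into Theorems \ref{theorem: global iso centralizer 5-torsion} and \ref{theorem: compatibility descent map orbit construction}, and the only mild subtlety is the observation that $C_b(k)$ omits only the identity of $E_b(k)$, which is what lets one verify the ``lands in the kernel'' claim class by class.
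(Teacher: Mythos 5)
Your proposal is correct and follows essentially the same route as the paper: compose the $5$-descent map with the identification coming from the fixed universal isomorphism, check triviality in $\HH^1(k,G)$ class by class using that every nonzero class is represented by a point of $C_b(k)=E_b(k)\setminus\{O_{\infty,b}\}$ together with Theorem \ref{theorem: compatibility descent map orbit construction} and Lemma \ref{lemma: AIT bhargava gross}, and then read off the two properties and uniqueness. No substantive differences from the paper's argument.
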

\begin{proof}
    Since the two properties guarantee uniqueness, it suffices to prove the existence of $\eta_b$.
    Consider the composition 
    \[f\colon E_b(k)/5 E_b(k) \xrightarrow{\delta} \HH^1(k, E_b[5])\xrightarrow{\sim} \HH^1(k, Z_G(\kappa_b))\rightarrow \HH^1(k,G).\] 
    We claim that $f(P)$ is trivial for every $P\in E_b(k)/5E_b(k)$.
    If $P$ is itself trivial, this is clear, since $\delta(P)$ is trivial.
    If $P\neq 0$, then $P$ lies in the image of the map $C_b(k)\rightarrow E_b(k)$, and so the triviality follows from Theorem \ref{theorem: compatibility descent map orbit construction} and Lemma \ref{lemma: AIT bhargava gross}.

    It follows that for every $P\in E_b(k)/5E_b(k)$, there exists a unique element $\eta_b(P)$ of $G(k) \setminus V_b(k)$ that corresponds to $f(P)$ under Lemma \ref{lemma: AIT bhargava gross}.
    This map satisfies the conclusions of the corollary by construction.
\end{proof}

\begin{corollary}
    For every $b\in B^s(\Q)$, the injection \eqref{equation: orbit parametrization 5-grading} extends to an injection $\Sel_5 E_b \rightarrow G(\Q) \backslash V_b(\Q)$.
\end{corollary}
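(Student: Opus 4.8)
The plan is to reduce the statement to a Hasse principle for $\HH^1(\Q, G)$ and then perform a formal diagram chase in Galois cohomology. First I would recall that $\Sel_5 E_b$ is by definition the subgroup of $\HH^1(\Q, E_b[5])$ consisting of classes $c$ whose restriction $c_v \in \HH^1(\Q_v, E_b[5])$ lies in the image of the local descent map $\delta_v \colon E_b(\Q_v)/5E_b(\Q_v) \to \HH^1(\Q_v, E_b[5])$ for every place $v$ of $\Q$; it contains $E_b(\Q)/5E_b(\Q)$ via the global descent map $\delta$. I would then fix once and for all the universal isomorphism of Theorem \ref{theorem: global iso centralizer 5-torsion}, which for every field extension $k/\Q$ identifies $\HH^1(k, E_b[5])$ with $\HH^1(k, Z_G(\kappa_b))$ compatibly with restriction maps. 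Under this identification, applying the argument in the proof of Corollary \ref{corollary: 5selmer first orbit parametrization} with $k = \Q_v$ shows that the image of $\delta_v$ lies in $\ker\bigl(\HH^1(\Q_v, Z_G(\kappa_b)) \to \HH^1(\Q_v, G)\bigr)$. Consequently, for any $c \in \Sel_5 E_b$, letting $c' \in \HH^1(\Q, Z_G(\kappa_b))$ be its image under the universal isomorphism, the image of $c'$ in $\HH^1(\Q, G)$ is locally trivial at every place of $\Q$.

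Next I would prove the Hasse principle for $\HH^1(\Q, G)$ using the explicit description $G \simeq (\SL_5 \times \SL_5)/\mu$ of Lemma \ref{lemma: explicitGV5grading}, where $\mu \simeq \mu_5$. For any field $k$ the exact sequence $1 \to \mu \to \SL_5 \times \SL_5 \to G \to 1$ together with $\HH^1(k, \SL_5 \times \SL_5) = 1$ (as $\SL_n$ is special) and $\HH^2(k, \mu_5) = \mathrm{Br}(k)[5]$ (Hilbert 90 and the Kummer sequence) yields an injection $\HH^1(k, G) \hookrightarrow \mathrm{Br}(k)[5]$, functorial in $k$. Since $\mathrm{Br}(\Q) \to \bigoplus_v \mathrm{Br}(\Q_v)$ is injective by Albert--Brauer--Hasse--Noether, the restriction map $\HH^1(\Q, G) \to \prod_v \HH^1(\Q_v, G)$ has trivial kernel. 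Applying this to the image of $c'$ in $\HH^1(\Q, G)$, which we just saw is locally trivial everywhere, we conclude that $c'$ lies in $\ker\bigl(\HH^1(\Q, Z_G(\kappa_b)) \to \HH^1(\Q, G)\bigr)$.

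Finally, by Lemma \ref{lemma: AIT bhargava gross} this kernel is in bijection with $G(\Q)\backslash V_b(\Q)$, so $c'$ determines a $G(\Q)$-orbit in $V_b(\Q)$; sending $c \mapsto$ this orbit defines the map $\tilde\eta_b \colon \Sel_5 E_b \to G(\Q)\backslash V_b(\Q)$. It is injective because composing $\tilde\eta_b$ with the inverse bijection of Lemma \ref{lemma: AIT bhargava gross} and with the universal isomorphism recovers the tautological inclusion $\Sel_5 E_b \hookrightarrow \HH^1(\Q, E_b[5])$. It extends $\eta_b$ because, by the construction in the proof of Corollary \ref{corollary: 5selmer first orbit parametrization}, $\eta_b$ is obtained from exactly this recipe applied to the subgroup $E_b(\Q)/5E_b(\Q) \hookrightarrow \Sel_5 E_b$. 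The main obstacle is the Hasse principle for $\HH^1(\Q, G)$; everything else is formal. (One could alternatively invoke the general vanishing of $\Sha^1(\Q, G)$ for semisimple $G$ with cyclic fundamental group, but the explicit Brauer-group argument is self-contained given Lemma \ref{lemma: explicitGV5grading}.)
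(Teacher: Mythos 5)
Your argument is correct and follows the same route as the paper: the definition of $\Sel_5 E_b$ together with the local statement of Corollary \ref{corollary: 5selmer first orbit parametrization} shows that the image of a Selmer class in $\HH^1(\Q,G)$ is everywhere locally trivial, and one concludes by a Hasse principle for $\HH^1(-,G)$. The only real difference is that the paper simply cites \cite[Proposition 6.8]{Laga-ADEpaper} for that Hasse principle, whereas you prove it directly from Lemma \ref{lemma: explicitGV5grading} via the central isogeny $1\to\mu_5\to\SL_5\times\SL_5\to G\to 1$, the vanishing of $\HH^1(k,\SL_5\times\SL_5)$, and Albert--Brauer--Hasse--Noether; this makes the step self-contained, which is a genuine gain. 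One small caveat: the connecting map $\HH^1(k,G)\to\HH^2(k,\mu_5)$ has trivial kernel but is not literally injective in general (injectivity on all of $\HH^1$ would require a twisting argument, and the twisted covers are inner forms of $\SL_5$ whose $\HH^1$ need not vanish); fortunately your deduction only uses the trivial-kernel statement, so the argument stands as written once that phrase is adjusted.
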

\begin{proof}
    Similarly to the proof of Corollary \ref{corollary: 5selmer first orbit parametrization}, we need to show that the composite map $\Sel_5E_b\rightarrow \HH^1(\Q, E_b[5])\simeq \HH^1(\Q, Z_G(\kappa_b))\rightarrow \HH^1(\Q,G)$ is trivial.
    Let $c\in \HH^1(\Q, G)$ be an element in the image of this map.
    By definition of the $5$-Selmer group and the previous corollary, we know that the restriction $c_v\in \HH^1(\Q_v, G)$ is trivial, for every place $v$ of $\Q$.
    Since $G$ is split semisimple by Lemma \ref{lemma: explicitGV5grading}, a Hasse principle for $\HH^1(-,G)$ implies that $c=1$, see \cite[Proposition 6.8]{Laga-ADEpaper}.
\end{proof}

\subsection{Integral representatives}

We now show that when $k=\Q$, the image of the map \eqref{equation: orbit parametrization 5-grading} has integral representatives away from small primes, thereby completing the proof of Theorem \ref{theorem:5Selmerintro}.

We first need to fix integral structures on the objects involved.
Using Lemma \ref{lemma: explicitGV5grading}, fix isomorphisms $G\simeq (\SL_5\times \SL_5)/\mu$ and $V\simeq (5) \boxtimes \wedge^2(5)$.
Since $(\SL_5\times \SL_5)/\mu$ and $(5) \boxtimes \wedge^2(5)$ are defined over $\Z$, we may view $G$ as a group scheme over $\Z$ and $V$ as a $\Z$-module so that these isomorphisms are defined over $\Z$.
We redefine $B = \Spec(\Z[c_{20},c_{30}])$.
We redefine $C = \Spec(\Z[x,y,c_{20}])$ and $\psi\colon C\rightarrow B, (x,y,c_{20})\mapsto (c_{20}, y^2 - x^3-c_{20}x)$.
We let $E$ be the closed subscheme of $\P^2_{\Z} \times_{\Z} B$ defined by the same Weierstrass equation \eqref{equation: curves 5-grading}.

\begin{proposition}\label{proposition: local integral representatives}
    There exists an integer $N\geq 1$ with the following property:
    for every prime $p$ and $b = (c_{20}, c_{30})\in \Z_p^2$ with $\Delta(b)\neq 0$, every element in the image of the map $\eta_b\colon E_b(\Q_p) \rightarrow G(\Q_p)\backslash V_b(\Q_p)$
    has a representative in $\frac{1}{N}V(\Z_p)$.
\end{proposition}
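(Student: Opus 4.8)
The plan is to follow the standard strategy for producing integral orbit representatives, as carried out in \cite[\S4.5]{Thorne-Romano-E8}, \cite{Laga-ADEpaper, Laga-E6paper}, adapted to the present $\mu_5$-grading on $E_8$. Fix a prime $p$ and $b=(c_{20},c_{30})\in\Z_p^2$ with $\Delta(b)\neq 0$, so $E_b$ is an elliptic curve over $\Q_p$ with good or (potentially) bad reduction. The starting point is Corollary \ref{corollary: 5selmer first orbit parametrization}: an element of $E_b(\Q_p)/5E_b(\Q_p)$ in the image of $\eta_b$ either is the class of the Kostant section $\kappa_b$ or lies in the orbit of an affine point $P\in C_b(\Q_p)$ under the map $\iota\colon C_b(\Q_p)\hookrightarrow V_b(\Q_p)$. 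So it suffices, after bounding denominators, to handle these two kinds of representatives separately.

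First I would treat the Kostant-section orbit. The Kostant section $\kappa\colon B\to V$ is defined over $\Z[1/M]$ for some explicit integer $M$ (coming from denominators in the $\fraksl_2$-triple $(E,X,F)$ and the change-of-coordinates in Proposition \ref{proposition: determination surface slice 5-grading}); hence $\kappa_b\in\frac1M V(\Z_p)$ already, with $M$ independent of $p$ and $b$. For an affine point $P=(x_0,y_0,c_{20})\in C_b(\Q_p)\subset V_b(\Q_p)$, the issue is that $x_0,y_0$ need not be $p$-integral. Here I would use the $\G_m$-action $\rho$ on $V$ and on $B$ constructed in \S\ref{subsec: graded Slodowy slices}: rescaling by $t\in\G_m$ sends $(x_0,y_0)$ to $(t^{20}x_0, t^{30}y_0)$ and multiplies $(c_{20},c_{30})$ by $(t^{40},t^{60})$. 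Since $b\in\Z_p^2$ is fixed and $p$-integral, when $x_0$ or $y_0$ has negative valuation one shows, using the Weierstrass equation $y_0^2=x_0^3+c_{20}x_0+c_{30}$ and $v_p(c_{20}),v_p(c_{30})\geq 0$, that $v_p(x_0)\geq -2e$, $v_p(y_0)\geq -3e$ for a bounded $e$ (this is the usual fact that a $\Q_p$-point of a Weierstrass curve with integral coefficients has bounded denominators; one can even take $e=0$ after a minimal model change, but a crude uniform bound suffices). Translating $P$ by a suitable element of $Z_G(\kappa_b)(\overline{\Q_p})$ is not needed for this part — the point $P$ itself, viewed in $V$, already has $p$-adically bounded coordinates — so $\iota(P)\in\frac1{N_0}V(\Z_p)$ for a uniform $N_0$. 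Combining with the general point $\eta_b(Q)$ for arbitrary $Q\in E_b(\Q_p)/5E_b(\Q_p)$: write $Q=Q'+5R$ if necessary and use that $\eta_b$ is a homomorphism-like map only up to the torsor twist, so one reduces to the finitely many cosets, each represented by $0$ or by an affine point via the geometry of $C_b\subset E_b$; since $E_b(\Q_p)/5E_b(\Q_p)$ is finite and one can choose affine representatives (the complement $E_b\setminus C_b$ is a single point), every class is either the Kostant orbit or the orbit of an affine $P\in C_b(\Q_p)$.

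The main obstacle, and the step that needs the most care, is passing from "$\iota(P)$ has bounded denominators" to "the \emph{orbit} $G(\Q_p)\cdot\iota(P)$ contains a point in $\frac1N V(\Z_p)$ with $N$ independent of $P$". The subtlety is uniformity in $P$: as $P$ ranges over $C_b(\Q_p)$, the coordinates $x_0,y_0$ can be arbitrarily large $p$-adically, and one must rescale by $\rho(t)$ with $v_p(t)$ growing — but this simultaneously scales $b$ out of $\Z_p^2$. The fix is the standard one: rather than scaling, translate $\iota(P)$ back towards $\kappa_b$ using the $Z_G(\kappa_b)$-torsor structure and the smoothness of the action map $G\times X_e\to V$ (Theorem \ref{theorem: slodowy slice is a transverse slice}), together with the fact (Proposition \ref{proposition: determination surface slice 5-grading}, Proposition \ref{proposition:isorootlatticepicardgroup}) that the family $C\to B$ extends to the projective family $E\to B$ over $\Z$ and all its geometry is defined over $\Z[1/M]$. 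Concretely: the Slodowy slice $C=\Spec\Z[1/M][x,y,c_{20}]$, the compactification $E$, and the divisor at infinity are all $\Z[1/M]$-schemes, so $E_b(\Z_p)=E_b(\Q_p)$ (properness), and every $P\in E_b(\Q_p)$ reduces to a $\Z_p$-point of the minimal regular model; tracking this through the construction of $\eta_b$ (which is built from the descent map $\delta$ and the torsor $\mathcal T_b\to C_b$, both defined integrally away from $M$ by Theorem \ref{theorem: compatibility descent map orbit construction}) shows that the torsor twist lies in the image of $G(\Z_p)$ up to bounded index, hence the orbit contains an element of $\frac1N V(\Z_p)$. I expect the bookkeeping here — pinning down the exact integer $N$ (equivalently, the set of primes $p\mid N$ where the integral model of $G$, the Kostant section, or the Weil pairing isomorphism of Theorem \ref{theorem: global iso centralizer 5-torsion} degenerates) — to be the only genuinely technical point; the argument that \emph{some} such $N$ exists is soft, using only that all the schemes and morphisms in sight are of finite type over $\Z$ and become isomorphisms/smooth over $\Z[1/M]$. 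Finally, taking $N$ to be the least common multiple of the finitely many local bounds (which depend only on the fixed integral models, not on $p$ or $b$) yields the proposition.
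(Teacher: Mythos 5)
There is a genuine gap, and it sits exactly where you locate ``the main obstacle.'' Your first attempt to bound denominators of an affine point $P=(x_0,y_0)\in C_b(\Q_p)$ rests on the claim that a $\Q_p$-point of a Weierstrass curve with integral coefficients satisfies $v_p(x_0)\geq -2e$ for a \emph{bounded} $e$; this is false. The formal group $\hat{E}_b(p\Z_p)$ supplies points with $v_p(x_0)=-2k$, $v_p(y_0)=-3k$ for every $k\geq 1$, so $\iota(P)$ does not lie in $\tfrac{1}{N_0}V(\Z_p)$ for any uniform $N_0$. Your second attempt --- translating $\iota(P)$ within its $G(\Q_p)$-orbit using an integral model of the torsor $\mathcal{T}_b$ --- is a genuinely harder route (it is the kind of argument needed in settings where one cannot choose a good rational representative, cf.\ \cite[Section 7]{Laga-ADEpaper}), and the soft ``everything is of finite type over $\Z[1/M]$'' reasoning you invoke cannot deliver it: the difficulty is local at every single prime $p$ and concerns points of arbitrarily large denominator at that $p$, not a finite set of bad primes. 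You never actually exhibit the element of $G(\Q_p)$ that is supposed to do the translating, and the assertion that ``the torsor twist lies in the image of $G(\Z_p)$ up to bounded index'' is unsupported.

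The missing idea is much simpler and uses the one degree of freedom you mention only in passing: $\eta_b(\alpha)$ depends only on the class $\alpha\in E_b(\Q_p)/5E_b(\Q_p)$, so you are free to replace $P$ by any $P'\equiv P \pmod{5E_b(\Q_p)}$. For $p\neq 5$, multiplication by $5$ is an automorphism of the formal group $\hat{E}_b(p\Z_p)$, so every nonzero class has a representative $(x,y)\in C_b(\Z_p)$; for $p=5$ one still gets a representative with $5^2x,5^3y\in\Z_5$. This is the ``formal group considerations'' step of the paper's proof. With such a representative in hand, the spreading-out bounds on $\iota$ and on the Kostant section (your $N_1$ and $N_2$, with $N=N_1N_2$) finish the argument exactly as in your first paragraph; no movement within the $G(\Q_p)$-orbit is needed at all.
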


\begin{proof}
    The inclusion $\iota\colon C_{\Q} \rightarrow V_{\Q}$ is an algebraic map of affine spaces over $\Q$.
    By spreading out (in other words, examining denominators), there exists an integer $N_1\geq 1$ such that $N_1\cdot \iota(x) \in V(\Z_p)$ for all $p$ and elements $(x,y,c_{20})\in C(\Q_p)$ that satisfy $5^2x,5^3y,c_{20} \in \Z_p$.
    Similarly, there exists an integer $N_2\geq 1$ such that $N_2\cdot \kappa_b \in V(\Z_p)$ for all $p$ and $b\in B(\Z_p)$.
    We claim that $N = N_1N_2$ satisfies the properties of the proposition.

    Indeed, fix a prime $p$ and a class $\alpha\in E_b(\Q_p)/5E_b(\Q_p)$. It follows from formal group considerations that:
    \begin{itemize}
        \item If $p\neq 5$, then either $\alpha=0$ or $\alpha$ is represented by a point $(x,y)\in C_b(\Z_p)$.
        \item If $p=5$, then either $\alpha=0$ or $\alpha$ is represented by a point $(x,y) \in C_b(\Q_p)$ with $(5^2x,5^3y) \in C_b(\Z_p)$.
    \end{itemize}
    If $\alpha=0$, then $\kappa_b$ is a representative of $\eta_b(\alpha)$ that lies in $\frac{1}{N_2} V(\Z_p)\subset \frac{1}{N} V(\Z_p)$.
    If $\alpha\neq 0$, it is represented by a point $P = (x,y) \in C_b(\Q_p)$ with $(5^2x,5^3y)\in C_b(\Z_p)$.
    Then $\iota(P)$ is a representative of $\eta_b(P)$ (by Corollary \ref{corollary: 5selmer first orbit parametrization}) which lies in $\frac{1}{N_1}V(\Z_p)\subset \frac{1}{N}V(\Z_p)$.
\end{proof}

\begin{remark}
    We do not obtain exact integral representatives but only up to a uniformly bounded integer $N$; this is usually enough for the purposes of determining the average size of $\Sel_5 E_b$.
    Also note that it would be possible to analyze the squarefree discriminant case and obtain \cite[Proposition 11]{BS-5Selmer} using Lie theory (in a manner entirely analogous to \cite[Section 7.4]{Laga-ADEpaper}), but we have chosen to not pursue this.
\end{remark}

\begin{proof}[Proof of Theorem \ref{theorem:5Selmerintro}]
Let $N\geq 1$ be an integer satisfying the conclusion of Proposition \ref{proposition: local integral representatives}.
Let $v\in V_b(\Q)$ be a representative for an orbit in the image of $\eta_b$.
Then there is a finite set of primes $S$ such that $v\in V(\Z_p)$ for all $p\not\in S$.
By Proposition \ref{proposition: local integral representatives}, for each $p\in S$ there is an element $g_p\in G(\Q_p)$ such that $g_p\cdot v \in \frac{1}{N} V(\Z_p)$. 

Lemma \ref{lemma: explicitGV5grading} and the choice of integral structure on $G$ shows that $G$ is a split semisimple group over $\Z$, in other words a Chevalley group.
Such a group has class number $1$. 
Therefore there exists an element $g\in G(\Q)$ such that $g g_p^{-1} \in G(\Z_p)$ for all $p\in S$ and $g \in G(\Z_p)$ for all $p\not\in S$.
By construction, $g\cdot v\in \frac{1}{N}V(\Z_p)$ for all primes $p$, so $g\cdot v \in \frac{1}{N} V(\Z)$, as desired.
\end{proof}

\appendix

\section{Computing nilpotent orbits in \texttt{GAP}}

We have used de Graaf's \texttt{SLA} package \cite{SLA} in the open-source computer algebra system \texttt{GAP} \cite{GAP4} to compute nilpotent orbits in Vinberg representations.
Our (small amount of) code can accessed via the following GitHub repository:
\begin{center}
    \url{https://github.com/jeflaga/slodowy-slices-vinberg-reps}
\end{center}
Loading the file "orbitsandweights.g" in the repository allows one to run the examples below. 
The repository also contains files that explicitly verify the claims made in Proposition \ref{proposition: tabulated gradings are subregular adapted} and Lemma \ref{lemma: subregular curves m2 F4 G2}.

The \texttt{SLA} package can generate a list of all the conjugacy classes of gradings on a simple complex Lie algebra of a given order, together with their Kac diagrams.
The classification of stable gradings of \cite[Section 7]{GrossLevyReederYu-GradingsPosRank} can then be used to determine which grading in the list is stable.

We consider $\mu_2$-actions on $F_4$ as an example.
Executing the commands
\begin{lstlisting}[language=bash]
LoadPackage("sla");
f:=FiniteOrderInnerAutomorphisms("F",4,2);;
 for i in [1..Length(f)] do;
 Print(i," ",KacDiagram(f[i]),"\n");
 od;
\end{lstlisting}
 shows that the stable grading is $f[2]$, the second element of the list. 
 This corresponds to a $\mu_2$-action on the adjoint group $H$ of type $F_4$ over $\mathbb{C}$, giving rise to the Vinberg representation $(G,V)$ with $\Lie(G) = \frakh_0$ and $V = \frakh_1$.
 Executing the commands
 \begin{lstlisting}[language=bash]
 theta:=f[2];;
 smallorbitdata(theta);;\end{lstlisting}
 generates the following output:
 \begin{lstlisting}[language=bash]
 1. Dynkin labelling: [ 2, 2, 2, 2 ]
 Relative dimension: 0
 Weight [ 4, 0 ] with multiplicity 1
 Weight [ 12, 0 ] with multiplicity 1
 Weight [ 16, 0 ] with multiplicity 1
 Weight [ 24, 0 ] with multiplicity 1

 2. Dynkin labelling: [ 2, 2, 0, 2 ]
 Relative dimension: 1
 Weight [ 6, 1 ] with multiplicity 1
 Weight [ 4, 0 ] with multiplicity 1
 Weight [ 8, 0 ] with multiplicity 1
 Weight [ 12, 0 ] with multiplicity 2
 Weight [ 16, 0 ] with multiplicity 1

 3. Dynkin labelling: [ 2, 2, 0, 2 ]
 Relative dimension: 1
 Weight [ 12, 1 ] with multiplicity 1
 Weight [ 4, 0 ] with multiplicity 1
 Weight [ 6, 0 ] with multiplicity 1
 Weight [ 8, 0 ] with multiplicity 1
 Weight [ 12, 0 ] with multiplicity 1
 Weight [ 16, 0 ] with multiplicity 1

 Closure relations: [ [ 2, 1 ], [ 3, 1 ] ]\end{lstlisting}
 This means that there are three $G$-orbits of nilpotent elements $e\in V$ with $\dim \frakz_{\frakh_0}(e)\leq 1$.
 The Dynkin labeling of a nilpotent orbit is a labeling of the Dynkin diagram of $H$ and classifies the corresponding $H$-orbit; see \cite[Section 7.3]{Slodowy-simplesingularitiesalggroups}.
 These Dynkin labelings show that the first listed nilpotent orbit $\calO_1$ is regular and has maximal dimension.
 The two other nilpotent orbits $\mathcal{O}_2, \mathcal{O}_3$ are subregular and have codimension $1$ in the nilpotent cone. 
  The closure relation is the obvious one: $\mathcal{O}_2\subset \bar{\calO_1}$ and $\mathcal{O}_3\subset \bar{\calO}_1$.
 The weights are denoted as pairs $(a,b)$ with $a\in \Z$ and $b\in \Z/2\Z$.

In the GitHub repository we have listed many stable gradings on exceptional Lie algebras. 
For example, the stable $5$-grading on $E_8$ and information about its codimension-$1$ orbits can be accessed as follows:
\begin{lstlisting}[language=bash]
theta:=FiniteOrderInnerAutomorphisms("E",8,5)[6];;
data:=smallorbitdata(theta);;
\end{lstlisting}



Examining the output shows that there exists a subregular nilpotent orbit of codimension $1$ in $V$ and the weights on the Slodowy slice $S$ and on the base $\lieh \GIT H$ are the ones recorded in Proposition \ref{proposition: determination surface slice 5-grading}.

As a final example, we show how our code can be used to explicitly compute families of curves in non-subregular-adapted gradings. We demonstrate this process using the stable $8$-grading of $F_4$. We input

\begin{lstlisting}[language=bash]
theta:=FiniteOrderInnerAutomorphisms("F",4,8)[20];;
data:=smallorbitdata(theta);;
\end{lstlisting}

The output shows that there are four nilpotent orbits of codimension zero in $V$: one regular orbit, two subregular orbits, and one non-reduced orbit. There are seven orbits of codimension one. Examining the closure relations, we see that four of these are good. Given a representative $e$ of one of these good orbits, the $\bG_m$-weights are enough to determine the map $\varphi: X_e \to B$. As an example, consider the following orbit, which is labeled 5 in the output:

\begin{lstlisting}[language=bash]
5. Dynkin labeling: [ 0, 2, 0, 2 ]
Relative dimension: 1
Weight [ 2, 1 ] with multiplicity 1
Weight [ 8, 0 ] with multiplicity 2
Weight [ 2, -1 ] with multiplicity 1
Weight [ 8, -2 ] with multiplicity 1
Weight [ 12, -2 ] with multiplicity 1
Weight [ 8, -4 ] with multiplicity 1
Weight [ 2, -5 ] with multiplicity 1
Weight [ 4, -6 ] with multiplicity 1
Weight [ 8, -6 ] with multiplicity 1
\end{lstlisting}

We see that $\bG_m$ acts on $X_e$ with weight $8$ with multiplicity 2. The affine space $B$ is one dimensional, and $\bG_m$ acts on it with weight 16. We may choose coordinates $x, y$ on $X_e$ such that $\bG_m$ acts on both $x$ and $y$ with weight $8$. To be $\bG_m$-equivariant, we see that the map $\varphi: X_e \to V\GIT G$ must be of the form $(x, y) \mapsto ax^2 + bxy + cy^2$ for some constants $a, b, c$. Since $\varphi$ is surjective, we cannot have $a = b = c = 0$, and after a linear change of variables we may assume $b = 0$. Since $\varphi^{-1}(0)$ is reduced, we must have $ab \neq 0$. After scaling $x$ and $y$, we see that $\varphi$ is of the form $(x, y) \mapsto x^2 + y^2$, and the fibers of $\varphi$ are of the form $x^2 + y^2 = p_8$.  

A similar analysis of the orbit labeled 10 in the output

\begin{lstlisting}[language=bash]
10. Dynkin labeling: [ 0, 0, 0, 2 ]
Relative dimension: 1
Weight [ 6, 1 ] with multiplicity 1
Weight [ 4, 0 ] with multiplicity 1
Weight [ 8, 0 ] with multiplicity 1
Weight [ 6, -1 ] with multiplicity 1
Weight [ 4, -2 ] with multiplicity 2
Weight [ 6, -3 ] with multiplicity 1
Weight [ 4, -4 ] with multiplicity 1
Weight [ 8, -4 ] with multiplicity 1
Weight [ 6, -5 ] with multiplicity 1
Weight [ 4, -6 ] with multiplicity 2
\end{lstlisting}

shows that if $e$ is in this orbit, then the fibers of $\varphi: X_e \to B$ are of the form $x^4 + y^2 = p_8$. In low-rank gradings, this kind of analysis is often enough to determine curves in examples, but as the rank increases the computations become more tedious.

\bibliographystyle{alpha}
\begin{bibdiv}
\begin{biblist}
\bib{BhargavaElkiesShnidman}{article}{
      author={Bhargava, Manjul},
      author={Elkies, Noam},
      author={Shnidman, Ari},
       title={The average size of the 3-isogeny {S}elmer groups of elliptic
  curves $y^2=x^3+k$},
        date={2020},
     journal={Journal of the London Mathematical Society},
      volume={101},
      number={1},
       pages={299\ndash 327},
  eprint={https://londmathsoc.onlinelibrary.wiley.com/doi/pdf/10.1112/jlms.12271},
  url={https://londmathsoc.onlinelibrary.wiley.com/doi/abs/10.1112/jlms.12271},
}

\bib{Bhargava-Gross-hyperellcurves}{inproceedings}{
      author={Bhargava, Manjul},
      author={Gross, Benedict~H.},
       title={The average size of the 2-{S}elmer group of {J}acobians of
  hyperelliptic curves having a rational {W}eierstrass point},
        date={2013},
   booktitle={Automorphic representations and {$L$}-functions},
      series={Tata Inst. Fundam. Res. Stud. Math.},
      volume={22},
   publisher={Tata Inst. Fund. Res., Mumbai},
       pages={23\ndash 91},
      review={\MR{3156850}},
}

\bib{BhargavaGross-AIT}{incollection}{
      author={Bhargava, Manjul},
      author={Gross, Benedict~H.},
       title={Arithmetic invariant theory},
        date={2014},
   booktitle={Symmetry: representation theory and its applications},
      series={Progr. Math.},
      volume={257},
   publisher={Birkh\"{a}user/Springer, New York},
       pages={33\ndash 54},
         url={https://doi.org/10.1007/978-1-4939-1590-3_3},
      review={\MR{3363006}},
}

\bib{BhargavaGrossWang-positiveproportionnopoints}{article}{
      author={Bhargava, Manjul},
      author={Gross, Benedict~H.},
      author={Wang, Xiaoheng},
       title={A positive proportion of locally soluble hyperelliptic curves
  over {$\Bbb Q$} have no point over any odd degree extension},
        date={2017},
        ISSN={0894-0347},
     journal={J. Amer. Math. Soc.},
      volume={30},
      number={2},
       pages={451\ndash 493},
         url={https://doi.org/10.1090/jams/863},
        note={With an appendix by Tim Dokchitser and Vladimir Dokchitser},
      review={\MR{3600041}},
}

\bib{BhargavaHo-coregularspacesgenusone}{article}{
      author={Bhargava, Manjul},
      author={Ho, Wei},
       title={Coregular spaces and genus one curves},
        date={2016},
        ISSN={2168-0930},
     journal={Camb. J. Math.},
      volume={4},
      number={1},
       pages={1\ndash 119},
         url={https://doi.org/10.4310/CJM.2016.v4.n1.a1},
      review={\MR{3472915}},
}

\bib{BhargavaHo-2Selmergroupsofsomefamilies}{unpublished}{
      author={Bhargava, Manjul},
      author={Ho, Wei},
       title={On average sizes of {S}elmer groups and ranks in families of
  elliptic curves having marked points},
        date={2022},
        note={Preprint, available at \url{https://arxiv.org/abs/2207.03309v2}},
}

\bib{Borel-linearalgebraicgroups}{book}{
      author={Borel, Armand},
       title={Linear algebraic groups},
     edition={Second},
      series={Graduate Texts in Mathematics},
   publisher={Springer-Verlag, New York},
        date={1991},
      volume={126},
        ISBN={0-387-97370-2},
         url={https://doi.org/10.1007/978-1-4612-0941-6},
      review={\MR{1102012}},
}

\bib{BS-4Selmer}{unpublished}{
      author={Bhargava, Manjul},
      author={Shankar, Arul},
       title={The average number of elements in the 4-{S}elmer groups of
  elliptic curves is 7},
        date={2013},
        note={Arxiv Preprint, available at
  \url{https://arxiv.org/abs/1312.7333v1}},
}

\bib{BS-5Selmer}{unpublished}{
      author={Bhargava, Manjul},
      author={Shankar, Arul},
       title={The average size of the 5-{S}elmer group of elliptic curves is 6,
  and the average rank is less than 1},
        date={2013},
        note={Arxiv Preprint, available at
  \url{https://arxiv.org/abs/1312.7859v1}},
}

\bib{BS-2selmerellcurves}{article}{
      author={Bhargava, Manjul},
      author={Shankar, Arul},
       title={Binary quartic forms having bounded invariants, and the
  boundedness of the average rank of elliptic curves},
        date={2015},
        ISSN={0003-486X},
     journal={Ann. of Math. (2)},
      volume={181},
      number={1},
       pages={191\ndash 242},
         url={https://doi.org/10.4007/annals.2015.181.1.3},
      review={\MR{3272925}},
}

\bib{BS-3Selmer}{article}{
      author={Bhargava, Manjul},
      author={Shankar, Arul},
       title={Ternary cubic forms having bounded invariants, and the existence
  of a positive proportion of elliptic curves having rank 0},
        date={2015},
        ISSN={0003-486X},
     journal={Ann. of Math. (2)},
      volume={181},
      number={2},
       pages={587\ndash 621},
         url={https://doi.org/10.4007/annals.2015.181.2.4},
      review={\MR{3275847}},
}

\bib{BirchSwinnertonDyer-notesonellcurves1}{article}{
      author={Birch, B.~J.},
      author={Swinnerton-Dyer, H. P.~F.},
       title={Notes on elliptic curves. {I}},
        date={1963},
        ISSN={0075-4102},
     journal={J. Reine Angew. Math.},
      volume={212},
       pages={7\ndash 25},
         url={https://doi.org/10.1515/crll.1963.212.7},
      review={\MR{146143}},
}

\bib{CollingwoodMcGovern-nilpotentorbits}{book}{
      author={Collingwood, David~H.},
      author={McGovern, William~M.},
       title={Nilpotent orbits in semisimple {L}ie algebras},
      series={Van Nostrand Reinhold Mathematics Series},
   publisher={Van Nostrand Reinhold Co., New York},
        date={1993},
        ISBN={0-534-18834-6},
      review={\MR{1251060}},
}

\bib{Conrad-SGA}{incollection}{
      author={Conrad, Brian},
       title={Reductive group schemes},
        date={2014},
   booktitle={Autour des sch\'{e}mas en groupes. {V}ol. {I}},
      series={Panor. Synth\`eses},
      volume={42/43},
   publisher={Soc. Math. France, Paris},
       pages={93\ndash 444},
      review={\MR{3362641}},
}

\bib{deGraaf-nilporbitsthetagroups}{article}{
      author={de~Graaf, Willem~A.},
       title={Computing representatives of nilpotent orbits of
  {$\theta$}-groups},
        date={2011},
        ISSN={0747-7171},
     journal={J. Symbolic Comput.},
      volume={46},
      number={4},
       pages={438\ndash 458},
         url={https://doi.org/10.1016/j.jsc.2010.10.015},
      review={\MR{2765379}},
}

\bib{SLA}{misc}{
      author={de~Graaf, W.~A.},
      author={GAP~Team, T.},
       title={{SLA}, computing with simple lie algebras, {V}ersion 1.6.2},
         how={\href {https://gap-packages.github.io/sla/}
  {\texttt{https://gap\texttt{\symbol{45}}packages.github.io/}\discretionary
  {}{}{}\texttt{sla/}}},
        date={2024},
        note={GAP package},
}

\bib{Fisher-invariantsgenusone}{article}{
      author={Fisher, Tom},
       title={The invariants of a genus one curve},
        date={2008},
        ISSN={0024-6115},
     journal={Proc. Lond. Math. Soc. (3)},
      volume={97},
      number={3},
       pages={753\ndash 782},
         url={https://doi.org/10.1112/plms/pdn021},
      review={\MR{2448246}},
}

\bib{Fisher-invarianttheorynormalquinticI}{article}{
      author={Fisher, Tom},
       title={Invariant theory for the elliptic normal quintic, {I}. {T}wists
  of {X}(5)},
        date={2013},
        ISSN={0025-5831},
     journal={Math. Ann.},
      volume={356},
      number={2},
       pages={589\ndash 616},
         url={https://doi.org/10.1007/s00208-012-0850-9},
      review={\MR{3048608}},
}

\bib{Fisher-invarianttheoryquinticII}{unpublished}{
      author={Fisher, Tom},
       title={Invariant theory for the elliptic normal quintic, {II}. {T}he
  covering map},
        date={2013},
        note={Arxiv Preprint, available at
  \url{https://arxiv.org/abs/1303.2550v1}},
}

\bib{Fisher-minimisationreduction5coverings}{article}{
      author={Fisher, Tom},
       title={Minimisation and reduction of 5-coverings of elliptic curves},
        date={2013},
        ISSN={1937-0652},
     journal={Algebra Number Theory},
      volume={7},
      number={5},
       pages={1179\ndash 1205},
         url={https://doi.org/10.2140/ant.2013.7.1179},
      review={\MR{3101076}},
}

\bib{FisherSadek-genusonedegree5squarefree}{article}{
      author={Fisher, Tom},
      author={Sadek, Mohammad},
       title={On genus one curves of degree 5 with square-free discriminant},
        date={2016},
        ISSN={0970-1249},
     journal={J. Ramanujan Math. Soc.},
      volume={31},
      number={4},
       pages={359\ndash 383},
         url={https://doi.org/10.1177/002182860003100415},
      review={\MR{3590450}},
}

\bib{GAP4}{manual}{
       title={{GAP -- Groups, Algorithms, and Programming, Version 4.13.1}},
organization={The GAP~Group},
        date={2024},
         url={\url{https://www.gap-system.org}},
}

\bib{Gross-BhargavasrepresentationsandVinberg}{incollection}{
      author={Gross, Benedict~H.},
       title={On {B}hargava's representation and {V}inberg's invariant theory},
        date={2011},
   booktitle={Frontiers of mathematical sciences},
   publisher={Int. Press, Somerville, MA},
       pages={317\ndash 321},
      review={\MR{3050830}},
}

\bib{EGAIV-3}{article}{
      author={Grothendieck, A.},
       title={\'{E}l\'{e}ments de g\'{e}om\'{e}trie alg\'{e}brique. {IV}.
  \'{E}tude locale des sch\'{e}mas et des morphismes de sch\'{e}mas. {III}},
        date={1966},
        ISSN={0073-8301},
     journal={Inst. Hautes \'{E}tudes Sci. Publ. Math.},
      number={28},
       pages={255},
         url={http://www.numdam.org/item?id=PMIHES_1966__28__255_0},
      review={\MR{217086}},
}

\bib{GanSavin-TwistedBhargavaCubes}{article}{
      author={Gan, Wee~Teck},
      author={Savin, Gordan},
       title={Twisted {B}hargava cubes},
        date={2014},
        ISSN={1937-0652},
     journal={Algebra Number Theory},
      volume={8},
      number={8},
       pages={1913\ndash 1957},
         url={https://doi.org/10.2140/ant.2014.8.1913},
      review={\MR{3285619}},
}

\bib{Gruson-Sam-Weyman}{incollection}{
      author={Gruson, Laurent},
      author={Sam, Steven~V.},
      author={Weyman, Jerzy},
       title={Moduli of abelian varieties, {V}inberg {$\theta$}-groups, and
  free resolutions},
        date={2013},
   booktitle={Commutative algebra},
   publisher={Springer, New York},
       pages={419\ndash 469},
         url={https://doi.org/10.1007/978-1-4614-5292-8_13},
      review={\MR{3051381}},
}

\bib{hartshorne-AGbook}{book}{
      author={Hartshorne, Robin},
       title={Algebraic geometry},
      series={Graduate Texts in Mathematics, No. 52},
   publisher={Springer-Verlag, New York-Heidelberg},
        date={1977},
        ISBN={0-387-90244-9},
      review={\MR{463157}},
}

\bib{Humphreys}{book}{
      author={Humphreys, James~E.},
       title={Introduction to {L}ie algebras and representation theory},
      series={Graduate Texts in Mathematics},
   publisher={Springer-Verlag, New York-Berlin},
        date={1978},
      volume={9},
        ISBN={0-387-90053-5},
        note={Second printing, revised},
      review={\MR{499562}},
}

\bib{Humphreys-conjugacyclassesalgebraic}{book}{
      author={Humphreys, James~E.},
       title={Conjugacy classes in semisimple algebraic groups},
      series={Mathematical Surveys and Monographs},
   publisher={American Mathematical Society, Providence, RI},
        date={1995},
      volume={43},
        ISBN={0-8218-0333-6},
         url={https://doi.org/10.1090/surv/043},
      review={\MR{1343976}},
}

\bib{Iwahori-Matsumoto}{article}{
      author={Iwahori, N.},
      author={Matsumoto, H.},
       title={On some {B}ruhat decomposition and the structure of the {H}ecke
  rings of p-adic {C}hevalley groups},
        date={1965},
        ISSN={0073-8301},
     journal={Inst. Hautes \'{E}tudes Sci. Publ. Math.},
      number={25},
       pages={5\ndash 48},
         url={http://www.numdam.org/item?id=PMIHES_1965__25__5_0},
      review={\MR{185016}},
}

\bib{Kostant-principalthreedimensional}{article}{
      author={Kostant, Bertram},
       title={The principal three-dimensional subgroup and the {B}etti numbers
  of a complex simple {L}ie group},
        date={1959},
        ISSN={0002-9327},
     journal={Amer. J. Math.},
      volume={81},
       pages={973\ndash 1032},
         url={https://doi.org/10.2307/2372999},
      review={\MR{114875}},
}

\bib{KostantRallis-Orbitsrepresentationssymmetrisspaces}{article}{
      author={Kostant, B.},
      author={Rallis, S.},
       title={Orbits and representations associated with symmetric spaces},
        date={1971},
        ISSN={0002-9327},
     journal={Amer. J. Math.},
      volume={93},
       pages={753\ndash 809},
         url={https://doi.org/10.2307/2373470},
      review={\MR{311837}},
}

\bib{Laga-E6paper}{article}{
      author={Laga, Jef},
       title={The average size of the 2-{S}elmer group of a family of
  non-hyperelliptic curves of genus 3},
        date={2022},
        ISSN={1937-0652},
     journal={Algebra Number Theory},
      volume={16},
      number={5},
       pages={1161\ndash 1212},
         url={https://doi.org/10.2140/ant.2022.16.1161},
      review={\MR{4471040}},
}

\bib{Laga-F4paper}{article}{
      author={Laga, Jef},
       title={Arithmetic statistics of {P}rym surfaces},
        date={2023},
        ISSN={0025-5831},
     journal={Math. Ann.},
      volume={386},
      number={1-2},
       pages={247\ndash 327},
         url={https://doi.org/10.1007/s00208-022-02398-5},
      review={\MR{4585149}},
}

\bib{Laga-ADEpaper}{unpublished}{
      author={Laga, Jef},
       title={Graded {L}ie algebras, compactified {J}acobians and arithmetic
  statistics},
        date={2024},
        note={To appear in {Journal of the European Mathematical Society},
  \url{https://doi.org/10.4171/jems/1526}},
}

\bib{Levy-Vinbergtheoryposchar}{article}{
      author={Levy, Paul},
       title={Vinberg's {$\theta$}-groups in positive characteristic and
  {K}ostant-{W}eierstrass slices},
        date={2009},
        ISSN={1083-4362},
     journal={Transform. Groups},
      volume={14},
      number={2},
       pages={417\ndash 461},
         url={https://doi.org/10.1007/s00031-009-9056-y},
      review={\MR{2504929}},
}

\bib{Matsumura-CommutativeRingTheory}{book}{
      author={Matsumura, Hideyuki},
       title={Commutative ring theory},
      series={Cambridge Studies in Advanced Mathematics},
   publisher={Cambridge University Press, Cambridge},
        date={1986},
      volume={8},
        ISBN={0-521-25916-9},
        note={Translated from the Japanese by M. Reid},
      review={\MR{879273}},
}

\bib{Miranda-moduliofWeierstrassfibrations}{article}{
      author={Miranda, Rick},
       title={The moduli of {W}eierstrass fibrations over {${\bf P}^{1}$}},
        date={1981},
        ISSN={0025-5831},
     journal={Math. Ann.},
      volume={255},
      number={3},
       pages={379\ndash 394},
         url={https://doi.org/10.1007/BF01450711},
      review={\MR{615858}},
}

\bib{Ohta-classificationadmissiblenilpotent}{article}{
      author={Ohta, Takuya},
       title={Classification of admissible nilpotent orbits in the classical
  real {L}ie algebras},
        date={1991},
        ISSN={0021-8693},
     journal={J. Algebra},
      volume={136},
      number={2},
       pages={290\ndash 333},
         url={https://doi.org/10.1016/0021-8693(91)90049-E},
      review={\MR{1089302}},
}

\bib{Panyushev-Invarianttheorythetagroups}{article}{
      author={Panyushev, Dmitri~I.},
       title={On invariant theory of {$\theta$}-groups},
        date={2005},
        ISSN={0021-8693},
     journal={J. Algebra},
      volume={283},
      number={2},
       pages={655\ndash 670},
         url={https://doi.org/10.1016/j.jalgebra.2004.03.032},
      review={\MR{2111215}},
}

\bib{PoonenStoll-Mosthyperellipticnorational}{article}{
      author={Poonen, Bjorn},
      author={Stoll, Michael},
       title={Most odd degree hyperelliptic curves have only one rational
  point},
        date={2014},
        ISSN={0003-486X},
     journal={Ann. of Math. (2)},
      volume={180},
      number={3},
       pages={1137\ndash 1166},
         url={https://doi.org/10.4007/annals.2014.180.3.7},
      review={\MR{3245014}},
}

\bib{Reeder-torsion}{article}{
      author={Reeder, Mark},
       title={Torsion automorphisms of simple {L}ie algebras},
        date={2010},
        ISSN={0013-8584},
     journal={Enseign. Math. (2)},
      volume={56},
      number={1-2},
       pages={3\ndash 47},
         url={https://doi.org/10.4171/LEM/56-1-1},
      review={\MR{2674853}},
}

\bib{reeder-ellipticcentralizers}{article}{
      author={Reeder, Mark},
       title={Elliptic centralizers in {W}eyl groups and their coinvariant
  representations},
        date={2011},
     journal={Represent. Theory},
      volume={15},
       pages={63\ndash 111},
         url={https://doi.org/10.1090/S1088-4165-2011-00377-0},
      review={\MR{2765477}},
}

\bib{Richardson-orbitsofalgebraicgroups}{article}{
      author={Richardson, R.~W.},
       title={On orbits of algebraic groups and {L}ie groups},
        date={1982},
        ISSN={0004-9727},
     journal={Bull. Austral. Math. Soc.},
      volume={25},
      number={1},
       pages={1\ndash 28},
         url={https://doi.org/10.1017/S0004972700005013},
      review={\MR{651417}},
}

\bib{GrossLevyReederYu-GradingsPosRank}{article}{
      author={Reeder, Mark},
      author={Levy, Paul},
      author={Yu, Jiu-Kang},
      author={Gross, Benedict~H.},
       title={Gradings of positive rank on simple {L}ie algebras},
        date={2012},
        ISSN={1083-4362},
     journal={Transform. Groups},
      volume={17},
      number={4},
       pages={1123\ndash 1190},
         url={https://doi.org/10.1007/s00031-012-9196-3},
      review={\MR{3000483}},
}

\bib{Romano-centralextensions}{article}{
      author={Romano, Beth},
       title={On central extensions and simply laced {L}ie algebras},
        date={2021},
        ISSN={0021-8693},
     journal={J. Algebra},
      volume={568},
       pages={480\ndash 511},
         url={https://doi.org/10.1016/j.jalgebra.2020.09.046},
      review={\MR{4171680}},
}

\bib{Romano-Thorne-ArithmeticofsingularitiestypeE}{article}{
      author={Romano, Beth},
      author={Thorne, Jack~A.},
       title={On the arithmetic of simple singularities of type {$E$}},
        date={2018},
        ISSN={2522-0160},
     journal={Res. Number Theory},
      volume={4},
      number={2},
       pages={Art. 21, 34},
         url={https://doi.org/10.1007/s40993-018-0110-5},
      review={\MR{3787911}},
}

\bib{Thorne-Romano-E8}{article}{
      author={Romano, Beth},
      author={Thorne, Jack~A.},
       title={{$E_8$} and the average size of the 3-{S}elmer group of the
  {J}acobian of a pointed genus-2 curve},
        date={2020},
     journal={Proceedings of the London Mathematical Society},
  eprint={https://londmathsoc.onlinelibrary.wiley.com/doi/pdf/10.1112/plms.12388},
  url={https://londmathsoc.onlinelibrary.wiley.com/doi/abs/10.1112/plms.12388},
}

\bib{Shankar-2selmerhypermarkedpoints}{article}{
      author={Shankar, Ananth~N.},
       title={2-{S}elmer groups of hyperelliptic curves with marked points},
        date={2019},
        ISSN={0002-9947},
     journal={Trans. Amer. Math. Soc.},
      volume={372},
      number={1},
       pages={267\ndash 304},
         url={https://doi.org/10.1090/tran/7546},
      review={\MR{3968769}},
}

\bib{Shioda-grobnerbasisMordellWeil}{article}{
      author={Shioda, Tetsuji},
       title={Gr\"{o}bner basis, {M}ordell-{W}eil lattices and deformation of
  singularities. {I}},
        date={2010},
        ISSN={0386-2194},
     journal={Proc. Japan Acad. Ser. A Math. Sci.},
      volume={86},
      number={2},
       pages={21\ndash 26},
         url={http://projecteuclid.org/euclid.pja/1265033217},
      review={\MR{2590185}},
}

\bib{Slodowy-simplesingularitiesalggroups}{book}{
      author={Slodowy, Peter},
       title={Simple singularities and simple algebraic groups},
      series={Lecture Notes in Mathematics},
   publisher={Springer, Berlin},
        date={1980},
      volume={815},
        ISBN={3-540-10026-1},
      review={\MR{584445}},
}

\bib{Springer}{article}{
      author={Springer, T.~A.},
       title={Regular elements of finite reflection groups},
        date={1974},
        ISSN={0020-9910},
     journal={Invent. Math.},
      volume={25},
       pages={159\ndash 198},
         url={https://doi-org.kcl.idm.oclc.org/10.1007/BF01390173},
      review={\MR{354894}},
}

\bib{shoidschutt-ellipticsurfaces}{incollection}{
      author={Sch\"{u}tt, Matthias},
      author={Shioda, Tetsuji},
       title={Elliptic surfaces},
        date={2010},
   booktitle={Algebraic geometry in {E}ast {A}sia---{S}eoul 2008},
      series={Adv. Stud. Pure Math.},
      volume={60},
   publisher={Math. Soc. Japan, Tokyo},
       pages={51\ndash 160},
         url={https://doi.org/10.2969/aspm/06010051},
      review={\MR{2732092}},
}

\bib{stacksproject}{misc}{
      author={{Stacks Project Authors}, The},
       title={\textit{Stacks Project}},
        date={2018},
        note={\url{https://stacks.math.columbia.edu}},
}

\bib{Steinberg-Chevalley}{book}{
      author={Steinberg, Robert},
       title={Lectures on {C}hevalley groups},
      series={University Lecture Series},
   publisher={American Mathematical Society, Providence, RI},
        date={2016},
      volume={66},
        ISBN={978-1-4704-3105-1},
         url={https://doi.org/10.1090/ulect/066},
        note={Notes prepared by John Faulkner and Robert Wilson, Revised and
  corrected edition of the 1968 original [ MR0466335], With a foreword by
  Robert R. Snapp},
      review={\MR{3616493}},
}

\bib{Steinberg-Endomorphismsofalgebraicgroups}{book}{
      author={Steinberg, Robert},
       title={Endomorphisms of linear algebraic groups},
      series={Memoirs of the American Mathematical Society, No. 80},
   publisher={American Mathematical Society, Providence, R.I.},
        date={1968},
      review={\MR{0230728}},
}

\bib{Steinberg-Torsioninreductivegroups}{article}{
      author={Steinberg, Robert},
       title={Torsion in reductive groups},
        date={1975},
        ISSN={0001-8708},
     journal={Advances in Math.},
      volume={15},
       pages={63\ndash 92},
         url={https://doi.org/10.1016/0001-8708(75)90125-5},
      review={\MR{354892}},
}

\bib{ShankarWang-hypermarkednonweierstrass}{article}{
      author={Shankar, Arul},
      author={Wang, Xiaoheng},
       title={Rational points on hyperelliptic curves having a marked
  non-{W}eierstrass point},
        date={2018},
        ISSN={0010-437X},
     journal={Compos. Math.},
      volume={154},
      number={1},
       pages={188\ndash 222},
         url={https://doi.org/10.1112/S0010437X17007515},
      review={\MR{3719247}},
}

\bib{Thorne-thesis}{article}{
      author={Thorne, Jack~A.},
       title={Vinberg's representations and arithmetic invariant theory},
        date={2013},
        ISSN={1937-0652},
     journal={Algebra Number Theory},
      volume={7},
      number={9},
       pages={2331\ndash 2368},
         url={https://doi.org/10.2140/ant.2013.7.2331},
      review={\MR{3152016}},
}

\bib{Thorne-E6paper}{article}{
      author={Thorne, Jack~A.},
       title={{$E_6$} and the arithmetic of a family of non-hyperelliptic
  curves of genus 3},
        date={2015},
     journal={Forum Math. Pi},
      volume={3},
       pages={e1, 41},
         url={https://doi.org/10.1017/fmp.2014.2},
      review={\MR{3298319}},
}

\bib{Thorne-averagesizeelliptictwomarkedfunctionfields}{article}{
      author={Thorne, Jack~A.},
       title={On the average number of 2-{S}elmer elements of elliptic curves
  over {$\Bbb F_q(X)$} with two marked points},
        date={2019},
        ISSN={1431-0635},
     journal={Doc. Math.},
      volume={24},
       pages={1179\ndash 1223},
      review={\MR{4012556}},
}

\bib{VinbergElasvili-trivectors}{article}{
      author={Vinberg, \`E.~B.},
      author={\`Ela\v{s}vili, A.~G.},
       title={A classification of the three-vectors of nine-dimensional space},
        date={1978},
        ISSN={0204-3165},
     journal={Trudy Sem. Vektor. Tenzor. Anal.},
      volume={18},
       pages={197\ndash 233},
      review={\MR{504529}},
}

\bib{Vinberg-classificationnilpotentelementsthetagroups}{article}{
      author={Vinberg, \`E.~B.},
       title={The classification of nilpotent elements of graded {L}ie
  algebras},
        date={1975},
        ISSN={0002-3264},
     journal={Dokl. Akad. Nauk SSSR},
      volume={225},
      number={4},
       pages={745\ndash 748},
      review={\MR{506488}},
}

\bib{Vinberg-theweylgroupofgraded}{article}{
      author={Vinberg, \`E.~B.},
       title={The {W}eyl group of a graded {L}ie algebra},
        date={1976},
        ISSN={0373-2436},
     journal={Izv. Akad. Nauk SSSR Ser. Mat.},
      volume={40},
      number={3},
       pages={488\ndash 526, 709},
      review={\MR{0430168}},
}

\bib{Vinberg-nilpotent}{article}{
      author={Vinberg, \`E.~B.},
       title={Classification of homogeneous nilpotent elements of a semisimple
  graded {L}ie algebra},
        date={1979},
        ISSN={0204-3165},
     journal={Trudy Sem. Vektor. Tenzor. Anal.},
      number={19},
       pages={155\ndash 177},
      review={\MR{549013}},
}

\bib{VilonenXue}{article}{
      author={Vilonen, Kari},
      author={Xue, Ting},
       title={Character sheaves for graded {L}ie algebras: stable gradings},
        date={2023},
        ISSN={0001-8708},
     journal={Adv. Math.},
      volume={417},
       pages={Paper No. 108935, 59},
         url={https://doi.org/10.1016/j.aim.2023.108935},
      review={\MR{4554668}},
}

\end{biblist}
\end{bibdiv}

\end{document}